\newif\iffurther
\numberwithin{equation}{section}
\numberwithin{figure}{section}
\theoremstyle{plain}
\newtheorem{thm}{Theorem}[section] 
\newtheorem*{thm*}{Theorem}
\newtheorem{prop}[thm]{Proposition}
\newtheorem{cor}[thm]{Corollary}
\theoremstyle{definition}
\newtheorem{defn}[thm]{Definition}
\newtheorem{lem}[thm]{Lemma}
\theoremstyle{remark}
\newtheorem{rem}[thm]{Remark}
\newtheorem{exmpl}[thm]{Example}
\newtheorem*{acknowledgement*}{Acknowledgement}
\newcommand\suchthat{\;\ifnum\currentgrouptype=16 \middle\fi|\;}
\def\GK{{\operatorname{GKdim}}}
\def\Span{{\operatorname{Span}}}
\def\Aut{{\operatorname{Aut}}}
\def\grAut{{\operatorname{Aut_{gr}}}}
\def\Ann{{\operatorname{Ann}}}
\def\Ker{{\operatorname{Ker}}}
\def\Im{{\operatorname{Im}}}
\def\gr{{\operatorname{gr}}}
\def\sing{{\operatorname{pro}}}
\def\supp{{\operatorname{supp}}}
\def\Subshift{{\operatorname{Subshift}}}
\def\Mon{{\operatorname{Mon}}}
\begin{document}

\title{Noncommutative point spaces of symbolic dynamical systems}



\author{Jason P. Bell}
\address{Department of Pure Mathematics, University of Waterloo, Waterloo, ON, N2L 3G1,
Canada}
\email{jpbell@uwaterloo.ca}

\author{Be'eri Greenfeld}
\address{Department of Mathematics, University of Washington, Seattle, WA, 98195, USA}
\email{grnfld@uw.edu}

\thanks{The first-named author was supported by a Discovery Grant from the National Sciences and Engineering Research Council of Canada.}

\keywords{Point modules, proalgebraic varieties, monomial algebras, subshifts}

\subjclass[2020]{14A22, 16S38, 37B10, 16D90}

\begin{abstract}
We study point modules of monomial algebras associated with symbolic dynamical systems, parametrized by proalgebraic varieties which `linearize' the underlying dynamical systems. Faithful point modules correspond to transitive sub-systems, equivalently, to monomial algebras associated with infinite words. In particular, we prove that the space of point modules of every prime monomial algebra with Hilbert series $1/(1-t)^2$---which is thus thought of as a `monomial $\mathbb{P}^1$'---is isomorphic to a union of a classical projective line with a Cantor set. While there is a continuum of monomial $\mathbb{P}^1$'s with non-equivalent graded module categories, they all share isomorphic parametrizing spaces of point modules. In contrast, free algebras are geometrically rigid, and are characterized up to isomorphism from their spaces of point modules.

Furthermore, we derive enumerative and ring-theoretic consequences from our analysis. In particular, we show that the formal power series counting the irreducible components of the moduli schemes of truncated point modules of finitely presented monomial algebras are rational functions, and classify isomorphisms and automorphisms of projectively simple monomial algebras.
\end{abstract}

\maketitle


\section{Introduction}

Let $X$ be a projective variety over a field $F$:
\[
X = \{[w_0\colon\cdots\colon w_d]\in \mathbb{P}^d\ |\ f_1(w_0,\dots,w_d)=\cdots=f_m(w_0,\dots,w_d)=0\}
\]
for some homogeneous polynomials $f_1,\dots,f_m\in F[x_0,\dots,x_d]$. Then the variety $X$ parametrizes point modules over the homogeneous coordinate ring $A=F[t_0,\dots,t_d]/\left<f_1,\dots,f_m\right>$. A \textit{point module} over a connected graded algebra $A=\bigoplus_{n=0}^{\infty} A_n$ is a graded, cyclic $A$-module $M$ generated in degree $0$ with $1$-dimensional homogeneous components; that is, with Hilbert series $H_M(t)=1+t+t^2+\cdots$. 
This observation provides a way to study noncommutative geometry by analyzing parameter spaces of point modules of noncommutative graded algebras.
Thus, spaces of point modules have become fundamental in noncommutative projective algebraic geometry, enabling one to attach a geometric object to a given noncommutative graded algebra, see \cite{ATV, ATV2, AZ, AZ2, NS, RRZ, RZ, Smith} and references therein. 
Given a connected graded algebra $A$, let $\mathcal{P}_n(A)$ denote the moduli space of $n$-truncations of its point modules; this is a projective scheme. We have a compatible system of morphisms:
$$ \cdots \twoheadrightarrow \mathcal{P}_{n+1}(A)\twoheadrightarrow \mathcal{P}_n(A)\twoheadrightarrow \cdots \twoheadrightarrow \mathcal{P}_0(A) = \{\bullet\} $$
and there is a canonical bijection: 
$$ \mathcal{P}(A) := \varprojlim_{n} \mathcal{P}_n(A) \longleftrightarrow \{\text{Point}\ A\text{-modules}\}/\cong, $$
see \cite{ATV, Rogalski, SV}. The inverse limit is, in general, a proalgebraic variety which is not even an algebraic stack. For instance, the proalgebraic variety associated with a free noncommutative algebra is an infinite product of projective spaces.

The purpose of this paper is to discover and study a symbolic dynamical version of this subject and to decipher the geometry of the spaces of point modules of monomial algebras, building bridges between symbolic dynamics and (pro)algebraic geometry. 
Let $\Sigma$ be a finite alphabet. A \textit{subshift} is a non-empty shift-invariant subset $X\subseteq \Sigma^{\mathbb{N}}$ which is closed in the product topology. With every subshift $X$ one associates a monomial algebra $A_X$, generated by the letters of the alphabet and spanned by all of the finite factors of $X$, which can be thought of as a `ring of functions' on $X$. 
A monomial algebra takes the form $A_X$ if and only if it is \textit{prolongable}; that is, every non-zero monomial can be extended to a longer non-zero monomial.
If $X$ is \textit{transitive}, that is, generated by a single infinite word $w$ then $A_X$ is in fact $A_w$, the monomial algebra of finite subwords of $w$.
We explicitly compute the proalgebraic variety of point modules of $A_X$ by combinatorial means, showing that it can be viewed as a linearization of the underlying subshift (Theorem \ref{decomposition}). While general graded algebras need not admit point modules at all,\footnote{For instance, the prime Noetherian PI-algebra $F+M_2(F)t+M_2(F)t^2+\cdots$ admits no point modules.} infinite-dimensional monomial algebras always do. 
An important role is played by the annihilators of point modules:
\begin{thm}[{Theorem \ref{faithful} and Proposition \ref{sing char}}]
A monomial algebra $A$ takes the form $A_w$ for some infinite word if and only if it admits a faithful point module; and $A$ is prolongable if and only if the intersection of the annihilators of its point modules is zero.
\end{thm}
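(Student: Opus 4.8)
Here is my plan.

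Throughout I would work with the combinatorial model of point modules furnished by Theorem~\ref{decomposition}. Write $L\subseteq\Sigma^{*}$ for the factorial language of nonzero monomials of $A$; recall that $A$ is prolongable exactly when $L$ is right-extendable, and then $A=A_{X}$ for the subshift $X=X_{L}$ of infinite words whose finite factors all lie in $L$, with $L=\mathcal L(X)$. I will use that a point module of $A$ is the same as a sequence $\mathbf c=(\mathbf c_{i})_{i\ge 0}$, $\mathbf c_{i}=[c_{i,a}]_{a\in\Sigma}\in\mathbb P(F^{\Sigma})$, under which a letter $a$ sends the degree-$k$ cyclic generator $p_{k}$ to $c_{k,a}p_{k+1}$, so a monomial $v=a_{1}\cdots a_{m}$ sends $p_{k}$ to $\bigl(\prod_{j=0}^{m-1}c_{k+j,\,a_{j+1}}\bigr)p_{k+m}$; writing $T_{i}=\{a:c_{i,a}\ne 0\}$ (a nonempty set), the sequence is a genuine $A$-module exactly when every word \emph{spelled along} $(T_{i})$---i.e.\ $a_{1}\cdots a_{m}$ with $a_{j+1}\in T_{k+j}$ for all $j<m$ and some $k$---belongs to $L$; then a monomial $v$ lies in $\Ann(M)$ iff it is spelled along no window, while in general $\Ann(M)\cap A_{m}$ is the common kernel of the functionals $\lambda_{k}\colon A_{m}\to F$, $a_{1}\cdots a_{m}\mapsto\prod_{j<m}c_{k+j,\,a_{j+1}}$. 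A basic object is the \emph{tautological point module} $M_{w}$ attached to an infinite word $w$ with $\operatorname{Fac}(w)\subseteq L$, namely $\mathbf c_{i}=[\mathbf e_{w_{i}}]$; on $M_{w}$ one has $\lambda_{k}(v)=1$ when $v$ is the length-$m$ factor of $w$ at position $k$, and $\lambda_{k}(v)=0$ otherwise (so $M_w$ is automatically a point module: forbidden monomials act by zero).

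I would settle the prolongability equivalence first, since it also feeds the other one. As each $\Ann(M)$, hence $\bigcap_M\Ann(M)$, is homogeneous, it suffices to look at homogeneous elements. If $A$ is prolongable and $0\ne z\in A_{m}$, pick a monomial $v$ occurring in $z$ with nonzero coefficient, extend it (keeping it a prefix) by prolongability and K\"onig's lemma to an infinite word $w$ with $\operatorname{Fac}(w)\subseteq L$, and note that on $M_{w}$ the monomial $v$ acts from level $0$ as the isomorphism $p_{0}\mapsto p_{m}$ while every other degree-$m$ monomial kills $p_{0}$; hence $\lambda_{0}(z)$ is the coefficient of $v$ in $z$ and is nonzero, so $z\notin\Ann(M_{w})$ and $\bigcap_M\Ann(M)=0$. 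Conversely, if $A$ is finite dimensional it has no point modules and the (empty) intersection equals $A\ne 0$; if $A$ is infinite dimensional but not prolongable, point modules exist and there is a nonzero monomial $u$ with $ua\notin L$ for all $a\in\Sigma$, and for any point module $M=(\mathbf c_{i})$ one gets $u\in\Ann(M)$: were it not, $u$ would be spelled along some window $T_{k},\dots,T_{k+|u|-1}$, and choosing any $a\in T_{k+|u|}$ (nonempty!) would exhibit $ua$ spelled along $T_{k},\dots,T_{k+|u|}$, forcing $ua\in L$, a contradiction. Thus $0\ne u\in\bigcap_M\Ann(M)$.

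For the faithfulness criterion, the easy direction is that if $A=A_{w}$ then $L=\operatorname{Fac}(w)$ and $M_{w}$ is faithful: if $z=\sum_{v\in L_{m}}\mu_{v}v$ annihilates $M_{w}$ then $\lambda_{k}(z)=\mu_{w_{k}\cdots w_{k+m-1}}=0$ for all $k$, and since every word of $L_{m}$ occurs in $w$ all $\mu_{v}$ vanish. Conversely, suppose $M$ is a faithful point module with data $(\mathbf c_{i})$ and supports $(T_{i})$. By the prolongability part $A$ is prolongable, so $A=A_{X}$ with $X=X_{L}$, and it remains to show $X$ is transitive---equivalently, to produce an infinite word $w$ with $\operatorname{Fac}(w)=L$, for then $A=A_{w}$. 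Since no monomial lies in $\Ann(M)$, every word of $L$ is spelled along some window, so (with the module axioms) $L$ is \emph{exactly} the set of words spelled along $(T_{i})$.

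The heart of the matter---and the step I expect to be the main obstacle---is promoting this to the existence of a single ``covering'' word, where one must use the non-monomial part of $\Ann(M)$. The key estimate: for each $m$ and each finite set $W$ of windows, at most $|W|$ words of $L_{m}$ are spelled along no window outside $W$---otherwise the only $\lambda_{k}$ not vanishing on the span of those words are the $|W|$ functionals with $k\in W$, and being too few to separate that span they produce a nonzero element of $\Ann(M)\cap A_{m}$. This Hall-type bound is exactly what excludes the ``irreparable branching'' of $(T_{i})$ that obstructs a covering word---the phenomenon behind the non-faithful examples, in which two same-degree words are realizable only inside a single common window. I would use it to show that at every scale the words realizable only within bounded windows are few and suitably nested, and then build $w$ greedily: keeping the prefix of $w$ spellable along $(T_{i})$ (so $\operatorname{Fac}(w)\subseteq L$ automatically) while processing the words of $L$ in an order respecting the finitely many ``deadlines'' imposed by the estimate, one obtains an infinite word in which every word of $L$ eventually occurs, whence $\operatorname{Fac}(w)=L$. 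Everything else is routine given Theorem~\ref{decomposition}; the real work is this last conversion of the vanishing of the \emph{entire} annihilator of one point module into transitivity of the underlying subshift.
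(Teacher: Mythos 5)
Your treatment of the prolongability equivalence is complete and correct, and it matches the paper's argument (Lemma \ref{lem_prolongable} for one direction, and the same "extend a non-prolongable monomial to an infinite word and evaluate at level $0$" argument for the other). The ``key estimate'' you isolate for the faithfulness direction is also correct: with $|W|=1$ it is precisely the paper's central Claim (that two length-$l$ monomials with $S_u=S_v=\{0\}$ coincide), and your proof of it by a dimension count is a clean alternative to the paper's explicit two-element argument. So you have found the right lemma.

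The gap is in the last step, which you yourself flag as the main obstacle and then leave as a sketch. Saying one can ``build $w$ greedily, processing the words of $L$ in an order respecting the finitely many deadlines imposed by the estimate'' does not amount to a proof, and the difficulty it hides is exactly where the paper spends most of its effort. After the Claim, the paper (i) reduces to the non-prime case, where one can locate a monomial $u$ with $0<|S_u|<\infty$; (ii) extends $u$ leftwards inside $\mathcal{T}$ to get a $u'$ with $S_{u'}=\{0\}$, then applies the Claim to every right-extension of $u'$ to force $|C_{i}|=1$ for all $i\ge |u'|$, so that the tree has a unique infinite tail $u'C_{|u'|}C_{|u'|+1}\cdots$; and (iii) supposes some $v\in L$ does not factor this tail word, deduces $S_v$ is finite, extends $v$ leftwards to $v'$ with $S_{v'}=\{0\}$, and invokes the Claim once more to identify $v'$ with the unique length-$|v'|$ extension of $u'$, a contradiction. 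Your plan does not address the case distinction in (i), does not contain the ``eventually all singletons'' argument of (ii) (which is what actually makes any greedy scheme terminate), and does not explain why a greedy prefix extension inside the tree is never blocked; absent (ii), a greedy construction in the branching tree $\mathcal{T}$ has no reason to reach every word of $L$. To close the gap you would need to supply these steps (or a genuine substitute), after which your estimate would suffice exactly as the paper's Claim does.
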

In particular, the data of point modules of monomial algebras is encoded by their maximal prolongable quotients; these quotients are systematically studied in Section \ref{sec:prolongable radical}.

An important class of monomial algebras are
algebras which can be thought of as monomial---or symbolic dynamical---versions of the homogeneous coordinate ring of $\mathbb{P}^1$:

\begin{thm}[{Theorem \ref{Sturmian_iso}}]
Let $A$ be a prime monomial algebra of Hilbert series $H_A(t)=\frac{1}{(1-t)^2}$. Then the proalgebraic variety of point modules of $A$ is a union of a projective line with a Cantor set, intersecting at two points.
\end{thm}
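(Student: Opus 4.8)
The plan is to reduce to the Sturmian case and then read the space of point modules off the combinatorics of the underlying language. First, since $\dim A_1=2$, the algebra $A$ is a monomial algebra on two generators $x_0,x_1$ with some factorial language $L$, and $\dim A_n=p_L(n)=n+1$. If some nonzero monomial $u$ were not prolongable, then $ux_0=ux_1=0$, whence $uAu=Fu^2=0$ and $\langle u\rangle$ would be a nonzero square-zero ideal, contradicting primeness; so $A$ is prolongable and $A=A_X$ for the subshift $X$ with language $L$, and by the Morse--Hedlund theorem $X$ is a Sturmian subshift. From here I would use the standard structure of Sturmian subshifts: $L$ is closed under reversal; $X$ is infinite and minimal, hence a Cantor set, and so is the space $X^{-}$ of left-infinite words all of whose factors lie in $L$; for each $n$ there is a unique left-special factor $v_n$, with $v_n$ a prefix of $v_{n+1}$, so one may form $\mathbf v=\lim_n v_n$; the left-special letter $\star$ coincides with the right-special letter (the more frequent one); and the unique right-special factor of length $n$ is $(v_n)^{R}$.

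Next, I would describe point modules combinatorially, specializing Theorem~\ref{decomposition} (or arguing directly by recording the one-dimensional maps $x_i\colon M_n\to M_{n+1}$): a point module over $A$ is the same datum as a sequence $(T_n)_{n\ge 0}$ of nonempty subsets $T_n\subseteq\Sigma$, recording which generators act nontrivially out of degree $n$, together with a point $p_n\in\mathbb P^1$ lifting $T_n$ (a coordinate point when $|T_n|=1$, an arbitrary point of $\mathbb P^1\setminus\{[1:0],[0:1]\}$ when $|T_n|=2$), subject to the requirement that every word read letter by letter off consecutive $T_n$'s lies in $L$, i.e.\ $T_nT_{n+1}\cdots T_{n+\ell-1}\subseteq L$ for all $n,\ell$ (using $L=L^{R}$). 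The structural remark that drives everything is this: if $T_{n_0}=\Sigma$, then the length-$2$ constraint through degree $n_0$ forces every letter of $T_{n_0+1}$ to be left-special, so $T_{n_0+1}=\{\star\}$, and an easy induction then forces $T_{n_0+k}=\{\mathbf v_{k-1}\}$ for all $k\ge 1$ and $T_{n_0-k}=\{\mathbf v_{k-1}\}$ for $1\le k\le n_0$. In particular at most one degree $n_0$ can satisfy $T_{n_0}=\Sigma$, and once that degree is fixed the whole sequence $(T_n)$ is determined, leaving only the choice of $p_{n_0}\in\mathbb P^1$.

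The heart of the argument is then to show that the fat point must sit in degree $0$. For $n_0=0$ the surviving constraints reduce to ``$v_k$ is left-special'', which is automatic, so one gets a genuine family $F_0\cong\mathbb P^1$ of point modules parametrized by $p_0$; its two boundary members, with $p_0\in\{[1:0],[0:1]\}$, are ``thin'' (all $T_n$ singletons), namely the modules whose letter in degree $k\ge 1$ is $\mathbf v_{k-1}$ and whose letter in degree $0$ is, respectively, the two letters of $\Sigma$ --- and these lie in $X^{-}$ precisely because each $\mathbf v_{k-1}\cdots\mathbf v_0=(v_k)^{R}$ is right-special. For $n_0\ge 1$, the constraint through degree $n_0$ forces $\star\,x\,v_k\in L$ for both letters $x$ and all $k$, hence $\star\star v_k\in L$ for all $k$; but $\star^{\,j}$ is left-special for $j\le m-1$ while $\star^{\,m}$ is not (since $\star^{\,m+1}\notin L$), where $m\ge 2$ is the longest run of $\star$ in $L$, so $v_m=\star^{\,m-1}\bar\star$, whence $\mathbf v$ begins with $\star^{\,m-1}\bar\star$ and $\star\star v_k$ contains the forbidden run $\star^{\,m+1}$ --- a contradiction. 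Thus the only ``fat'' point modules form the single family $F_0\cong\mathbb P^1$; every other point module is thin, and a thin module $(\{b_n\})_n$ is admissible exactly when all finite factors of the left-infinite word $\cdots b_2b_1b_0$ lie in $L$, i.e.\ it is a point of the Cantor set $X^{-}$, distinct points giving non-isomorphic modules. Therefore, set-theoretically, $\mathcal P(A)=X^{-}\cup F_0$, glued along the two thin modules just described, which get identified with $[1:0]$ and $[0:1]$ in $\mathbb P^1$; in particular the two pieces meet in exactly two points.

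Finally, to pass from the set of point modules to the proalgebraic variety: at truncation level $n$, $\mathcal P_n(A)\subseteq(\mathbb P^1)^n$ is the closed subscheme cut out by the monomial conditions above, and by the previous steps it is the union of a finite set $Y_n$ of $n+1$ points (the thin truncations) with finitely many projective lines $\mathbb P^1_{n_0}$, one for each degree $n_0$ at which a fat point is admissible up to length $n$; the transition map $\mathcal P_{n+1}\to\mathcal P_n$ (forget the top degree) restricts to the natural surjection $Y_{n+1}\to Y_n$, carries each surviving line isomorphically onto the corresponding line, and collapses the line whose fat point lies in the degree being forgotten. By the rigidity established above, for $n_0\ge 1$ the degree $n_0$ is admissible only up to some finite truncation length, so all those lines drop out of the inverse system; passing to $\varprojlim$ leaves exactly $\varprojlim_n Y_n=X^{-}$ and the single line $\mathbb P^1_0$, and hence $\mathcal P(A)=\varprojlim_n\mathcal P_n(A)$ is, inside $\prod_n\mathbb P^1$, the union of a projective line and a Cantor set meeting in two points. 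I expect the main obstacle to be the rigidity argument of the third paragraph --- pinning the fat point to degree $0$ via the maximal-run estimate on $\mathbf v$ and identifying the two gluing points --- together with the bookkeeping needed here to confirm that the spurious finite-level projective lines really do vanish in the inverse limit.
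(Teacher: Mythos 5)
Your proposal is correct and takes essentially the same route as the paper: reduce to the Sturmian case, classify the admissible trees over $A$ (showing that at most one level can be two-element and that this forces it to be at level $0$), and then read off the proalgebraic variety from Theorem \ref{decomposition}. The only point of divergence is that the paper establishes the tree classification (Lemma \ref{trees_Sturm}) via the cutting-sequence slope $\alpha$ and explicit run-length bounds for the majority letter, whereas you argue through the chain of left/right-special factors and the longest run of the special letter; these are equivalent reformulations of the same Sturmian facts, the crux of both being the same forbidden-run observation, so your rigidity argument is in fact sound and the inverse-limit bookkeeping you flag at the end is routine given Theorem \ref{decomposition}.
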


Thus, the geometry of every `monomial  $\mathbb{P}^1$' is given by a union of a classical $\mathbb{P}^1$ with a copy of the underlying subshift. There is a continuum of pairwise non-isomorphic monomial $\mathbb{P}^1$'s, even with pairwise non-equivalent graded module categories (Proposition \ref{ZhangRigidity}), which share isomorphic proalgebraic varieties of point modules. 
A completely different situation occurs for free algebras, which are geometrically rigid, and more generally, for algebras with irreducible proalgebraic varieties of point modules---there are only countably many such prolongable monomial algebras.
\begin{thm}[{Proposition \ref{Irr} and Theorem \ref{free_rigid}}]
Let $A$ be a prolongable monomial algebra for which $\mathcal{P}(A)$ is irreducible. Then $A$ has a surjective map onto a free algebra, whose kernel is nilpotent. 
If $A$ is a prolongable monomial algebra for which $\mathcal{P}(A)$ is isomorphic to the proalgebraic variety of point modules of a free algebra then $A$ is free.
\end{thm}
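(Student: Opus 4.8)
The plan is to first make $\mathcal{P}(A)$ completely explicit using the monomial combinatorics (this is essentially the content of Theorem~\ref{decomposition}), deduce that irreducibility forces $\mathcal{P}(A)$ to be a product of coordinate projective subspaces, read off the subshift and the free quotient from that product (giving Proposition~\ref{Irr}), and finally add a rigidity argument for infinite products of projective spaces to kill the kernel when $\mathcal{P}(A)$ is actually isomorphic to the point-space of a free algebra (giving Theorem~\ref{free_rigid}). Concretely: write $A=A_X$ on an alphabet $\Sigma$, $d=|\Sigma|$; a truncated point module of length $n+1$ is exactly a tuple $(p_1,\dots,p_n)\in(\mathbb{P}^{d-1})^n$ such that for every minimal monomial relation $x_{j_1}\cdots x_{j_\ell}$ of $A$ and every shift $i$ with $i+\ell\le n$ one has $\prod_{t=1}^{\ell}(p_{i+t})_{j_t}=0$ (each direction of this is a routine unwinding of associativity of the module action). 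Consequently each $\mathcal{P}_n(A)$, hence each stable image $\mathcal{P}_n^{\infty}(A):=\operatorname{im}\!\big(\mathcal{P}_m(A)\to\mathcal{P}_n(A)\big)$ for $m\gg0$, is a finite union of products $\prod_k L_k$ of coordinate subspaces $L_k\subseteq\mathbb{P}^{d-1}$; and $\mathcal{P}(A)=\varprojlim_n\mathcal{P}_n^{\infty}(A)$ with surjective transition maps.

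\textbf{Step 1: irreducibility forces a product form.} Assume $\mathcal{P}(A)$ is irreducible. Since $\mathcal{P}(A)$ surjects continuously onto each $\mathcal{P}_n^{\infty}(A)$, every $\mathcal{P}_n^{\infty}(A)$ is irreducible, hence equals a single product $\prod_{k\le n}L_k^{(n)}$ of nonempty coordinate subspaces; surjectivity of the projection $\mathcal{P}_{n+1}^{\infty}(A)\to\mathcal{P}_n^{\infty}(A)$ forces $L_k^{(n+1)}=L_k^{(n)}$ for $k\le n$, so I get $\mathcal{P}(A)=\prod_{k\ge1}L_k$ where $L_k$ is the coordinate subspace on a nonempty set $S_k\subseteq\Sigma$. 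Next I identify $X$: each $w\in X$ gives the monomial point module with $k$-th datum the coordinate point $e_{w_k}$, so $e_{w_k}\in L_k$, i.e.\ $w_k\in S_k$; conversely any $(s_k)\in\prod_k S_k$ gives the configuration $(e_{s_k})_k\in\prod_k L_k=\mathcal{P}(A)$, and the associated point module witnesses $s_1\cdots s_n\neq0$ in $A$ for all $n$, so $(s_k)\in X$. Hence $X=\prod_k S_k$, and shift-invariance of $X$ forces $S_{k+1}\subseteq S_k$, so by finiteness of $\Sigma$ there is $N_0$ with $S_k=T$ for all $k\ge N_0$.

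\textbf{Step 2: Proposition~\ref{Irr}.} Put $B=F\langle T\rangle$ and let $\phi\colon A\to B$ kill the generators outside $T$. This is well defined: a monomial lying in the relation ideal of $A$ has, at some position $i$, a letter outside $S_i\supseteq T$, hence a letter outside $T$, so it maps to $0$. It is clearly surjective, and $\ker\phi$ is spanned by the nonzero monomials of $A$ using at least one letter outside $T$; such a letter can occur only in one of the first $N_0-1$ positions of a nonzero monomial, so $(\ker\phi)^{N_0}=0$. This proves Proposition~\ref{Irr}. Note for the next step that we also obtained $\mathcal{P}(A)\cong\prod_k\mathbb{P}^{\,n_k}$ with $n_k=|S_k|-1$ non-increasing and eventually equal to $n_\infty=|T|-1$.

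\textbf{Step 3: Theorem~\ref{free_rigid}, and the main obstacle.} Now suppose in addition $\mathcal{P}(A)\cong\mathcal{P}(F\langle y_1,\dots,y_e\rangle)=(\mathbb{P}^{\,m})^{\mathbb{N}}$ with $m=e-1$ (if $e=1$ this is a point, forcing all $S_k$ singletons and $A=F[y_1]$; assume $m\ge1$). By Step~2 it suffices to prove $n_k=m$ for all $k$, for then $S_k\supseteq T$ and $|S_k|=|T|$ give $S_k=T$, hence $X=T^{\mathbb{N}}$ and $A=A_{T^{\mathbb{N}}}=F\langle T\rangle$ is free. The rigidity input is: \emph{every morphism $\mathbb{P}^{N}\to\mathbb{P}^{\,n}$ with $n<N$, and every morphism $(\mathbb{P}^{m})^{r}\to\mathbb{P}^{\,n}$ with $n<m$, is constant.} Indeed the pullback of $\mathcal{O}(1)$ is $\mathcal{O}(a)$ (resp.\ $\mathcal{O}(a_1,\dots,a_r)$) with non-negative entries; if it is ample one intersects $n+1$ ample effective divisors in a source of dimension $\ge n+1$ (nonempty, since an ample divisor meets every positive-dimensional subvariety), contradicting base-point-freeness; if some entry is $0$ the morphism descends to a smaller product, and the induction bottoms out at a constant. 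Applying this to $L_1=\mathbb{P}^{\,n_1}\hookrightarrow\mathcal{P}(A)\cong(\mathbb{P}^{m})^{\mathbb{N}}$: if $n_1>m$ all components $\mathbb{P}^{n_1}\to\mathbb{P}^{m}$ are constant, so the closed immersion is constant — impossible; thus $n_1\le m$. Applying it to the isomorphism $\Phi\colon(\mathbb{P}^{m})^{\mathbb{N}}\xrightarrow{\ \sim\ }\prod_k\mathbb{P}^{\,n_k}$: if $n_\infty<m$ then each component $\Phi_k$ with $k\ge N_0$ is constant, so $\Phi$ factors through the finite-dimensional $\prod_{k<N_0}\mathbb{P}^{\,n_k}$, forcing $(\mathbb{P}^{m})^{r}$ to embed in a variety of dimension $\sum_{k<N_0}n_k$ for every $r$ — absurd; thus $n_\infty\ge m$. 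Hence $m\ge n_1\ge n_k\ge n_\infty\ge m$, so all $n_k=m$, completing the proof. The hard part is exactly this last step: mere irreducibility of $\mathcal{P}(A)$ is \emph{not} enough (e.g.\ $A=F\langle x,y\rangle/(xy,y^2)$ is prolongable and non-free with $\mathcal{P}(A)\cong\mathbb{P}^1$ irreducible, which is why Proposition~\ref{Irr} cannot be strengthened), so one genuinely needs $\mathcal{P}(A)\cong\mathcal{P}(\text{free})$ to pin down the \emph{eventual} value $n_\infty$; making the morphism and dimension arguments rigorous also requires some care about the non-Noetherian pro-scheme structure, i.e.\ that a morphism from $\mathcal{P}(A)$ to a finite-type scheme factors through some $\mathcal{P}_n^{\infty}(A)$.
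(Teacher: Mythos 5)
Your proposal is correct and follows the same overall strategy as the paper's proofs. For Proposition \ref{Irr}, your route is essentially the paper's: you identify $\mathcal{P}_n(A)$ with a union of products of coordinate subspaces via Theorem \ref{decomposition}, extract from irreducibility a single dominating tree $C_0C_1\cdots$, obtain the nesting $C_0\supseteq C_1\supseteq\cdots$, and conclude that the ideal generated by $\Sigma\setminus T$ (with $T$ the eventual value) is nilpotent with free quotient. The one cosmetic difference is how the nesting is derived: you pass to the subshift $X=\prod_k S_k$ and invoke shift-invariance of $X$, whereas the paper observes directly that the shifted tree $C_1C_2\cdots$ is again a tree over $A$ and invokes maximality. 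These are the same observation in two dialects. (The phrasing ``a monomial lying in the relation ideal of $A$ has, at some position $i$, a letter outside $S_i$'' should really be ``a monomial that vanishes in $A$ must contain a letter outside $T$, since any monomial with all letters in $T$ is a factor of $TT\cdots\subseteq X$'' — but your conclusion is right.)

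For Theorem \ref{free_rigid}, the architecture is again the paper's — first bound the initial dimension from above, then bound the eventual dimension from below, then squeeze — but the key rigidity input is proved by a genuinely different method. The paper establishes that a morphism $(\mathbb{P}^{d-1})^{\times \ell}\to\mathbb{P}^{a_r}$ with $a_r<d-1$ is constant by an inductive argument that applies Milne's rigidity theorem for complete varieties (constancy on slices $\Rightarrow$ constancy). You instead read off the multidegree of the pullback of $\mathcal{O}(1)$ via $\operatorname{Pic}((\mathbb{P}^m)^{\times r})\cong\mathbb{Z}^r$ and argue that $n+1\le m$ base-point-free sections of a globally generated line bundle on $(\mathbb{P}^m)^{\times r}$ must have nontrivial common zero locus unless the degree is zero. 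Both are legitimate; the Picard-theoretic argument is arguably more self-contained (it avoids the rigidity theorem for abelian-type morphisms), but requires the same care about reducing from a proregular morphism $\Phi$ to its finite-level components $\Phi_k$, and your final ``forcing $(\mathbb{P}^m)^r$ to embed in a variety of dimension $\sum_{k<N_0}n_k$'' is slightly loose — the precise contradiction, as in the paper, is that the identity on the free algebra's moduli factors through $\Phi$ and hence through a bounded-dimensional image, which cannot surject onto $(\mathbb{P}^m)^{\times(r+1)}$ for $r\gg0$ — but the argument goes through. Your non-free irreducible example $F\langle x,y\rangle/(xy,y^2)$ illustrating the sharpness of Proposition \ref{Irr} is also correct and is an instructive complement to the paper's example $F\langle x_1,x_2,x_3\rangle/\langle x_1^2,x_2x_1,x_3x_1\rangle$.
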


A special role in noncommutative algebraic geometry is played by \textit{projectively simple} algebras. These are infinite-dimensional graded algebras all of whose proper graded homomorphic images are finite-dimensional; thus, these are the `minimal' infinite-dimensional graded algebras. Indeed, a monomial algebra is projectively simple if and only if it takes the form $A_X$ for a minimal subshift $X$. An important class of projectively simple algebras are monomial $\mathbb{P}^1$'s,  analyzed in Theorem \ref{Sturmian_iso}. 
A remarkable result of Reichstein, Rogalski and Zhang \cite{RRZ} says that every projectively simple algebra over an algebraically closed field, which admits a point module and which is strongly Noetherian, is finite-codimensional in a twisted homogeneous coordinate rings of a smooth projective variety. 
What can be said about projectively simple graded algebras admitting point modules, but which are not necessarily strongly Noetherian? What can be said about the spaces of point modules of such algebras? These wide fundamental problems are raised in various versions in \cite{RRZ} (see discussion after Theorem 0.4 on page 368 therein and before Example 2.5 on page 376; see also \cite[Remark~4.7]{RZ}) and are partially answered in the current paper for the class of monomial algebras by the aforementioned theorems.

Taking an enumerative approach, we consider the sequence: $$a_n(A) = \#\{\text{Irreducible}~\text{components of}\ \mathcal{P}_n(A)\}.$$
\begin{thm}[{Theorem \ref{rational}}]
Let $A$ be a finitely presented monomial algebra.
Then the formal power series $\sum_{n=0}^{\infty} a_n(A)t^n$ is a rational function.
\end{thm}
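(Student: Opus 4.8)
The plan is to reduce the count $a_n(A)$ to a counting problem for a regular language over a finite alphabet and then invoke the transfer-matrix method. Write $A=F\langle x_1,\dots,x_k\rangle/I$ with $I$ generated by finitely many monomials, and let $\ell$ be the largest degree of a defining relation; this is exactly where the finite-presentation hypothesis is used, since it keeps $\ell$, and everything built from it, finite. First I would record the truncated-level version of the combinatorial picture behind Theorem~\ref{decomposition}: writing the action on an $(n{+}1)$-truncated point module as $x_c\cdot e_i=\lambda_{i,c}e_{i+1}$, the module is recorded by the points $p_i=[\lambda_{i,1}:\cdots:\lambda_{i,k}]\in\mathbb{P}^{k-1}$, and — because for a monomial algebra a nonzero monomial annihilates no $e_i$ as soon as each $\supp(p_i)$ is nonempty, so that cyclicity and one-dimensionality of the graded pieces come for free — the module axioms reduce to a purely combinatorial constraint on the support sequence $\mathbf{S}=(S_0,\dots,S_{n-1})$, $S_i=\supp(p_i)$, over the finite poset $\mathcal{A}$ of nonempty subsets of $\{1,\dots,k\}$ ordered by inclusion: for each relation $x_{a_1}\cdots x_{a_m}\in I$ one must avoid, in every window $[j,j+m-1]\subseteq[0,n-1]$, the pattern $a_m\in S_j,\ a_{m-1}\in S_{j+1},\dots,a_1\in S_{j+m-1}$. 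Call such $\mathbf{S}$ \emph{admissible}. Writing $\mathbb{P}(S)\subseteq\mathbb{P}^{k-1}$ for the coordinate subspace on $\{x_c:c\in S\}$, and using that admissibility is inherited by coordinatewise-smaller sequences, I would show that as reduced schemes $\mathcal{P}_n(A)=\bigcup_{\mathbf{S}\ \mathrm{admissible}}\mathbb{P}(S_0)\times\cdots\times\mathbb{P}(S_{n-1})$.

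The second step identifies the irreducible components. Each product $\prod_i\mathbb{P}(S_i)$ is irreducible, and one such product is contained in another precisely when $S_i\subseteq S_i'$ for all $i$; since an irreducible closed subset lying in a finite union of closed sets lies in one of them, the irreducible components of $\mathcal{P}_n(A)$ are exactly the products indexed by the \emph{maximal} admissible support sequences, and distinct maximal sequences give distinct components (the factors of the product recover the $S_i$). Hence $a_n(A)$ equals the number of maximal admissible sequences in $\mathcal{A}^{n}$. Crucially, because admissibility is inherited downward, a sequence is maximal admissible if and only if no single-letter enlargement of any $S_i$ stays admissible; and a newly created forbidden window must meet position $i$, so this is a \emph{local} condition, depending only on the $S_{i'}$ with $|i'-i|<\ell$, with the windows truncated near the two endpoints.

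Next I would argue that the set $L$ of all maximal admissible sequences (of all lengths) is a regular language over $\mathcal{A}$: admissibility is cut out by finitely many forbidden factors (one family of length-$\le\ell$ patterns per relation), and maximality is the local constraint above, together with modified, one-sided versions of it on the length-$(\ell-1)$ prefix and suffix. A finite automaton whose state after reading a prefix is the block of its last $\le 2\ell-1$ letters, with transitions that verify the admissibility and maximality constraints as soon as enough context is present, and with an accepting rule that ``flushes'' the final window, recognises $L$. Finally, the generating function counting length-$n$ words of a regular language is rational by the transfer-matrix formalism: $a_n(A)=\mathbf{v}^{\top}M^{n}\mathbf{u}$ for a fixed nonnegative integer matrix $M$ and fixed vectors $\mathbf{u},\mathbf{v}$, up to finitely many corrected low-degree terms from the initial/final states, so $\sum_{n\ge 0}a_n(A)t^n=\mathbf{v}^{\top}(I-tM)^{-1}\mathbf{u}+(\text{polynomial})$ is a rational function of $t$.

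The main obstacle I anticipate is the careful handling of the two boundaries: near $i=0$ and $i=n-1$ the maximality windows are one-sided, so ``maximal admissible'' is genuinely a different local constraint there than in the bulk, and the automaton's start and accept behaviour must reflect this — this is the only place where real bookkeeping is needed. Upstream of that, the scheme-level identification $\mathcal{P}_n(A)=\bigcup_{\mathbf{S}}\prod_i\mathbb{P}(S_i)$ must be made precise, but this is essentially the finite-truncation shadow of Theorem~\ref{decomposition}. Everything after $L$ has been shown to be regular is a routine application of the transfer-matrix method; I would also note explicitly that finite presentation cannot be dropped casually, as it is precisely what bounds $\ell$ and thereby keeps both the alphabet $\mathcal{A}$ and the set of forbidden factors finite.
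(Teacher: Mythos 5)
Your overall strategy—encode sequences of supports as words over the finite alphabet $\mathcal{S}=\{\text{nonempty subsets of } \Sigma\}$, show the objects you count form a regular language, then invoke the transfer-matrix method—is exactly the route the paper takes, down to the observation that maximality is a local condition seen through a window of width bounded by the maximal relation length. However, there is a genuine gap: your ``admissible'' sequences are what the paper calls \emph{coherent} sequences, i.e.\ $(S_0,\dots,S_n)$ such that every monomial in $S_0\cdots S_n$ is nonzero in $A$. That condition characterizes the larger scheme $\overline{\mathcal{P}}_n(A)$ of cyclic graded modules with Hilbert series $1+t+\cdots+t^n$, not $\mathcal{P}_n(A)$. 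The scheme $\mathcal{P}_n(A)$ parametrizes \emph{truncations of actual point modules}, and by Theorem~\ref{decomposition} its components correspond to sequences that extend to an infinite tree over $A$—what the paper calls \emph{prolongable} sequences. These two notions genuinely differ; the paper's Example~\ref{Exmpl:trunc} exhibits a coherent sequence that is not prolongable, so a coherent-maximal sequence need not give a component of $\mathcal{P}_n(A)$. So as written, your argument proves rationality of $\sum \bar{a}_n(A)t^n$, which is the other half of the paper's Theorem~\ref{rational}, but not the generating function $\sum a_n(A)t^n$ that was asked for.

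The fix is within reach of your framework, and is what the paper actually implements: for a finitely presented (and WLOG prolongable, via Proposition~\ref{prop:pro fin prsntd}) monomial algebra with relation-length bound $d$, a coherent sequence is prolongable if and only if its length-$(d-1)$ suffix is prolongable, because once the last $d-1$ windows extend into an infinite tree, every length-$d$ sub-sequence of the concatenation is already known to be coherent. Thus prolongability is likewise a \emph{local} boundary condition on the right end of the word, and can be folded into the accepting states of your automaton alongside the right-boundary maximality corrections you already anticipated. You also need ``maximal among prolongable'' rather than ``maximal among coherent'' throughout, since enlarging a coordinate can preserve coherence but destroy prolongability; again this is local, so the automaton survives. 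Finally, a small but necessary preliminary: one should pass from $A$ to $A/\sing(A)$ (Proposition~\ref{prop:pro fin prsntd} guarantees this stays finitely presented and leaves $\mathcal{P}(A)$ unchanged), since the notion of ``tree over $A$'' and hence $\mathcal{P}_n(A)$ is really a statement about the maximal prolongable quotient. With these corrections your proposal becomes essentially the paper's proof.
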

A similar result is proved for the (usually bigger) moduli schemes $\mathcal{P}_n(A)\subseteq \overline{\mathcal{P}}_n(A)$ parametrizing graded cyclic modules of Hilbert series $1+t+\cdots+t^n$ (whose limit is $\mathcal{P}(A)$ as well). Irreducible components of both families of moduli spaces have attracted considerable attention (\cite{Smith_Skl,Walton_Skl,Walton_Skl_cor}; see also \cite{Smith}).

We also study the structure of isomorphisms and automorphism groups of monomial algebras:
\begin{thm}[{Theorem \ref{iso mon alg}, Theorem \ref{Aut}, Corollary \ref{cor:iso}}]
Let $A,B$ be monomial algebras. If $A\cong B$ then there is an isomorphism between them given by a permutation on the generators. Furthermore, if $A,B$ are projectively simple then every graded isomorphism between them is given by a permutation and scaling of the generators, and in particular, $\grAut(A)\leq \mathbb{G}_m \wr S_n$.
\end{thm}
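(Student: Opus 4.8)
\emph{Proof plan.}
The common engine is an elementary observation about monomial algebras: since distinct nonzero monomials are linearly independent, a product $v_1\cdots v_m$ of degree--one elements, with $v_\ell=\sum_j c^{(\ell)}_j x_j$, vanishes in a monomial algebra \emph{exactly} when $x_{j_1}\cdots x_{j_m}=0$ for every tuple $(j_1,\dots,j_m)\in\supp(v_1)\times\cdots\times\supp(v_m)$; no cancellation is available. Hence a graded isomorphism $\phi\colon A\to B$ is controlled entirely by its support data $T_i:=\supp(\phi(x_i))$, subject to the single constraint that a monomial $x_{i_1}\cdots x_{i_m}$ is nonzero in $A$ if and only if $y_{j_1}\cdots y_{j_m}\ne 0$ in $B$ for \emph{some} tuple $(j_\ell)\in T_{i_1}\times\cdots\times T_{i_m}$. (An arbitrary algebra isomorphism of monomial algebras is first replaced by a graded one; this is a standard reduction, using that a monomial algebra is connected graded and finitely generated in degree one, and I expect it to be routine.)

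For Theorem~\ref{iso mon alg} the plan is to extract from the support data a \emph{relabelling} $\sigma$ of the two alphabets realizing an isomorphism $x_i\mapsto y_{\sigma(i)}$. The delicate point is that the $T_i$ need be neither singletons nor disjoint --- already $F\langle x,y\rangle/(x^2)\cong F\langle u,v\rangle/(u^2)$ via $x\mapsto u$, $y\mapsto\lambda u+\mu v$ --- so $\sigma$ cannot be read off directly. I would argue by induction on word length: the constraints in degrees one and two single out a candidate bijection $\sigma$ (from the incidence pattern of the $T_i$), and the bijectivity of $\phi$ on every graded component $A_m\xrightarrow{\sim}B_m$, re--expressed through the support data, propagates the matching to all lengths, giving $x_{i_1}\cdots x_{i_m}\ne 0$ in $A\iff y_{\sigma(i_1)}\cdots y_{\sigma(i_m)}\ne 0$ in $B$; the languages of $A$ and of $B$ then coincide after relabelling by $\sigma$, which is the required permutation isomorphism.

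For Theorem~\ref{Aut} and Corollary~\ref{cor:iso}, let $A=A_X$, $B=A_Y$ be projectively simple, so $X,Y$ are minimal subshifts, and let $\phi$ be a graded isomorphism. Now I would pass to point modules: $\phi$ induces an isomorphism $\mathcal{P}(A)\cong\mathcal{P}(B)$ of their proalgebraic varieties of point modules (all point modules here being faithful, since in a projectively simple algebra every nonzero graded ideal is cofinite), and this isomorphism must carry the symbolic part $X\hookrightarrow\mathcal{P}(A_X)$ coming from Theorem~\ref{decomposition} onto the symbolic part of $\mathcal{P}(A_Y)$ --- provided that symbolic part is characterized intrinsically inside the moduli (for instance as the maximal totally disconnected closed subset, or through the irreducible--component structure of the truncated moduli $\mathcal{P}_n$). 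For $w\in X$ the module $M_w$ is carried by $\phi$ to the point module in which $x_i$ acts as $\phi(x_i)=\sum_j c_{ij}y_j$, i.e.\ by $m_k\mapsto c_{i,\,w_{k+1}}\,m_{k+1}$; for this to again be some $M_{w'}$, the column of the transition matrix $(c_{ij})$ indexed by $w_{k+1}$ must have a single nonzero entry. As $w$ ranges over $X$ the letter $w_{k+1}$ attains every value, so every column of $(c_{ij})$ has a unique nonzero entry; being invertible, $(c_{ij})$ is then a monomial matrix, so $\phi(x_i)=c_i\,y_{\sigma(i)}$ for some $\sigma\in S_n$ and $c_i\in\mathbb{G}_m$. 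Taking $A=B$ gives $\grAut(A)\le\mathbb{G}_m\wr S_n$.

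The main obstacle is the combinatorial/structural reconstruction in each half. In Theorem~\ref{iso mon alg} it is turning the genuinely generator--mixing support data of an arbitrary graded isomorphism into a bona fide alphabet relabelling; the induction must be organised so as to exploit invertibility of $\phi$ in every single degree, and the overlapping--support phenomenon above shows this is not automatic. In Theorem~\ref{Aut} it is pinning down the intrinsic characterisation of the symbolic part of $\mathcal{P}(A_X)$ and verifying it is respected by $\phi$. If that route proves awkward I would instead argue directly that a non--monomial transition matrix would force the language of $Y$ to be invariant under a nontrivial linear recombination of its letters, and then derive a contradiction from minimality of $Y$ --- using that in an infinite minimal subshift every letter has bounded runs, together with uniform recurrence and the equality $p_X=p_Y$ of complexity functions; I expect whichever of these keeps the bookkeeping lightest to be the one to push through.
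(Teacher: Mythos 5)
Your plan has genuine gaps in both halves. For Theorem~\ref{iso mon alg}, you correctly isolate the no-cancellation observation and recognize, via your $x\mapsto u$, $y\mapsto\lambda u+\mu v$ example, that the support sets $T_i$ can overlap nontrivially. But you then have no concrete mechanism for extracting $\sigma$: ``the constraints in degrees one and two single out a candidate bijection'' is asserted, not argued, and you yourself flag this as the main obstacle. The paper's device is Hall's marriage theorem applied to the support sets $S_i=\supp(f(x_i))$: linear independence of the images $f(x_{i_1}),\dots,f(x_{i_t})$ forces $|S_{i_1}\cup\cdots\cup S_{i_t}|\geq t$ for every index subset, so Hall's theorem yields a system of distinct representatives $\sigma(i)\in S_i$; the no-cancellation observation then gives $\varphi_\sigma(I)\subseteq J$ directly, and the reverse inclusion is obtained by symmetry together with a finite-order argument on permutations (from $\varphi_\pi(J)\subseteq I$ one gets $\varphi_{\pi\sigma}^{n!}(I)\subseteq\varphi_{\pi\sigma}(I)\subseteq I$, forcing $\varphi_{\pi\sigma}(I)=I$). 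Some such one-stroke extraction is needed; your degree-by-degree induction is not obviously well-posed and is left at the level of intent.

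For Theorem~\ref{Aut} your route through $\mathcal{P}(A)$ is structurally different from the paper's and carries a gap that is worse than the ``provided'' you flag. Restriction of scalars along $\phi$ sends the monomial $B$-point module $M_{w'}$ to the $A$-point module in which $x_i$ scales $m_k$ by $c_{i,w'_k}$; this is monomial precisely when column $w'_k$ of $(c_{ij})$ has a unique nonzero entry, which is exactly the conclusion sought. So ``pullback of monomial is monomial'' cannot be used as an input unless you first prove that the Cantor locus of monomial point modules is intrinsic in $\mathcal{P}$, and you do not. The paper avoids the moduli space entirely: extending scalars to an infinite field and using that a vector space over an infinite field is not a finite union of proper subspaces, it produces a degree-one element $\xi=\sum_{s\in S}\alpha_s x_s$ with $S\subsetneq\{1,\dots,n\}$ and $\phi(\xi)=\sum_j\beta_jx_j$, all $\beta_j\neq 0$; one checks $\phi(\xi)$ is right regular (every nonzero monomial has a nonzero right prolongation, so any homogeneous $\sum\gamma_lu_l$ times $\sum\beta_jx_j$ keeps a surviving monomial), hence so is $\xi$; but uniform recurrence of $w$ gives a maximal factor $v$ avoiding some letter $x_t$ with $t\notin S$, whence $v\xi=0$, a contradiction. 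Your fallback via bounded runs and complexity gestures in this direction but would have to be built out from scratch.
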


Throughout the paper we prove additional results of independent ring-theoretic interest and analyze various examples. 

We start with a background section providing some necessary preliminary notions and results from symbolic dynamics, combinatorics of words and noncommutative algebra.

\textit{Conventions.} In this article, $\mathbb{N}=\{0,1,2,\dots\}$. Graded algebras are by default assumed to be connected $\mathbb{N}$-graded with finite-dimensional homogeneous components and generated in degree $1$. For a graded module $M=\bigoplus_{n\in \mathbb{N}} M_n$ we let $M_{\leq n}=\bigoplus_{i=0}^n A_i,\ M_{\geq n}=\bigoplus_{i=n}^{\infty} M_i$.
Modules are right modules unless stated otherwise.

\section{Growth, monomial algebras, infinite words and subshifts}

\subsection{Growth of algebras}
Let $A$ be a finitely generated associative algebra over a field $F$. Let $V$ be a finite-dimensional generating subspace, namely, $A=F\left<V\right>$. The growth of $A$ with respect to $V$ is the function:
\[ 
\gamma_{A,V}(n)=\dim_F \left(F+V+V^2+\cdots+V^n\right).
\]
If $1\in V$ then equivalently $\gamma_{A,V}(n)=\dim_F V^n$. 

We write $f\preceq g$ if $f(n)\leq g(Cn)$ for some $C>0$ and for all $n\in \mathbb{N}$, and we say that $f$ is asymptotically equivalent to $g$, denoted $f\sim g$, if $f\preceq g$ and $g\preceq f$. We henceforth refer to growth functions of algebras up to asymptotic equivalence. We note that the function $\gamma_{A,V}(n)$ is independent of the choice of the generating subspace up to asymptotic equivalence, and so we write $\gamma_A(n)$ for the growth function of the algebra.

The Gel'fand-Kirillov (GK) dimension of $A$ is:
\[
\GK(A) = \limsup_{n\rightarrow \infty} \frac{\log \gamma_A(n)}{\log n}.
\]
It follows from the definition that $\GK(A)<\infty$ if and only if $\gamma_A(n)$ is polynomially bounded. If $A$ is commutative then $\GK(A)$ coincides with the classical Krull dimension of $A$. The possible values of $\GK(A)$ are $0,1,[2,\infty]$. For more on the growth of algebras and the Gel'fand-Kirillov dimension, see \cite{KL}.

If $A=\bigoplus_{n=0}^{\infty} A_n$ is graded with finite-dimensional homogeneous components then $\gamma_A(n)\sim \dim_F\bigoplus_{i=0}^n A_i$. The Hilbert series of $A$ is the formal power series:
\[
H_A(t) = \sum_{n=0}^{\infty} \dim_F A_n \cdot t^n.
\]
If $A$ is commutative then $H_A(t)$ is a rational function of $t$. Specifically, for the homogeneous coordinate ring of $\mathbb{P}^d$---that is, the polynomial ring $A=F[x_0,\dots,x_d]$---we have:
\[
H_A(t) = \frac{1}{(1-t)^{d+1}}.
\]
A graded algebra is projectively simple if it is infinite-dimensional, but all of its proper graded homomorphic images are finite-dimensional. Every projectively simple algebra is prime.

\subsection{Monomial algebras and infinite words}

Let $\Sigma=\{x_1,\dots,x_m\}$ be a finite alphabet. Let $I\triangleleft F\left<\Sigma\right>$ be an ideal generated by monomials. Then the quotient ring $A=F\left<\Sigma\right>/I$ is called a monomial algebra.
Every monomial algebra $A$ is naturally graded by letting $\deg(x_1)=\cdots=\deg(x_m)=1$, and $A$ decomposes as a direct sum of homogeneous components $A=\bigoplus_{n=0}^{\infty} A_n$ where $A_n$ is spanned by the set of its non-zero length-$n$ monomials, denoted $\mathcal{L}_n(A)$. The set $\mathcal{L}(A):=\bigcup_{n=0}^{\infty} \mathcal{L}_n(A)\subseteq \Sigma^*$, which is a basis for $A$, is a hereditary language over $\Sigma$, namely, a non-empty set of words which is closed under taking subwords. Conversely, for every hereditary language $W\subseteq \Sigma^*$ there exists a monomial algebra:
\[ A_W:=F\left<x_1,\dots,x_m\right>/\left<u\ |\ u\ \text{is not a factor of a word from}\ W\right>
\]
such that $\mathcal{L}(A_W)=W$, and for every monomial algebra $A$, we have that $A_{\mathcal{L}(A)}=A$.
Of special importance is the case where $W$ consists of the subwords (also called factors) of a given (right) infinite word $w\in \Sigma^{\mathbb{N}}$, and we define:
\[ A_w:=F\left<x_1,\dots,x_m\right>/\left<u\ |\ u\ \text{is not a factor of}\ w\right>. 
\]

For a hereditary language $W$ (resp. infinite word $w$) we let $p_W(n)$ (resp. $p_w(n)$) be its complexity function, counting its length-$n$ words. By the above discussion, $p_W(n)=\#\mathcal{L}(A_W)$, so $\gamma_{A_W}(n)=\sum_{i=0}^{n} p_W(i)$.

We say that a monomial algebra $A$ is (right) prolongable if for every non-zero monomial $w\in A$ there exists a non-empty monomial $v\in A$ such that $wv\neq 0$. Similarly, one can define prolongable hereditary languages. 
For every infinite word $w$, the monomial algebra $A_w$ is prolongable. If $A=A_W$ is prolongable then $p_W(n)\leq p_W(n+1)$, and consequently $\gamma_{A_W}(n)\sim np_W(n)$.

\subsection{Dynamical properties of words, subshifts and algebras}

Prolongable algebras are tightly connected to symbolic dynamics. Let $\Sigma= \{x_1,\dots,x_m\}$ be a finite alphabet and consider the (left) shift operator $T\colon \Sigma^{\mathbb{N}}\rightarrow \Sigma^{\mathbb{N}}$ on right infinite words over $\Sigma$.
A (one-sided, right) subshift is a non-empty, shift-invariant closed (in the product topology) subset $X\subseteq \Sigma^{\mathbb{N}}$. We associate with $X$ a monomial algebra $A_X$ which is defined as the monomial algebra corresponding to the language of all finite factors of $X$, denoted $\mathcal{L}(X)$; similarly, one defines the complexity function $p_X(n)$ of $X$. The algebra $A_X$ is always (right) prolongable. Conversely, for any (right) prolongable monomial algebra $A=F\left<\Sigma\right>/I$, let: \[ \Subshift(A)=\{ w=x_{i_0}x_{i_1}\cdots\in \Sigma^{\mathbb{N}}\ |\ \text{all finite factors of}\ w\ \text{are non-zero in}\ A \} \]
be the subshift associated with $A$. The operators $X\mapsto A_X$ and $A\mapsto \Subshift(A)$ on the classes of subshifts and prolongable monomial algebras, are mutual inverses.

Many important dynamical properties of subshifts are reflected in the algebraic structure of their associated monomial algebras, see \cite{BBL,Madill,Nekrashevych}. In particular, a subshift is minimal if it contains no proper subshift; minimality of $X$ is equivalent to projective simplicity of $A_X$. A subshift is eventually periodic if there exist $p,m>0$ such that $T^{p+m}(x)=T^{m}(x)$ for all $x\in X$. The monomial algebra associated with an eventually periodic subshift has linear growth and satisfies a polynomial identity (PI).

We say that a subshift is transitive
if it admits a dense orbit, namely, there exists an infinite word $w\in X$ such that $\{w,T(w),T^2(w),\dots\}$ is dense in $X$. In this case, $A_X=A_w$. Every prime monomial algebra takes the form $A_w$ for some right (or left, or two-sided) infinite word $w$. Thus, $A_w$ is the monomial algebra associated with a $\mathbb{Z}$-subshift (namely, a closed shift-invariant subset of $X\subseteq \Sigma^{\mathbb{Z}}$). Then, one can view $A$ as a `ring of functions' on $X$: as observed in \cite{Nekrashevych}, $A_X$ is isomorphic to a subalgebra of the convolution algebra of the \'etale groupoid $X\rtimes \mathbb{Z}$; specifically, to the subalgebra generated by the sets of germs of the restriction of the shift operator onto the cylindrical
sets $\{w\in X\ :\ w[0] = \sigma\}$, for all $\sigma\in \Sigma$.

An infinite word $w$ is recurrent if every finite factor of it occurs infinitely many times, and uniformly recurrent if for every finite factor $u$ of $w$ there exists some $C_u\in \mathbb{N}$ such that every length-$C_u$ factor of $w$ contains an occurrence of $u$. The monomial algebra $A_w$ is prime if and only if $w$ is recurrent, and projectively simple if and only if $w$ is uniformly recurrent; see \cite{BBL,Madill}.

The following are standard in combinatorics of infinite words (e.g. see \cite{Pir}) and will be freely used throughout the paper.

\begin{lem}[{K\"onig's Lemma}]
Every infinite connected graph whose vertices have finite degrees, contains an infinite path.
\end{lem}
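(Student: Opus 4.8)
The plan is to construct the desired path one vertex at a time, carrying along the invariant that the current endpoint still ``sees'' infinitely much of the graph; this is the classical argument. First I would fix an arbitrary vertex $v_0$. Since $G$ is connected and infinite, every vertex is reachable from $v_0$, so the connected component of $v_0$ in $G$ --- namely all of $G$ --- is infinite. This serves as the base case of the following induction.

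\emph{Inductive step.} Suppose I have already chosen pairwise distinct vertices $v_0, v_1, \ldots, v_n$ forming a path, such that the connected component $C_n$ of $v_n$ in $G \setminus \{v_0, \ldots, v_{n-1}\}$ is infinite. I delete $v_n$ from $C_n$. Because $v_n$ has finite degree, $C_n \setminus \{v_n\}$ splits into only finitely many connected components, and each of them contains a neighbor of $v_n$ (take a simple path in $C_n$ from a vertex of the component to $v_n$; its penultimate vertex is such a neighbor). Their union, $C_n \setminus \{v_n\}$, is infinite, so by the pigeonhole principle at least one component $D$ is infinite; choose any neighbor $v_{n+1}$ of $v_n$ lying in $D$. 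Since $v_{n+1} \notin \{v_0, \ldots, v_n\}$ and $v_{n+1}$ is adjacent to $v_n$, the path extends. It remains to check the invariant, namely that $D$ is exactly the connected component of $v_{n+1}$ in $G \setminus \{v_0, \ldots, v_n\}$: any path from $v_{n+1}$ avoiding $v_0, \ldots, v_{n-1}$ lies in $C_n$ (as $v_{n+1} \in C_n$ and $C_n$ is a component of $G \setminus \{v_0, \ldots, v_{n-1}\}$), and if it also avoids $v_n$ it lies in $C_n \setminus \{v_n\}$, hence in $D$. So the component of $v_{n+1}$ in $G \setminus \{v_0, \ldots, v_n\}$ is infinite, and the invariant is restored.

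Iterating yields an infinite sequence $v_0, v_1, v_2, \ldots$ of pairwise distinct vertices with each $v_i$ adjacent to $v_{i+1}$, which is the required infinite path. There is no serious obstacle here; the one point I would be careful to spell out is the identification of $D$ with the new component, since this is what simultaneously guarantees that no vertex is revisited and that the ``infinitely much remaining'' invariant propagates. The local-finiteness hypothesis enters exactly once per step, in the pigeonhole over the neighbours of $v_n$, and is indispensable: an infinite star is connected and infinite but contains no infinite path.
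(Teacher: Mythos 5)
Your proof is correct, and it is the classical argument for K\"onig's Lemma. Note that the paper does not actually prove this lemma; it records it without proof as a standard background fact from combinatorics on words (with a general reference), so there is no ``paper approach'' to compare against. Your component-tracking version is a clean way to present it: the invariant that the component of $v_n$ in $G \setminus \{v_0,\ldots,v_{n-1}\}$ is infinite is exactly what the usual ``infinitely many vertices still reachable'' phrasing amounts to, and you correctly pin down both why local finiteness bounds the number of new components (each one owns at least one neighbour of $v_n$, and distinct components own distinct neighbours) and why the chosen component $D$ coincides with the component of $v_{n+1}$ after deleting $v_n$, which is what guarantees the path never revisits a vertex and the invariant propagates. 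The closing remark that the infinite star shows local finiteness is essential is a nice sanity check.
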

The finite factors of an infinite hereditary language form a tree, with an arrows between words of the form $u\rightarrow ux$ for $s\in \Sigma$. An application of this lemma then yields that every infinite-dimensional monomial algebra has a homomorphic image of the form $A_w$ for some infinite word $w$.


\begin{lem}[{F\"urstenberg's Lemma}]
Let $w$ be an infinite word. Then there exists a uniformly recurrent word $w'$ such that every factor of $w'$ is also a factor of $w$.
\end{lem}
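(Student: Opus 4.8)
The plan is to run a minimality argument at the level of languages and then translate back to infinite words via K\"onig's Lemma. Write $\mathcal{L}(w)\subseteq\Sigma^*$ for the (infinite, hereditary) set of finite factors of $w$, and let $\mathcal{F}$ be the family of all \emph{infinite} hereditary languages $L$ with $L\subseteq\mathcal{L}(w)$, ordered by reverse inclusion. I would first show that $\mathcal{F}$ has a minimal-under-inclusion element by Zorn's Lemma. The one nontrivial point is the chain condition: given a chain $(L_i)_{i\in I}$ in $\mathcal{F}$, the intersection $L_\infty:=\bigcap_i L_i$ is plainly hereditary and contained in $\mathcal{L}(w)$, and it is still \emph{infinite}. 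Indeed, for each $n$ every $L_i$ contains a word of length exactly $n$ (being infinite and hereditary over the finite alphabet $\Sigma$, it has words of unbounded length, hence, by heredity, a word of length $n$), and since $\Sigma^n$ is finite and the chain is totally ordered, a short finiteness argument produces a single $u_n\in\Sigma^n$ lying in all of the $L_i$, hence in $L_\infty$. So $L_\infty\in\mathcal{F}$ is an upper bound for the chain, and Zorn yields a minimal $L\in\mathcal{F}$.

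Next, applying K\"onig's Lemma to the factor tree of $L$ (exactly as in the remark following its statement above) produces an infinite word $w'$ all of whose factors lie in $L$. Then $\mathcal{L}(w')$ is itself an infinite hereditary language contained in $L$, so minimality of $L$ forces $\mathcal{L}(w')=L\subseteq\mathcal{L}(w)$. This already establishes that every factor of $w'$ is a factor of $w$.

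Finally, I would check that $w'$ is uniformly recurrent, by contradiction. If it is not, there is a factor $u$ of $w'$ and, for every $C\in\mathbb{N}$, a length-$C$ factor $z_C$ of $w'$ not containing $u$. Put $L'=\{v\in L\ |\ u\text{ is not a factor of }v\}$. Then $L'$ is hereditary (a subword of a $u$-free word is $u$-free), contained in $\mathcal{L}(w)$, and infinite, since every subword of every $z_C$ lies in $L'$ and so $L'$ has words of arbitrarily large length; but $L'\subsetneq L$ because $u\in L\setminus L'$, contradicting the minimality of $L$. Hence every factor of $w'$ recurs with bounded gaps, i.e.\ $w'$ is uniformly recurrent.

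The only real obstacle is the verification of the chain condition for Zorn's Lemma --- equivalently, the fact that a nested intersection of infinite hereditary languages over a finite alphabet remains infinite; everything else is bookkeeping. (This is precisely the combinatorial shadow of compactness of $\Sigma^{\mathbb{N}}$: one may equivalently take $X=\overline{\{T^nw : n\geq 0\}}$, note that $\mathcal{L}(X)=\mathcal{L}(w)$, use compactness and Zorn to extract a minimal subshift $Y\subseteq X$, and take for $w'$ any element of $Y$, which is automatically uniformly recurrent.)
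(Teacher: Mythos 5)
Your proof is correct. Note that the paper itself does \emph{not} supply a proof of F\"urstenberg's Lemma: it lists it (alongside K\"onig's Lemma) as a standard fact from combinatorics of infinite words, pointing to \cite{Pir} for background. So there is no in-paper argument to compare against; what you have written is one of the two standard arguments, and you have even sketched the other one in your closing parenthetical.

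The one place a careless reader might worry is the chain condition for Zorn, and you handle it correctly. For a chain $(L_i)$ and fixed $n$, the sets $L_i\cap\Sigma^n$ are nonempty (each $L_i$ is infinite and hereditary over a finite alphabet, so it has a word of every length) and totally ordered by inclusion inside the finite set $\Sigma^n$; the one of least cardinality is then contained in all the others, so it lies in $\bigcap_i L_i\cap\Sigma^n$, which is therefore nonempty. This gives $L_\infty\in\mathcal{F}$. The remaining steps — K\"onig applied to the factor tree of the minimal $L$, the equality $\mathcal{L}(w')=L$ by minimality, and the $u$-avoiding sublanguage $L'\subsetneq L$ to contradict minimality if $w'$ were not uniformly recurrent — are all stated cleanly and are correct. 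As you observe, this is precisely the combinatorial translation of ``take the orbit closure of $w$, extract a minimal subshift by Zorn plus compactness, and pick any point of it''; the two formulations are interchangeable, and the language-theoretic version you chose fits the paper's setup of hereditary languages naturally. One would also want to note (trivially) that $\mathcal{F}\neq\emptyset$ since $\mathcal{L}(w)\in\mathcal{F}$, but this is immediate.
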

In a ring-theoretic language, every infinite-dimensional monomial algebra has a projectively simple monomial homomorphic image. We conclude with two more lemmas which will be used in the sequel.

\begin{lem} \label{lem:two prolong}
Let $w\in \Sigma^{\mathbb{N}}$ be a non eventually periodic right infinite word. Then for every $n \geq 1$ there exists a length-$n$ factor $u$ of $w$ such that $ux_1,ux_2$ both factor $w$, for some distinct $x_1,x_2\in \Sigma$.
\end{lem}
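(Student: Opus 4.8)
The plan is to prove the contrapositive: if there exists $n \geq 1$ such that \emph{no} length-$n$ factor of $w$ admits two distinct right extensions inside $w$, then $w$ is eventually periodic. Since every length-$n$ factor of $w$ occurs at some position $i$ and therefore extends to the length-$(n+1)$ factor $w[i]\cdots w[i+n]$, the hypothesis is equivalent to saying that \emph{every} length-$n$ factor $u$ has a \emph{unique} letter $f(u)\in\Sigma$ with $u\,f(u)$ a factor of $w$. In Morse--Hedlund language this is precisely the condition $p_w(n+1)=p_w(n)$, which is known to force eventual periodicity; I would nevertheless give a self-contained argument rather than quote that theorem.

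First I would set $u_i := w[i]w[i+1]\cdots w[i+n-1]$ for each $i\geq 0$, so that each $u_i$ is a length-$n$ factor and $w[i+n]=f(u_i)$ by the uniqueness hypothesis, since $w[i]\cdots w[i+n]$ is a factor of $w$. There are at most $|\Sigma|^n$ length-$n$ factors, so by the pigeonhole principle there exist $i<j$ with $u_i=u_j$; put $q:=j-i>0$. The heart of the argument is the claim that $u_{i+k}=u_{j+k}$ for every $k\geq 0$, proved by induction on $k$: the base case is $u_i=u_j$, and for the inductive step, from $u_{i+k}=u_{j+k}$ and uniqueness one gets $w[i+k+n]=f(u_{i+k})=f(u_{j+k})=w[j+k+n]$, which combined with $u_{i+k}=u_{j+k}$ yields $u_{i+k+1}=u_{j+k+1}$ after shifting the window by one letter.

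Reading off the first letter of the identity $u_{i+k}=u_{j+k}$ then gives $w[i+k]=w[j+k]=w[i+k+q]$ for all $k\geq 0$, i.e. $w[m]=w[m+q]$ for all $m\geq i$, so $w$ is eventually periodic, contradicting the hypothesis on $w$. I do not expect a genuine obstacle here; the only point that requires care is the observation that the uniqueness hypothesis constrains the right extension at \emph{every} occurrence of a given factor, not just at one occurrence, which is exactly what makes the pigeonhole–induction argument close up. An even shorter route, if one is willing to quote it, is the Morse--Hedlund theorem directly: failure of the statement at some $n$ gives $p_w(n+1)=p_w(n)$, hence $p_w$ is bounded, hence $w$ is eventually periodic.
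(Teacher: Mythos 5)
Your proof is correct and is essentially the paper's argument: both pass to the contrapositive, use pigeonhole to find two occurrences of the same length-$n$ window, and then propagate the unique-right-extension hypothesis to conclude that $w$ is eventually periodic. You spell out the induction that the paper leaves implicit (and note the Morse--Hedlund shortcut), but the underlying reasoning is identical.
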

\begin{proof}
Otherwise, there exists some $n\geq 1$ such that every length-$n$ factor of $w$ has a unique prolongation to the right. Fix some $m_2\geq m_1+n$ such that $w[m_1,m_1+n-1]=w[m_2,m_2+n-1]$. Let $u=w[m_1,m_2-1]$. Then $w=w[0,m_1-1]uuu\cdots$ is eventually periodic.
\end{proof}


\begin{lem} \label{NoIsoPt}
Let $A$ be a prime monomial algebra not of linear growth. Then $\Subshift(A)$ is homeomorphic to the Cantor set.
\end{lem}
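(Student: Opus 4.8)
The plan is to invoke the standard (Brouwer) topological characterization of the Cantor set: a space is homeomorphic to the Cantor set exactly when it is non-empty, compact, metrizable, totally disconnected, and perfect (has no isolated points). The first four properties come essentially for free. Being of non-linear growth, $A$ is infinite-dimensional; since it is prime and monomial, $A=A_w$ for a right-infinite recurrent word $w$, and $w$ is not eventually periodic because an eventually periodic word yields a monomial algebra of linear growth. As $A_w$ is always prolongable, $\Subshift(A)=\Subshift(A_w)=\{v\in\Sigma^{\mathbb{N}}:\text{every finite factor of }v\text{ is a factor of }w\}$ is defined, and it is a closed subset of $\Sigma^{\mathbb{N}}$. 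Since $\Sigma$ is finite, $\Sigma^{\mathbb{N}}$ with the product topology is compact, metrizable, and totally disconnected, and all three properties pass to the closed subspace $\Subshift(A)$; moreover $w\in\Subshift(A)$, so it is non-empty. Hence the entire content of the lemma reduces to showing that $\Subshift(A)$ has no isolated points.

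For that I would argue by contradiction. Suppose $v\in\Subshift(A)$ is isolated. Then there is an $n$ such that $v$ is the \emph{only} element of $\Subshift(A)$ whose length-$n$ prefix equals $u:=v[0,n-1]$. Since $v\in\Subshift(A)$, the word $u$ is a factor of $w$, and since $w$ is recurrent, $u$ occurs in $w$ at infinitely many positions; choose two of them, $0\le i<j$, so that $w[i,i+n-1]=u=w[j,j+n-1]$. Both shifted words $T^{i}w$ and $T^{j}w$ lie in $\Subshift(A)$ — every finite factor of a shift of $w$ is a factor of $w$ — and both have $u$ as length-$n$ prefix, so the uniqueness hypothesis forces $T^{i}w=v=T^{j}w$. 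But $T^{i}w=T^{j}w$ means $w$ is eventually periodic (periodic with period $j-i$ from position $i$ on), contradicting the choice of $w$. Therefore $\Subshift(A)$ is perfect, and hence homeomorphic to the Cantor set.

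The argument is short, and the only step needing a genuine idea is the reduction of ``$v$ is isolated'' to ``$w$ is eventually periodic.'' It is tempting to build two distinct points of $\Subshift(A)$ with a long common prefix by locating a branching node above $u$ in the factor tree of $w$ (in the spirit of Lemma~\ref{lem:two prolong}) and then applying K\"onig's Lemma; but the cleaner route is to exploit recurrence directly, as above: two occurrences of the prefix $u$ yield two shifts of $w$ that both must coincide with the unique point $v$, and eventual periodicity follows at once. No appeal to K\"onig's Lemma is required. The one piece of bookkeeping to check carefully is that ``$A$ is not of linear growth'' genuinely excludes the degenerate finite-dimensional/eventually periodic cases, so that $\Subshift(A)$ is an honest infinite subshift rather than a finite or empty set.
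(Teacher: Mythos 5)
Your proof is correct and takes essentially the same approach as the paper: both reduce via Brouwer's characterization to showing $\Subshift(A)$ is perfect, and both exploit that recurrence of $w$ yields multiple occurrences of a given prefix while non-eventual-periodicity forces the corresponding shifts of $w$ to be distinct. Your organization as a direct contradiction (two coinciding shifts $T^iw=T^jw$ force eventual periodicity) is marginally cleaner than the paper's route via two occurrences of $z$ with distinct long prolongations, but the underlying idea and all essential ingredients are the same.
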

\begin{proof}
Every compact, totally disconnected, perfect metric spaces is homeomorphic to the Cantor set, so we have to show that $\Subshift(A)$ has no isolated points.
Since $A$ is prime, $A=A_w$ for some infinite recurrent word $w\in \Sigma^{\mathbb{N}}$. 
perfect, that it, has no isolated point. Pick $u\in \Subshift(A)$ and fix $d\in \mathbb{N}$. We can find $n$ such that $w[n,n+d-1]=u[0,d-1]=:z$. Since $w$ is recurrent, we can find $n'>n$ such that $w[n',n'+d-1]=z$. Therefore, if the factor $z$ has a unique $n'-n$ right prolongation in $w$ then $w$ is eventually periodic and hence $A$ is of linear growth. So there exist at least two occurrences of $z$ with distinct length $n'-n$ prolongations, say, $w[n_1,n_1+d-1]=w[n_2,n_2+d-1]=z$. It follows that either $T^{n_1}(w)$ or $T^{n_2}(w)$ is not $u$, yet has the same length-$d$ prefix as of $u$. Therefore $u$ is not isolated.
\end{proof}

\section{Combinatorial classification of point modules}

\subsection{Infinite words support faithful point modules}

\begin{prop} \label{AW}
Let $w$ be a right-infinite (resp.~left-infinite) word over a finite alphabet. Then the monomial algebra $A_w$ admits a faithful right (resp.~left) point module.
\end{prop}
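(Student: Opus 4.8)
The plan is to write down an explicit faithful point module; it is essentially the \emph{tautological} module attached to $w$. Write the right-infinite word as $w = a_0a_1a_2\cdots$ with $a_n\in\Sigma$, and set $M_w = \bigoplus_{n\ge 0} Fe_n$ with $e_n$ placed in degree $n$ and right action defined on the generators by
\[
e_n\cdot x_j = \begin{cases} e_{n+1}, & x_j = a_n,\\[2pt] 0, & x_j\ne a_n.\end{cases}
\]
Since this prescribes a module over the free algebra $F\langle\Sigma\rangle$ by its action on a basis, it is automatically well defined. Equivalently, $M_w$ is the quotient of $A_w$, viewed as a right module over itself, by the span $N$ of all nonzero monomials of $A_w$ that are not prefixes of $w$; one checks at once that $N$ is a right submodule, since a one-letter extension $ux_j$ of a non-prefix $u$ is again either zero in $A_w$ or a non-prefix.

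First I would record the combinatorial meaning of the action: for a monomial $u$ of length $k$ one has $e_n\cdot u = e_{n+k}$ when $u=a_na_{n+1}\cdots a_{n+k-1}$, i.e.\ when $u$ occurs in $w$ starting at position $n$, and $e_n\cdot u=0$ otherwise (``slide the occurrence along $w$''). In particular every monomial that is \emph{not} a factor of $w$ annihilates $M_w$, so the $F\langle\Sigma\rangle$-action descends to $A_w$. Moreover $\dim_F(M_w)_n=1$ for every $n$, and every prefix $a_0a_1\cdots a_{n-1}$ of $w$ is a nonzero monomial of $A_w$ because $w$ is infinite; hence $e_0\cdot(a_0\cdots a_{n-1})=e_n$, so $e_0$ generates $M_w$ and $M_w$ is a point module.

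Next I would prove faithfulness. Since $M_w$ is graded, $\Ann_{A_w}(M_w)$ is a homogeneous ideal, so it suffices to show it contains no nonzero homogeneous element. Let $0\ne a=\sum_i c_iu_i$ with the $u_i$ distinct nonzero monomials of a common degree $d$ and $c_i\in F^{\times}$. Each $u_i$, being nonzero in $A_w$, is a factor of $w$; fix an occurrence of $u_{i_0}$ starting at some position $n_0$. A position of $w$ together with a length determines the corresponding factor uniquely, so $u_{i_0}$ is the only one among the $u_i$ that occurs at position $n_0$, whence $e_{n_0}\cdot a = c_{i_0}e_{n_0+d}\ne 0$. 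Thus $a\notin\Ann_{A_w}(M_w)$, and $M_w$ is faithful.

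Finally, the left-infinite case follows formally: if $w$ is left-infinite then its reversal $\widetilde w$ is right-infinite, $A_w\cong A_{\widetilde w}^{\,\mathrm{op}}$, and left $A_w$-modules are exactly right $A_{\widetilde w}$-modules, so the faithful right point module $M_{\widetilde w}$ produced above is a faithful left point module over $A_w$ (one may also simply run the mirror-image construction directly). I do not expect a genuine obstacle here—the module is essentially forced. The one step that deserves care is faithfulness, where the content is precisely that distinct monomials of $A_w$ never occupy the same position of $w$, which is what allows the single module $M_w$ to ``see'' the entire monomial basis at once; this genuinely uses that $A_w$ is governed by one word, since general graded algebras (even prime Noetherian PI ones, as in the footnote) may have no faithful point module.
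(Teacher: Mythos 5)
Your proof is correct and takes essentially the same approach as the paper: you construct the same tautological point module $e_n\cdot x_j=\delta_{w[n],x_j}e_{n+1}$ and prove faithfulness by observing that a nonzero homogeneous combination of distinct factors is detected at any position where one of those factors occurs in $w$. The additional remarks (the description as a cyclic quotient of $A_w$, and the reduction of the left case via $A_w\cong A_{\widetilde w}^{\,\mathrm{op}}$) are accurate but do not change the argument.
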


\begin{proof}
We prove the proposition for right-infinite words.
Let $F\left<x_1,\dots,x_m\right>$ be the free algebra and let $P=P_0+P_1+\cdots$ be the right point $F\left<x_1,\dots,x_m\right>$-module defined as follows:
\begin{eqnarray*}
P_i & = & Fe_i \\
e_i \cdot x_j & = & \delta_{w[i],x_j}e_{i+1}.
\end{eqnarray*}
For all $i\geq 0$ and $1\leq j\leq m$. To see that $P$ is actually an $A_w$-module, let $x_{i_1}\cdots x_{i_k}$ be a word vanishing in $A_w$, namely, it is not a factor of $w$. Then for every $i\geq 0$:
\begin{eqnarray*}
e_i \cdot x_{i_1}\cdots x_{i_k} & = & \delta_{w[i],x_{i_1}}\delta_{w[i+1],x_{i_2}}\cdots \delta_{w[i+k-1],x_{i_k}}e_{i+k} \\ & = & \delta_{w[i,i+k-1],x_{i_1}\cdots x_{i_k}} e_{i+k} = 0.
\end{eqnarray*}
To observe that $P$ is a faithful $A_w$-module, suppose that $0\neq f\in A_w$ annihilates $M$. Since $P$ is graded, we may assume that $f$ is homogeneous, say, of degree $d$. Write $f=\sum_{k=1}^{l} c_k u_k$ where $u_1,\dots,u_l$ are distinct length-$d$ factors of $w$ and the coefficients $c_1,\dots,c_l\in F$ are non-zero. Since $f$ annihilates $P$, it follows that for each $i\geq 0$:
$$0 = e_i \cdot f = e_i \cdot \sum_{k=1}^{l} c_k u_k = e_{i+d}\cdot \left(\sum_{k=1}^{l} c_k\delta_{w[i,i+d-1],u_k}\right).$$
Hence: $$\sum_{k=1}^{l} c_k \delta_{w[i,i+d-1],u_k}=0.$$
Now pick $i$ for which $w[i,i+d-1]=u_1$; this is indeed possible since $u_1$ is a factor of $w$. We now get:
$$0=\sum_{k=1}^{l} c_k \delta_{w[i,i+d-1],u_k}=c_1,$$
contradicting the assumption that all coefficients $c_1,\dots,c_l$ are non-zero. Hence $P$ is a faithful point $A_w$-module.
\end{proof}


\begin{cor}
Every prime monomial algebra admits both right and left faithful point modules. Every infinite-dimensional monomial algebra admits both right and left point modules.
\end{cor}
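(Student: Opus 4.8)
The plan is to derive both statements formally from Proposition~\ref{AW} and the structural facts recalled in the preceding section, using the opposite algebra to move between the left- and right-handed versions. Note first that if $A=F\langle\Sigma\rangle/I$ is a monomial algebra then $A^{\mathrm{op}}$ is again a monomial algebra (namely $F\langle\Sigma\rangle$ modulo the monomial ideal obtained by reversing every word of $I$); moreover $A^{\mathrm{op}}$ is prime, resp.\ infinite-dimensional, precisely when $A$ is, and right point $A^{\mathrm{op}}$-modules are exactly left point $A$-modules. Consequently it suffices, in each of the two statements, to produce the \emph{right} point module in question, and then apply that construction once to $A$ and once to $A^{\mathrm{op}}$.

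For the first statement, let $A$ be a prime monomial algebra. As recalled above, such an $A$ can be presented as $A_w$ for a right-infinite word $w$, and Proposition~\ref{AW} then furnishes a faithful right point $A$-module. Applying the same reasoning to $A^{\mathrm{op}}$ (again a prime monomial algebra) gives a faithful right point $A^{\mathrm{op}}$-module, i.e.\ a faithful left point $A$-module.

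For the second statement, let $A$ be an infinite-dimensional monomial algebra. Then $\mathcal{L}(A)$ is an infinite hereditary language, hence contains words of every length, so by the consequence of K\"onig's Lemma recorded above $A$ has a homomorphic image of the form $A_w$ for some right-infinite word $w$, say via the graded surjection $\pi\colon A\twoheadrightarrow A_w$. Restriction of scalars along $\pi$ turns any point $A_w$-module into a graded cyclic $A$-module that is generated in degree $0$ and has one-dimensional homogeneous components, i.e.\ into a point $A$-module; feeding in the module of Proposition~\ref{AW} gives a right point $A$-module, and repeating the argument for $A^{\mathrm{op}}$ gives a left point $A$-module. There is no serious obstacle here---the substantive work is already contained in Proposition~\ref{AW}; the only points worth checking are that $A^{\mathrm{op}}$ is again a (prime, resp.\ infinite-dimensional) monomial algebra and that point modules pull back along graded surjections, both of which are immediate.
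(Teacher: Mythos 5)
Your proof is correct and, at bottom, uses the same ingredients as the paper's: reduce to Proposition~\ref{AW} by writing $A$ (or a homomorphic image of $A$) in the form $A_w$, then inflate. Two small organizational differences are worth flagging. First, for the left-handed assertions the paper invokes the left-word version of Proposition~\ref{AW} directly, using that a prime monomial algebra is $A_{w'}$ for a \emph{left}-infinite $w'$ as well as for a right-infinite $w$; you instead reduce left to right by passing to $A^{\mathrm{op}}$. The two are equivalent once one observes (as you do) that $A^{\mathrm{op}}$ is again a monomial algebra, that it is prime, resp.\ infinite-dimensional, exactly when $A$ is, and that right point $A^{\mathrm{op}}$-modules are left point $A$-modules; your way makes the symmetry explicit without relying on the two-sidedness of the cited structure theorem. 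Second, for the non-prime case you invoke K\"onig's Lemma where the paper invokes F\"urstenberg's Lemma. Both give a graded surjection $A\twoheadrightarrow A_w$ for some right-infinite $w$, which is all that is needed; K\"onig is the more elementary tool (F\"urstenberg additionally ensures $w$ is uniformly recurrent, which the corollary does not use). So your route is marginally more economical, and the substance matches the paper's.
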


\begin{proof}
Every prime monomial algebra takes the form $A_w$ for both a right-infinite and a left-infinite word $w$ (see \cite{BBL}), so it admits a faithful point module by Proposition \ref{AW}.
Let $A$ be a finitely generated infinite-dimensional monomial algebra. By F\"urstenberg's lemma, $A$ admits a homomorphic image of the form $A_w$ for some uniformly recurrent word $w$. Therefore $A$ admits a point module by inflation, applying Proposition \ref{AW} to $A_w$.
\end{proof}

Let $A=F\left<x_1,\dots,x_m\right>/I$ be a monomial algebra.
Let $C_0,C_1,\dots\subseteq \{x_1,\dots,x_m\}$ be non-empty subsets. We call the set of infinite words (in the free algebra on $x_1,\dots,x_m$) $\mathcal{T}=C_0C_1\cdots$ a \emph{tree} over $A$ if every finite factor of every infinite word from $\mathcal{T}$ is a non-zero monomial in $A$. Equivalently, $\mathcal{T}$ is a tree over $A$ if and only if there is a well-defined surjection $A\twoheadrightarrow A_\mathcal{T}$ where:
\[ A_\mathcal{T}:=F\left<x_1,\dots,x_m\right>/\left<u\ |\ u\ \text{is not a factor of an infinite word in}\ \mathcal{T}\right> \]
via $x_i\mapsto x_i$.

For an $m$-tuple of infinite sequences of scalars from $F$:
\[ \Lambda = \left( (\lambda_{i,1})_{i=0}^{\infty},\dots,(\lambda_{i,m})_{i=0}^{\infty} \right) \]
we define a collection of infinite words $\mathcal{T}(\Lambda)=C_0C_1\cdots$ where $C_i=\{x_j|\lambda_{i,j}\neq 0\}$.

A point module $P=Fe_0+Fe_1+\cdots$ for which there exists an infinite word $w\in \{x_1,\dots,x_m\}^{\mathbb{N}}$ such that $e_i\cdot x_j=\lambda_{i,j}\delta_{w[i],x_j}e_{i+1}$ for some scalars $\lambda_{i,j}\in F^\times$ will be called a \emph{monomial point module}. Equivalently, this coincides with $\mathcal{T}(\Lambda)$ consisting of a single infinite word (each $C_i$ being a singleton).


\begin{thm} \label{Classification}
Let $A=F\left<x_1,\dots,x_m\right>/I$ be a monomial algebra. Let $P=Fe_0+Fe_1+\cdots$ be a point $A$-module with $e_i\cdot x_j = \lambda_{i,j}e_{i+1}$ for all $i\geq 0,1\leq j\leq m$. Then $\mathcal{T}(\Lambda)$ is a tree over $A$.

Conversely, let: \[ \Lambda = \left( (\lambda_{i,1})_{i=0}^{\infty},\dots,(\lambda_{i,m})_{i=0}^{\infty} \right) \]
be an $m$-tuple of infinite sequences of scalars from $F$ such that $\mathcal{T}(\Lambda)$ is a tree over $A$. Then there is a point $A$-module $P=Fe_0+Fe_1+\cdots$ such that $e_i\cdot x_j = \lambda_{i,j}e_{i+1}$.
\end{thm}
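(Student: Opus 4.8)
The plan is to prove the two implications of Theorem~\ref{Classification} separately, in each case reducing everything to the combinatorial condition that a monomial $u=x_{i_1}\cdots x_{i_k}$ is a factor of some infinite word in $\mathcal{T}(\Lambda)$ precisely when it is nonzero in $A$.

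First, suppose $P=Fe_0+Fe_1+\cdots$ is a point $A$-module with $e_i\cdot x_j=\lambda_{i,j}e_{i+1}$. I would first argue that for each $i$ the set $C_i=\{x_j:\lambda_{i,j}\neq 0\}$ is nonempty: if all $\lambda_{i,j}$ vanished then $e_i\cdot A_{\geq 1}=0$, contradicting the fact that $P$ is a cyclic module generated in degree $0$ with every homogeneous component $Fe_{i+1}\neq 0$. (Here I use that $A$ is generated in degree $1$, which is part of the standing conventions.) Next I would show $\mathcal{T}(\Lambda)=C_0C_1\cdots$ is a tree over $A$ in the sense defined above, i.e.\ that every finite factor of every infinite word in $\mathcal{T}(\Lambda)$ is nonzero in $A$. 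A finite factor occurring at position $i$ has the form $x_{j_1}\cdots x_{j_k}$ with $x_{j_1}\in C_i,\dots,x_{j_k}\in C_{i+k-1}$, i.e.\ with $\lambda_{i,j_1},\dots,\lambda_{i+k-1,j_k}$ all nonzero. Acting on $e_i$ gives $e_i\cdot x_{j_1}\cdots x_{j_k}=\left(\prod_{t=0}^{k-1}\lambda_{i+t,j_{t+1}}\right)e_{i+k}$, which is a nonzero multiple of $e_{i+k}\neq 0$; hence the monomial $x_{j_1}\cdots x_{j_k}$ does not annihilate $P$, so in particular it is nonzero in $A$ (if it were zero in $A$ it would act as $0$ on every $A$-module). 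This is exactly the tree condition, and equivalently the surjection $A\twoheadrightarrow A_{\mathcal{T}(\Lambda)}$ via $x_i\mapsto x_i$ is well defined.

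For the converse, given $\Lambda$ with $\mathcal{T}(\Lambda)$ a tree over $A$, I would \emph{define} $P=\bigoplus_{i\geq 0}Fe_i$ with $\deg e_i=i$ and prescribe $e_i\cdot x_j=\lambda_{i,j}e_{i+1}$, extended multiplicatively; this defines an action of the free algebra $F\langle x_1,\dots,x_m\rangle$, and it descends to $A$ provided every monomial $u$ that vanishes in $A$ acts as $0$. So suppose $u=x_{j_1}\cdots x_{j_k}$ is zero in $A$; then $e_i\cdot u=\left(\prod_{t=0}^{k-1}\lambda_{i+t,j_{t+1}}\right)e_{i+k}$, and I must show this product vanishes for every $i$. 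If it did not vanish for some $i$, then $x_{j_1}\in C_i,\ x_{j_2}\in C_{i+1},\dots$, so $u$ occurs as a factor (at position $i$) of the infinite word $C_iC_{i+1}\cdots$ after prefixing by any chosen word in $C_0\cdots C_{i-1}$ — i.e.\ $u$ is a factor of an infinite word in $\mathcal{T}(\Lambda)$ — and since $\mathcal{T}(\Lambda)$ is a tree over $A$ this forces $u$ to be nonzero in $A$, a contradiction. Hence $P$ is a genuine $A$-module; it is clearly graded, cyclic generated by $e_0$ (each $e_{i+1}$ is hit because $C_i\neq\varnothing$), and has $1$-dimensional homogeneous components, so it is a point module with the prescribed structure constants.

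The only real subtlety — and the step I would be most careful about — is the passage between "the product of the $\lambda$'s along a path is nonzero" and "$u$ is a factor of an actual infinite word in $\mathcal{T}(\Lambda)$", since a priori the sets $C_\ell$ for $\ell<i$ or $\ell\geq i+k$ must also be nonempty to extend the finite path to a full bi-infinite-to-the-right word; but the nonemptiness of every $C_\ell$ is built into the definition of $\mathcal{T}(\Lambda)$ via the tree hypothesis in the converse direction, and is established from cyclicity of $P$ in the forward direction, so no extra work beyond what is sketched above is needed. Everything else is the routine bookkeeping of telescoping products of Kronecker-type structure constants.
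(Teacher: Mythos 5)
Your proposal is correct and follows essentially the same route as the paper's proof: in both directions the argument reduces to the telescoping product of structure constants $e_i\cdot x_{r_1}\cdots x_{r_k}=\bigl(\prod_t\lambda_{i+t,r_{t+1}}\bigr)e_{i+k}$, together with the observation that cyclicity forces each $C_i\neq\varnothing$ and the tree condition characterizes exactly which finite monomials survive. The only cosmetic difference is that the paper factors the action through $A_{\mathcal{T}(\Lambda)}$ and then inflates along the surjection $A\twoheadrightarrow A_{\mathcal{T}(\Lambda)}$, while you show directly that the defining ideal of $A$ acts as zero; these are equivalent formulations of the same computation.
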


\begin{proof}
Let $A=F\left<x_1,\dots,x_m\right>/I$ be a monomial algebra and let $P=Fe_0+Fe_1+Fe_2+\cdots$ be a point $A$-module. Let $e_i\cdot x_j = \lambda_{i,j}e_{i+1}$ for some scalars $\lambda_{i,j}\in F$. 
For each $i\geq 0$, let $C_i=\{x_j|\lambda_{i,j}\neq 0\}\subseteq \Sigma$ and let: \[ \mathcal{T}=\mathcal{T}(\Lambda)=C_0C_1\cdots \subseteq \{x_1,\dots,x_m\}^{\mathbb{N}}.\]
Since $P$ is a point module (generated by $e_0$), it follows that $C_i\neq \emptyset$ for all $i\geq 0$.
To prove that $\mathcal{T}$ is a tree over $A$ we have to show that given a finite factor of some infinite word from $\mathcal{T}$, say, $x_{r_1}\cdots x_{r_k}$, such that there exists $i\geq 0$ such that: 
\[ 
x_{r_1}\in C_i,\dots,x_{r_k}\in C_{i+k-1}
\]
then $x_{r_1}\cdots x_{r_k}$ is non-zero in $A$. Indeed,
$ \lambda_{i,r_1},\dots,\lambda_{i+k-1,r_k} \neq 0 $ so: \[ e_i\cdot x_{r_1}\cdots x_{r_k} = \lambda_{i,r_1}\lambda_{i+1,r_2}\cdots \lambda_{i+k-1,r_k} e_{i+k} \neq 0 \]
and therefore $x_{r_1}\cdots x_{r_k}$ is a non-zero element of $A$.

Conversely, let: \[ \Lambda = \left( (\lambda_{i,1})_{i=0}^{\infty},\dots,(\lambda_{i,m})_{i=0}^{\infty} \right) \]
be an $m$-tuple of infinite sequences of scalars from $F$ such that $\mathcal{T}=\mathcal{T}(\Lambda)$ is a tree over $A$.
Define a right module over the free algebra $F\left<x_1,\dots,x_m\right>$:
\[ P = Fe_0 + Fe_1 + \cdots \]
via $e_i\cdot x_j = \lambda_{i,j}e_{i+1}$. This is a point module generated by $e_0$ since $\mathcal{T}(\Lambda)=C_0C_1\cdots$ is a tree and therefore each $C_i$ is non-empty. If $x_{r_1}\cdots x_{r_k}$ is not a factor of an infinite word from $\mathcal{T}$ then, for every $i\geq 0$, it does \textit{not} hold that: 
\[ 
x_{r_1}\in C_i,\dots,x_{r_k}\in C_{i+k-1};
\]
equivalently, one of $\lambda_{i,r_1},\dots,\lambda_{i+k-1,r_k}$ is zero. Hence for each $i\geq 0$ it holds that:
\[ e_i\cdot x_{r_1}\cdots x_{r_k} = \lambda_{i,r_1}\lambda_{i+1,r_2}\cdots \lambda_{i+k-1,r_k} e_{i+k} = 0 \]
and therefore $P$ is a well-defined point $A_\mathcal{T}$-module. Since $A\twoheadrightarrow A_\mathcal{T}$ via $x_j\mapsto x_j$, it follows that $P$ is a point $A$-module by inflation.
\end{proof}

\begin{prop} \label{rem}
Every projectively simple monomial algebra which is not of linear growth admits a non-monomial faithful point module.
\end{prop}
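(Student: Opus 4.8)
\section*{Proof proposal}

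The plan is to build a point $A$-module that, past its $0$-th component, coincides with the faithful monomial point module of $A_w$ from Proposition \ref{AW}, but which branches at the $0$-th component; faithfulness will then be inherited from the ``tail'', while the branching will make the module non-monomial. Recall first that a projectively simple monomial algebra which is not of linear growth is of the form $A_w = A_X$ for a uniformly recurrent, non eventually periodic word $w$, where $X = \Subshift(A)$ is the associated minimal subshift; by minimality of $X$, every $u \in X$ satisfies $\mathcal{L}(u) = \mathcal{L}(w)$.

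The combinatorial core is to extract the right branching datum. Since $w$ is not eventually periodic, $X$ has a left-special factor of every length, i.e.\ a factor $u$ with two distinct left extensions $au, bu \in \mathcal{L}(w)$ (standard, cf.\ Lemma \ref{lem:two prolong}). Left-special factors are closed under passing to prefixes and have bounded branching, so by K\"onig's Lemma there is a right-infinite word $v$ all of whose prefixes are left-special. Put $S_n = \{ c \in \Sigma : c\, v[0,n-1] \in \mathcal{L}(w) \}$; this is a non-increasing sequence of subsets of the finite alphabet $\Sigma$ with $\#S_n \geq 2$, so it stabilizes to some $S_\infty$ with $\#S_\infty \geq 2$; fix distinct letters $a, b \in S_\infty$. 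One checks readily that $v \in X$ (its factors are factors of left-special factors of $w$), that $av, bv \in X$ (since $a, b \in S_\infty$), and, again by minimality, that $\mathcal{L}(v) = \mathcal{L}(w)$, so $A_v = A$.

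Next I would feed into Theorem \ref{Classification} the tree $\mathcal{T} = C_0 C_1 \cdots$ with $C_0 = \{a, b\}$ and $C_i = \{v[i-1]\}$ for $i \geq 1$, for which $\mathcal{T}(\Lambda) = \{av, bv\}$; every finite factor of $av$ and of $bv$ lies in $\mathcal{L}(w)$, so $\mathcal{T}$ is a tree over $A$. Theorem \ref{Classification} then yields a point $A$-module $P = Fe_0 + Fe_1 + \cdots$ with $e_0 \cdot x_a = e_0 \cdot x_b = e_1$, with $e_i \cdot x_{v[i-1]} = e_{i+1}$ for $i \geq 1$, and all other products zero (taking all the nonzero structure constants equal to $1$). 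Since $e_0 \cdot x_a$ and $e_0 \cdot x_b$ are both nonzero with $a \neq b$, the module $P$ is not monomial.

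Finally, for faithfulness I would note that the graded submodule $P_{\geq 1} = Fe_1 + Fe_2 + \cdots$ of $P$ is isomorphic, via a degree shift, to the faithful point $A_v$-module associated with $v$ in Proposition \ref{AW}; as $A_v = A$, it is faithful over $A$, and hence $\Ann_A(P) \subseteq \Ann_A(P_{\geq 1}) = 0$. So $P$ is a non-monomial faithful point $A$-module, as desired. The one genuinely delicate step is producing $v$ together with a \emph{single} pair $a \neq b$ of left extensions working simultaneously at all prefix lengths: this is exactly where K\"onig's Lemma and the pigeonhole over the finite alphabet are needed, and once they are in hand the remainder is a direct invocation of Theorem \ref{Classification} and Proposition \ref{AW}.
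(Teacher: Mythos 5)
Your proof is correct and follows essentially the same strategy as the paper's: produce a right-infinite word $v$ with $\mathcal{L}(v)=\mathcal{L}(w)$ together with two distinct letters $a\neq b$ such that $av,bv\in\Subshift(A)$, feed the tree $\{a,b\}\{v[0]\}\{v[1]\}\cdots$ into Theorem \ref{Classification}, and read off a non-monomial point module. The two places where you diverge from the paper are cosmetic. First, the order of extraction: you first run K\"onig's Lemma on the tree of left-special factors to get $v$ and then stabilize $S_n$ to extract the pair $\{a,b\}$, whereas the paper first pigeonholes a pair $\{x_i,x_j\}$ that left-prolongs infinitely many factors and then runs K\"onig on those; both are sound and interchangeable. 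Second, and more substantively, your faithfulness argument is tidier than the paper's: you observe directly that $P_{\geq 1}$ is (up to a degree shift) exactly the monomial point module of $v$ from Proposition \ref{AW}, hence faithful since $A_v=A$ by minimality, so $\Ann_A(P)\subseteq\Ann_A(P_{\geq 1})=0$. The paper instead invokes projective simplicity to conclude $A=A_\mathcal{T}$ and leaves the passage from this to faithfulness of $P$ unexplained; spelling it out amounts to the same minimality observation you make. One small notational slip: with $a,b\in\Sigma$ already letters, writing $e_0\cdot x_a$ and $e_0\cdot x_b$ doubles the subscripting; you mean $e_0\cdot a=e_0\cdot b=e_1$.
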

\begin{proof}
Let $\Sigma=\{x_1,\dots,x_m\}$ and let $A=F\left<\Sigma\right>/I$ be a projectively simple monomial algebra which is not of linear growth; then $A=A_w$ for some uniformly recurrent left word. Then for every $n \geq 1$ there exists a length-$n$ monomial $0\neq u\in A$ with two distinct left prolongations $x_iu,x_ju\neq 0$ by Lemma \ref{lem:two prolong}. It follows that for some $1\leq i<j\leq m$ there are infinitely many monomials which can be prolonged to the left both by $x_i$ and by $x_j$. By K\"onig's lemma, 
there exists a right infinite word $w\in \Subshift(A)$ such that $x_iw,x_jw\in \Subshift(A)$. Hence we get a tree $\mathcal{T}$ over $A$ with $C_0=\{x_i,x_j\}$. By projective simplicity of $A$, it follows that $A=A_\mathcal{T}$, and the point module $P=Fe_0+Fe_1+\cdots$ given by $e_k\cdot x_l=\delta_{x_l\in C_k}e_{k+1}$ is a non-monomial faithful point $A$-module.
\end{proof}

\subsection{Faithful point modules imply underlying infinite words}

\begin{thm} \label{faithful}
Let $A$ be a finitely generated monomial algebra. Then $A$ admits a faithful right point module if and only if $A$ is of the form $A_w$ for a right infinite word $w$.
\end{thm}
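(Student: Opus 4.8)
One implication is immediate: if $A=A_w$ for a right infinite word $w$, then $A$ admits a faithful right point module by Proposition \ref{AW}. The content is the converse, so assume $P=Fe_0+Fe_1+\cdots$ is a faithful right point module with $e_i\cdot x_j=\lambda_{i,j}e_{i+1}$, and let $\mathcal{T}=\mathcal{T}(\Lambda)=C_0C_1\cdots$ be the associated tree over $A$ provided by Theorem \ref{Classification}.

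The first step is to show $A=A_\mathcal{T}$. The kernel of the canonical surjection $A\twoheadrightarrow A_\mathcal{T}$ is a monomial ideal, spanned by those non-zero monomials $u=x_{r_1}\cdots x_{r_k}$ of $A$ that are not factors of any infinite word in $\mathcal{T}$; for each such $u$ and every $i\ge 0$ at least one of $\lambda_{i,r_1},\dots,\lambda_{i+k-1,r_k}$ vanishes, so $e_i\cdot u=0$, i.e.\ $u\in\Ann_A(P)=0$. Hence this kernel is zero and $A=A_\mathcal{T}$. In particular $A$ is prolongable: every non-zero monomial is a factor of some infinite word of $\mathcal{T}$ and therefore extends to the right inside that word (equivalently, $\bigcap\{\Ann_A(M): M \text{ a point module}\}\subseteq\Ann_A(P)=0$, and one invokes Proposition \ref{sing char}). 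Writing $X:=\Subshift(A)$, we thus have $A=A_X$, and it remains only to prove that $X$ is \emph{transitive} — i.e.\ some $w\in X$ has $\mathcal{L}(w)=\mathcal{L}(X)$ — for then $A=A_X=A_w$.

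I would prove transitivity by contradiction. Since $A$ is finitely generated, for each fixed $n$ the ``windows'' $(C_i,C_{i+1},\dots,C_{i+n-1})$, hence the ``boxes'' $B_i^{(n)}:=C_iC_{i+1}\cdots C_{i+n-1}$ of degree-$n$ monomials fitting at position $i$, take only finitely many values, and $\bigcup_{i\ge0}B_i^{(n)}=\mathcal{L}_n(A)$. The action of $P$ gives a map $A_n\to\prod_{i\ge0}F$, $f\mapsto(e_i\cdot f)_i$, whose $i$-th coordinate, written in the monomial basis of $A_n$, is a functional supported on $B_i^{(n)}$ with a product of non-zero scalars on each monomial there; faithfulness of $P$ says exactly that this map is injective for every $n$. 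The plan is to show that non-transitivity of $X$ forces this injectivity to fail for some $n$: in a non-transitive subshift the ``large'' boxes — those not contained in a finite union of the others — can occur at only finitely many positions, while the boxes occurring at infinitely many positions are comparatively ``small'', so the span of all the coordinate functionals has dimension strictly less than $\dim A_n=\#\mathcal{L}_n(A)$ for a suitable $n$; then some non-zero $f\in A_n$ has $e_i\cdot f=0$ for all $i$, i.e.\ $0\ne f\in\Ann_A(P)$, a contradiction.

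The hard part is precisely this last step: turning ``$X$ is not transitive'' into a concrete degree $n$ at which the finitely many, size-bounded recurring boxes cannot support a full-rank action of $P$ on $A_n$. This requires comparing the complexity $p_X(n)=\#\mathcal{L}_n(A)$ with the combinatorics of the recurring boxes, together with a genericity argument for products of the $\lambda_{i,j}$; equivalently, one must establish that for a product tree $\mathcal{T}=\prod_iC_i$ the subshift $\Subshift(A_\mathcal{T})$ is transitive if and only if, for every $n$, the covering of $\mathcal{L}_n(A_\mathcal{T})$ by the boxes $B_i^{(n)}$ admits a non-zero scalar assignment of full rank. (Alternatively, the transitivity step could be packaged using the maximal prolongable quotient machinery developed later in the paper, together with the identification of the $A_w$'s among prolongable monomial algebras as exactly those whose associated subshift is transitive.)
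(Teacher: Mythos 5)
Your reduction is correct up to a point: you rightly pass from a faithful point module $P$ to the tree $\mathcal{T}(\Lambda)$, establish $A=A_{\mathcal{T}}$ and hence $\sing(A)=0$, and correctly identify that what remains is to show $\Subshift(A)$ is transitive. But the heart of the theorem --- deducing transitivity from faithfulness --- is exactly the step you leave as an unexecuted ``plan,'' and the plan has problems.

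The paper's converse argument does \emph{not} go through a rank/dimension count of the coordinate functionals. It introduces, for each nonzero monomial $u$ of length $l$, the set $S_u=\{n:\ u\in C_n\cdots C_{n+l-1}\}$ of positions at which $u$ fits into the tree. The central lemma, proved directly from faithfulness, is: if $u,v\in\Sigma^l$ satisfy $S_u=S_v=\{0\}$, then $u=v$. (The proof produces an explicit nonzero element $\beta u-\alpha v$ in $\Ann_A(P)$ otherwise.) Reducing to the non-prime case, one finds a monomial $u$ with $0<|S_u|<\infty$; taking $t=\max S_u$ and a left extension $u'\in C_0\cdots C_{t+l-1}$, one gets $S_{u'}=\{0\}$, and the lemma forces $|C_{t+l}|=|C_{t+l+1}|=\cdots=1$, so the tree tail is a single word; one further application of the lemma shows every nonzero monomial is a factor of $u'C_{t+l}C_{t+l+1}\cdots$, giving $A=A_w$.

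Your proposed rank argument is unlikely to close the gap as stated. The $i$-th coordinate functional on $A_n$ is a tensor product $\ell^{(i)}_{C_i}\otimes\cdots\otimes\ell^{(i+n-1)}_{C_{i+n-1}}$, where $\ell^{(j)}_{C_j}$ is the functional $x_r\mapsto\lambda_{j,r}$; so two positions with the same box $B_i^{(n)}=B_{i'}^{(n)}$ can nevertheless give independent functionals, and the rank of the system is not bounded by the number of distinct boxes, nor is ``low rank'' cleanly equivalent to non-transitivity. A genericity argument for the $\lambda$'s is also not available: $\Lambda$ is a single, fixed tuple, not a family over which one can vary Zariski-densely. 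The paper's $S_u$ argument sidesteps all of this by extracting one explicit annihilating element rather than estimating a rank.

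Lastly, the parenthetical alternative --- ``packaging the transitivity step using the maximal prolongable quotient machinery'' --- is a dead end. Proposition \ref{sing char} only yields $\sing(A)=0$, i.e.\ that $A$ is prolongable; there are prolongable monomial algebras that are not of the form $A_w$ (non-transitive subshifts), so prolongability is strictly weaker than what this theorem asserts.
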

\begin{proof}
That every algebra of the form $A_w$ admits a faithful point module is proved in Proposition \ref{AW}.
Let $\Sigma=\{x_1,\dots,x_m\}$. Let $A=F\left<\Sigma\right>/I$ be a monomial algebra and let $P=Fe_0+Fe_1+Fe_2+\cdots$ be a faithful right $A$-module; clearly $A$ is infinite-dimensional. 
Let $e_i\cdot x_j = \lambda_{i,j}e_{i+1}$ for some scalars $\lambda_{i,j}\in F$. 
For each $i\geq 0$, let $C_i=\{x_j|\lambda_{i,j}\neq 0\}$  and let $\mathcal{T}=C_0C_1\cdots$, which, by Theorem \ref{Classification}, is a tree over $A$. As proved in Theorem \ref{Classification}, the action of $A$ on $P$ factors through $A\twoheadrightarrow A_\mathcal{T}$. Since $P$ is faithful, it follows that $A\cong A_\mathcal{T}$ via the natural isomorphism. Thus, from now on let us refer to $A_\mathcal{T}$ as $A$, naturally identifying their systems of monomial generators with each other.

We may assume that $A$ is \textit{not} prime, since prime monomial algebras take the form $A_w$ for some right-infinite words $w$ (see \cite{BBL}).
For every monomial $u\in F\left<x_1,\dots,x_m\right>$, of length, say, $l$, we let:
\[
S_u := \{n\in \mathbb{N}\ |\ u\in C_n\cdots C_{n+l-1} \}.
\]
Thus $S_u\neq \emptyset$ if and only if $u$ is non-zero in $A$.

\bigskip

\textit{Claim}. If $u,v\in \Sigma^l$ and $S_u=S_v=\{0\}$ then $u=v$.

\textit{Proof of Claim}. Assume that $u,v\in \Sigma^l$ are distinct monomials and $S_u=S_v=\{0\}$. For every non-empty monomial $\rho$, we have that $\rho u=\rho v=0$, for otherwise we would have $|\rho|\in S_u$ or $|\rho|\in S_v$. Since $P$ is generated by $e_0$, for every $i\geq 1$ there exists some non-empty monomial $\rho$ such that $e_0 \cdot \rho = \lambda e_i$ for some $\lambda=\lambda_\rho\neq 0$. Therefore:
\[
e_i \cdot u = e_0 \cdot \lambda^{-1}\rho u = 0
\]
\[
e_i \cdot v = e_0 \cdot \lambda^{-1}\rho v = 0
\]
so every linear combination of $u,v$ annihilates $P_{\geq 1}$.
In addition, $e_0\cdot u = \alpha e_l$ and $e_0\cdot v = \beta e_l$ for some $\alpha,\beta\in F$. If $\alpha=0$ then $P\cdot u=0$, contradicting that $P$ is faithful; otherwise, if $\alpha\neq 0$ then $\beta u - \alpha v\neq 0$ and $P\cdot (\beta u - \alpha v) = 0$, again contradicting the faithfulness of $P$. The claim is proved.

\bigskip

Since we assume that $A$ is non-prime, there exists some $u\in \Sigma^l$ such that $0 < |S_u| < \infty$. Otherwise, for each pair of monomials $u_1,u_2\neq 0$ we can find $i_1\ll i_2$ such that $i_1\in S_{u_1}$ and $i_2\in S_{u_2}$, so $u_1Au_2\neq 0$, contradicting non-primeness. Let $t=\max S_u$; let $u'\in C_0\cdots C_{t+l-1}$ be an arbitrary monomial whose suffix is $u$. Then $0\in S_{u'}$ and if there was $i>0$ such that $i\in S_{u'}$ we would get that $u\in C_{t+i}\cdots C_{t+i+l-1}$, contradicting the maximality of $t$; hence $S_{u'}=\{0\}$. Since every non-zero monomial $u''$ whose prefix is $u'$ also has $S_{u''}=\{0\}$, it follows from the above claim that: 
\[
|C_{t+l}|=|C_{t+l+1}|=\cdots=1.
\]
In particular, 
\[
u'C_{t+l}C_{t+l+1}\cdots
\] 
is a single infinite word, so if $A$ does not take the form $A_w$ for some right infinite word $w$ then there must be some monomial $v\in \mathcal{T}$ which is not a subword of $u'C_{t+l}C_{t+l+1}\cdots$. It follows that $S_v$ is finite (for otherwise it would appear in $C_{t+l}C_{t+l+1}\cdots$). Let $p=\max S_v$, say, $v\in C_p\cdots C_q$ for some $p<q$; extending $v$ to the right if necessary, we may assume that $q\geq t+l-1$. Then extend $v$ to the left, resulting in some $v'\in C_0\cdots C_q$ containing $v$ as a suffix. Thus $S_{v'}=\{0\}$, by the maximality of $p$.
Pick $z\in C_0\cdots C_q$ extending $u'$; hence $S_z=\{0\}$. Now by the above claim, $z=v'$, but then $v$ factors $v'=z\in u'C_{t+l}C_{t+l+1}\cdots$, a contradiction to the way we picked $v$. It follows that $A$ takes the form $A_w$ for some right infinite word, as claimed.
\end{proof}

\begin{cor}
Let $A$ be a finitely generated semiprime monomial algebra. Then $A$ admits a faithful point module if and only if it is prime.
\end{cor}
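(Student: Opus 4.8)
The plan is to reduce the claim to the dynamical description of primeness for algebras of the form $A_w$, with Theorem~\ref{faithful} serving as the bridge.

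The forward implication needs nothing new: if $A$ is prime then it admits a faithful point module by the corollary following Proposition~\ref{AW}. So all of the content lies in the converse. Suppose $A$ is semiprime and admits a faithful (right) point module. Since $A$ is finitely generated, Theorem~\ref{faithful} gives $A \cong A_w$ for a right infinite word $w$. As $A_w$ is prime precisely when $w$ is recurrent, it suffices to show that a non-recurrent $w$ would force $A_w$ to contain a nonzero nilpotent ideal, contradicting semiprimeness.

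To produce such an ideal I would first extract a factor of $w$ that occurs \emph{exactly once}. If $w$ is not recurrent, some finite factor $u$ occurs only finitely often, say at positions $n_1 < \cdots < n_k$; put $v := w[n_1,\, n_k + |u| - 1]$, the factor stretching from the first to the last occurrence of $u$. Since $u$ is both a prefix and a suffix of $v$, a further occurrence of $v$ at some position $m$ would produce an occurrence of $u$ at $m$ (whence $m \ge n_1$) and another at $m + (n_k - n_1)$ (whence $m \le n_1$), forcing $m = n_1$; so $v$ occurs uniquely in $w$. Consequently no word of the form $vav$ is a factor of $w$ — the two copies of $v$ would have to start at the same position while $|vav| > |v|$ — so $v A_w v = 0$ in $A_w$. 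Hence $J := A_w v A_w$ is a nonzero ideal (it contains $v$) with $J^2 = A_w (v A_w v) A_w = 0$, contradicting semiprimeness. Therefore $w$ must be recurrent, and $A = A_w$ is prime.

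The argument is short, and the step I expect to require the most care is the combinatorial one: extracting the uniquely-occurring factor $v$ and checking $v A_w v = 0$, which is exactly where the ring-theoretic hypothesis of semiprimeness gets translated into a recurrence property of the word $w$.
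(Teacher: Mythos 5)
Your proposal is correct and follows the same overall strategy as the paper: apply Theorem~\ref{faithful} to get $A \cong A_w$, then conclude via the recurrence characterization of primeness. The one place you diverge is that you supply a self-contained combinatorial argument for the implication ``$w$ not recurrent $\Rightarrow A_w$ not semiprime'' (extracting a uniquely-occurring factor $v$ and showing $vA_wv = 0$, so $A_wvA_w$ is a nonzero square-zero ideal), whereas the paper simply invokes the cited fact that $A_w$ is prime iff $w$ is recurrent; your lemma is exactly the strengthening that makes the deduction ``semiprime $\Rightarrow$ recurrent'' go through, so it is a welcome filling-in rather than a genuinely different route.
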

\begin{proof}
By Theorem \ref{faithful}, $A$ takes the form $A_w$. Since $A$ is semiprime, $w$ is recurrent so $A=A_w$ is in fact prime.
\end{proof}


\begin{cor} \label{base_change}
The existence of a faithful point module of a monomial algebra $A$ is independent of the base field. In particular, $A$ admits a faithful point module if and only if $A\otimes_F K$ does, for every field extension $K/F$.
\end{cor}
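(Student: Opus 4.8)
The plan is to deduce this from the combinatorial characterization of Theorem~\ref{faithful}: a finitely generated monomial algebra admits a faithful right point module if and only if it is of the form $A_w$ for some right infinite word $w$. The key point is that whether a monomial algebra has this form depends only on its \emph{language of nonzero monomials}, and extension of scalars leaves that language unchanged.

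First I would record the behaviour of monomial algebras under base change. Write $A = F\langle x_1,\dots,x_m\rangle/I$ with $I$ a monomial ideal, and let $W = \mathcal{L}(A)\subseteq\Sigma^*$ be the hereditary language of monomials that are nonzero in $A$; thus $A$ has $F$-basis $W$ and $I$ is the $F$-span of the monomials not in $W$. For any field extension $K/F$, exactness of $-\otimes_F K$ gives $A\otimes_F K = K\langle x_1,\dots,x_m\rangle/IK$, where $IK$ is the $K$-span of the same set of monomials, so $IK$ is again a monomial ideal. Hence $A\otimes_F K$ is a finitely generated monomial algebra over $K$ with the same basis of nonzero monomials, i.e.\ $\mathcal{L}(A\otimes_F K) = W$; it is again connected graded with finite-dimensional homogeneous components ($\dim_K(A\otimes_F K)_n = \dim_F A_n$) and generated in degree $1$, so Theorem~\ref{faithful} applies to it over $K$.

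Next I would translate the conclusion of Theorem~\ref{faithful} through the assignment $A\mapsto \mathcal{L}(A)$. Saying that $A\cong A_w$ for some right infinite word $w\in\Sigma^{\mathbb{N}}$ amounts to saying that $W$ is exactly the set of finite factors of some right infinite word over $\Sigma$ --- a statement about the pair $(\Sigma,W)$ in which the coefficient field plays no role. Since $A$ and $A\otimes_F K$ have the same alphabet and the same language $W$, this property holds for $A$ over $F$ if and only if it holds for $A\otimes_F K$ over $K$. Combining this with Theorem~\ref{faithful}, applied once over $F$ and once over $K$, yields: $A$ admits a faithful point module $\iff$ $W$ is the set of finite factors of some right infinite word $\iff$ $A\otimes_F K$ admits a faithful point module. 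This gives the displayed equivalence, and the first assertion of the corollary is precisely the statement that the (manifestly field-independent) middle condition governs the existence of a faithful point module; the argument applies verbatim to left point modules and left infinite words.

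I do not anticipate a genuine obstacle. The only steps requiring (routine) care are the base-change bookkeeping in the second paragraph --- that tensoring up preserves being a monomial algebra, preserves the monomial language, and preserves the finite-generation and connectedness hypotheses needed to invoke Theorem~\ref{faithful} --- together with, if one wants the statement for an arbitrary pair of coefficient fields rather than for an extension, passing through a common extension. Once Theorem~\ref{faithful} is available, the corollary is essentially a reformulation of it.
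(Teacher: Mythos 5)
Your proof is correct and takes essentially the same route as the paper: invoke Theorem~\ref{faithful} to reduce the existence of a faithful point module to the combinatorial condition that $A$ has the form $A_w$, then observe that this condition (and the underlying monomial language) is unchanged by extension of scalars. You spell out the base-change bookkeeping more explicitly than the paper does, but there is no difference in substance.
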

\begin{proof}
By Theorem \ref{faithful}, the existence of a faithful point $A$-module is equivalent to $A$ taking the form $A_w$ for some infinite word $w$, which is independent of the base field.
\end{proof}

\begin{rem}
The existence of point modules can be sensitive to field extensions for general (non-monomial) graded algebras. Indeed, let $E/\mathbb{Q}$ be an elliptic curve with no $\mathbb{Q}$-rational points and let $B$ be its homogeneous coordinate ring (with respect to some embedding into a projective space in which it has no rational points). Then $B$ does not have point modules but $B\otimes_\mathbb{Q} \overline{\mathbb{Q}}$ does.
\end{rem}

\section{The prolongable radical} \label{sec:prolongable radical}

\subsection{The maximal prolongable quotient}
Let $A$ be a connected graded algebra, finitely generated in degree $1$. Consider the following ideal:
$$ \sing(A) := \{f\in A\ |\ \exists d\geq 1:\ f\cdot A_d=0 \}. $$
This is the (right) \textit{prolongable radical} of $A$.

\begin{prop} \label{sing is mon} Let $A$ be a connected graded algebra. Then $\sing(A)\triangleleft A$ is a homogeneous ideal, contained in the prime radical of $A$. In particular, if $A$ is infinite-dimensional then $A/\sing(A)$ is infinite-dimensional. Furthermore, if $A$ is a monomial algebra then $\sing(A)$ is generated by monomials.
\end{prop}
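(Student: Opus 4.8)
The plan is to establish the four claims in sequence, using throughout that $A$ is generated in degree $1$, so $A_n=A_1^n$ and hence $A_{p+q}=A_pA_q$ for all $p,q\geq0$; in particular $A_d=0$ for some $d$ forces $A$ finite-dimensional. I would record at the outset that the ``contained in the prime radical'' clause genuinely needs $A$ infinite-dimensional: for $A=F[x]/(x^2)$ one has $A_2=0$, so $\sing(A)=A$, which is not contained in the prime radical $(x)$. So I would prove the ideal and homogeneity statements unconditionally and, from the prime-radical clause onward, assume $A$ infinite-dimensional, equivalently $A_d\neq0$ for every $d$. Set $J_d:=\Ann_A(A_d)=\{f\in A: fA_d=0\}$; since $A_{d+1}=A_dA_1$ we have $J_d\subseteq J_{d+1}$, so $\sing(A)=\bigcup_{d\geq1}J_d$ is an ascending union, and since $A_n=A_dA_{n-d}$ for $n\geq d$ we have $J_d=\Ann_A(A_{\geq d})$.

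Each $J_d$ is a left ideal, since $fA_d=0$ implies $afA_d=0$. It is also a right ideal: for homogeneous $a$ of degree $q$, $aA_d\subseteq A_{q+d}=A_dA_q$, hence $faA_d\subseteq (fA_d)A_q=0$; a general $a$ is a sum of homogeneous pieces, the degree-$0$ piece just rescaling $f$. So $\sing(A)$ is a two-sided ideal. For homogeneity, if $fA_d=0$ write $f=\sum_i f_i$ with $f_i$ homogeneous; for $b\in A_d$ the terms $f_ib$ lie in distinct graded components, so $fb=0$ forces $f_ib=0$, and letting $b$ range over $A_d$ gives $f_i\in J_d$.

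Assume now $A$ infinite-dimensional. Then $A_d\neq0$ makes $J_d$ contain no nonzero scalar, so by homogeneity $J_d\subseteq A_{\geq1}$; hence any product $h_1\cdots h_{d+1}$ of homogeneous elements of $J_d$ of degree $\geq1$ vanishes, since $h_2\cdots h_{d+1}\in A_{\geq d}$ and $h_1A_{\geq d}=0$. Expanding arbitrary elements into homogeneous components gives $J_d^{\,d+1}=0$, so $J_d$, being nilpotent, lies in every prime ideal, hence in the prime radical; therefore $\sing(A)=\bigcup_d J_d$ lies in the prime radical, and in particular $\sing(A)\neq A$. Then $A/\sing(A)$ is connected graded, generated in degree $1$, and infinite-dimensional: if $(A/\sing(A))_d=0$ then $A_d\subseteq\sing(A)$, and as $A_d$ is finite-dimensional this yields $A_dA_e=0$, hence $A_{d+e}=0$, for some $e$ — a contradiction.

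For the monomial case, $A=F\langle x_1,\dots,x_m\rangle/I$ has $F$-basis the set $\mathcal{L}(A)$ of nonzero monomials, with $\mathcal{L}_n(A)$ a basis of $A_n$; it suffices to show $\sing(A)$ is $F$-spanned by the monomials it contains, and by homogeneity I may take $f\in J_d$ homogeneous, $f=\sum_u c_u u$ over $u\in\mathcal{L}_n(A)$. For each $v\in\mathcal{L}_d(A)$, $0=fv=\sum_u c_u(uv)$ in $A$; the nonzero terms involve pairwise distinct basis monomials $uv$ (as $|u|=n$ is fixed, $uv$ determines $u$), so $c_u=0$ whenever $uv\neq0$. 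Hence every $u$ with $c_u\neq0$ satisfies $uA_d=0$, i.e.\ $u\in\sing(A)$, as required. The one step carrying real content is the nilpotence bound $J_d^{\,d+1}=0$, which is precisely where ``generated in degree $1$'' and the exclusion of the degenerate finite-dimensional case enter; everything else is routine bookkeeping with graded components and the monomial basis, so I anticipate no serious obstacle beyond attending to that degenerate case.
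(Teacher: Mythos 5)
Your proof is correct and follows essentially the same route as the paper's --- homogeneity by comparing graded components, containment in the prime radical via a degree-$(d+1)$ nilpotency bound, and the monomial case via linear independence of distinct nonzero monomials --- simply organized around the ideals $J_d=\Ann_A(A_d)$ rather than around a single homogeneous element $f$ with $f\cdot A_{\geq d}=0$ as in the paper. Your observation about the finite-dimensional degenerate case is a legitimate catch: the paper's claim ``$\langle f\rangle^{d+1}=0$'' silently requires $f$ to have positive degree, which fails when $A_d=0$ forces $1\in\sing(A)$ (as in $F[x]/(x^2)$); the paper clearly intends $A$ infinite-dimensional there, and your explicit handling of that case, together with spelling out why $A/\sing(A)$ remains infinite-dimensional, is a modest improvement in rigor.
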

\begin{proof}
If $f=f_0+\cdots+f_k\in \sing(A)$ is a sum of homogeneous elements of distinct degrees then for some $d$ we have that $fg=0$ for each $g\in \bigcup_{i=d}^{ \infty} A_i$. It follows that $f_0g+\cdots+f_kg=0$, and since the non-zero summands among $\{f_ig\}_{i=0}^k$ have distinct degrees, $f_0g=\cdots=f_kg=0$ and $f_0,\dots,f_k\in \sing(A)$. 

Next, $\sing(A)$ is contained in the prime radical of $A$ since each $f\in \sing(A)$ generates a nilpotent ideal, since if $f\cdot A_{\geq d}=0$ then $\left< f \right>^{d+1}=0$.

Finally, assume that $A$ is a monomial algebra and let  $f=w_1+\cdots+w_k\in \sing(A)$ be a sum of distinct monomials; we may assume that $w_1,\dots,w_k$ have an equal length by the first part of this proposition. For some $d$, we have that $fu=0$ for every monomial $u$ of length at least $d$, so $w_1u+\cdots+w_ku=fu=0$ is a vanishing sum of distinct monomials, and since $A$ is a monomial algebra it follows that $w_1u=\cdots=w_ku=0$, so $w_1,\dots,w_k\in \sing(A)$. Thus $\sing(A)$ is generated by its monomial elements.
\end{proof}

\begin{lem} \label{prolongable}
A monomial algebra $A$ is prolongable if and only if $\sing(A)=0$.
\end{lem}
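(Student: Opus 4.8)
Both implications are most cleanly handled by contraposition, and the key ingredient is Proposition~\ref{sing is mon}, which guarantees that for a monomial algebra $\sing(A)$ is generated by monomials.

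For the direction ``$\sing(A)=0 \Rightarrow A$ prolongable'', suppose $A$ is not prolongable. Then there is a non-zero monomial $w\in A$ such that $wv=0$ for every non-empty monomial $v$. Applying this to $v=x_j$ for each generator gives $w\cdot A_1=0$, since $A_1$ is spanned by $x_1,\dots,x_m$. Hence $w\in\sing(A)$ with $d=1$, and $w\neq 0$, so $\sing(A)\neq 0$. This part is immediate.

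For the converse, suppose $\sing(A)\neq 0$. By Proposition~\ref{sing is mon} we may pick a non-zero monomial $w\in\sing(A)$, so there is some $d\geq 1$ with $w\cdot A_d=0$. The first step is to upgrade this to: $wv=0$ for \emph{every} non-zero monomial $v$ with $|v|\geq d$. Indeed, writing $v=v_1v_2$ with $|v_1|=d$, the prefix $v_1$ is a subword of $v$, hence non-zero in $A$ (the language $\mathcal{L}(A)$ is hereditary), so $v_1\in A_d$ and $wv=(wv_1)v_2=0$. Consequently, every non-zero monomial $v$ with $wv\neq 0$ has $|v|\leq d-1$; in particular the empty word qualifies and the set of such $v$ is non-empty and of bounded length. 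The second step is to choose $v_0$ of maximal length among non-zero monomials with $wv_0\neq 0$. Then $wv_0$ is itself a non-zero monomial, and for any non-empty monomial $v'$ we have $(wv_0)v' = w(v_0v')$; if this were non-zero then $v_0v'$ would be a non-zero monomial with $w\cdot(v_0v')\neq 0$ and $|v_0v'|>|v_0|$, contradicting maximality. Hence $(wv_0)v'=0$ for all non-empty $v'$, so the non-zero monomial $wv_0$ cannot be prolonged, and $A$ is not prolongable.

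There is no real obstacle here; the only point requiring a little care is that membership $w\in\sing(A)$ only furnishes a single degree $d$ with $w\cdot A_d=0$ rather than all degrees $\geq 1$, so one cannot conclude directly that $w$ itself is non-prolongable — one must slide along to a maximal-length extension $wv_0$, using heredity of the language to propagate vanishing to all lengths $\geq d$.
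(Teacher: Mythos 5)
Your proof is correct and takes essentially the same route as the paper's: the ``not prolongable $\Rightarrow \sing(A)\neq 0$'' step is word-for-word the same (observe $w\cdot A_1=0$ for a monomial $w$ with no right prolongation), and the converse direction, which the paper merely declares ``clearly,'' you justify correctly by invoking Proposition~\ref{sing is mon} to pick a non-zero monomial $w\in\sing(A)$ and then passing to a maximal-length right extension $wv_0$ to exhibit a non-prolongable monomial. Your closing observation that one cannot take $w$ itself as the witness---since $w\cdot A_d=0$ for a single fixed $d$ does not preclude shorter non-zero extensions---is precisely the small subtlety that keeps this direction from being entirely immediate.
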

\begin{proof}
Clearly $\sing(A)=0$ for every prolongable monomial algebra $A$. If $A$ is non-prolongable, pick a non-zero monomial $w\in A$ which has no right prolongation and observe that $w\cdot A_1=0$ so $0\neq w\in \sing(A)$.
\end{proof}

\begin{lem} \label{lem_prolongable}
Let $A$ be a graded algebra. Then:
$$ \sing(A) \subseteq \bigcap_{\substack{ P\ \text{is a point} \\ \text{right}\  A\text{-module}}} \Ann_A(P), $$
where we take an empty intersection to be $A$.
\end{lem}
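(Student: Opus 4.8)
The plan is to reduce to homogeneous elements and then argue directly. Both sides of the claimed inclusion are homogeneous ideals: $\sing(A)$ is homogeneous by Proposition \ref{sing is mon}, and each $\Ann_A(P)$ is the annihilator of a graded module over a graded algebra, hence homogeneous, so the intersection is too; and the empty-intersection case is trivial since $\sing(A)\subseteq A$ tautologically. Thus it suffices to show that every homogeneous $f\in\sing(A)$, say of degree $m$, annihilates every point right $A$-module. So fix such an $f$, fix $d\geq 1$ with $f\cdot A_d=0$, and fix a point module $P=Fe_0+Fe_1+\cdots$ with $P_n=Fe_n$; the goal is to prove $e_n\cdot f=0$ for all $n\geq 0$.

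First I would record two elementary facts about point modules. Since $P$ is cyclic, generated by $e_0$, and $A$ is generated in degree $1$, one gets $P_{k+1}=e_0\cdot A_{k+1}=e_0\cdot A_k\cdot A_1=P_k\cdot A_1$ for every $k$, and hence by induction $P_{k}\cdot A_{j}=P_{k+j}$ for all $k,j\geq 0$; in particular each $e_k$ generates $P_{\geq k}$. Combining this with the defining property of a point module that every homogeneous component is one-dimensional, hence nonzero, we obtain the key fact $e_{n+m}\cdot A_d=P_{n+m+d}\neq 0$.

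Now the main computation, for a fixed $n$. By associativity of the module action, $(e_n\cdot f)\cdot A_d=e_n\cdot(f\cdot A_d)=e_n\cdot 0=0$. Since $f$ is homogeneous of degree $m$, the element $e_n\cdot f$ lies in $P_{n+m}=Fe_{n+m}$, so $e_n\cdot f=\mu_n e_{n+m}$ for some scalar $\mu_n\in F$. Then $\mu_n\cdot(e_{n+m}\cdot A_d)=0$, and since $e_{n+m}\cdot A_d\neq 0$ we conclude $\mu_n=0$, i.e.\ $e_n\cdot f=0$. As $n$ was arbitrary this gives $f\cdot P=0$, i.e.\ $f\in\Ann_A(P)$, and as $P$ was arbitrary, $f\in\bigcap_P\Ann_A(P)$, as required.

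There is no genuine obstacle in this argument; the one point deserving care is the identity $e_{n+m}\cdot A_d=P_{n+m+d}$, which is precisely where both defining features of a point module are used — that it is cyclically generated in degree $0$ (together with $A$ being generated in degree $1$, so that the generator of any homogeneous component generates the whole tail) and that all its homogeneous components are nonzero. Everything else is associativity of the action and one-dimensionality of the components.
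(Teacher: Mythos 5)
Your argument is essentially the paper's, written in direct rather than contradiction form: both fix a homogeneous $f$ of degree $m$ with $f\cdot A_d=0$, observe that $e_n\cdot f$ must be a scalar multiple of $e_{n+m}$, and then kill that scalar by using $e_{n+m}\cdot A_d=P_{n+m+d}\neq 0$ together with $(e_n\cdot f)\cdot A_d=0$. You also make explicit the reduction to homogeneous $f$ and the cyclic-generation fact $P_k\cdot A_j=P_{k+j}$, which the paper uses implicitly; otherwise the proofs coincide.
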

\begin{proof}
Let $f\in \sing(A)$ be a homogeneous element of degree $m$, and let $d$ be such that $f\cdot A_d=0$; let $P=Fe_0+Fe_1+\cdots$ be an arbitrary point right $A$-module. Assume to the contrary that $e_i\cdot f\neq 0$ for some $i\geq 0$, say, $e_i\cdot f=\lambda e_{i+m}$ with $\lambda\neq 0$. Then: $$ e_{i+m+d}\in e_{i+m}\cdot A_d=e_i\cdot fA_d=0, $$ a contradiction. Hence $f\in \Ann_A(P)$.
\end{proof}

\begin{prop} \label{sing char}
Let $A$ be a monomial algebra. Then:
$$ \sing(A) = \bigcap_{\substack{ P\ \text{is a point} \\ A\text{-module}}} \Ann_A(P).$$
\end{prop}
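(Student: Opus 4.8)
The inclusion $\sing(A)\subseteq \bigcap_P \Ann_A(P)$ is precisely Lemma \ref{lem_prolongable}, so the plan is to prove the reverse inclusion $\bigcap_P \Ann_A(P)\subseteq \sing(A)$, and I would do this by contraposition: starting from a nonzero $f\notin \sing(A)$, I will construct a point $A$-module $P$ with $f\notin \Ann_A(P)$. First I would reduce to homogeneous $f$. Writing $f=f_0+\cdots+f_N$ with $f_d$ homogeneous of degree $d$, for any graded point module $P=\bigoplus_i Fe_i$ the elements $e_i\cdot f_0,\dots,e_i\cdot f_N$ lie in pairwise distinct components $P_{i+d}$, so $e_i\cdot f=0$ forces $e_i\cdot f_d=0$ for every $d$; hence each $\Ann_A(P)$, and therefore $\bigcap_P \Ann_A(P)$, is homogeneous, and it suffices to show $f_d\in\sing(A)$ for each $d$. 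So we may assume $f$ is homogeneous, say of degree $m$, and (being nonzero) write $f=\sum_{k=1}^{\ell}c_k u_k$ in the monomial basis of $A$, with the $u_k$ distinct nonzero monomials of length $m$ and $c_k\in F^\times$.

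For a monomial $v$ of length $d$ one has $f\cdot v=\sum_{k\,:\,u_k v\neq 0\text{ in }A}c_k\,(u_k v)$, and the monomials $u_k v$ occurring here are distinct (common length $m+d$, distinct prefixes $u_k$), so $f\cdot v=0$ exactly when $u_k v=0$ in $A$ for every $k$. Hence $f\notin\sing(A)$ means precisely that for every $d\geq 1$ there is an index $k$ with $u_k\cdot A_d\neq 0$. Setting $K_d=\{\,k:u_k\cdot A_d\neq 0\,\}\subseteq\{1,\dots,\ell\}$, each $K_d$ is nonempty, and $K_{d+1}\subseteq K_d$ because the language of $A$ is hereditary (a nonzero right extension of $u_k$ by $d+1$ letters restricts to a nonzero right extension by $d$ letters). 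A decreasing chain of nonempty subsets of a finite set has nonempty intersection, so I can fix $k_0\in\bigcap_d K_d$; thus $u_{k_0}$ admits nonzero right extensions of every length.

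Now consider the tree of all nonzero monomials of $A$ having $u_{k_0}$ as a prefix, ordered by the prefix relation. It is locally finite (the alphabet is finite) and infinite (by the previous paragraph it has nodes of every depth), so by K\"onig's Lemma it contains an infinite branch: a right-infinite word $w$ with $w[0,m-1]=u_{k_0}$ all of whose finite factors are nonzero in $A$, since each such factor is a factor of a nonzero monomial and $A$ is monomial. In particular $w$ is a (one-word) tree over $A$, so $A\twoheadrightarrow A_w$; inflating the point $A_w$-module of Proposition \ref{AW} along this surjection yields a point $A$-module $P=Fe_0+Fe_1+\cdots$ with $e_i\cdot x_j=\delta_{w[i],x_j}\,e_{i+1}$. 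Then $e_0\cdot u_{k_0}=e_m$ while $e_0\cdot u_k=0$ for $k\neq k_0$ (as $u_k\neq w[0,m-1]$), so $e_0\cdot f=c_{k_0}\,e_m\neq 0$ and $f\notin\Ann_A(P)$, as required. (When $A$ is finite-dimensional both sides equal $A$, consistent with the empty-intersection convention.)

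The argument is essentially routine given the machinery already developed. The one step that needs a moment's care is extracting a \emph{single} summand $u_{k_0}$ prolonging arbitrarily far — for each length $d$ one only gets \emph{some} summand prolonging that far — and this is exactly what the nested-finite-sets observation handles, using heredity of the language of a monomial algebra. Everything else is a direct application of K\"onig's Lemma and Proposition \ref{AW}.
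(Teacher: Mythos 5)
Your argument is correct and follows essentially the same route as the paper's proof: both extract a monomial summand of $f$ lying outside $\sing(A)$, extend it (via K\"onig's Lemma) to a right-infinite word $w$, and inflate the faithful point $A_w$-module of Proposition \ref{AW} along $A\twoheadrightarrow A_w$ to a point $A$-module that $f$ fails to annihilate. The only real difference is your nested-sets ($K_d$) device for locating the distinguished summand $u_{k_0}$, which is an unnecessary detour (as is the reduction to homogeneous $f$): since $\sing(A)$ is a linear subspace, $f\notin\sing(A)$ already forces some monomial summand to lie outside $\sing(A)$, which is the one-line observation the paper uses.
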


\begin{proof}
Let $A=F\left<x_1,\dots,x_m\right>/I$ be a finitely generated infinite-dimensional monomial algebra. The inclusion $\subseteq$ follows from Lemma \ref{lem_prolongable}. Conversely, let $f\notin \sing(A)$. Write $f=\sum_{i=1}^n c_iw_i$, a linear combination of distinct non-zero monomials with non-zero coefficients. At least one of these monomials, say, $w_i$ does not belong to $\sing(A)$. This means that there exist letters $x_{j_1},x_{j_2},\dots$ such that the words:
\begin{align}
    w_ix_{j_1},w_ix_{j_1}x_{j_2},\dots \label{non-zero monomials}
\end{align}
are all non-zero. Consider the ideal $J\triangleleft A$ generated by all of the monomials which do not occur as subwords of the words in the above list (\ref{non-zero monomials}). Then $A/J\cong A_z$ is the monomial algebra associated with the right-infinite word:
$$ z = w_ix_{j_1}x_{j_2}\cdots $$
Since $J$ is a monomial ideal, $f\in J$ if and only if  $w_1,\dots,w_n\in J$; since $w_i\notin J$, also $f\notin J$. However, by Proposition \ref{AW} the algebra $A_z$ admits a faithful point module $P$, which is by inflation a point $A$-module and $\Ann_A(P)=J$. Thus $f\notin J = \Ann_A(P)$ and the claim is proved.
\end{proof}

\begin{rem}
In general, the prolongable radical need not coincide with the intersection of all of the annihilators of its point modules. Indeed, if $A$ is a prime graded algebra without point modules, e.g., $F+M_2(F)t+M_2(F)t^2+\cdots$ then $\sing(A)=0$.
\end{rem}

While the definition of a prolongable monomial algebra is combinatorial in nature, it has an equivalent purely algebraic characterization which follows from the above analysis:

\begin{cor} \label{prolongable cor}
A monomial algebra is prolongable if and only if the intersection of the annihilators of its point modules is trivial.
\end{cor}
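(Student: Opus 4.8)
The plan is to obtain this as an immediate consequence of the two preceding results, with no genuinely new argument required. By Lemma~\ref{prolongable}, a monomial algebra $A$ is prolongable if and only if its prolongable radical $\sing(A)$ is zero; by Proposition~\ref{sing char}, for any monomial algebra $A$ one has $\sing(A)=\bigcap_P\Ann_A(P)$, the intersection being taken over all point $A$-modules $P$. Concatenating these two equivalences gives exactly the assertion: $A$ is prolongable $\iff$ $\sing(A)=0$ $\iff$ $\bigcap_P\Ann_A(P)=0$.

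Concretely, I would spell out the two directions. If $A$ is prolongable, then $\sing(A)=0$ by Lemma~\ref{prolongable}, hence $\bigcap_P\Ann_A(P)=\sing(A)=0$ by Proposition~\ref{sing char}. Conversely, if $\bigcap_P\Ann_A(P)=0$, then in particular this is a proper ideal, so---recalling the convention of Lemma~\ref{lem_prolongable} that an empty intersection equals $A$---the family of point $A$-modules is non-empty; Proposition~\ref{sing char} then yields $\sing(A)=\bigcap_P\Ann_A(P)=0$, and Lemma~\ref{prolongable} gives that $A$ is prolongable.

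There is essentially no obstacle here: the substantive work has already been carried out in Lemma~\ref{prolongable} (the combinatorial-to-algebraic translation of prolongability) and in Proposition~\ref{sing char} (the identification of $\sing(A)$ with the intersection of annihilators of point modules, which itself rests on the explicit faithful point modules $A_z$ from Proposition~\ref{AW} and the classification of point modules in Theorem~\ref{Classification}). The only thing worth flagging is the bookkeeping around monomial algebras with no point modules at all---for instance finite-dimensional ones---where both sides of the equivalence fail simultaneously, consistently with the empty-intersection convention; and the remark that Proposition~\ref{sing char} is stated with no finiteness hypothesis beyond the standing conventions, so no separate treatment of the finite- and infinite-dimensional cases is needed.
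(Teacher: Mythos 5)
Your argument is correct and is exactly the paper's proof: combine Lemma~\ref{prolongable} ($A$ prolongable $\iff \sing(A)=0$) with Proposition~\ref{sing char} ($\sing(A)=\bigcap_P\Ann_A(P)$). The extra remarks about the empty-intersection convention and finite-dimensional edge cases are sensible sanity checks but not needed for the deduction.
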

\begin{proof}
This is immediate from Lemma \ref{prolongable} and Proposition \ref{sing char}.
\end{proof}

\subsection{Growth of prolongable quotients}

\begin{prop} \label{growth max prolongable subexp}
Let $g\colon \mathbb{N}\rightarrow \mathbb{N}$ be a subexponential function. Then there exists a monomial algebra $A$ whose growth is $\gamma_A\succeq g$ but $A/\sing(A)$ has polynomial growth.
\end{prop}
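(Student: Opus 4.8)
The plan is to build $A$ as the monomial algebra $A_W$ of a carefully chosen hereditary language $W$ over a two-letter alphabet $\{x,y\}$, designed so that (i) the full language $W$ has large growth dominating $g$, while (ii) every monomial that survives in the maximal prolongable quotient lies in a ``thin'' sublanguage of polynomial growth. The key structural observation is Lemma \ref{prolongable}: $A/\sing(A)$ is prolongable, and by Lemma \ref{prolongable} again it is the monomial algebra of the prolongable hereditary language $W' \subseteq W$ consisting of those words of $W$ admitting arbitrarily long right-extensions inside $W$. So the task reduces to a purely combinatorial one: exhibit a hereditary language $W$ with $\sum_{i\le n} p_W(i) \succeq g(n)$ but with $\sum_{i\le n} p_{W'}(i)$ polynomially bounded.

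The construction I would use: fix the infinite word $z = xyx^2yx^3yx^4y\cdots$ (or any fixed word of linear complexity), and let $L_0 = \mathcal{L}(A_z)$ be its finite-factor language, which has linear growth and is prolongable. Now adjoin, for each $n$, a large ``bouquet'' of dead-end words of length around $n$ that are \emph{not} prolongable: concretely, pick a strictly increasing sequence $n_1 < n_2 < \cdots$ growing fast enough that $g(n_k) \le $ (something like) $2^{n_{k-1}}$ for all $k$ (possible since $g$ is subexponential, so $\log g(n) = o(n)$), and let $W = L_0 \cup \{\, u \in \{x,y\}^* : u \text{ is a subword of some } v x^{n_k} \text{ with } |v| \le \sqrt{n_k},\ v \text{ arbitrary}\,\}$ — i.e. allow every word of length up to roughly $\sqrt{n_k}$ to occur, but only as a prefix of a block that is then forced to terminate in a long run of $x$'s of the exact length $n_k$, after which nothing extends. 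One checks $W$ is hereditary by construction. Then $p_W(m)$ for $m$ slightly below $n_k$ counts roughly $2^{\sqrt{n_k}}$ arbitrary short prefixes times the length, so $\gamma_{A_W}$ dominates $g$ along the subsequence $n_k$ (adjusting the $\sqrt{\cdot}$ to an appropriate slowly-growing $h(n_k)$ with $2^{h(n_k)} \succeq g(n_k)$); this only needs $\succeq$, not an exact rate. Meanwhile the prolongable sublanguage $W'$: a word of $W$ extends arbitrarily far to the right inside $W$ only if it lies entirely in $L_0$ — any word using one of the adjoined blocks is within bounded distance of a dead end, since those blocks have bounded length $n_k + h(n_k)$ past which $W$ offers no extension. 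Hence $W' = L_0$, which has linear (in particular polynomial) growth, so $A/\sing(A) = A_{W'} = A_z$ has polynomial growth.

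The remaining points are routine verifications: that $W$ is genuinely hereditary (closed under subwords — ensured by including \emph{all} subwords of the chosen long words), that the adjoined part contributes nothing prolongable (each adjoined word is a factor of a word of bounded total length with no further extension, so it fails the prolongability criterion after finitely many steps), that $W' = L_0$ exactly (one inclusion is clear; conversely any prolongable word must avoid every terminal block, hence lie in $L_0$), and the growth estimate $\gamma_{A_W}(n_k) \ge 2^{h(n_k)} \succeq g(n_k)$ with $h$ chosen so that $2^{h(n)} \ge g(n)$, using subexponentiality of $g$ to guarantee $h(n)/n \to 0$ so the short prefixes really do fit below $n_k$.

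The main obstacle I anticipate is bookkeeping the two competing length scales so that the short ``fat'' prefixes (length $h(n_k)$, giving $\approx 2^{h(n_k)}$ words) are both (a) numerous enough to beat $g$ and (b) short enough that appending the forced terminal run of length $\approx n_k$ keeps the total length in a controlled window, while simultaneously ensuring these fat prefixes never accidentally become prolongable (which is why they must be glued to a \emph{terminating} suffix rather than to more of $z$). Handling the case where $g$ itself is already polynomially bounded is trivial (take $A = A_z$), so one may assume $g$ is superpolynomial, which makes $h(n) = \lceil \log_2 g(n)\rceil$ satisfy $h(n)\to\infty$ while still $h(n) = o(n)$, exactly the regime the construction needs.
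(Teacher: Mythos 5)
Your proposal takes a genuinely different route from the paper---the paper does not build a language from scratch, but cites the explicit realization in Bell--Zelmanov of subexponential growth functions by monomial algebras whose languages are $\bigcup_n T(d_n,n)\cup\cdots$ with $d_n\to\infty$, and then exploits the structural fact that $\langle y\rangle^2\subseteq\sing(A)$ in that particular construction, with $A/\langle y\rangle^2$ of GK-dimension $2$. Unfortunately, your construction has a fatal gap in the step that claims $W'=L_0$.

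The problem is that the hereditary closure of the adjoined blocks swallows the whole free monoid. Your $W$ contains every subword of $v x^{n_k}$ for $v$ \emph{arbitrary} with $|v|\leq h(n_k)$; in particular it contains $v$ itself, for every $v$ with $|v|\leq h(n_k)$. Since $h(n_k)\to\infty$, every word of every length occurs this way, so $W=\{x,y\}^*$ and $A_W$ is the free algebra. The claim that ``any word using one of the adjoined blocks is within bounded distance of a dead end'' fails precisely because a given short word $u$ (say an arbitrary word of length $m$) sits inside \emph{infinitely many} blocks $ux^{n_k}$, one for each $k$ with $m\leq h(n_k)$; the right-extension $ux^{n_k}$ stays inside $W$ and its length is unbounded in $k$. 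So every $u$ is prolongable, $W'=\{x,y\}^*$, and $A/\sing(A)\cong A$ has exponential, not polynomial, growth. Indeed, Proposition~\ref{growth max prolongable exp} in this same paper shows that no construction of this kind can work once $\gamma_A$ reaches exponential growth: the prolongable quotient would be forced to be exponential as well.

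The underlying difficulty you need to overcome is that ``fat'' words with arbitrary prefixes cannot simply be glued onto a forced terminal suffix, because hereditarity always re-extracts the prefix by itself, and identical prefixes occur in blocks of unbounded length. What the Bell--Zelmanov language does that yours does not is impose a constraint \emph{internal to the word} (runs of $x$'s between consecutive $y$'s must have length $\geq d_n$, with $d_n\to\infty$), so that the forbidden patterns $yx^iy$ for small $i$ genuinely terminate---they appear in some finite-length words but are eventually ruled out and cannot escape to longer blocks. Reproducing this self-limiting feature requires more than appending dead ends; it is the reason the paper invokes the Bell--Zelmanov realization rather than building the example from first principles.
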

\begin{proof}
It follows from \cite[Theorem 1.1]{BellZelmanov} that for every subexponential function $g\colon \mathbb{N}\rightarrow \mathbb{N}$ there exists a finitely generated algebra $A$ such that $g(n)\preceq \gamma_A(n)\prec \exp(n)$, and furthermore, $A$ is realized as a monomial algebra $A=F\left<x,y\right>/I$ whose hereditary language $\mathcal{L}(A)$ is:
$$ \bigcup_{n\geq 1} T(d_n,n) \cup \bigcup_{n\geq 1} x^{e_n}T(d_n-1,n-e_n) $$
where: $$ T(d,n) = \{x^n\} \cup \{x^iyx^{a_1}yx^{a_2}\cdots x^{a_s}yx^j\ \text{of length $n$ with}\ a_1,\dots,a_s\geq d \} $$
for some sequence $\{d_n\}_{n \geq 1}$. The sequence $\{d_n\}_{n\geq 1}$ tends to infinity, since the realized growth function $f$ is subexponential (see \cite[Page~691]{BellZelmanov}). Notice also that $y^2=0$. By the structure of $\mathcal{L}(A)$ as above, for every $k$ there exists some $l$ such that every non-zero monomial of $A$ of length greater than $l$ cannot contain any occurrence of $yx^iy$ for any $i\leq k$. Since every monomial in the the ideal $\left<y\right>^2=AyAyA$ contains an occurrence of $yx^iy$ for some $i>0$, this ideal is contained in the prolongable radical, and furthermore, it can be shown that $A/\left<y\right>^2$ is prolongable and has GK-dimension $2$, so $\GK(A/\sing(A))=2$. 
\end{proof}

The case of algebras of exponential growth is very different:

\begin{prop} \label{growth max prolongable exp}
Let $A$ be a graded algebra of exponential growth. Then $A/\sing(A)$ has exponential growth too.
\end{prop}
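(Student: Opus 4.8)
The plan is to establish the contrapositive in a sharper, constructive form: if $A$ has exponential growth then the prolongable core $A/\sing(A)$ already carries enough non-zero elements to have exponential growth. I will carry this out for monomial $A$, which is the substantive case. The first step is to make the quotient explicit. By Proposition~\ref{sing is mon} the ideal $\sing(A)$ is spanned by monomials, and by Proposition~\ref{sing char} together with Theorem~\ref{Classification} a non-zero monomial of $A$ lies in $\sing(A)$ exactly when it lies on no infinite branch of the tree of non-zero monomials; hence $A/\sing(A)=A_{\Subshift(A)}$ and $\mathcal{L}(A/\sing(A))=\mathcal{L}(\Subshift(A))$.

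The combinatorial heart of the argument is the classical fact that a finitely generated monomial algebra of exponential growth contains a \emph{free pair}: distinct words $u,v\in\mathcal{L}(A)$ of a common length $L\geq 1$ such that every word of $\{u,v\}^{*}$ is non-zero in $A$ (equivalently, $A$ has a free monomial subalgebra on two generators). I would obtain this from a pigeonhole/Perron--Frobenius argument applied to the Rauzy (factor) graphs of $\mathcal{L}(A)$ of increasing order: exponential growth forces, for a suitable order $N$, a strongly connected component with Perron eigenvalue $>1$, hence a vertex carrying two distinct return loops, from which one reads off $u$ and $v$. I expect this extraction to be the main obstacle, the delicate point being that $\mathcal{L}(A)$ need not be of finite type, so one cannot conclude that an arbitrary concatenation of the two loops is non-zero merely from its short factors being non-zero; a careful choice of $N$ and of the loops (or a direct appeal to the literature on growth of monomial algebras) is needed, and in a write-up I would isolate this as a lemma.

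Given a free pair, I would show $\{u,v\}^{*}\subseteq\mathcal{L}(A/\sing(A))$. For $w\in\{u,v\}^{*}$ form the right-infinite word $\widehat{w}=w\,uuu\cdots$; every finite factor of $\widehat{w}$ is a factor of some $wu^{m}\in\{u,v\}^{*}\subseteq\mathcal{L}(A)$, so it is non-zero in $A$ because $\mathcal{L}(A)$ is hereditary; hence $\widehat{w}\in\Subshift(A)$ and $w\in\mathcal{L}(\Subshift(A))=\mathcal{L}(A/\sing(A))$. Consequently the $2^{r}$ distinct words of $\{u,v\}^{r}$ are all non-zero in $A/\sing(A)$, so $\dim_F (A/\sing(A))_{rL}\geq 2^{r}$ for every $r$, whence $\gamma_{A/\sing(A)}(n)\geq 2^{\lfloor n/L\rfloor}$ and $A/\sing(A)$ has exponential growth, as required.

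Finally, I would record one general-purpose observation used implicitly here and valid for any graded algebra: writing $L_{j}=\{f\in A:fA_{j}=0\}$ (a homogeneous two-sided ideal, increasing in $j$, with $\bigcup_{j}L_{j}=\sing(A)$), right multiplication by the degree-one generators annihilates $L_{1}$, so $\dim_F A_{n+1}\leq(\dim_F A_{1})\cdot\dim_F(A/L_{1})_{n}$; iterating gives $\dim_F A_{n+j}\leq (\dim_F A_1)^{j}\dim_F(A/L_j)_n$, so each $A/L_{j}$ retains the exponential growth rate of $A$. Passing directly from this to $A/\sing(A)=\varinjlim_{j}A/L_{j}$ would require controlling how fast the annihilator depth in degree $n$ grows with $n$, which is exactly the subtlety that the free-pair argument circumvents.
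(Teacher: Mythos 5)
Your proposal has two problems, and the second is fatal to the plan as you describe it.

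First, you restrict at the outset to monomial $A$, calling it ``the substantive case,'' but the proposition is stated for arbitrary connected graded algebras, and the paper's proof works in that generality; no reduction of the general case to the monomial case is offered, nor is one obvious.

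Second, and decisively, the ``classical fact'' at the combinatorial heart of your argument is false. There exist prolongable, projectively simple monomial algebras $A$ of exponential growth: take $A=A_w$ for a uniformly recurrent word $w$ of positive topological entropy, which exist by the Hahn--Katznelson / Grillenberger constructions of minimal subshifts of positive entropy. Such an $A$ admits no free pair. Indeed, if $u\neq v$ of common length $L$ satisfied $\{u,v\}^{*}\subseteq\mathcal{L}(A)$, then every factor of the periodic word $u^{\omega}=uuu\cdots$ would be a factor of some $u^{k}$, hence non-zero in $A$, so $A$ would surject onto $A_{u^{\omega}}$; this quotient is infinite-dimensional, and it is proper because it has linear growth while $\mathcal{L}(A)\supseteq\{u,v\}^{*}$ already contains $2^{k}$ words of length $kL$. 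That contradicts projective simplicity of $A$. (Dynamically: the finite periodic orbit of $u^{\omega}$ would be a proper subshift of the minimal $\Subshift(A)$.) Since for these $A$ one has $A/\sing(A)=A$, the free-pair route cannot even produce the desired exponential lower bound on $A/\sing(A)$. The obstacle you flag --- that non-vanishing of long concatenations is not controlled by short factors --- is not a technical nuisance to be finessed by a cleverer choice of loops; it reflects the genuine failure of the statement, and no choice of $N$ or of Rauzy-graph loops can produce an object that does not exist.

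Your closing ``general-purpose observation,'' by contrast, is correct and is in fact very close to what the paper actually does; you are right that, as stated, it shows each $A/L_{j}$ retains the exponential growth rate but says nothing about the limit $A/\sing(A)=\varinjlim_{j}A/L_{j}$. The paper's proof is the right refinement: write $B=A/\sing(A)$ and suppose $B$ is subexponential. Fix a block length $m$; since $A_{\leq m}\cap\sing(A)$ is finite-dimensional, it is annihilated on the right by $A_{M}$ for a single $M=M(m)$. For $n=tm+M+r$ the surjective multiplication $\mu\colon A_{m}^{\otimes t}\otimes_{F}A_{M+r}\twoheadrightarrow A_{n}$ factors through $B_{m}^{\otimes t}\otimes_{F}A_{M+r}$ (any tensor slot in $A_{m}\cap\sing(A)$ kills the product, because everything to its right has degree $\geq M$), giving $\dim_{F}A_{n}\leq(\dim_{F}B_{m})^{t}\dim_{F}A_{M+r}$ and hence bounding the exponential growth rate of $A$ by $(\dim_{F}B_{m})^{1/m}$, which tends to $1$ as $m\to\infty$. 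This is pure linear algebra, needs no combinatorics of words, and is valid for any connected graded algebra. The key point your last paragraph is missing is to use blocks of length $m$ rather than $1$: the estimate is then in terms of $\dim_{F}B_{m}$ (the codimension of the full slice $A_{m}\cap\sing(A)$) rather than $\dim_{F}(A/L_{j})_{m}$, and it is precisely that replacement which makes the passage to the limit possible.
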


\begin{proof}
Let $B=A/\sing(A)$, so $B_n = \frac{A_n}{A_n \cap \sing(A)}$, and let $\gamma_A(n)=\dim_F A_{\leq n},\gamma_B(n)=\dim_F B_{\leq n}$ be the growth functions of $A,B$ respectively. Suppose that $\gamma_B(n)$ grows subexponentially. Given $\varepsilon>0$, fix $m\gg_\varepsilon 1$ such that $\gamma_B(m)^{1/m}<1+\varepsilon$. Since $\dim_F A_{\leq m}<\infty$, there exists some $M$ such that for every $u\in A_{\leq m} \cap \sing(A)$ we have that $u\cdot A_M = 0$.
Given $n\geq M+m$, write $n=tm+M+r$ with $t\geq 1$ and $0\leq r<m$ and consider the (surjective) multiplication map:
$$ \mu\colon A_m^{\otimes t} \otimes_F A_{M+r} \twoheadrightarrow A_{n}. $$
Observe that $\mu$ induces a well-defined surjective map:
$$ \overline{\mu}\colon \left(\frac{A_m}{A_m \cap \sing(A)}\right)^{\otimes t} \otimes_F A_{M+r} \twoheadrightarrow A_{n} $$
from which it follows that:
$$ \dim_F A_{n} \leq (\dim_F B_m)^t \cdot \dim_F A_{M+r} $$
thus:
$$ \left(\dim_F A_{n}\right)^{1/n} \leq (\dim_F B_m)^{1/m} \cdot \left(\dim_F A_{M+r}\right)^{1/n} $$
and since $\dim_F A_{M+r}=O_n(1)$ and $(\dim_F B_m)^{1/m}<1+\varepsilon$, it follows that for $n\gg_\varepsilon 1$ we have $\dim_F A_{n}\leq (1+\varepsilon)^n$. Since this can be done for every $\varepsilon>0$, it follows that $\gamma'_A(n)$ grows subexponentially and therefore so does $\gamma_A(n)$.
\end{proof}


\subsection{Left prolongability vs. right prolongability}
In a similar vein to the above definition, one can define the left prolongable radical. For the sake of clarity in this subsection, let us distinguish between the two notions:
\begin{eqnarray*}
\sing_r(A) & = & \{f\in A\ |\ \exists d\geq 1:\ f\cdot A_d=0 \} \\
\sing_l(A) & = & \{f\in A\ |\ \exists d\geq 1:\ A_d \cdot f=0 \}
\end{eqnarray*} 
The analysis from the previous seubsections applies to $\sing_l(A)$ mutatis mutandis. In particular, $\sing_l(A)$ is equal to the intersection of annihilators of left point $A$-modules and $\sing_l(A)$ is contained in the prime radical of $A$.

A natural question is: how do $\sing_r(A)$ and $\sing_l(A)$ interact with each other for an arbitrary infinite-dimensional monomial algebra $A$? Recall that the lowest possible growth rate for $A/\sing_r(A),A/\sing_l(A)$ is linear, both being infinite-dimensional. 



\begin{prop} \label{Bergman's_gap_prolongable}
Let $A$ be an infinite-dimensional monomial algebra. Then the quotient $A/\sing_l(A)$ has linear growth if and only if $A/\sing_r(A)$ does.
\end{prop}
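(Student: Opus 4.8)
The plan is to argue contrapositively and symmetrically: it suffices to show that if $A/\sing_r(A)$ has super-linear growth, then so does $A/\sing_l(A)$, and then invoke the left-right symmetry of the statement. Since $A$ is a monomial algebra, by Proposition~\ref{sing is mon} both $\sing_r(A)$ and $\sing_l(A)$ are monomial ideals, so $B_r := A/\sing_r(A)$ and $B_l := A/\sing_l(A)$ are again monomial algebras, with bases given by the surviving monomials. A monomial $u$ survives in $B_r$ precisely when it is \emph{right prolongable} inside $A$, i.e. for every $d$ there is a length-$d$ monomial $v$ with $uv \neq 0$ in $A$; dually for $B_l$. Moreover $B_r$ is itself right prolongable (this is essentially Lemma~\ref{prolongable} applied to the maximal right-prolongable quotient), hence $B_r = A_{X_r}$ for the subshift $X_r = \Subshift(B_r)$, and likewise $B_l = A_{X_l}$. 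Note $B_r$ has linear growth iff the subshift $X_r$ is eventually periodic, and similarly for $B_l$.

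First I would reduce to a statement about a single subshift. The key observation is that the set of monomials surviving in \emph{both} $B_r$ and $B_l$ — the \emph{bi-prolongable} monomials, those $u$ admitting arbitrarily long prolongations on \emph{both} sides simultaneously — spans a subalgebra-with-ideal structure; more precisely, let $C$ be the monomial algebra on the bi-prolongable monomials. By a König's Lemma argument (as used repeatedly in the excerpt), a monomial is bi-prolongable iff it is a factor of some bi-infinite word all of whose factors are nonzero in $A$, i.e. a factor of the two-sided subshift naturally attached to $A$. One then checks that $C$ has linear growth iff this two-sided subshift is eventually periodic, and that this two-sided subshift is eventually periodic iff $X_r$ is iff $X_l$ is — a two-sided eventually periodic orbit closure projects to eventually periodic one-sided ones and conversely an eventually periodic one-sided subshift forces the bi-infinite completions to be periodic. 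So it remains to compare the growth of $B_r$ (resp. $B_l$) with that of $C$.

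The main step, and the one I expect to be the obstacle, is to show that $\gamma_{B_r}(n) \sim \gamma_C(n)$ (and symmetrically on the left). The inclusion $\gamma_C \preceq \gamma_{B_r}$ is immediate since every bi-prolongable monomial is right-prolongable. For the reverse, one must show that a right-prolongable monomial, while perhaps not left-prolongable, cannot be "too far" from a bi-prolongable one: I would prove that for every right-prolongable monomial $u$ there is a bounded-length left extension $pu$ (with $|p|$ bounded \emph{independently of} $u$, using finiteness of the alphabet and a pigeonhole/recurrence argument on the finitely many "left-defect lengths") that is bi-prolongable — or, failing a uniform bound, that the number of right-prolongable monomials of length $n$ that fail to extend to bi-prolongable ones of length $n + C$ is itself at most linear in $n$, which suffices for the equivalence of \emph{linear} growth. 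Concretely: if $u$ is right-prolongable but not left-prolongable, its left-extensions terminate, so $u$ lies within distance (say) $\ell_u$ of a maximal left-dead-end; bounding the contribution of monomials with large $\ell_u$ via the structure of $\sing_l(A)$ and the fact that $A_{\le m} \cap \sing_l(A)$ kills $A_{\ge M}$ on the left (the same estimate as in the proof of Proposition~\ref{growth max prolongable exp}, now run on one side) gives $\gamma_{B_r}(n) \le \gamma_C(n) + (\text{linear})$, hence $B_r$ has linear growth iff $C$ does iff $B_l$ does. Assembling: $B_r$ linear $\iff$ two-sided subshift eventually periodic $\iff$ $B_l$ linear.
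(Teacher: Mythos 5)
Your general strategy---reduce to $A = B_r$ right-prolongable and compare it against the bi-prolongable monomials $C$ (which in the reduced setting coincide with the nonzero monomials of $B_l$), arguing the excess is small---is reasonable, and you correctly identify where the difficulty sits. But the central comparison step, that $\gamma_{B_r}(n) \leq \gamma_C(n) + O(n)$ when one of $B_r, B_l$ has linear growth, is asserted rather than proved, and neither of the two routes you sketch closes the gap.

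The ``bounded-length left extension'' route fails already in tiny examples: take $A = A_w$ with $w = 1\,0^{\infty}$. The monomial $u = 1$ is right-prolongable (every nonzero monomial of $A_w$ is), yet it admits no left extension at all, so no $pu$ with $|p|$ bounded (or unbounded) is bi-prolongable. The ``linear excess'' fallback is, in the reduced setting, the assertion that $\dim_F A_n - \dim_F (B_l)_n$ is uniformly bounded; since $\dim_F (B_l)_n$ is itself bounded (Lemma~\ref{lem_Bergman}), this is the same as saying $\dim_F A_n$ is bounded, which is exactly the conclusion one is after. You have restated, not reduced, the problem. Appealing to the estimate in Proposition~\ref{growth max prolongable exp} does not help either: that is a tensor-product multiplication-map argument tailored to exponential growth and yields no per-degree bound of the type needed here. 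Likewise, the claimed chain ``$Z$ eventually periodic iff $X_r$ is iff $X_l$ is'' presupposes, in one direction, precisely what the proposition asserts; it is not a formal consequence of projecting two-sided words to one-sided ones.

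The obstruction you are not engaging with is that the ``left-defect length'' of $u \in \sing_l(A)$---the least $D$ with $A_D \cdot u = 0$---is not uniformly bounded over $u$; it is only bounded among $u$'s of a fixed degree. The paper's proof handles this by freezing a degree $n \geq C$ (where $C$ bounds $\dim_F(B_l)_i$ via Lemma~\ref{lem_Bergman}), choosing $D$ adequate for the finitely many length-$n$ monomials, and passing to the intermediate quotient $A' = A/J$ with $J = \Span\{u : A_D \cdot u = 0\}$. The condition defining $J$ is \emph{coarser} than membership in $\sing_l(A)$, so $A'$ is larger than $B_l$, but the choice of $D$ forces $\#\mathcal{L}_n(A') \leq C \leq n$, and a \emph{second} application of Lemma~\ref{lem_Bergman} then bounds $\#\mathcal{L}_i(A')$ for all $i \geq n$. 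The crucial combinatorial device is then the suffix map $\mathcal{L}_i(A) \to \mathcal{L}_{i-D}(A')$ sending $u$ to its length-$(i-D)$ suffix: this lands in $A'$ because the suffix can be left-extended $D$ steps by the discarded prefix, and its fibers have size at most $m^D$. Your $C$, cut out by the unbounded condition ``extendable arbitrarily far left,'' cannot serve as the target of such a suffix map; the intermediate algebra $A'$, cut out by a \emph{bounded} left-extendability condition calibrated to a single degree, is the essential new ingredient your proposal is missing.
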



We will need the following combinatorial lemma.

\begin{lem}[{\cite[Lemma~2.4]{KL}}] \label{lem_Bergman}
Let $W$ be a hereditary language over the letters $x_1,\dots,x_m$. If for some $C\geq 1$ we have $p_W(C)\leq C$ then $p_W(n)\leq C^3$ for all $n\geq C$. In particular, $A_W$ has linear growth.
\end{lem}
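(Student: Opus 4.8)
The plan is to prove the sharper bound $p_W(n)\le C$ for all $n\ge C$ when $W$ is right-prolongable, and then to reduce the general case to this by passing to the largest prolongable sublanguage of $W$ and controlling a bounded ``fringe''. The workhorse will be the \emph{Rauzy graph} $G_k$ of $W$ (for $k\ge 1$): its vertices are the length-$k$ words of $W$, and there is a directed edge $u\to v$ exactly when $u[1,k-1]=v[0,k-2]$ and the length-$(k+1)$ word $u[0]v$ belongs to $W$; thus $\#V(G_k)=p_W(k)$, $\#E(G_k)=p_W(k+1)$, and every word of $W$ of length $k+j$ reads off, via its consecutive length-$k$ factors, as a directed walk of $j$ edges in $G_k$.

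First I would treat the case where $W$ is infinite and right-prolongable. Then $p_W(n)\ge 1$ for all $n$, and right-prolongability makes the ``length-$n$ prefix'' map from the length-$(n+1)$ words of $W$ onto the length-$n$ words surjective, so $1\le p_W(1)\le p_W(2)\le\cdots$. If $p_W(1)=1$ then a single letter occurs in $W$, so $p_W\le 1$ everywhere and we are done. Otherwise $p_W(1)\le\cdots\le p_W(C)\le C$ is a nondecreasing sequence of $C$ integers in $[1,C]$ that cannot be strictly increasing (that would force $p_W(1)=1$), so $p_W(j_0)=p_W(j_0+1)$ for some $j_0<C$. In $G_{j_0}$ this says $\#E=\#V$; since prolongability gives every vertex out-degree $\ge 1$, every vertex has out-degree exactly $1$, i.e. every length-$j_0$ word of $W$ prolongs uniquely to the right inside $W$. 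A short induction upgrades this to: every word of $W$ of length $\ge j_0$ prolongs uniquely to the right (the new letter is forced by its length-$j_0$ suffix, and prolongability guarantees one exists); hence a word of length $n\ge j_0$ is the unique iterated right-prolongation of its length-$j_0$ prefix, so $p_W(n)\le p_W(j_0)\le p_W(C)\le C$ for all $n\ge j_0$, in particular for all $n\ge C$.

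For general (infinite) $W$, set $X:=\{x\in\Sigma^{\mathbb N}\colon \text{every factor of }x\text{ lies in }W\}$; this is a subshift, nonempty by K\"onig's Lemma since $W$ has words of every length, and $W_\infty:=\mathcal{L}(X)$ is a factorial, right-prolongable sublanguage of $W$ with $p_{W_\infty}(C)\le C$, so $p_{W_\infty}(n)\le C$ for $n\ge C$ by the previous step. It then remains to estimate the fringe $W\setminus W_\infty$, i.e. the words of $W$ that do \emph{not} prolong to a point of $X$. Here I would invoke the Morse--Hedlund circle of ideas (see \cite{Pir}): from the pigeonhole consequence of $p_W(C)\le C$ --- any sufficiently long word of $W$ contains a length-$C$ factor recurring within distance $\le C$, hence a block of period $\le C$ and length $>C$ --- one shows that every $w\in W$ with $|w|\ge C$ becomes periodic of period $\le C$ after deleting a prefix of length $<C$; counting the data that pins down such a word (the length $<C$ of its nonperiodic prefix, the period length $\le C$, and the length-$(<2C)$ prefix carrying these), one gets a bound polynomial in $C$, and a careful accounting yields $p_W(n)\le C^3$ for all $n\ge C$. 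Finally, linear growth of $A_W$ is immediate: $\gamma_{A_W}(n)=\sum_{i=0}^{n}p_W(i)\le\sum_{i=0}^{C-1}p_W(i)+(n-C+1)C^3=O(n)$.

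I expect the main obstacle to be exactly the fringe estimate of the third paragraph: propagating the local periodicity handed out by the pigeonhole into a genuine eventual periodicity of period $\le C$ beginning within the first $C$ letters, and then bookkeeping the pre-periodic part, the period length and the periodic pattern. This is what consumes the gap between the sharp constant $C$ available in the prolongable case and the constant $C^3$ asserted in general, and it is the standard (but not wholly formal) part of the argument. I note that for the application made of this lemma in the paper (via Proposition \ref{Bergman's_gap_prolongable}) only the prolongable case is needed, since the quotients there are prolongable by Lemma \ref{prolongable}.
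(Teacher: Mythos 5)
Your first two paragraphs are correct: for an infinite right-prolongable hereditary language, monotonicity of $p_W$, the pigeonhole giving $p_W(j_0)=p_W(j_0+1)$ for some $j_0<C$, out-degree one in the Rauzy graph, and the propagation of unique right prolongation do yield the sharper bound $p_W(n)\le C$ for $n\ge C$. Bear in mind, though, that the paper offers no proof of this lemma at all (it is quoted from \cite[Lemma~2.4]{KL}), and the entire content of the cited lemma is exactly the part your outline leaves open, namely the non-prolongable ``fringe''.

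That part contains a genuine gap, and the intermediate claim you propose to establish is in fact false. Take $W=\{x^n\ :\ n\ge 0\}\cup\{x^ny\ :\ n\ge 0\}\cup\{yx^n\ :\ n\ge 0\}$, the hereditary language of all factors of the words $x^ny$ and $yx^n$; then $p_W(n)=3$ for $n\ge 2$, so the hypothesis holds with $C=3$, yet for every $n$ the word $x^ny\in W$ does not become periodic of period $\le 3$ after deleting any prefix of length $<3$ (its last letter violates every period $\le n$). The obstruction is precisely the fringe you set aside: words that cannot be prolonged far to the right need not be governed by a periodic tail --- they can terminate in a bounded but non-periodic dead-end block --- so any correct structure statement must also trim a suffix, and the real work of the lemma is to bound, uniformly in $n\ge C$, how many length-$n$ words hang off the prolongable core $W_\infty$ (how deep and how wide the finite subtrees rooted at words of $W_\infty$ can be); your proposal supplies no argument for this, and the ``careful accounting yielding $C^3$'' cannot proceed from the stated (false) claim. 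Finally, your closing remark that the paper only needs the prolongable case is not accurate: in the proof of Proposition \ref{Bergman's_gap_prolongable} the lemma is applied to $A'=A/J$ with $J=\Span_F\{u\ :\ A_D\cdot u=0\}$ for a fixed $D$, and this quotient need not be prolongable on either side, so the general form of the lemma is genuinely used.
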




\begin{proof}[{Proof of Proposition \ref{Bergman's_gap_prolongable}}]
Let $A=F\left<x_1,\dots,x_m\right>/I$ be a monomial algebra that is infinite-dimensional. Assume that $A/\sing_l(A)$ has a linear growth and let us prove that $\overline{A}:=A/\sing_r(A)$ has a linear growth too. 
Since $\overline{A}/\sing_l(\overline{A})$ is a homomorphic image of $A/\sing_l(A)$, it has a linear growth too. Hence without loss of generality we may replace $A$ by $\overline{A}$ and aim to prove the following: if $A$ is right prolongable and $A/\sing_l(A)$ has a linear growth then $A$ has a linear growth.

Assume that $A/\sing_l(A)$ has a linear growth. It follows by Lemma \ref{lem_Bergman} that there exists some constant $C>0$ such that for all $n$:
\begin{eqnarray*}
C  \geq  \dim_F (A/\sing_l(A))_n & = & \#\mathcal{L}_n(A) - \#\left( \mathcal{L}_n(A)\cap \sing_l(A) \right) \\
& = & \#\mathcal{L}_n(A) - \#\{u\in \mathcal{L}_n(A)|\exists d:\ A_d\cdot u=0\}.
\end{eqnarray*}
Fix $n \geq C$. Let $D\geq 1$ be such that if $u\in \mathcal{L}_n(A)\cap \sing_l(A)$ then $A_D\cdot u=0$.
Consider: \[ J = \Span_F \{ u\in \mathcal{L}(A)|A_D\cdot u=0 \} \triangleleft A \] and let $A'=A/J$.
Notice that: \[ \mathcal{L}_n(A') \cap \{u\in \mathcal{L}_n(A)|\exists d:\ A_d\cdot u=0\} = \varnothing \]
so $\#\mathcal{L}_n(A')\leq C \leq n$, and hence by Lemma \ref{lem_Bergman}, for each $i\geq n$ we have: \[\#\mathcal{L}_i(A')\leq n^3 = (C+1)^3.\]

Fix $i > D$. Given a monomial $u\in \mathcal{L}_i(A)$, let $\pi(u)$ denote its length $i-D$ suffix. We claim that $\pi(u)\in \mathcal{L}_{i-D}(A')$. Indeed, if $u\in \mathcal{L}_i(A)$ has $\pi(u)=vtv'$ for some $t\in J$ and $v,v'$ arbitrary monomials then we can write $u=u_0\pi(u)$ with $u_0$ of length $D$, and so $u=u_0vtv'=0$ by definition of $J$.
Therefore, for every $i>D$ we have a map:
\[ \pi\colon \mathcal{L}_i(A) \rightarrow \mathcal{L}_{i-D}(A') \]
such that $\#\pi^{-1}(\xi)\leq m^D$ for each $\xi$ in the codomain.
It follows that for every $i\geq n+D$:
\[ \#\mathcal{L}_i(A) \leq m^D \cdot \#\mathcal{L}_{i-D}(A') \leq m^D(C+1)^3 = O_i(1) \]
and it follows that $A$ itself has linear growth.
\end{proof}

The left-right growth symmetry concluded in Proposition \ref{Bergman's_gap_prolongable} does not hold anymore if we replace the linear growth assumption by, say, a quadratic growth assumption (by Bergman's gap theorem \cite{Bergman_gap} and \cite[Theorem~2.5]{KL}, the slowest possible super-linear growth rate of an algebra is quadratic); this is demonstrated in the following example. Therefore, the case where the maximal prolongable quotients have a linear growth is really special.

\begin{exmpl}
Let $w=x^{a_1}y^{b_1}x^{a_2}y^{b_2}\cdots$ be a right-infinite word over the alphabet $\{x,y\}$ for some sequences $\{a_i\}_{i=1}^{\infty}$ and $\{b_i\}_{i=1}^{\infty}$ and let $A=A_w$. If $a_i,b_i\xrightarrow{i\rightarrow \infty} \infty$ then every monomial in $A$ containing a subword of the form $x y^r x$ or $y x^r y$ for $r\geq 1$ belongs to $\sing_l(A)$. Consequently, $$A/\sing_l(A)\cong F\left<x,y\right>/\left<x y^r x,y x^r y\ |\ r\geq 1\right>=\Span_F \{x^ay^b,y^bx^a\ |\ a,b\geq 0\}$$ which has a quadratic growth.

Let $u_n$ be a word in the free semigroup on $X,Y$ such that all length-$n$ words factor it (e.g. $u_n=W_1\cdots W_{2^n}$ where $\{X,Y\}^n = \{W_1,\dots,W_{2^n}\}$). Let $u_n^{[k]}$ be the word obtained from $u_n$ by replacing each $X$ by $x^k$ and each $Y$ by $y^k$.
Now let: $$ w=u_1^{[1]}u_2^{[2]}u_3^{[3]}\cdots$$ and let $A=A_w$. By the above argument, $A/\sing_l(A)$ has quadratic growth. However, $\sing_r(A)=0$ so $A/\sing_r(A)\cong A$, and for each $n$ we have at least $2^n$ words of length $n^2$ (namely, all words resulting from applying the substitution $X\mapsto x^n,Y\mapsto y^n$ to all length-$n$ words on $X,Y$). Thus: $$ \gamma_{A/\sing_r(A)}(n)=\gamma_A(n)\succeq 2^{\sqrt{n}}. $$ Considering $w=u_1^{[i_1]}u_2^{[i_2]}u_3^{[i_3]}\cdots$ with a modified sequence $i_1,i_2,\dots$ one can make $\gamma_A$ grow faster than an arbitrary subexponential function.
\end{exmpl}

\section{Isomorphisms} \label{sec:isomorphisms}

\subsection{Isomorphic point modules}
A homomorphism $\varphi\colon M\rightarrow M'$ between graded modules is graded of degree $d\in \mathbb{Z}$ if $\varphi(M_i)\subseteq M'_{i+d}$ for every $i\geq 0$. For an element $v$ of a graded module we let $\supp(v)$ be the set of degrees of non-zero components on which $v$ is supported.

\begin{thm} \label{Graded_isomorphisms}
Let $A$ be a connected graded algebra, generated in degree $1$ and let $\varphi\colon M\xrightarrow{\sim} M'$ be an isomorphism of point $A$-modules. Then $\varphi$ is graded of degree $0$.
\end{thm}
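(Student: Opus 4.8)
The plan is to prove that $\varphi\colon M\to M'$ must be graded of degree $0$ by using the finiteness of the Hilbert series and the cyclicity of point modules. First I would recall the structure: since $M$ and $M'$ are point $A$-modules, each has Hilbert series $1+t+t^2+\cdots$, so $\dim_F M_i = \dim_F M'_i = 1$ for all $i\geq 0$, and both are cyclic, generated in degree $0$. Write $M=\bigoplus_i M_i$ with $M_i = Fe_i$ and $M'=\bigoplus_i M'_i$ with $M'_i = Fe'_i$.

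The key observation is that an abstract module isomorphism need not a priori respect the grading, but we can decompose it: any $F$-linear map $\varphi\colon M\to M'$ decomposes as $\varphi = \sum_{d\in\mathbb{Z}} \varphi_d$ where $\varphi_d$ is the degree-$d$ homogeneous component, i.e. $\varphi_d(M_i)\subseteq M'_{i+d}$. Since $M$ is generated in degree $0$, $\varphi$ is determined by $\varphi(e_0)$, and $\varphi(e_0)\in M'$ has finite support, say $\supp(\varphi(e_0))\subseteq\{d_1<\cdots<d_r\}$ with $d_1\geq 0$ (as $M'$ is $\mathbb{N}$-graded). Since each $\varphi_d$ commutes with the $A$-action (because $A$ is graded and $\varphi$ is an $A$-module map — the degree-$d$ component of an $A$-linear map is again $A$-linear), each $\varphi_{d_k}$ is itself an $A$-module homomorphism $M\to M'$. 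Now $\varphi_{d_1}$, being the lowest-degree component, sends the generator $e_0$ to a nonzero element of $M'_{d_1}$, hence (by cyclicity of $M$ and the fact that $M'$ is generated in degree $0$, so $M'_{d_1} = e'_0 \cdot A_{d_1}$) its image is a nonzero graded submodule; I claim $\varphi_{d_1}$ is injective. Indeed its kernel is a graded submodule of $M$; if nonzero it contains some $M_i$, but then since $M$ is cyclic generated by $e_0$ and $M_i \neq 0$ forces (via the $A$-action being "transitive enough") a contradiction with $M_j\neq 0$ for $j\geq i$ — more carefully, $\ker\varphi_{d_1}$ being a proper graded submodule of a point module which is cyclic in degree $0$ means $(\ker\varphi_{d_1})_0 = 0$, and then an inductive argument on degree using surjectivity of multiplication $M_i\otimes A_1\to M_{i+1}$ shows $\ker\varphi_{d_1}=0$. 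Comparing Hilbert series, an injective graded map of degree $d_1$ from $M$ (Hilbert series $\sum t^i$) into $M'$ (Hilbert series $\sum t^i$) forces $\sum t^{i+d_1} \leq \sum t^i$ coefficientwise, hence $d_1 = 0$.

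Having shown $d_1=0$, I would then argue $\varphi_0$ is already an isomorphism: it is injective by the above, and its image is a graded submodule $N\subseteq M'$ with $N_0 = M'_0$, so again by cyclicity of $M'$ (generated in degree $0$) and surjectivity of $M'_i\otimes A_1\to M'_{i+1}$, we get $N = M'$. Thus $\varphi_0$ is an isomorphism. Finally, to conclude $\varphi = \varphi_0$, i.e. all higher components vanish: consider $\psi = \varphi\circ\varphi_0^{-1}\colon M'\to M'$, an automorphism of $M'$ with $\psi = \mathrm{id} + (\text{strictly positive degree part})$. Write $\psi(e'_0) = e'_0 + (\text{terms in degrees} > 0)$. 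If $\psi\neq\mathrm{id}$, let $d>0$ be minimal with $\psi_d\neq 0$; then $\psi_d$ is an $A$-module endomorphism of $M'$ which is nonzero, but by the Hilbert series / injectivity argument applied to $\psi_d$ (a degree-$d$ graded map), $\psi_d$ cannot be injective, so $\ker\psi_d\neq 0$; and a nonzero $A$-module map $M'\to M'$ of positive degree $d$ must be injective by the same cyclicity argument as above applied to $M'$ in both source and target — contradiction. Hence $\psi = \mathrm{id}$ and $\varphi = \varphi_0$ is graded of degree $0$.

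The main obstacle I anticipate is the injectivity claim for a nonzero degree-$d$ $A$-module endomorphism (or homomorphism between two point modules): one must rule out nontrivial graded submodules of a point module, which rests on the point module being cyclic and generated in degree $0$ together with the multiplication maps $M_i\otimes A_1\twoheadrightarrow M_{i+1}$ being surjective — this surjectivity is exactly what cyclicity plus generation of $A$ in degree $1$ gives, so it should go through, but it deserves a careful one-line lemma. A subtlety worth checking: the argument that $\varphi_d$ is again $A$-linear uses that $A$ is graded and generated in degree $1$, so that $\varphi(v\cdot a) = \varphi(v)\cdot a$ read in each homogeneous degree gives $\varphi_d(v\cdot a) = \varphi_d(v)\cdot a$ when $a$ is homogeneous, and then extend by linearity; this is routine but should be stated. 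I would also double-check the edge case where some $\varphi_d$ with $d>0$ could conceivably still be injective into a "shifted" copy — the Hilbert-series inequality $t^{d}\cdot H_{M}(t) \preceq H_{M'}(t)$ with both series equal to $1/(1-t)$ cleanly forbids this for $d\neq 0$, so this is not actually an obstacle once phrased correctly.
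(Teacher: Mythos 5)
Your decomposition $\varphi=\sum_d\varphi_d$ into graded components is a valid and genuinely different route from the paper's, which instead analyzes the support-size function $L(u)=\#\supp(u)$ and shows that $L(\varphi(v_i))$ is constant and equal to $1$. Your preliminary steps are sound: since $M$ is cyclic generated in degree $0$, $\varphi(e_0)$ has finite support so the decomposition is a finite sum; each $\varphi_d$ is $A$-linear because $A$ is graded; and your cyclicity lemma is correct --- a nonzero graded $A$-module map $\theta\colon M\to M'$ of degree $d\geq 0$ between point modules is automatically injective, because its kernel is a graded submodule of $M$ hence of the form $M_{\geq k}$, and $\theta(e_{k-1})\neq 0$ together with $\theta(e_{k})=0$ would force $e'_{k-1+d}\cdot A_1=0$, contradicting $M'_{k+d}\neq 0$. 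That lemma is the real engine of this approach and deserves to be isolated.

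The gap is in both places where you invoke the Hilbert series to forbid a positive-degree injective map, and it is a real gap, not a bookkeeping issue. If $H_M(t)=H_{M'}(t)=1/(1-t)$, the inequality $t^dH_M(t)\preceq H_{M'}(t)$ (coefficientwise) holds for \emph{every} $d\geq 0$, since $t^d+t^{d+1}+\cdots\leq 1+t+\cdots$ termwise. So this comparison does not force $d=0$. Worse, positive-degree injective graded endomorphisms of a point module genuinely exist: for the $F[x]$-point module with $e_i\cdot x=e_{i+1}$, the degree-one map $e_i\mapsto e_{i+1}$ is an injective module endomorphism. So your claimed contradiction in the final step (``$\psi_d$ cannot be injective by Hilbert series, but must be by cyclicity'') does not materialize; both statements are compatible, and the second is the true one.

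Both gaps can be repaired, but only the first repair is cosmetic. That $d_1=0$ follows directly from surjectivity of $\varphi$: since $\varphi(M)=\varphi(e_0)\cdot A\subseteq M'_{\geq d_1}$, if $d_1>0$ then $e'_0\notin\Im\varphi$, contradiction. For the final step ($\psi=\mathrm{id}$) you need to use bijectivity of $\psi$, not just that $\psi_d$ is an endomorphism. One clean fix: decompose $\psi^{-1}=\sum_e\chi_e$ as well, with both $\psi_0$ and $\chi_0$ equal to the identity and both sums finite. If $d_{\max}:=\max\supp(\psi)$ or $e_{\max}:=\max\supp(\psi^{-1})$ is positive, then the degree-$(d_{\max}+e_{\max})$ component of $\psi\circ\psi^{-1}=\mathrm{id}$ is $\psi_{d_{\max}}\circ\chi_{e_{\max}}$, a composition of two nonzero (hence, by your cyclicity lemma, injective) graded maps, so it is injective and in particular nonzero --- but $\mathrm{id}$ has no component in positive degree. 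Alternatively, directly solve $\psi(v)=e'_0$ degree by degree and observe that if $\psi_d\neq 0$ for some minimal $d>0$, the solution $v$ has nonzero components in all degrees $0,d,2d,\dots$, so $v\notin M'$. Either way, the heart of the theorem lies exactly in the step your Hilbert-series shortcut tried to skip.
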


\begin{proof}
Let $A_1=Fx_1+\cdots+Fx_m$ generate $A$ as an algebra (along with $A_0=F$).
Fix homogeneous bases: $$ M=Fv_0+Fv_1+\cdots\ \ \text{and}\ \ M'=Fw_0+Fw_1+\cdots $$ 
For each $u\in M'$ let $L(u)=\#\supp(u)$, namely, if $u=c_1w_{j_1}+\cdots +c_tw_{j_t}$ with all $j_1,\dots,j_t$ distinct and $c_1,\dots,c_t$ non-zero then $L(u)=t$.
We claim that: 
\begin{align}
L(\varphi(v_1)) \geq L(\varphi(v_2)) \geq \cdots    \label{L non inc}
\end{align}
Indeed, for each $i\geq 0$ we can find $1\leq j\leq m$ such that $v_i\cdot x_j=cv_{i+1}$ for some $c\neq 0$, so $\varphi(v_{i+1})=c^{-1}\varphi(v_i)\cdot x_j$ and therefore $L(\varphi(v_{i+1}))\leq L(\varphi(v_i))$. Let: $$ d=\min \{L(\varphi(v_i))\ |\ i\geq 0\}\geq 1. $$ 
(Indeed, $d\geq 1$ since if $\varphi(v_i)=0$ for some $i$ then $\dim_F \Im(\varphi)<\infty$.)
We first claim that in fact $L(\varphi(v_i))=d$ for all $i\geq 0$, namely, the sequence (\ref{L non inc}) is constant.
Otherwise, let $i$ be the maximum index such that $L(\varphi(v_i))=e>d$ and write: $$ \varphi(v_i)=c_1w_{j_1}+\cdots+ c_ew_{j_e} $$ for $j_1<\cdots<j_e$ and $c_1,\dots,c_e\neq 0$. 
Notice that $\varphi(v_{i+1})$ is supported on $d<e$ homogeneous components which are contained in $\{j_1+1,\dots,j_e+1\}$, since $v_{i+1}=v_i\cdot g$ for some $g\in A_1$; pick $j_l$ (for some $1\leq l\leq e$) such that $j_l+1\notin \supp(\varphi(v_{i+1}))$. Then for every $a\in A_1$ we have: $$ \varphi(v_i\cdot a)=c_1w_{j_1}\cdot a+\cdots+c_ew_{j_e}\cdot a=c'_1w_{j_1+1}+\cdots+c'_ew_{j_e+1} $$ for some (possibly zero) scalars $c'_1,\dots,c'_e$. Since $v_i\cdot a\in M_{i+1}=Fv_{i+1}$, it follows that $\supp(\varphi(v_i\cdot a))\subseteq \supp(\varphi(v_{i+1}))$, so $w_{j_l}\cdot a=0$. It follows that $w_{j_l}\cdot A_{\geq 1}$, contradicting that $M'$ is generated by $w_0$.
Hence for all $i\geq 0$ we have that $L(\varphi(v_i))=d$. We have thus established that (\ref{L non inc}) is constant.

Now let us furthermore prove that $d=1$. Assume otherwise to the contrary, so $L(\varphi(v_0))>1$. Let $\varphi(v_0)=\alpha_0w_0+\cdots+\alpha_pw_p$ with $\alpha_p\neq 0$ and $p>0$; furthermore, notice that $\alpha_0\neq 0$ since $\varphi$ is surjective, and if $\alpha_0=0$ then $\Im(\varphi)\subseteq M'_{\geq 1}=\sum_{i=1}^{\infty} Fw_i$.
Since $\varphi$ is surjective, there exists a linear combination: $$ \xi=c_0v_0+\cdots+c_rv_r $$ such that $\varphi(\xi)=w_0$.
Notice that $c_0\neq 0$, since otherwise $\varphi\left(M_{\geq 1}\right)=M'$, contradicting that $\varphi$ is injective. Moreover, $c_r\neq 0$ and $r>0$, since $d>1$ (so it is impossible that $\varphi(c_0v_0)=w_0$). 
Pick a homogeneous element $a\in A_r$ such that $v_0\cdot a=v_r$. Now: $$ \varphi(c_rv_r)=c_r\alpha_0w_0\cdot a+\cdots+c_r\alpha_pw_p\cdot a. $$ 

Since $c_r\neq 0$, it follows that $\varphi(c_rv_r)$ is non-zero and supported on the same number of homogeneous components as $\varphi(v_0)$; it then holds that $w_p\cdot a=\beta w_{p+r}$ for some $\beta\neq 0$. Otherwise, if $\beta=0$ then $L(\varphi(c_rv_r))<L(\varphi(v_0)$.
Observe that $\varphi(v_0),\dots,\varphi(v_{r-1})\in Fw_0+\cdots+Fw_{p+r-1}$. Hence: 
\begin{eqnarray*}
w_0=\varphi(\xi) & = & c_0\varphi(v_0)+\cdots+c_r\varphi(v_r) \\ & = & \beta_0 w_0+\cdots+\beta_{p+r-1}w_{r-1}+c_r\alpha_p\beta w_{p+r}
\end{eqnarray*}
for some $\beta_0,\dots,\beta_{r-1}\in F$ and with $c_r,\alpha_p,\beta\neq 0$, yielding a non-trivial linear combination of homogeneous basis elements, a contradiction.

Therefore, for every $i\geq 0$ we have that $L(\varphi(v_i))=1$. Write $\varphi(v_0)=\gamma_0v_t$ for some $t\geq 0$ and $\gamma_0\in F$. Then $\varphi(v_i)=\gamma_iv_{i+t}$ for some $\gamma_i\in F$. Now $\Im(\varphi)=\sum_{i=t}^{\infty} Fw_i=M'_{\geq t}$ and since $\varphi$ is surjective, $t=0$. Hence $\varphi$ is graded of degree $0$.
\end{proof}

Theorem \ref{Graded_isomorphisms} cannot be improved to non-point modules. Here are two quick examples.

\begin{exmpl}
Let $M=M_0+M_1+\cdots$ be the graded $F[t]$-module given by $M_i=Fe_i+Ff_i$ and $e_i\cdot t=e_{i+1},\ f_i\cdot t=f_{i+1}$. Then $M$ is a fat point module (generated by $M_0$, with Hilbert series $2(1-t)^{-1}$) and every matrix in $\text{GL}_2(F[t])\setminus \text{GL}_2(F)$ induces a non-graded automorphism of $M$.
For a concrete example, take $\varphi\colon M\rightarrow M$ given by $\varphi(e_0)=e_0+e_1+f_1,\ \varphi(f_0)=e_0+f_0$.
\end{exmpl}

\begin{exmpl}
Let $A=F+A_1+A_2+\cdots$ where $A_{\geq 1}$ is a nil graded algebra (e.g. a nil Golod-Shafarevich algebra). Then every $f\in A\setminus F$ with a non-zero degree-$0$ term is invertible and induces a non-graded automorphism $a\mapsto fa$ of $A$ as a right module over itself.
\end{exmpl}

\subsection{Isomorphic monomial algebras} \label{subsec:iso mon alg}
Isomorphisms between monomial algebras can be quite complicated, for instance, for free algebras \cite{Umirbaev}.
For a permutation $\sigma\in S_m$ define an automorphism of the $m$-generated free algebra $$\varphi_\sigma\colon F\left<x_1,\dots,x_m\right>\rightarrow F\left<x_1,\dots,x_m\right>$$ by $\varphi_\sigma(x_i)=x_{\sigma(i)}$.

\begin{thm}\label{iso mon alg}
Let $A,B$ be monomial algebras and fix monomial presentations: \[ A=F\left<x_1,\dots,x_m\right>/I,\ B=F\left<x_1,\dots,x_{m'}\right>/J \] with no redundant $x_i$'s, namely, $x_1,\dots,x_m\notin I$ and $x_1,\dots,x_{m'}\notin J$. Suppose that $A\cong B$ as (abstract) algebras. Then $m=m'$ and there exists a permutation $\sigma\in S_m$ such that $J=\varphi_\sigma(I)$.
\end{thm}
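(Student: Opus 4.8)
The plan is to show that any abstract algebra isomorphism $\psi\colon A\xrightarrow{\sim} B$ must respect the grading and then, on the degree-$1$ pieces, must carry the monomial basis of $A_1$ to (scalar multiples of) the monomial basis of $B_1$ up to a bijection. The first reduction is the graded one: $A$ and $B$ are $\mathbb{N}$-graded connected and generated in degree $1$, and $A_0=B_0=F$ is forced since these are the only scalars. A standard argument shows that the degree-$1$ component $A_1$ is intrinsically characterized as $\mathfrak{m}/\mathfrak{m}^2$, where $\mathfrak{m}=A_{\geq 1}$ is the unique maximal graded ideal; but to even speak of $\mathfrak{m}$ intrinsically one wants to know the grading is canonical. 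For monomial (or more generally finitely generated connected graded) algebras this follows because $\mathfrak{m}$ is the Jacobson radical's graded analogue: concretely, $\mathfrak{m}=A_{\geq 1}$ is the largest ideal $\mathfrak{n}$ such that $\dim_F A/\mathfrak{n}=1$ and $\bigcap_n \mathfrak{n}^n=0$ on graded pieces — or, more simply, one invokes the fact that a connected graded algebra finitely generated in degree $1$ has a canonical filtration by powers of the augmentation ideal, which is preserved by every algebra automorphism, and the associated graded recovers the grading. So $\psi$ induces a linear isomorphism $\overline{\psi}\colon A_1\to B_1$, forcing $m=m'$.

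The heart of the matter is then combinatorial: I must show $\overline{\psi}$, expressed in the monomial bases $x_1,\dots,x_m$ of $A_1$ and $x_1,\dots,x_m$ of $B_1$, is a generalized permutation matrix (a permutation composed with diagonal scaling). Suppose not; then after relabeling there is a generator, say $x_1$, with $\psi(x_1)=\sum_{i\in S}\alpha_i x_i$ for a subset $S$ of size $\geq 2$ with all $\alpha_i\neq 0$. The idea is to derive a contradiction from the relation structure. The key observation is that in a monomial algebra, for a single generator $x_j$, the powers $x_j, x_j^2, x_j^3,\dots$ either are all nonzero or vanish from some point on — and more usefully, one can track which products of generators survive. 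I would look at the two-sided ideal structure: in $B$, the element $\psi(x_1)=\sum_{i\in S}\alpha_i x_i$ generates an ideal whose degree-$2$ part is spanned by $\{x_i x_k, x_k x_i : i\in S\}$ intersected with the nonzero monomials, whereas back in $A$ the ideal generated by $x_1$ has degree-$2$ part inside $\mathrm{span}\{x_1 x_k, x_k x_1\}$, of dimension at most $2m-1$. Comparing dimensions of $(\langle x_1\rangle)_2$ and $(\langle \psi(x_1)\rangle)_2$, one wants to show the latter is forced to be too big unless $|S|=1$ — but this needs care because monomials may vanish.

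The cleaner route, which I expect to be the actual mechanism, is to use that each generator $x_j$ of a monomial algebra satisfies: $x_j$ spans a one-dimensional space $U_j\subseteq A_1$ such that, letting $\mathrm{ann}^r(U_j)=\{a\in A_1 : U_j\cdot a=0 \text{ in } A_2\}$ and $\mathrm{ann}^l(U_j)$ similarly, these are \emph{coordinate subspaces} (spanned by subsets of the $x_i$). A line $\ell\subseteq B_1$ has both one-sided annihilators equal to coordinate subspaces if and only if $\ell$ is itself a coordinate line $Fx_i$: for a generic line $\sum_{i\in S}\alpha_i x_i$ with $|S|\geq 2$, its right annihilator is the intersection $\bigcap_{i\in S}\mathrm{ann}^r(Fx_i)$, which is a coordinate subspace, but the condition that pins down coordinate lines is that the \emph{map} $a\mapsto \ell\cdot a$ from $B_1/\mathrm{ann}^r(\ell)$ into $B_2$ lands in a coordinate subspace of $B_2$ and similarly on the left, together with an incompressibility/indecomposability argument. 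Since $\psi$ carries the intrinsically-defined data (product map $A_1\otimes A_1\to A_2$) to that of $B$, and since in $A$ every coordinate line $Fx_j$ has this property, the line $\psi(Fx_j)$ must have it in $B$, hence is a coordinate line. This gives a bijection $\sigma$ on $\{1,\dots,m\}$ with $\psi(Fx_j)=Fx_{\sigma(j)}$, i.e.\ $\psi(x_j)=\gamma_j x_{\sigma(j)}$; then $\psi$ agrees with $\varphi_\sigma$ up to the diagonal automorphism $x_i\mapsto \gamma_{\sigma^{-1}(i)}x_i$, which preserves every monomial ideal, so $J=\psi(I)=\varphi_\sigma(I)$ as desired.

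The main obstacle I anticipate is the robustness of the "coordinate line detection" step in the presence of vanishing monomials: if many degree-$2$ monomials of $B$ are zero, the product map $B_1\otimes B_1\to B_2$ is highly degenerate and a non-coordinate line could a priori masquerade as a coordinate one. To handle this I would not work only in degree $2$ but pass to the generating structure of the \emph{whole} algebra — using that $A$ is infinite-dimensional (the redundancy hypothesis plus the isomorphism make both sides infinite-dimensional, or at least we may argue componentwise) and using Theorem~\ref{Classification}/Theorem~\ref{faithful}: point modules are an intrinsic invariant, and a monomial point module over $A$ "reads off" the generators one at a time; transporting point modules through $\psi$ and comparing which linear combinations of generators can act as the single nonzero scalar $\lambda_{i,j}$ at each step forces $\psi$ to be diagonal-monomial. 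I expect the final write-up to combine a short argument at the level of $A_1\to B_1$ with an appeal to the point-module classification to eliminate the degenerate cases, and the bookkeeping of which monomials vanish will be the part requiring the most attention.
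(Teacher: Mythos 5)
Your proof has a genuine gap at its core, and it is worth comparing it carefully to what the paper does.

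First, the reduction to a graded isomorphism. You gesture at an intrinsic characterization of $\mathfrak{m}=A_{\geq 1}$ as ``the largest ideal $\mathfrak{n}$ with $\dim_F A/\mathfrak{n}=1$ and $\bigcap_n\mathfrak{n}^n=0$,'' but this is not correct: in $A=F[x]$ the ideal $(x-1)$ also has codimension one and $\bigcap_n(x-1)^n=0$, yet $(x-1)\neq A_{\geq 1}$. The paper instead invokes \cite[Theorem~1]{BellZhang}, which is a nontrivial general result: any abstract isomorphism between connected graded algebras finitely generated in degree one can be replaced by a graded one. Your sketch does not deliver this and should simply cite that theorem.

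Second, and more seriously, the central claim you try to prove is false. You want to show that the induced linear map $\overline{\psi}\colon A_1\to B_1$ must be a generalized permutation matrix, by detecting ``coordinate lines'' via annihilators or via point modules. But take $A=B=F\left<x,y\right>$ with $I=J=0$. The map $x\mapsto x+y$, $y\mapsto y$ extends to a graded automorphism of the free algebra whose degree-one piece is not a monomial matrix, all annihilators of lines in $B_1$ vanish (so every line ``looks like'' a coordinate line), and the point-module parametrization $\mathbb{P}^1\times\mathbb{P}^1\times\cdots$ is invariant under the full $\mathrm{GL}_2$-action. So no intrinsic device of the kind you propose can force $\overline{\psi}$ to be diagonal-monomial: the degenerate cases you anticipate are not corner cases but genuine counterexamples to the stronger statement. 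Note that the theorem itself is still true here (trivially, with $\sigma=\mathrm{id}$); it is your intermediate claim that fails.

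The paper's actual argument is genuinely different and avoids this trap. Given the graded isomorphism $f$ with $f(x_i)=\sum_{j\in S_i}c_{ij}x_j$, the sets $S_i$ need \emph{not} be singletons. What one can show is that $|S_{i_1}\cup\cdots\cup S_{i_t}|\geq t$ for all choices (since the images of $t$ independent vectors must be independent), so by Hall's marriage theorem there is a permutation $\sigma$ with $\sigma(i)\in S_i$ for every $i$. Then one uses the defining property of a monomial ideal: when you expand $f(x_{i_1}\cdots x_{i_k})$ for a monomial relation in $I$, you get a sum of distinct monomials with nonzero coefficients lying in $J$, hence \emph{each} of those monomials lies in $J$, in particular $x_{\sigma(i_1)}\cdots x_{\sigma(i_k)}$. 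This gives $\varphi_\sigma(I)\subseteq J$ without ever asserting that $f$ itself is monomial. Equality is then obtained by applying the same construction to $f^{-1}$ to get $\varphi_\pi(J)\subseteq I$, and a short finiteness argument ($\varphi_{\pi\sigma}^{n!}(I)=I$) forces $J=\varphi_\sigma(I)$. You should redo your argument along these lines; the marriage-theorem step is the key idea you are missing.
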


This means that isomorphic monomial algebras are equal to each other up to renaming the monomial generators.

\begin{proof}[{Proof of Theorem \ref{iso mon alg}}]
By \cite[Theorem~1]{BellZhang} there is a graded isomorphism $f\colon A\rightarrow B$ which induces a bijective linear map on the degree-$1$ components, $f(A_1)=B_1$. Thus $m=\dim_F A_1=\dim_F B_1=m'$ (notice that since $I,J$ are monomial ideals and $x_1,\dots,x_m\notin I,\ x_1,\dots,x_{m'}\notin J$ then these two sets are bases of $A,B$ respectively).

For each $1\leq i\leq m$ let $S_i\subseteq \{1,\dots,m\}$ be the index set of the support of $f(x_i)$ and write $f(x_i)=\sum_{j\in S_i} c_{ij}x_j$ with all $c_{ij}\neq 0$. For every $1\leq i_1<\cdots<i_t\leq m$ we have:
\[
f(x_{i_1}),\dots,f(x_{i_t})\in \Span_F \{x_j|j\in S_{i_1}\cup\cdots\cup S_{i_t}\} 
\]
and since $x_{i_1},\dots,x_{i_t}$ are linearly independent and $f$ is an isomorphism, it follows that: \[
|S_{i_1}\cup\cdots \cup S_{i_t}|\geq t.
\] By Hall's marriage theorem there exists a permutation $\sigma\in S_m$ such that $\sigma(i)\in S_i$ for all $1\leq i\leq m$. 
Given $x_{i_1}\cdots x_{i_k}\in I$, let us show that $x_{\sigma(i_1)}\cdots x_{\sigma(i_k)}\in J$. Indeed, since $f$ is a homomorphism it follows that: 
\begin{eqnarray*} f(x_{i_1}\cdots x_{i_k}) & = & \left(\sum_{j_1\in S_{i_1}} c_{i_1 j_1}x_{j_1}\right)\cdots \left(\sum_{j_k\in S_{i_k}} c_{i_k j_k}x_{j_k}\right) \\
& = & \sum_{\substack{j_1\in S_{i_1},\dots,j_k\in S_{i_k}}} c_{i_1 j_1}\cdots c_{i_k j_k} x_{j_1}\cdots x_{j_k}
\end{eqnarray*}
belongs to $J$. Since $J$ is a monomial ideal and the above sum is a linear combination of monomials which are distinct in the free algebra, with non-zero coefficients, it follows that each one of them belongs to $J$. In particular, $x_{\sigma(i_1)}\cdots x_{\sigma(i_k)}\in J$ as $\sigma(i_l)\in S_{i_l}$ for each $1\leq l\leq k$. Therefore $\varphi_\sigma(I)\subseteq J$. Interchanging the roles of $A$ and $B$, we get that $\varphi_\pi(J)\subseteq I$ for some $\pi\in S_m$. It follows that $\varphi_{\pi\sigma}(I)\subseteq I$ and thus by induction $\varphi_{\pi \sigma}^d(I)\subseteq \varphi_{\pi\sigma}(I)\subseteq I$ for every $d\geq 1$; taking $d=n!$ gives: $$ I=\varphi^{n!}_{\pi\sigma}(I)\subseteq \varphi_{\pi\sigma}(I)\subseteq I, $$ so $\varphi_{\pi \sigma}(I)=I$. For every $a\in J$ we have that $\varphi_\pi(a)\in I=\varphi_{\pi \sigma}(I)=\varphi_{\pi}\varphi_{\sigma}(I)$, so $a\in \varphi_\sigma(I)$. It follows that $J=\varphi_\sigma(I)$, as claimed.
\end{proof}

Theorem \ref{iso mon alg} also illuminates a major difference between prolongable monomial algebras and their corresponding subshifts. While the assignment $A\mapsto \Subshift(A)$ is formally one-to-one, 
it is far from being so when considered in the relevant categories, of graded algebras and subshifts, respectively. 
Let: \[ {\bf t}=01101001\cdots \] be the Thue-Morse binary word. This word can be defined as the fixed point in $\{0,1\}^{\mathbb{N}}$ of the map induced by the endomorphism $\phi \colon \{0,1\}^*\to \{0,1\}^*$ given by $\phi(0)=01$ and $\phi(1)=10$. Namely, ${\bf t} = \phi^{\omega}(0) := \lim_{n\rightarrow \infty} \phi^n(0)$. For more on the Thue-Morse sequence see \cite{AS}.

We create infinite words ${\bf w}$ and ${\bf w}'$ over the alphabet $\{x,w,z\}$ by applying respectively the codings $\mu,\mu'\colon \{0,1\}\to \{x,w,z\}^*$ to ${\bf t}$, where: 
\[ \mu(0)=xw^2, \mu(1)=xz^2\ \ \text{and}\ \  \mu'(0)=xwz, \mu'(1)=xz^2.\] Then ${\bf w}$ and ${\bf w}'$ are uniformly recurrent as ${\bf t}$ is. Let $A_{\bf w}$ and $A_{{\bf w}'}$ denote the monomial algebras on generators $x,w,z$ in which a monomial is zero in $A_{\bf w}$ (respectively $A_{{\bf w}'}$) if it is not a subword of ${\bf w}$ (respectively ${\bf w}'$).


The algebras $A_{\bf w}$ and $A_{{\bf w}'}$ are non-isomorphic projectively simple monomial algebras both of quadratic growth. 
Indeed, since ${\bf w}$ and ${\bf w}'$ are uniformly recurrent, it follows that $A_{\bf w}$ and $A_{{\bf w}'}$ are projectively simple monomial algebras. It can be seen that $A_{\bf w} \not\cong A_{{\bf w}'}$ by Theorem \ref{iso mon alg}, since $x^2=w^2=0$ in $A_{{\bf w'}}$ while only one monomial generator of $A_{\bf w}$ is squared zero.

However, the subshifts generated by ${\bf w}$ and by ${\bf w}'$ are conjugate to each other.
To see this, we let 
$f\colon \{x,w,z\}^*\to \{x,w,z\}^*$ denote the involution that takes a word $u$ and changes all occurrences of $xw^2$ to $xwz$ and all occurrences of $xwz$ to $xw^2$.  Note that since there can be no overlaps between subwords of the form $xw^2$ and $xwz$, the map $f$ is defined unambiguously and extends to infinite words. In symbolic dynamical terms, $f$ induces a sliding block code with memory $2$ and zero anticipation, induced by the block map $\Phi\colon \{x,w,z\}^3\rightarrow \{x,w,z\}$ given by: \[ \Phi(xww)=z,\Phi(xwz)=w\ \text{and}\ \Phi(v_1v_2v_3)=v_3\ \text{for}\ v_1v_2v_3\neq xww,xwz. \] Since $f({\bf w})={\bf w}'$ and $f^2=\text{id}$ (hence bijective), it is a conjugacy between the subshifts generated by ${\bf w}$ and ${\bf w}'$.

\subsection{Automorphisms}

Let $A$ be a connected graded algebra. Let $\Aut(A)$ be its automorphism group and let $\grAut(A)$ its subgroup of graded automorphisms. Graded automorphisms play a major role in  twisting graded algebras \cite{ZhangTwists}; this will be revisited in the sequel.

Let $A=F\left<x_1,\dots,x_n\right>/I$ be a monomial algebra, and assume that this is a non-redundant presentation, namely, no $x_i$ belongs to $I$. Then the torus $\mathbb{G}_m^{\times n}$  acts faithfully on $A_1=Fx_1+\cdots+Fx_n$ and thus embeds into $\grAut(A)$. In general, $\grAut(A)$ can be very big, e.g. for free algebras we have $\grAut(F\left<x_1,\dots,x_n\right>)\cong \text{GL}_n(F)$ (and the non-graded automorphism group is even wilder as mentioned before, see \cite{Umirbaev}).
\begin{thm} \label{Aut}
Let $A=F\left<x_1,\dots,x_n\right>/I$ be a projectively simple monomial algebra. Then every graded automorphism of $A$ is a composition of a torus action with a permutation on the generators. In particular, 
\[
\grAut(A)\leq \mathbb{G}_m \wr S_n=\mathbb{G}_m^{\times n}\rtimes S_n.
\]
\end{thm}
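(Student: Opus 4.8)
The plan is to show that any graded automorphism $\psi \in \grAut(A)$ permutes, up to scalars, the monomial generators $x_1,\dots,x_n$; once this is known, composing $\psi$ with the inverse of the corresponding permutation $\varphi_\sigma$ (which is itself a graded automorphism of $A$, since $A$ is fixed by $\varphi_\sigma$ — indeed an isomorphism $A \cong A$ must be realized by such a permutation by Theorem \ref{iso mon alg}) yields an automorphism fixing each line $Fx_i$, i.e. an element of the torus $\mathbb{G}_m^{\times n}$. So the real content is the claim that $\psi(Fx_i)$ is again one of the coordinate lines $Fx_j$.

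First I would set up notation exactly as in the proof of Theorem \ref{iso mon alg}: write $\psi(x_i) = \sum_{j \in S_i} c_{ij} x_j$ with all $c_{ij} \neq 0$, and recall from that proof that Hall's marriage theorem already furnishes a permutation $\sigma$ with $\sigma(i) \in S_i$ for all $i$, and that $\varphi_\sigma(I) = I$ (so $\varphi_\sigma$ is a genuine graded automorphism of $A$). Replacing $\psi$ by $\varphi_\sigma^{-1} \circ \psi$, we may assume $i \in S_i$ for every $i$, i.e. $\psi(x_i) = c_{ii} x_i + \sum_{j \in S_i, j \neq i} c_{ij} x_j$. The goal is now to force every $S_i$ to be the singleton $\{i\}$. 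Suppose not, so some $S_{i_0}$ contains an index $j_0 \neq i_0$.

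The key step, and the one I expect to be the main obstacle, is to derive a contradiction from projective simplicity. Here is the mechanism I would use. Since $A$ is projectively simple, it is infinite-dimensional, so by König's Lemma $\Subshift(A)$ is non-empty and $A = A_w$ for a uniformly recurrent word $w$; in particular every non-zero monomial of $A$ occurs infinitely often with bounded gaps. Consider the span $V = \Span_F\{x_i : i \in S_{i_0}\} = \Span_F\{\psi(x_i) : i \in S_{i_0}\} \cap (\text{a } \dim |S_{i_0}| \text{ space})$ — more precisely, look at the ideal $\mathfrak{a} = (\psi^{-1}(\langle x_j : j \neq j_0\rangle \cap A))$, or better, argue directly on monomials. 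The cleanest route: a graded automorphism preserves the set of proper graded ideals, and for a monomial algebra the monomial two-sided ideals $A x_j A + (\text{lower-degree relations})$ correspond to forbidding a letter; projective simplicity says all proper graded quotients are finite-dimensional. Pick a letter, say $x_{j_0}$, that appears in $w$ (if $x_{j_0} \in \mathcal{L}_1(A)$); then $A / (x_{j_0})$ is finite-dimensional, hence $x_{j_0}$ occurs syndetically in $w$. Applying $\psi$ and chasing which monomials in the $x_i$'s map into $(x_{j_0})$, the fact that $\psi(x_i)$ has $x_{j_0}$-component exactly for $i$ with $j_0 \in S_i$ (a set containing $i_0$) must be reconciled with $\psi$ being an automorphism; one shows that a non-trivial mixing $\psi(x_{i_0}) = c\,x_{i_0} + c' x_{j_0} + \cdots$ forces a monomial relation of $A$ to fail to be preserved, or forces $A/(\text{something})$ to be infinite-dimensional, contradicting minimality. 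Concretely, I would find (using Lemma \ref{lem:two prolong} or uniform recurrence) a long non-zero monomial $u$ of $A$ in which substituting $\psi(x_{i_0})$ expands to a sum including a monomial with an occurrence of a forbidden word, while by the automorphism property $\psi(u) \neq 0$ and all its monomial summands must be non-zero in $A$ — the usual ``monomial ideal $\Rightarrow$ each summand lies in it'' trick — yielding the contradiction. After this, $\psi(x_i) = c_{ii} x_i$ for all $i$, so $\psi \in \mathbb{G}_m^{\times n}$, and reinstating $\varphi_\sigma$ gives $\psi \in \mathbb{G}_m^{\times n} \rtimes S_n = \mathbb{G}_m \wr S_n$, which is the stated bound.

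A remark on why the projective-simplicity hypothesis is doing real work here: for a general monomial algebra the analogue is false — e.g. for $F\langle x,y\rangle/(xy)$ one has non-diagonal, non-monomial graded automorphisms — so the argument genuinely cannot avoid invoking that all proper graded images are finite-dimensional (equivalently, by the correspondence in Section 2, that $\Subshift(A)$ is minimal). I would make sure the contradiction step is phrased so that it only uses uniform recurrence / minimality and the elementary monomial-ideal observation, keeping the write-up short.
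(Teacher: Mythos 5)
Your reduction to the case $i \in S_i$ via Hall's marriage theorem (imported from the proof of Theorem \ref{iso mon alg}) is sound: that argument really does give a permutation $\sigma$ with $\sigma(i)\in S_i$ and $\varphi_\sigma(I)=I$, so replacing $\psi$ by $\varphi_\sigma^{-1}\circ\psi$ is legitimate and the problem becomes showing each $S_i=\{i\}$. However, the step you flagged as the likely obstacle is where the argument actually breaks, and the mechanism you propose is not correct. You write that for a suitable long non-zero monomial $u$, the non-vanishing of $\psi(u)$ forces \emph{every} monomial appearing in the expansion of $\psi(x_{i_1})\cdots\psi(x_{i_k})$ to be non-zero in $A$, so a single forbidden subword would yield a contradiction. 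That is the monomial-ideal trick run backwards. The correct statement is: if a linear combination of \emph{distinct} monomials with non-zero coefficients is zero in a monomial algebra then each monomial is zero; this is the version used in Theorem \ref{iso mon alg}, applied to $u\in I$, whence $\psi(u)=0$ and each summand lies in $I$. The converse is false: $\psi(u)\neq 0$ tells you only that \emph{at least one} monomial in the expansion survives, and in a projectively simple $A_w$ it is typical that most of them do not (e.g. with $\psi(x_{i_0})=c\,x_{i_0}+c'x_{j_0}$ and $u=x_{i_0}^k$, the expansion contains words like $x_{j_0}^k$ which will usually vanish, with no contradiction). So as written there is a genuine gap: the claimed lemma is false and no contradiction ensues.

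The paper's proof supplies exactly the ingredient your sketch is missing, via a different route that never reduces to $i\in S_i$. It rests on a characterization of right-regular degree-one elements: a linear combination $\beta_1x_1+\cdots+\beta_nx_n$ is right regular in $A=A_w$ if and only if all $\beta_i\neq 0$. Full support implies regular because every non-zero monomial admits a right prolongation, so at least one summand of $u(\sum\beta_ix_i)$ survives; conversely, if some $\beta_t=0$ then uniform recurrence produces a maximal factor $v$ of $w$ avoiding the letter $x_t$, and $v\cdot\xi=0$ for any $\xi$ supported away from $x_t$, so $\xi$ is a zero divisor. Assuming some $\phi(x_i)$ has support of size at least two, the paper builds a set $S\subsetneq\{1,\dots,n\}$ and (over an infinite field, via the finite-union-of-subspaces argument) a combination $\xi$ supported only on $\{x_s : s\in S\}$ with $\phi(\xi)$ of full support; then $\phi(\xi)$ is regular, so $\xi$ is regular, contradicting the characterization. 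Your Hall reduction is a pleasant normalization but does not by itself remove the need for this regularity analysis; to complete your proof you would essentially need to import it.
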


\begin{proof}
Let $w\in \{x_1,\dots,x_n\}^{\mathbb{N}}$ be a uniformly recurrent word such that $A=A_w$. Fix $\phi\in \grAut(A)$; we need to prove that there exist $\lambda_1,\dots,\lambda_n\in F^{\times}$ and $\sigma\in S_n$ such that for every $1\leq i\leq n$ we have $\phi(x_i)=\lambda_ix_{\sigma(i)}$. Extending scalars, we may assume without loss of generality that $F$ is an infinite field.
Assume to the contrary that for some $1\leq i\leq n$ we have $\phi(x_i)=\sum_{j=1}^{n} c_{ij}x_j$ with $c_{ij_1},c_{ij_2}\neq 0$ for some $j_1\neq j_2$. 
For each $1\leq j \leq n$, there exists some $1\leq k_j\leq n$ such that $\phi(x_{k_j})$ has $x_j$ in its support, for otherwise $x_j\notin \Im(\varphi)$ (recall that the presentation is assumed to be non-redundant). Consider the set: $$S=\{k_j|j\neq j_1,j_2\}\cup \{i\}$$ which has at most $n-1$ elements, say, $|S|=n-r$ for some $r\geq 1$, and enumerate $S=\{s_1,\dots,s_{n-r}\}$.
Consider the linear map $T\colon F^{n-r}\rightarrow A_1$ given by: $$T(\alpha_1,\dots,\alpha_{n-r})=\alpha_1\phi(x_{s_1})+\cdots+\alpha_{n-r}\phi(x_{s_{n-r}}).$$ For each $1\leq j\leq n$ let $\pi_j\colon A_1\rightarrow F$ be the linear functional projecting onto the coefficient of $x_j$. Notice that for each $j$, 
$$ \Ker (\pi_j \circ T) \subsetneq F^{n-r}. $$ Indeed, if $j\in\{j_1,j_2\}$ then $\phi(x_i)$ has $x_{j_1},x_{j_2}$ in its support, and so: $$\pi_j(T(0,\dots,0,1,0,\dots,0))=\pi_j(\phi(x_i))\neq 0$$ (where `$1$' is in the coordinate corresponding to $i$.) If $j\neq j_1,j_2$ then $\phi(x_{k_j})$ has $x_j$ in its support by definition and so $0\neq \pi_j(\phi(x_{k_j}))\in \Im(\pi_j \circ T)$. Since $F$ is an infinite field, an $F$-vector space cannot be equal to a union of finitely many of its proper subspaces, so: $$F^{n-r}\neq \bigcup_{j=1}^{n} \Ker(\pi_j\circ T).$$ Hence there exists a linear combination: $$\xi=\alpha_1x_{s_1}+\cdots+\alpha_{n-1}x_{s_{n-r}}$$ such that $\phi(\xi)=\beta_1x_1+\cdots+\beta_nx_n$ with all $\beta_1,\dots,\beta_n\neq 0$. We claim that $\beta_1x_1+\cdots+\beta_nx_n$ is a (right) regular element in $A$.
Indeed, it suffices to show that for every non-zero homogeneous element $\sum_l \gamma_{l=1}^p u_l$ (with $p\geq 1$ and $u_1,\dots,u_p$ non-zero monomials of the same length and $\gamma_1,\dots,\gamma_p\neq 0$) we have: $$\left(\sum_{l=1}^p \gamma_l u_l\right)\left(\beta_1x_1+\cdots+\beta_nx_n\right)\neq 0.$$ Since $u_1$ is a finite factor of $w$, it has some non-zero right prolongation, say, $u_1x_r\neq 0$. Then: $$\left(\sum_{l=1}^p \gamma_l u_l\right)\left(\beta_1x_1+\cdots+\beta_nx_n\right)=\sum_{l=1}^p \sum_{i=1}^n \gamma_l \beta_i u_l x_i$$ is non-zero since it is a linear combination of distinct monomials, one of which is $u_1x_r\neq 0$, whose coefficient is $\gamma_1\beta_r\neq 0$.
In particular, \[ \phi^{-1}(\beta_1x_1+\cdots+\beta_nx_n)=\xi=\alpha_1x_{s_1}+\cdots+\alpha_{n-1}x_{s_{n-r}} \] is regular too. 
Since $|S|<n$, we can pick some $t\in \{1,\dots,n\}\setminus S$.
But since $w$ is uniformly recurrent, there exists $C>0$ such that every length-$C$ factor of $w$ contains $x_t$. Therefore, there exists a maximal (finite) factor $v$ of $w$ which does not contain $x_t$; being a factor of $w$, observe that $v$ is a non-zero monomial in $A$. Now: \[ v(\alpha_1x_{s_1}+\cdots+\alpha_{n-1}x_{s_{n-r}})=\alpha_1vx_{s_1}+\cdots+\alpha_{n-1}vx_{s_{n-r}}=0 \] since each $vx_{s_1},\dots,vx_{s_{n-r}}$ is longer than $v$ and does not contain the letter $x_t$. This contradicts that $\alpha_1x_{s_1}+\cdots+\alpha_{n-1}x_{s_{n-r}}$ is regular. We conclude that there exists a function $\sigma\colon \{1,\dots,n\}\rightarrow \{1,\dots,n\}$ and scalars $\lambda_1,\dots,\lambda_n\in F$ such that $\phi(x_i)=\lambda_ix_{\sigma(i)}$ for all $1\leq i\leq n$. Since $\phi$ is an automorphism, $\sigma$ must be a permutation and $\lambda_1,\dots,\lambda_n\neq 0$, and the proof is completed.
\end{proof}

\begin{cor} \label{cor:iso}
If $f\colon A\rightarrow B$ is a graded isomorphism between projectively simple monomial algebras then $f$ is given by a permutation and scaling of the generators.
\end{cor}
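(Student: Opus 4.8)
The plan is to deduce the statement almost immediately from Theorem~\ref{iso mon alg} together with Theorem~\ref{Aut}. First I would fix non-redundant monomial presentations $A=F\left<x_1,\dots,x_m\right>/I$ and $B=F\left<x_1,\dots,x_{m'}\right>/J$; this is possible since both algebras, being projectively simple, are infinite-dimensional, so no generator can lie in the defining ideal. Applying Theorem~\ref{iso mon alg} to the abstract isomorphism underlying $f$ yields $m=m'$ and a permutation $\sigma\in S_m$ with $J=\varphi_\sigma(I)$. Because $\varphi_\sigma$ is an automorphism of the free algebra carrying $I$ bijectively onto $J$, it descends to a graded degree-$0$ isomorphism $\bar\varphi_\sigma\colon A\to B$ with $\bar\varphi_\sigma(x_i)=x_{\sigma(i)}$.

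Next I would consider the composite $g:=\bar\varphi_\sigma^{-1}\circ f\colon A\to A$. Since $f$ is graded of degree $0$ (this is assumed, but in any case follows from Theorem~\ref{Graded_isomorphisms} applied to point modules) and $\bar\varphi_\sigma$ is graded of degree $0$, and the inverse of such an isomorphism is again graded of degree $0$, the map $g$ is a graded automorphism of $A$. As $A$ is a projectively simple monomial algebra, Theorem~\ref{Aut} applies and gives scalars $\mu_1,\dots,\mu_m\in F^{\times}$ and a permutation $\tau\in S_m$ with $g(x_i)=\mu_i x_{\tau(i)}$ for all $1\leq i\leq m$.

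Finally I would unwind $f=\bar\varphi_\sigma\circ g$ on generators: $f(x_i)=\bar\varphi_\sigma(\mu_i x_{\tau(i)})=\mu_i x_{\sigma(\tau(i))}$, so setting $\lambda_i:=\mu_i\in F^{\times}$ and $\rho:=\sigma\tau\in S_m$ we get $f(x_i)=\lambda_i x_{\rho(i)}$; that is, $f$ is given by a permutation and scaling of the generators, which is what is claimed (in particular this recovers $\grAut(A)\leq \mathbb{G}_m\wr S_n$ when $B=A$). I do not anticipate any genuine obstacle here: the only points requiring care are verifying that $\bar\varphi_\sigma$ is honestly graded of degree $0$ so that $g$ lands in $\grAut(A)$, and bookkeeping the composition $\sigma\tau$ of the two permutations correctly.
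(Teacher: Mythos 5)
Your proof is correct and takes essentially the same approach as the paper: obtain a monomial isomorphism from Theorem~\ref{iso mon alg}, compose with $f$ to get a graded automorphism of $A$, and apply Theorem~\ref{Aut}. The only cosmetic difference is that the paper post-composes $f$ with a monomial isomorphism $B\to A$ whereas you pre-compose with the inverse of one $A\to B$; your parenthetical appeal to Theorem~\ref{Graded_isomorphisms} is unnecessary (and that theorem in fact concerns module isomorphisms, not algebra isomorphisms), since gradedness of $f$ is already hypothesized.
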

\begin{proof}
By Theorem \ref{iso mon alg}, there exists a monomial isomorphism $g\colon B\rightarrow A$ (namely, $g$ permutes the generators). Then $g\circ f$ is a graded automorphism of $A$, so by Theorem \ref{Aut} it is a permutation and scaling of the generators of $A$, and therefore so is $f=g^{-1}\circ (g\circ f)$.
\end{proof}

\begin{rem}
The above proof applies more generally when $A$ takes the form $A_w$ for some infinite word $w$ and each generator $x_i$ generates a finite-codimensional two-sided ideal.
\end{rem}

\begin{exmpl} \label{Aut(TM)}
Let ${\bf t}$ be the Thue-Morse sequence discussed above in Subsection \ref{subsec:iso mon alg}:
\[ 01101001\cdots \]
This word can also be defined by $t_1=0,t_{n+1}=t_n\overline{w_n}$ where $\overline{u}$ is the bitwise negation of $u$, and ${\bf t}=\lim_{n \rightarrow \infty} t_n$. The Thue-Morse sequence is uniformly recurrent and a string of bits factors it if and only if its bitwise negation does. 
Let $A=F\left<x,y\right>/I$ be the monomial algebra associated with the Thue-Morse sequence, substituting $0\mapsto x$ and $1\mapsto y$. This is a projectively simple algebra, since ${\bf t}$ is uniformly recurrent, and the non-trivial permutation $\tau\colon x\mapsto y,\ y\mapsto x$ defines a graded automorphism of $A$. Therefore $\grAut(A)\cong \left(\mathbb{G}_m\times \mathbb{G}_m\right) \rtimes \left<\tau\right>$.
\end{exmpl}

\section{Proalgebraic parametrization spaces}

\subsection{Proalgebraic varieties} We recall the theory of proalgebraic varieties, which serves as the correct algebro-geometric setting for spaces of point modules of arbitrary noncommutative graded algebras. The notion goes back to Grothendieck \cite{Grothendieck} and in the symbolic dynamical context to Gromov \cite{Gromov,Gromov2}; see also \cite{KowPil,Yokura}.

\begin{defn} An $\mathbb{N}$-indexed \textit{proalgebraic variety} is a sequence $\{(V_n,\varphi_n)\}_{n=0}^{\infty}$ where each $V_n$ is an algebraic variety and each $\varphi_n\colon V_{n+1}\rightarrow V_n$ is a surjective morphism of algebraic varieties:
\[ \cdots \rightarrow V_2 \xrightarrow{\varphi_2} V_1 \xrightarrow{\varphi_1} V_0. \]
For $i\leq i'$, we define $\varphi^{i'}_i:=\varphi_{i'}\circ\cdots \circ \varphi_i$ (or $\text{id}_{V_i}$ if $i=i'$). 
Let $$\mathcal{V}=\{(V_n,\varphi_n)\}_{n=0}^{\infty}~{\rm and}~ \ \mathcal{W}=\{(W_n,\psi_n)\}_{n=0}^{\infty}$$ be proalgebraic varieties. A \textit{morphism} of ($\mathbb{N}$-indexed) proalgebraic varieties, or a \textit{proregular map} $f\colon \mathcal{V}\rightarrow \mathcal{W}$ consists of an order-preserving function\footnote{The function $g$ may be defined only for $n\gg 1$.} $g\colon \mathbb{N}\rightarrow \mathbb{N}$ and a family of morphisms of algebraic varieties $f_j\colon V_{g(j)}\rightarrow W_j$ such that if $j\leq j'$ then $f_j\circ \varphi^{g(j')}_{g(j)}=\psi^{j'}_j\circ f_{j'}$.

A \textit{point} of a proalgebraic variety $\mathcal{V}$ as above is a compatible sequence $(a_n)_{n=0}^ {\infty}$ with each $a_i\in V_i$ and $\varphi_i(a_{i+1})=a_i$. We denote the set of points of $\mathcal{V}$ by $\varprojlim \mathcal{V}$. We occasionally take the freedom to think of the set of points as our proalgebraic variety itself, where the sequence of varieties and morphisms along the sequence are obvious.
\end{defn}

\begin{exmpl}
Let $V_n=\left(\mathbb{P}^d\right)^{\times n}$ and $\varphi_n\colon V_{n+1}\rightarrow V_n$ the projection onto the first $n$ coordinates. Then $\mathcal{V}=\{(V_n,\varphi_n)\}_{n=1}^{\infty}$ is a proalgebraic variety. If $W_n=\left(\mathbb{P}^d\right)^{\times a_n}$ for some increasing sequence $(a_n)_{n=1}^{\infty}$ and $\psi_n\colon W_{n+1}\rightarrow W_n$ is the projection onto the first $a_n$-coordinates then formally $\mathcal{W}=\{(W_n,\psi_n)\}_{n=1}^{\infty}$ is different from $\mathcal{V}$, but they are naturally isomorphic to each other, and can both be identified with $\mathbb{P}^d\times \mathbb{P}^d\times~\cdots$.
\end{exmpl}

\subsection{Parametrizing point modules}

Let $A=F\left<x_1,\dots,x_m\right>/I$ be a graded algebra ($m>1)$. Point $A$-modules are parametrized by a proalgebraic variety \cite{Rogalski, SV}.

Let $P=Fe_0+Fe_1+\cdots$ be a point module and let $e_i\cdot x_j=\lambda_{i,j}e_{i+1}$ for some scalars $\lambda_{i,j}\in F$. Therefore, point modules are parametrized by a subset (in fact, a proalgebraic subvariety) of $\mathbb{A}^m\times \mathbb{A}^m\times \cdots$ by:
\begin{eqnarray*}
    \{\text{Point}\ A\text{-modules}\} & \rightarrow & \mathbb{A}^m\times \mathbb{A}^m\times \cdots \\
    P & \mapsto & (\underline{\lambda}_0,\underline{\lambda}_1,\dots)
\end{eqnarray*}
where $\underline{\lambda}_i=(\lambda_{i,1},\dots,\lambda_{i,m})\in \mathbb{A}^m$.
By Theorem \ref{Graded_isomorphisms}, isomorphism classes of point modules correspond to orbits by the action of $\mathbb{G}_m\times \mathbb{G}_m\times\cdots$ so:
\begin{eqnarray} \label{proalg}
    \{\text{Point}\ A\text{-modules}\}/\cong & \rightarrow & \mathbb{P}^{m-1}\times \mathbb{P}^{m-1}\times \cdots \\
    P & \mapsto & (\underline{\lambda}_0,\underline{\lambda}_1,\dots)
\end{eqnarray}
where, from now on, $\underline{\lambda}_i=[\lambda_{i,1}\colon\cdots\colon\lambda_{i,m}]\in \mathbb{P}^{m-1}$.
The subset of $\mathbb{P}^{m-1}\times \mathbb{P}^{m-1}\times \cdots$ parametrizing point modules can be naturally identified with the set of points of the following proalgebraic variety $\mathcal{P}(A)$.
Let $\mathcal{P}_n(A)\subseteq \left(\mathbb{P}^{m-1}\right)^{\times (n+1)}$ be the projection of the above set onto the first $n+1$ copies of $\mathbb{P}^{m-1}$; equivalently, $\mathcal{P}_n(A)$ parametrizes truncations of point modules of $A$ with maximum non-zero degree $n$. We let $\pi_n\colon \mathcal{P}_{n+1}(A)\twoheadrightarrow \mathcal{P}_n(A)$ be the restriction of the projection $\left(\mathbb{P}^{m-1}\right)^{\times (n+2)} \twoheadrightarrow \left(\mathbb{P}^{m-1}\right)^{\times (n+1)}$. Then: \[ \mathcal{P}(A)=\{(\mathcal{P}_n(A),\pi_n)\}_{n=0}^{\infty} \] is a proalgebraic variety whose set of points naturally corresponds\footnote{Formally, $\varprojlim \mathcal{P}(A)$ is a subset of $\mathbb{P}^{m-1}\times \left(\mathbb{P}^{m-1}\right)^{\times 2}\times \left(\mathbb{P}^{m-1}\right)^{\times 3}\times \cdots$ but we naturally identify it with $\mathbb{P}^{m-1}\times \mathbb{P}^{m-1}\times \mathbb{P}^{m-1}\times \cdots$.} to the image of the parametrization given in \ref{proalg}. By abuse of notation, let us also think of $\pi_n$ as the natural projection from $\varprojlim \mathcal{P}(A)$ or from $\mathcal{P}_{n'}(A)$  onto $\mathcal{P}_n(A)$, whenever $n'\geq n$.

\begin{rem}
\begin{enumerate}
    \item The set of points parametrizing point modules $\varprojlim \mathcal{P}(A)$ forms a dynamical system with respect to the semigroup $\mathbb{N}=\left<s\right>$ acting by shifting $s\colon (\underline{\lambda}_0,\underline{\lambda}_1,\dots)\mapsto (\underline{\lambda}_1,\underline{\lambda}_2,\dots)$. Indeed, if $P=Fe_0+Fe_1+\cdots$ is a point $A$-module then its shift $s(P):=Fe_1+Fe_2+\cdots$ is again a point $A$-module.
    
    \item Rogalski and Zhang \cite{RZ}, Smith \cite{Smith_Skl}, Walton \cite{Walton_Skl,Walton_Skl_cor} and others studied cyclic graded modules with Hilbert series $1+t+\cdots+t^n$, which can be thought of as `truncated point modules'. But one must be careful: while a point module can be truncated by $M\mapsto M/M_{\geq n+1}$ (for each natural number $n$), in general, truncated point modules as above need not arise as truncations of actual point modules. Let $\overline{\mathcal{P}}_n(A)$ denote the moduli space of cyclic graded modules with Hilbert series $1+t+\cdots+t^n$.    
    Thus, in general, 
    \[ \mathcal{P}_n(A)\subseteq \overline{\mathcal{P}}_n(A)\]
    is a proper inclusion (see Example \ref{Exmpl:trunc}). In the example studied in \cite{Smith} we have $\mathcal{P}_n(A)=\overline{\mathcal{P}}_n(A)$ for all $n$, and in the example studies in \cite{Smith_Skl,Walton_Skl,Walton_Skl_cor}, $\mathcal{P}_n(A)=\overline{\mathcal{P}}_n(A)$ for all $n\geq 2$.    
    However, in all cases, $\varprojlim \mathcal{P}_n(A)=\varprojlim \overline{\mathcal{P}}_n(A)$.
\end{enumerate}

\begin{exmpl} \label{Exmpl:trunc}
Let $A=F\left<x_1,x_2,x_3,x_4\right>/\left<x_1x_2,x_1x_4,x_2x_1,x_2x_3\right>$, a prime monomial algebra. For each $n\geq 2$, the point module:
\[
M = Fe_0+\cdots+Fe_n
\]
given by the action $e_i\cdot x_j = \delta_{j,4}e_{i+1}$ for $1\leq i\leq n-2$ and by $e_{n-1}\cdot x_j = e_n$ for all $1\leq j\leq 4$ is not a truncation of a point module. Indeed, if there was a point module $\widetilde{M}=Fe_0+Fe_1+\cdots$ such that $\widetilde{M}/\widetilde{M}_{\geq n+1}\cong M$ then for some $1\leq j\leq 4$ we had $e_n \cdot x_j = \lambda e_{n+1}$ for some $\lambda\neq 0$. If $j\in \{1,3\}$ then $e_{n+1} = e_{n-1}\cdot \lambda^{-1} x_2x_j = 0$, and if $j\in \{2,4\}$ then $e_{n+1} = e_{n-1}\cdot \lambda^{-1} x_1x_j = 0$, a contradction.
\end{exmpl}

\end{rem}
To what extent does $\mathcal{P}(A)$ depend on the presentation of $A$? By Theorem \ref{iso mon alg}, if we fix two non-redundant monomial presentation of the same algebra then each one is induced from the other by a permutation on the generators. In other words, the resulting proalgebraic varieties lie in the same orbit of $S_m$ under its diagonal action on $\mathbb{P}^{m-1}\times \mathbb{P}^{m-1}\times \cdots$.

Our motivating question from now on is: To what extent is a given graded algebra recoverable from its proalgebraic variety of point modules?
Since $\mathcal{P}(A)$ could be empty---when $A$ admits no point modules---one cannot hope to reconstruct $A$ from $\mathcal{P}(A)$ in general.
Even for monomial algebras, in which case $\mathcal{P}(A)$ is never empty, we have $\mathcal{P}(A)\cong \mathcal{P}(A/\sing(A))$ by Lemma \ref{lem_prolongable} so one can potentially recover $A$ only up to an extension by a prolongable radical; equivalently, only within prolongable monomial algebras (in view of Corollary \ref{prolongable cor}). 

\subsection{Twisting points}
Let $A,B$ be two connected graded algebras, finitely generated in degree $1$. Zhang \cite{ZhangTwists} gave a remarkable characterization of when their categories of graded right modules with degree preserving morphisms are equivalent $\text{Gr}(A)\equiv \text{Gr}(B)$. Namely, he proved that these categories are equivalent if and only if $B$ is isomorphic to a \textit{Zhang twist} of $A$. A twisting system $\tau$ of $A$ consists of a sequence of degree-preserving linear automorphisms $\tau = \{\tau_n\}_{n=0}^{\infty}$ satisfying:
\[ \tau_n(x\tau_m(y)) = \tau_n(x)\tau_{n+m}(y) \] for every $n,m\geq 0$ and for every homogeneous elements $x,y\in A$ such that $x\in A_m$.
Then the twist of $A$ with respect to $\tau$ is the new graded algebra $A^\tau$ in which multiplication is given by (linearly extending) $x\star y:=x\tau_n(y)$ where $x,y$ are homogeneous and $n$ is the degree of $x$. A special case of a twisting system is given by $\{\tau_n=f^{\circ n}\}_{n=0}^{\infty}$ where $f\colon A\rightarrow A$ is a degree-preserving automorphism.

One obtains a bijective functorial correspondence of graded (right) $A$-modules with graded $A^\tau$-modules via twisting: if $M=\bigoplus_{i=0}^{\infty} M_i$ is a graded $A$-module then $M^\tau=\bigoplus_{i=0}^{\infty} M_i$ and $v\star a:=v\cdot \tau_n(a)$ where $v\in M_n$ and $a\in A^\tau$ a homogeneous element.

\begin{lem} \label{Zhang_prolongable}
Let $A$ be a connected graded algebra, finitely generated in degree $1$, and let $\tau$ be a twisting system of $A$. Then $A$ is right prolongable if and only if $B$ is, and $A$ is left prolongable if and only if $B$ is.
\end{lem}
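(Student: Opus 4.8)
The plan is to exploit that the twisting functor leaves the underlying graded vector space of $A$ untouched and only perturbs the one-sided multiplications in a tightly controlled, degree by degree fashion, so that the prolongable radicals are transported faithfully. Write $B=A^{\tau}$. Recall that right (resp. left) prolongability of a connected graded algebra $C$ amounts to the vanishing of $\sing_r(C)$ (resp. $\sing_l(C)$), and that by Proposition~\ref{sing is mon} — and, mutatis mutandis, the corresponding statement for $\sing_l$ — the ideals $\sing_r(A),\sing_l(A)$ (and likewise for $B$) are graded, hence spanned by their homogeneous elements. So it suffices to decide, degree by degree, which homogeneous elements lie in each radical.

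\emph{Right prolongability.} Let $f\in A$ be homogeneous of degree $m$; since $A$ and $B$ share the same underlying graded space, $f$ is also a homogeneous degree-$m$ element of $B$. For every $d\geq 1$ we have $B_d=A_d$ as sets, and $f\star y=f\cdot\tau_m(y)$ for $y\in B_d$. As $\tau_m$ is a degree-preserving linear automorphism of $A$, we have $\tau_m(A_d)=A_d$, whence $\{f\star y:y\in B_d\}=\{f\cdot z:z\in A_d\}$; that is, $f\star B_d=f\cdot A_d$ inside $A_{m+d}$. Hence $f\cdot A_d=0$ in $A$ iff $f\star B_d=0$ in $B$, and letting $d$ vary gives $f\in\sing_r(A)\iff f\in\sing_r(B)$. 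Therefore $\sing_r(A)=\sing_r(B)$, and in particular one is zero precisely when the other is.

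\emph{Left prolongability.} Here the transported identity genuinely involves $\tau_d$: for homogeneous $h$ and $x\in B_d=A_d$ one has $x\star h=x\cdot\tau_d(h)$, so $B_d\star h=A_d\cdot\tau_d(h)$ (a subspace, $\tau_d(h)$ being a single fixed element). If $B$ is not left prolongable, choose a nonzero homogeneous $h\in\sing_l(B)$ with $B_d\star h=0$ for some $d\geq 1$; then $A_d\cdot\tau_d(h)=0$, and $\tau_d(h)\neq 0$ since $\tau_d$ is injective, so $0\neq\tau_d(h)\in\sing_l(A)$. Conversely, if $A$ is not left prolongable, pick a nonzero homogeneous $g\in\sing_l(A)$ with $A_d\cdot g=0$; then $h:=\tau_d^{-1}(g)\neq 0$ satisfies $B_d\star h=A_d\cdot\tau_d(\tau_d^{-1}(g))=A_d\cdot g=0$, so $0\neq h\in\sing_l(B)$. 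Hence $A$ is left prolongable iff $B$ is.

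I do not expect a genuine obstacle here; the only subtlety is conceptual rather than technical. Unlike the right-hand radicals, $\sing_l(A)$ and $\sing_l(B)$ need not coincide as subsets of the common underlying space, because the left twisting automorphisms $\tau_d$ really do move elements around; what rescues the statement is that they do so bijectively and within each fixed degree, which is exactly enough to match the vanishing — though not the actual contents — of the two radicals. (Alternatively, one may note that the inverse automorphisms $\{\tau_n^{-1}\}$ form a twisting system of $B$ whose twist is $A$, reducing each equivalence to a single implication; the direct computation above sidesteps verifying this.)
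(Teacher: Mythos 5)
Your argument is correct, and the core idea—transport a nonzero annihilating element through the twist using the degree-preserving bijections $\tau_n$—is the same as the paper's. The differences are in execution, and yours is slightly cleaner. On the right-hand side you observe directly that $f\star B_d = f\cdot\tau_m(B_d) = f\cdot A_d$, which yields the stronger statement $\sing_r(A)=\sing_r(B)$ as subsets of the common underlying graded space; the paper is content to show only that one radical is nonzero when the other is. On the left-hand side the paper picks $a\in\sing_l(A)$ with $A_{\geq 1}a=0$ and transports $\tau_1^{-1}(a)$, then appeals to the fact that $B$ is generated in degree $1$ to conclude $B_{\geq 1}\star\tau_1^{-1}(a)=0$; strictly speaking this requires the small extra reduction (not spelled out there) that a nonzero element of $\sing_l(A)$ killed by all of $A_{\geq 1}$, not merely some $A_d$, always exists. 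Your version avoids both the degree-$1$ generation argument and that implicit reduction by working directly with the witnessing degree $d$ and the map $\tau_d^{-1}$, matching $B_d\star h = A_d\cdot\tau_d(h)$ term-for-term. Your closing observation that $\{\tau_n^{-1}\}$ is a twisting system of $B=A^\tau$ with $B$-twist equal to $A$ is also what underlies the paper's "by symmetry" reduction, and you are right that the direct two-sided computation makes that verification unnecessary.
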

\begin{proof}
By symmetry, it suffices to prove that $\sing_r(A)\neq 0\implies \sing_r(B)$ and that $\sing_l(A)\neq 0\implies \sing_l(B)$.

Assume that $A$ is not right prolongable, and pick a non-zero element $a\in \sing_r(A)\cap A_d$ for some $d$ such that $aA_{\geq 1}=0$ (recall that the prolongable radical is homogeneous). Now for every $b\in B_{\geq 1}$ we have:
\[
a \star b = a \cdot \tau_d(b) = 0
\]
so viewing $a$ as an element of $B$ we have that $a\in \sing_r(B)\neq 0$.

Now assume that $A$ is not left prolongable, and pick $a\in \sing_l(A)\cap A_d$ for some $d$ such that $A_{\geq 1}a=0$. Let $b=\tau_1^{-1}(a)$. For every $c\in B_1$ we have that: 
\[
c\star b=c\tau_1(b)=ca=0,
\]
and since $B=A^\tau$ is also generated in degree $1$, it follows that $B_{\geq 1}\star b=0$. Hence $b\in \sing_l(B)\neq 0$.
\end{proof}

\begin{prop} \label{ZhangTwistProvarieties}
Let $A$ be a connected graded algebra and let $A^\tau$ be its twist with respect to some twisting system. Then $\mathcal{P}(A)\cong \mathcal{P}(A^\tau)$ and moreover, for each $n$ there is an isomorphism of algebraic varieties $f_n\colon \mathcal{P}_n(A)\rightarrow \mathcal{P}_n(A^\tau)$ such that $\pi_n\circ f_n = f_{n-1}\circ \pi_n$.
\end{prop}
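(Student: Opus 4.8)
The plan is to run Zhang's module-twisting functor and read off what it does to the parameters in $\mathbb{P}^{m-1}\times\mathbb{P}^{m-1}\times\cdots$. First I would record the general features of the correspondence $M\mapsto M^\tau$ (as recalled just before the proposition): since $M^\tau=M$ as a \emph{graded vector space}, this functor preserves Hilbert series, and it preserves cyclicity with generator in degree $0$ (if $M=e_0A$ then the $B:=A^\tau$-submodule generated by $e_0$ under $\star$ contains $e_0\star b=e_0\cdot\tau_0(b)$ for all homogeneous $b$, and as $b$ ranges over $B=A$ the element $\tau_0(b)$ ranges over all of $A$, so this submodule is $e_0\cdot A=M$). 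It also preserves truncation, because $(M_{\geq n+1})^\tau=(M^\tau)_{\geq n+1}$, and it sends graded isomorphisms to graded isomorphisms. The inverse twisting system $\tau^{-1}=\{\tau_n^{-1}\}_{n\geq 0}$ is a twisting system for $B$ with $(M^\tau)^{\tau^{-1}}=M$, so $M\mapsto M^\tau$ is an honest bijection between (truncated) point $A$-modules and (truncated) point $B$-modules, descending to a bijection of isomorphism classes.

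Next I would compute coordinates. Each $\tau_n$ restricts to a linear automorphism of $A_1$; fix the (non-redundant) presentation $A=F\left<x_1,\dots,x_m\right>/I$ and let $T_n\in\mathrm{GL}_m(F)$ be the matrix with $\tau_n(x_j)=\sum_k (T_n)_{kj}x_k$. If $P=\bigoplus_i Fe_i$ is a point $A$-module with $e_i\cdot x_j=\lambda_{i,j}e_{i+1}$, then in $P^\tau$ one has $e_i\star x_j=e_i\cdot\tau_i(x_j)=\bigl(\sum_k (T_i)_{kj}\lambda_{i,k}\bigr)e_{i+1}$, so $P^\tau$ is the point $B$-module with parameter sequence $\underline{\mu}_i=\underline{\lambda}_i\,T_i$, viewing each $\underline{\lambda}_i$ as a row vector; the identical computation applies verbatim to truncated point modules. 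Passing to the $\mathbb{G}_m$-orbits of the parametrization \ref{proalg}, the bijection of the previous paragraph is therefore implemented on parameter tuples by $(\underline{\lambda}_i)_i\mapsto([\underline{\lambda}_iT_i])_i$, with inverse $(\underline{\mu}_i)_i\mapsto([\underline{\mu}_iT_i^{-1}])_i$ — which is exactly the same recipe applied to the twisting system $\tau^{-1}$ of $B$, since $\tau_i^{-1}$ has matrix $T_i^{-1}$ on $A_1$.

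Finally I would upgrade this to proalgebraic varieties. For each $n$ define $f_n\colon(\mathbb{P}^{m-1})^{\times(n+1)}\to(\mathbb{P}^{m-1})^{\times(n+1)}$ to act on the $i$-th factor ($0\leq i\leq n$) by the element $[v]\mapsto[vT_i]$ of $\mathrm{PGL}_m$; then $f_n$ is an isomorphism of the ambient variety with inverse built from the $T_i^{-1}$, and since $f_n$ acts on the $i$-th coordinate by a rule independent of $n$, forgetting the last coordinate intertwines $f_{n+1}$ with $f_n$, giving $\pi_n\circ f_{n+1}=f_n\circ\pi_n$. It remains to see that $f_n$ restricts to an isomorphism $\mathcal{P}_n(A)\xrightarrow{\ \sim\ }\mathcal{P}_n(B)$ of schemes. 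Here I would invoke the moduli interpretation of $\mathcal{P}_n$ (\cite{ATV,Rogalski,SV}): Zhang's twist is an $F$-linear equivalence that commutes with base change, so it carries a flat family of truncated point $A$-modules over any base $S$ to a flat family of truncated point $B$-modules over $S$, and the identity $P(\underline{\lambda})^\tau=P(f_n(\underline{\lambda}))$ from the previous paragraph shows that on the parameters of such a family this operation is precisely the restriction of $f_n$. Hence $f_n$ identifies the subscheme of $(\mathbb{P}^{m-1})^{\times(n+1)}$ classifying truncated point $A$-modules with the analogous subscheme for $B$ (scheme structure included), with inverse provided by the twist along $\tau^{-1}$; taking the inverse limit over $n$ then gives $\mathcal{P}(A)\cong\mathcal{P}(A^\tau)$.

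The genuinely routine part is the coordinate bookkeeping of the middle paragraph; the one point that needs care is the last step, namely promoting ``bijection on points'' to ``isomorphism of the (possibly non-reduced, and a priori proper inside $\overline{\mathcal{P}}_n$) moduli schemes $\mathcal{P}_n$.'' This is why the argument is phrased through flat families / the functor of points rather than through explicit defining equations of $\mathcal{P}_n(A)$ in $(\mathbb{P}^{m-1})^{\times(n+1)}$.
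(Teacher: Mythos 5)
Your proof is correct and follows essentially the same route as the paper's: write out the linear automorphism of $A_1$ induced by $\tau_i$ in coordinates, observe that twisting a (truncated) point module multiplies the $i$-th parameter by the corresponding matrix, and conclude that the $f_n$ are restrictions of an automorphism of $(\mathbb{P}^{m-1})^{\times(n+1)}$ acting factorwise by elements of $\mathrm{PGL}_m$, hence compatible with the projections. You are somewhat more careful than the paper about the scheme-theoretic justification at the end (verifying the bijection via the inverse twisting system $\tau^{-1}$ and promoting ``bijection on points'' to ``isomorphism of moduli schemes'' through flat families), which is a reasonable addition but not a different argument.
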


\begin{proof}
Let $A=F\left<x_1,\dots,x_m\right>/I$ be a connected graded algebra and fix a twisting system over it $\tau=\{\tau_n\}_{n=0}^{\infty}$. For each $i\geq 0$ and $1\leq j\leq m$ write $\tau_i(x_j)=\sum_{k=1}^{m} c_{ij}^{(k)}x_k$ for some $c_{ij}^{(k)}\in F$.

The assignment $M\mapsto M^{\tau}$ is a bijection between $\varprojlim \mathcal{P}(A)$ and $\varprojlim \mathcal{P}(A^{\tau})$. It remains to check that the induced maps on the truncations $f_n\colon \mathcal{P}_n(A)\rightarrow \mathcal{P}_n(A^\tau)$ satisfy the proposition requirements. Fix 
$(\underline{\lambda}_0,\underline{\lambda}_1,\dots)\in \varprojlim \mathcal{P}(A)$
where $\underline{\lambda}_i=(\lambda_{i,1},\dots,\lambda_{i,m})\in \mathbb{P}^{m-1}$ and $M=Fe_0+Fe_1+\cdots$ is a point module with $e_i\cdot x_j=\lambda_{i,j}e_{i+1}$. Now the point module $M^{\tau}$ is parametrized by $(\underline{\lambda}'_0,\underline{\lambda}'_1,\dots)$
where $\underline{\lambda}'_i=(\lambda'_{i,1},\dots,\lambda'_{i,m})$ and $\lambda'_{i,j}=\sum_{k=1}^{m} c_{ij}^{(k)} \lambda_{i,k}$ since: \[ e_i\star x_j=e_i \cdot \tau_i(x_j)=e_i\cdot \left(\sum_{k=1}^{m} c_{ij}^{(k)}x_k\right)=\left(\sum_{k=1}^{m} c_{ij}^{(k)} \lambda_{i,k}\right)e_{i+1}. \]
Therefore twisting induces an isomorphism of proalgebraic varieties:
\[ \begin{tikzcd}
\left(\mathbb{P}^{m-1}\right)^{\times n+1} \arrow[rr, "g_0\times \cdots \times g_n"] &  & \left(\mathbb{P}^{m-1}\right)^{\times n+1} \\
\mathcal{P}_n(A) \arrow[rr, "f_n"] \arrow[u, hook]                                   &  & \mathcal{P}_n(A^\tau) \arrow[u, hook]     
\end{tikzcd} \]
where the vertical maps are inclusions and $g_i\colon \mathbb{P}^{m-1}\rightarrow \mathbb{P}^{m-1}$ is a projective general linear automorphism given by the matrix:
\[ \left(\begin{matrix} c^{(1)}_{i1} & \cdots & c^{(m)}_{i1} \\ \vdots & \ddots & \vdots \\ c^{(1)}_{im} & \cdots & c^{(m)}_{im} \end{matrix}\right)\]
and therefore $f_{n-1}\circ \pi_n = \pi_n \circ f_n$.
\end{proof}

\begin{exmpl}
Let ${\bf t}$ be the Thue-Morse sequence and consider it under the substitution $0\mapsto x$ and $1\mapsto y$:
\begin{align}
   xyyxyxxy\cdots \label{TM_substitute}
\end{align}
Let $A=F\left<x,y\right>/I$ be the monomial algebra associated with it.
Then $A$ is a projectively simple algebra of GK-dimension 2. Consider the automorphism $\tau\colon A\rightarrow A$ given by $\tau(x)=y,\tau(y)=x$ from Example \ref{Aut(TM)}. Let $B$ be the twist of $A$ by $\tau$. 
By Proposition \ref{ZhangTwistProvarieties}, $\mathcal{P}(A)\cong \mathcal{P}(B)$. However, $A\not\cong B$, for otherwise there would be a degree preserving isomorphism \cite{BellZhang} $f\colon B\rightarrow A$. However, in $B$ we have that:
\[
y^{\star 4}=y\tau(y)\tau^2(y)\tau^3(y)=yxyx\neq 0
\]
as seen by (\ref{TM_substitute}). Furthermore, the Thue-Morse sequence is cube-free \cite{MH}, namely, for every monomial $w\in A$ we have $w^3=0$. Therefore:
\[
y^{\star 6}=y\tau(y)\tau^2(y)\tau^3(y)\tau^4(y)\tau^5(y)=yxyxyx = 0
\]
so $f(y)$ cannot be of the form $\lambda x$ or $\lambda y$, but if $\lambda,\mu\neq 0$ then $\lambda x + \mu y$ is not nilpotent, since its $n$-th power is a linear combination supported on all non-zero length-$n$ monomials in $A$, a contradiction.
\end{exmpl}

\subsection{Proalgebraic varieties and trees}

Fix a finite alphabet $\Sigma=\{x_1,\dots,x_m\}$ with $m\geq 2$. Let: \[ p_0=[1\colon 0\cdots \colon 0],\dots,p_{m-1}=[0\colon \cdots \colon 0\colon 1]\in \mathbb{P}^{m-1}. \]
For a non-empty subset $C\subseteq \Sigma$, let:
\[
i(C) = \{[w_0\colon\cdots \colon w_{m-1}]\in \mathbb{P}^{m-1}\ |\ \forall k\in C,\ w_{k-1}=0 \}
\]
so, in particular, $i(\{x_k\})=\{p_{k-1}\}$.



\begin{thm} \label{decomposition}
Let $F\left<x_0,\dots,x_m\right>/I$ be a monomial algebra. Then:
\[ 
\varprojlim \mathcal{P}(A) = \bigcup_{\substack{\mathcal{T}=C_0C_1\cdots \\ \text{tree over}\ A}} i(C_0)\times i(C_1)\times \cdots 
\]
Equivalently,
\[ \mathcal{P}_n(A) = \bigcup_{\substack{\mathcal{T}=C_0C_1\cdots \\ \text{tree over}\ A}} i(C_0)\times i(C_1)\times \cdots \times i(C_n). \]
Similarly, $\overline{\mathcal{P}}_n(A)$ is a union over:
\[
\{ i(C_0)\times\cdots\times i(C_n)\ |\ \text{All of the monomials in}\ C_0\cdots C_n\ \text{are non-zero in}\ A \}.
\]
\end{thm}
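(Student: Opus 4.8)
The plan is to read everything off from Theorem~\ref{Classification} together with the explicit parametrization of point modules set up above. Recall that a point $A$-module $P=Fe_0+Fe_1+\cdots$ is determined by the scalars $\lambda_{i,j}$ in $e_i\cdot x_j=\lambda_{i,j}e_{i+1}$, and corresponds to the sequence $(\underline{\lambda}_0,\underline{\lambda}_1,\dots)$ with $\underline{\lambda}_i=[\lambda_{i,1}\colon\cdots\colon\lambda_{i,m}]\in\mathbb{P}^{m-1}$; conversely every such sequence comes from a unique well-defined point module over the free algebra. For $q\in\mathbb{P}^{m-1}$ write $\supp(q)\subseteq\Sigma$ for the (necessarily non-empty) set of generators indexing the non-zero homogeneous coordinates of $q$, so that $q\in i(C)$ exactly when $\supp(q)\subseteq C$; note that the word $\mathcal{T}(\Lambda)=C_0C_1\cdots$ attached to $P$ is precisely $C_i=\supp(\underline{\lambda}_i)$.

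For the inclusion ``$\subseteq$'' in the first displayed identity I would take $(\underline{\lambda}_0,\underline{\lambda}_1,\dots)\in\varprojlim\mathcal{P}(A)$; by Theorem~\ref{Classification} the word $C_0C_1\cdots$ with $C_i=\supp(\underline{\lambda}_i)$ is a tree over $A$, and trivially $\underline{\lambda}_i\in i(C_i)$, so the point lies in $i(C_0)\times i(C_1)\times\cdots$. For ``$\supseteq$'' I would fix a tree $\mathcal{T}=C_0C_1\cdots$ over $A$ and a point $(\underline{\mu}_0,\underline{\mu}_1,\dots)\in i(C_0)\times i(C_1)\times\cdots$, set $D_i=\supp(\underline{\mu}_i)$ so that $\varnothing\neq D_i\subseteq C_i$ for every $i$, and verify that $D_0D_1\cdots$ is again a tree over $A$: any finite factor of an infinite word of $D_0D_1\cdots$ has the form $x_{r_1}\cdots x_{r_k}$ with $x_{r_1}\in D_i,\dots,x_{r_k}\in D_{i+k-1}$, and since $D_\ell\subseteq C_\ell$ it is also a finite factor of an infinite word of $\mathcal{T}$, hence a non-zero monomial of $A$. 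Then the converse half of Theorem~\ref{Classification} yields a point $A$-module realizing $e_i\cdot x_j=\mu_{i,j}e_{i+1}$, i.e.\ $(\underline{\mu}_0,\underline{\mu}_1,\dots)\in\varprojlim\mathcal{P}(A)$. This proves the identity for $\varprojlim\mathcal{P}(A)$.

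The ``equivalently'' identity for $\mathcal{P}_n(A)$ I would obtain by applying the projection $\pi_n$ onto the factors indexed $0,\dots,n$: by construction $\mathcal{P}_n(A)=\pi_n\bigl(\varprojlim\mathcal{P}(A)\bigr)$, each $i(C_i)$ is non-empty (it contains $p_{k-1}$ for any $x_k\in C_i$), so $\pi_n$ sends a product $\prod_{i\ge 0}i(C_i)$ onto $\prod_{i=0}^{n}i(C_i)$, and projection commutes with arbitrary unions. For $\overline{\mathcal{P}}_n(A)$ I would run the same argument with ``tree over $A$'' replaced by a finite condition: a cyclic graded module $M$ of the prescribed finite Hilbert series with $e_i\cdot x_j=\lambda_{i,j}e_{i+1}$ is a well-defined $A$-module if and only if every monomial vanishing in $A$ annihilates every $e_i$, and because $e_i\cdot u$ vanishes automatically once its degree exceeds the top of $M$, this is equivalent to requiring that all monomials occurring in $C_0\cdots C_n$ (with $C_i=\supp(\underline{\lambda}_i)$) be non-zero in $A$; the support-shrinking step $D_i\subseteq C_i$ goes through verbatim, since the ``all monomials non-zero'' condition is inherited under passing to smaller $C_i$'s.

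I expect the only genuine subtlety to be the ``$\supseteq$'' direction (and its $\overline{\mathcal{P}}_n$ analogue): one has to make sure that replacing each $C_i$ by the actual support $D_i$ of the chosen projective coordinate still produces a (finite) tree over $A$. This is exactly where one uses the subword-closed, ``finite factors of infinite words'' formulation of a tree over $A$, together with the elementary but essential fact that a point of projective space has non-empty support, so that no $D_i$ degenerates to $\varnothing$.
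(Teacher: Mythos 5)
Your proposal is correct and takes exactly the paper's approach: the paper dispatches Theorem~\ref{decomposition} with the single sentence ``This is immediate from Theorem~\ref{Classification},'' and your argument simply makes that deduction explicit, in particular correctly isolating the one small point worth spelling out, namely that in the ``$\supseteq$'' direction one may shrink each $C_i$ to the actual non-empty support $D_i=\supp(\underline{\mu}_i)$ and that $D_0D_1\cdots$ remains a tree over $A$ because its finite factors form a subset of those of $C_0C_1\cdots$. (One incidental note: the displayed definition of $i(C)$ as written has the quantifier condition ``$\forall k\in C$'' where it must mean ``$\forall x_k\notin C$'' for the stated example $i(\{x_k\})=\{p_{k-1}\}$ and for the Sturmian computations later to come out right; you read it with that correction, as intended.)
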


\begin{proof}
This is immediate from Theorem \ref{Classification}.
\end{proof}

Therefore, the proalgebraic variety of point modules of a monomial algebra is a projective limit of projective varieties, each of which is a finite union of products of projective spaces and points. This proalgebraic variety can be seen as a `linearization' of the underlying subshift of the maximal prolongable quotient of the given algebra.

\begin{exmpl} \label{exmpl:free_proj}
Let $F\left<\Sigma\right>$ where $\Sigma=\{x_1,\dots,x_d\}$ be a $d$-generated free algebra for $d\geq 2$. Then $\Sigma\Sigma\cdots$ is a tree over $F\left<\Sigma\right>$ and so $\mathcal{P}_n(F\left<\Sigma\right>)=\left(\mathbb{P}^{d-1}\right)^{\times(n+1)}$ and $\varprojlim \mathcal{P}_n(F\left<\Sigma\right>)=\mathbb{P}^{d-1}\times \mathbb{P}^{d-1}\times\cdots$.
\end{exmpl}

\begin{exmpl}
Let $A=F\left<x,y\right>/\left<y\right>^2$. A tree $C_0C_1\cdots$ over $A$ is given by $C_i=\{x\}$ for all $i$ except for at most one index $i=i_0$, for which $C_{i_0}$ is allowed to be $\{x,y\}$. Consequently, $\varprojlim\mathcal{P}(A)=\mathbb{P}^1\cup \mathbb{P}^1\cup \cdots$ is a countably infinite union of projective lines with a common intersection point.   
\end{exmpl}



\begin{exmpl}
Let $A=F\left<x,y\right>/\left<xy\right>$. A tree $C_0C_1\cdots$ over $A$ is given by either:
\begin{itemize}
    \item $C_i=\{y\}$ for all $i\geq 0$;
    \item $C_i=\{y\}$ for all $i<d$, then $C_{d}=\{x,y\}$ or $C_d=\{x\}$ and $C_i=\{x\}$ for all $i\geq d+1$ (possibly $d=0$);
    \item $C_i=\{x\}$ for all $i\geq 0$.
\end{itemize}
Therefore $\mathcal{P}_n(A)$ is a union of $n+1$ projective lines $L_0,\dots,L_n\cong \mathbb{P}^1$ where: 
\[ 
L_i\cap L_j= \begin{cases} 
      \{\bullet\} & \text{if}\ |i-j|=1 \\
      \emptyset & \text{if}\ |i-j|>1.
   \end{cases}
\]
\end{exmpl}

\section{Irreducibility and rigidity} \label{sec:irr free}


In this section we work over an algebraically closed field $F$.
A proalgebraic variety $\{(V_n,\varphi_n)\}_{n=1}^{\infty}$ is \emph{irreducible} if each $V_n$ is an irreducible algebraic variety.
As seen in Example \ref{exmpl:free_proj}, the proalgebraic variety of point modules of a free algebra $F\left<x_1,\dots,x_m\right>$ is an infinite product of $\mathbb{P}^{m-1}$'s, and is thus irreducible. There exist non-free prolongable monomial algebras with an irreducible proalgebraic variety of point modules; for instance, for the (right) prolongable monomial algebra $A=F\left<x_1,x_2,x_3\right>/\left<x_1^2,x_2x_1,x_3x_1\right>$ we have that $\mathcal{P}_n(A)=\mathbb{P}^2\times \left(\mathbb{P}^1\right)^{\times n-1}$, which is irreducible. However, algebras with irreducible varieties are close to being free: they are all `nilpotent by free'.
\begin{prop} \label{Irr}
Let $A$ be a prolongable monomial algebra. If $\mathcal{P}(A)$ is irreducible then there exists a nilpotent ideal $N\triangleleft A$ generated by a (possibly empty) subset of the set of monomial generators of $A$, such that $A/N\cong F\left<x_1,\dots,x_k\right>$ for some $k$.
\end{prop}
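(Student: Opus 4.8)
The plan is to read the structure of $A$ off the combinatorial description of the truncated moduli spaces in Theorem~\ref{decomposition}. Recall that $\mathcal{P}_n(A)=\bigcup_{\mathcal{T}=C_0C_1\cdots\ \text{tree over}\ A} i(C_0)\times\cdots\times i(C_n)$, and that as $\mathcal{T}$ ranges over trees over $A$ only finitely many tuples $(C_0,\dots,C_n)$ of nonempty subsets of $\{x_1,\dots,x_m\}$ occur, so this is a \emph{finite} union of closed irreducible subvarieties (each $i(C_j)$ is a projective-linear subspace, and $i(C)\subseteq i(C')$ iff $C\subseteq C'$). A finite union of irreducible closed sets is irreducible only when one of them contains all the others; so irreducibility of $\mathcal{P}_n(A)$ forces the existence of a tree over $A$ whose first $n+1$ layers $C_0,\dots,C_n$ contain the corresponding layers of every tree over $A$. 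Writing $E_j$ for the union of the $j$-th layers taken over all trees over $A$, this says that for every $n$ there is a tree over $A$ whose first $n+1$ layers are exactly $E_0,\dots,E_n$. Since any single aligned finite factor of the formal word $\mathcal{E}:=E_0E_1\cdots$ involves only finitely many layers, it is an aligned factor of one of these finite-level trees and hence nonzero in $A$; therefore $\mathcal{E}$ is itself a tree over $A$ — the maximal one — and $\mathcal{P}_n(A)=i(E_0)\times\cdots\times i(E_n)$.

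Next I would identify the layers concretely: $x\in E_j$ if and only if some $w\in\Subshift(A)$ has $w[j]=x$. (Given $x$ in the $j$-th layer of a tree, build an infinite word of that tree passing through $x$ at position $j$, using that all aligned factors of a tree over $A$ are nonzero in $A$, so the word lies in $\Subshift(A)$; conversely use the singleton tree of $w$.) Because $\Subshift(A)$ is shift-invariant, $x\in E_{j+1}$ implies $x\in E_j$, so $E_0\supseteq E_1\supseteq E_2\supseteq\cdots$ is a descending chain of subsets of a finite set and hence stabilizes: there is $j_0$ with $E_j=E:=E_{j_0}$ for all $j\geq j_0$. Let $N\triangleleft A$ be the two-sided ideal generated by the generators lying in $\{x_1,\dots,x_m\}\setminus E$; the remaining steps are to show $N$ is nilpotent and $A/N$ is free on $E$.

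For nilpotence: by the descending stabilization and the description of $E_j$, a generator outside $E$ can occur in a word of $\Subshift(A)$ only at a position $<j_0$, hence at most $j_0$ times. Since $A$ is prolongable we have $A=A_{\Subshift(A)}$, so every nonzero monomial of $A$ is a finite factor of some word of $\Subshift(A)$ and therefore involves at most $j_0$ generators from $\{x_1,\dots,x_m\}\setminus E$. As $N^{j_0+1}$ is spanned by monomials involving at least $j_0+1$ such generators, $N^{j_0+1}=0$. For the quotient: $A/N$ is presented on the generators in $E$ modulo the monomial relations of $A$ supported on those generators; but every word on the letters of $E$ is an aligned factor of $\mathcal{E}$ at positions $j_0,j_0+1,\dots$, where every layer equals $E$, hence is nonzero in $A$. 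So there are no such relations and $A/N\cong F\langle E\rangle$. Finally $E\neq\varnothing$, since the irreducibility hypothesis forces every $\mathcal{P}_n(A)$ nonempty, hence $A$ infinite-dimensional and $\Subshift(A)\neq\varnothing$, so $E=E_{j_0}$ is nonempty; thus $A/N\cong F\langle x_1,\dots,x_k\rangle$ with $k=|E|\geq 1$. (One may also first pass to a non-redundant presentation, or simply absorb any redundant generators into $N$.)

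The step I expect to be the crux is deducing that the formal maximal word $\mathcal{E}=E_0E_1\cdots$ is an honest tree over $A$: irreducibility of a single $\mathcal{P}_n(A)$ only yields a tree agreeing with $\mathcal{E}$ up to level $n$, and the point is that every finite aligned factor of $\mathcal{E}$ sees only finitely many layers and so is certified nonzero by one of these finite approximations. This genuinely uses irreducibility at \emph{all} levels — without it $\mathcal{E}$ need not be a tree (e.g.\ for $F\langle x,y\rangle/\langle xy,yx\rangle$, where $E_j=\{x,y\}$ for all $j$ but $xy=0$, and correspondingly $\mathcal{P}_1$ is a pair of points) — together with shift-invariance of $\Subshift(A)$ to obtain the descending chain of layers. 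Everything downstream is then elementary bookkeeping with monomials.
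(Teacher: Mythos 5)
Your proof is correct and follows the same overall strategy as the paper's: use Theorem~\ref{decomposition} to write each $\mathcal{P}_n(A)$ as a finite union of irreducible products $i(C_0)\times\cdots\times i(C_n)$, deduce from irreducibility the existence of a ``maximal'' tree, use shift-invariance to show its layers form a decreasing chain, and then split $A$ along the stabilized layer. The only real difference is presentational: where the paper invokes K\"onig's lemma to extract a tree that is $n$-maximal simultaneously for all $n$, you observe directly that the union $\mathcal{E}=E_0E_1\cdots$ of all trees' layers is itself a tree over $A$ (any finite aligned factor of $\mathcal{E}$ sees only finitely many layers and is therefore witnessed by some $n$-maximal tree), which makes the step constructive and removes the appeal to K\"onig; and where the paper uses that the shift $C_1C_2\cdots$ of the maximal tree is again a tree to conclude $C_{j+1}\subseteq C_j$, you derive $E_{j+1}\subseteq E_j$ from the identification $E_j=\{w[j]:w\in \Subshift(A)\}$ and shift-invariance of $\Subshift(A)$. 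These two deductions are equivalent in content; yours is a mild streamlining of the paper's argument rather than a different route.
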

\begin{proof}
Let $\Sigma=\{x_1,\dots,x_m\}$ and let $A=F\left<\Sigma\right>/I$ be a prolongable monomial algebra with an irreducible $\mathcal{P}(A)$. By Theorem \ref{decomposition},
\[ 
\mathcal{P}_n(A) = \bigcup_{\substack{\mathcal{T}=C_0C_1\cdots \\ \text{tree over}\ A}} i(C_0)\times i(C_1)\times \cdots \times i(C_n). 
\]
By assumption, $\mathcal{P}_n(A)$ is irreducible and hence, since it is a union of finitely many subsets, one of them must contain all of the others. This means that there exists a tree over $A$, say, $\mathcal{T}_{\text{max}}^{(n)}=C_0C_1\cdots$ such that for every other tree $\mathcal{T}=C'_0C'_1\cdots$ over $A$ it holds that $C'_i\subseteq C_i$ for all $0\leq i\leq n$; we say that $\mathcal{T}_{\text{max}}^{(n)}$ is an \textit{$n$-maximal} tree. If $\mathcal{T}$ is $n'$-maximal for some $n'\geq n$ then it is also $n$-maximal. It follows by K\"onig's lemma 
that there exists a tree $\mathcal{T}=C_0C_1\cdots$ which is $n$-maximal for all $n\geq 0$.
Now $\mathcal{T}'=C_1C_2\cdots$ is also a tree over $A$ and therefore by $n$-maximality of $\mathcal{T}$ we get that the $i$-th set of $\mathcal{T}'$, which is $C_{i+1}$, is contained in the $i$-th set of $\mathcal{T}$, which is $C_i$. Hence $C_0\supseteq C_1\supseteq \cdots$. In particular, there exists some $d\geq 0$ such that $C_d=C_{d+1}=\cdots$. Let $k=|C_d|$. 
Since $A$ is prolongable, every non-zero monomial in it factors some infinite word from some tree over $A$, and therefore factors some infinite word from the tree $\mathcal{T}$; hence $A=A_\mathcal{T}$.
Since every monomial in the free algebra $F\left<C_d\right>$ factors some infinite word from $C_dC_{d+1}\cdots$, it follows that $A/\left<\Sigma\setminus C_d\right>\cong F\left<C_d\right>\cong F\left<x_1,\dots,x_k\right>$. Moreover, the ideal $N=\left<\Sigma\setminus C_d\right>\triangleleft A=A_{\mathcal{T}}$ is nilpotent, as $N^{d+1}=0$.
\end{proof}

\begin{rem} \label{ShouldBeProp}
The proof of Proposition \ref{Irr} gives us the following: if $\mathcal{P}(A)$ is irreducible then $\mathcal{P}_n(A)=i(C_0)\times \cdots \times i(C_n)$ where $\mathcal{T}=C_0C_1\cdots$ is an $n$-maximal tree over $A$ for all $n$ (such a tree exists, as shown in the proof) and $A=A_{\mathcal{T}}$.
Furthermore, if $\dim i(C_n)=k$ for all $n\geq 0$ then $C_0=C_1=\cdots$ and $A\cong F\left<x_1,\dots,x_k\right>$.
\end{rem}

Notice that by Proposition \ref{Irr}, there are only countably many monomial algebras whose proalgebraic variety of point modules is irreducible.

\begin{thm} \label{free_rigid}
If $A$ is a prolongable monomial algebra with the property that $\mathcal{P}(A)\cong \mathcal{P}(F\left<x_1,\dots,x_d\right>)$, then $A\cong F\left<x_1,\dots,x_d\right>$.
\end{thm}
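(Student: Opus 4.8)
The plan is to combine the structural description of irreducible proalgebraic varieties from Proposition~\ref{Irr} and Remark~\ref{ShouldBeProp} with the fact that an isomorphism of proalgebraic varieties is recorded dimension-by-dimension along the inverse systems. First I would observe that $\mathcal{P}(F\langle x_1,\dots,x_d\rangle)$ is irreducible: by Example~\ref{exmpl:free_proj} it equals $(\mathbb{P}^{d-1})^{\times(n+1)}$ at level $n$, a product of irreducible varieties. Hence if $\mathcal{P}(A)\cong \mathcal{P}(F\langle x_1,\dots,x_d\rangle)$ then each $\mathcal{P}_n(A)$ is isomorphic to $(\mathbb{P}^{d-1})^{\times(n+1)}$, so in particular $\mathcal{P}(A)$ is irreducible and Proposition~\ref{Irr} applies: there is an $n$-maximal tree $\mathcal{T}=C_0C_1\cdots$ over $A$, valid for all $n$, with $A=A_{\mathcal{T}}$, $C_0\supseteq C_1\supseteq\cdots$, and $\mathcal{P}_n(A)=i(C_0)\times\cdots\times i(C_n)$. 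Here $i(C_j)\cong \mathbb{P}^{|C_j|-1}$, so $\mathcal{P}_n(A)\cong \mathbb{P}^{|C_0|-1}\times\cdots\times\mathbb{P}^{|C_n|-1}$.

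Next I would pin down the sizes $|C_j|$ by comparing these product decompositions. The point is that the factorization of a product of projective spaces into projective-space factors is essentially unique (up to reordering), so from $\mathbb{P}^{|C_0|-1}\times\cdots\times\mathbb{P}^{|C_n|-1}\cong (\mathbb{P}^{d-1})^{\times(n+1)}$ one gets that the multiset $\{|C_0|-1,\dots,|C_n|-1\}$ equals $\{d-1,\dots,d-1\}$, i.e.\ $|C_j|=d$ for all $0\le j\le n$; since $n$ is arbitrary, $|C_j|=d$ for every $j$. Alternatively, and perhaps more cleanly, one can avoid uniqueness-of-factorization subtleties by a numerical/cohomological invariant: compare Picard ranks (or second Betti numbers, or the number of irreducible divisors up to algebraic equivalence) of $\mathcal{P}_n(A)$ and $(\mathbb{P}^{d-1})^{\times(n+1)}$ — both equal $n+1$ — which gives the count of factors, and compare dimensions, $\sum_j(|C_j|-1)=(n+1)(d-1)$, together with the nesting $|C_0|\ge |C_1|\ge\cdots$ to force $|C_j|=d$ for all $j$. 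Either route yields $|C_j|=d$ for all $j$.

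Now I would invoke the second half of Remark~\ref{ShouldBeProp} directly: since $\dim i(C_n)=|C_n|-1=d-1$ for all $n\ge 0$, it follows that $C_0=C_1=\cdots$ (a constant nested sequence of equal size is constant) and $A\cong F\langle x_1,\dots,x_d\rangle$. This completes the argument.

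The main obstacle is making the step ``$\mathcal{P}_n(A)\cong (\mathbb{P}^{d-1})^{\times(n+1)}$ forces $|C_j|=d$'' rigorous without hand-waving about uniqueness of factorization of varieties. The safest fix is the invariant-based approach: an isomorphism of varieties induces an isomorphism on Picard groups, and $\mathrm{Pic}(\mathbb{P}^{a_0}\times\cdots\times\mathbb{P}^{a_n})\cong\mathbb{Z}^{n+1}$ has rank equal to the number of factors, so both sides have exactly $n+1$ factors; then equality of dimensions plus the monotonicity $|C_0|\ge\cdots\ge|C_n|$ (which is automatic here, but in fact we don't even need monotonicity once we know there are exactly $n+1$ factors and the total dimension is $(n+1)(d-1)$ — wait, that alone is not enough, so monotonicity or the multiset argument is genuinely used) finishes it. Everything else is a routine citation of Example~\ref{exmpl:free_proj}, Proposition~\ref{Irr}, and Remark~\ref{ShouldBeProp}.
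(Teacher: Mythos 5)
There is a genuine gap, and it concerns the very first non-trivial step. You write ``Hence if $\mathcal{P}(A)\cong \mathcal{P}(F\left<x_1,\dots,x_d\right>)$ then each $\mathcal{P}_n(A)$ is isomorphic to $(\mathbb{P}^{d-1})^{\times(n+1)}$.'' This does not follow from an isomorphism of proalgebraic varieties as defined in the paper. A morphism $\mathcal{V}\to\mathcal{W}$ of proalgebraic varieties comes with an order-preserving reindexing function $g$, and consists of morphisms $V_{g(n)}\to W_n$; an isomorphism is a pair of such morphisms whose induced maps on $\varprojlim$ are mutual inverses. Nothing forces the individual levels to be isomorphic. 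In fact the example the paper gives right after the definition---$V_n=(\mathbb{P}^d)^{\times n}$ versus $W_n=(\mathbb{P}^d)^{\times a_n}$ for an arbitrary increasing sequence $a_n$---shows two isomorphic proalgebraic varieties whose $n$-th terms differ. So your reduction to ``$\mathcal{P}_n(A)\cong(\mathbb{P}^{d-1})^{\times(n+1)}$, now apply unique factorization/Picard rank'' is not available; both the irreducibility of each $\mathcal{P}_n(A)$ and the computation of the dimensions $\dim i(C_j)$ must be extracted from the compatible families $f_n\colon\mathcal{P}_{g(n)}(F\left<x_1,\dots,x_d\right>)\to\mathcal{P}_n(A)$ and $f'_n\colon\mathcal{P}_{g'(n)}(A)\to(\mathbb{P}^{d-1})^{\times(n+1)}$ directly.

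This is precisely what the paper's proof does, and it is where the real content lies. Irreducibility is obtained not from a level isomorphism but from the fact that $\mathcal{P}_n(A)$ is a stabilizing union of irreducible images $\Im(\pi^m_n\circ f_m)$. Then, writing $\mathcal{P}_n(A)=i(C_0)\times\cdots\times i(C_n)$ with $\dim i(C_j)=a_j$ eventually constant equal to $a_r$, the bound $a_0\le d-1$ is deduced from the fact that a regular map $\mathbb{P}^{a_0}\to\mathbb{P}^{d-1}$ is constant when $a_0>d-1$, applied to the coordinate projections of $f'_n(-,p)$, while the bound $a_r\ge d-1$ uses the rigidity theorem for maps from products of projective spaces to show that otherwise $\dim\Im(f_n)$ stays bounded, contradicting surjectivity onto $(\mathbb{P}^{d-1})^{\times(m+1)}$ for $m\gg 1$. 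Your Picard-rank/unique-factorization idea is a fine way to compare two honest isomorphic products of projective spaces, but you never have that data in hand; the argument has to be rebuilt around the morphisms $f_n,f'_n$ and the reindexing, which is exactly the missing piece.
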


\begin{proof}
Fix morphisms: $$ \{f_n\colon \mathcal{P}_{g(n)}(F\left<x_1,\dots,x_d\right>)=(\mathbb{P}^{d-1})^{\times \left(g(n)+1\right)}\rightarrow \mathcal{P}_n(A)\} $$ and: $$ \{f'_n\colon \mathcal{P}_{g'(n)}(A)\rightarrow \mathcal{P}_n(F\left<x_1,\dots,x_d\right>)=(\mathbb{P}^{d-1})^{\times (n+1)}\} $$ such that the induced maps between $\varprojlim \mathcal{P}(F\left<x_1,\dots,x_d\right>)$ and $\varprojlim \mathcal{P}(A)$ are mutual inverses.
For each $n$, 
\[
\mathcal{P}_n(A) = \bigcup_{m=n}^{\infty} \Im\left(\pi^m_n \circ f_m\right)
\]
(where $\pi^m_n\colon \mathcal{P}_m(A)\rightarrow \mathcal{P}_n(A)$ is the natural projection.) Since each $\mathcal{P}_m(A)$ is an irreducible projective variety, each $\Im\left(\pi^m_n \circ f_m\right)\subseteq \mathcal{P}_n(A)$ is an irreducible projective variety and hence the union stabilizes, and $\mathcal{P}_n(A)=\Im\left(\pi^{m_0}_n \circ f_{m_0}\right)$ for some $m_0\geq n$ is irreducible itself and consequently, $\mathcal{P}(A)$ is irreducible.
It now follows from Proposition \ref{Irr} that there exists some $r\geq 0$ and $a_0 \geq \cdots\geq a_r$ such that for every $n$,
\[ 
\mathcal{P}_n(A)= i(C_0)\times \cdots \times i(C_n) 
\] 
where: 
\[
\dim i(C_0)=a_0,\dots, \dim i(C_r)=a_r,\dots, \dim i(C_n)=a_r.
\] 
By Remark \ref{ShouldBeProp}, it suffices to show that $a_0=\cdots=a_r=d-1$, from which it follows that $A\cong F\left<x_1,\dots,x_d\right>$.

If $a_0>d-1$ then for each $n$, $\mathcal{P}_{n}(A)=\mathbb{P}^{a_0}\times i(C_1)\times \cdots \times i(C_n)$ and for each $p\in i(C_1)\times \cdots \times i(C_{g'(n)})$ we have a regular map:
\begin{align}
f'_n(-,p)\colon \mathbb{P}^{a_0}\rightarrow (\mathbb{P}^{d-1})^{\times (n+1)}. \label{rigidity1}
\end{align}
Let $\text{pr}_0,\dots,\text{pr}_{n+1}\colon \left(\mathbb{P}^{d-1}\right)^{\times (n+1)}\rightarrow \mathbb{P}^{d-1}$ be the standard projections. Then each $\text{pr}_i \circ f'_n(-,p)\colon \mathbb{P}^{a_0}\rightarrow \mathbb{P}^{d-1}$ is constant since $a_0>d-1$, and hence so must be $f'_n(-,p)$ itself. (Recall that a regular map $\mathbb{P}^{n_1}\rightarrow \mathbb{P}^{n_2}$ with $n_1>n_2$ must be constant, since every homogeneous ideal in $F[t_0,\dots,t_{n_1}]$ generated by $n_2+1$ homogeneous polynomials vanishes at some point, so the map is not defined at this point.) 
Since for every $n$ the map $f'_n(-,p)$ as above (\ref{rigidity1}) is constant for each fixed $p$, it follows that for every $\tilde{p}\in i(C_1)\times i(C_2)\times \cdots$ the points $(0,\tilde{p}),(1,\tilde{p})\in \varprojlim \mathcal{P}(A)$ are mapped to the same point under $\varprojlim f'_n$, a contradiction. Hence $a_0\leq d-1$.

It remains to show that $a_r\geq d-1$; otherwise, assume that $a_r<d-1$. We may readily assume that $d\geq 2$. We claim that for each $r\leq k\leq n$ the natural projection:
\begin{align}
\text{pr}_{i(C_k)} \circ f_{n}\colon (\mathbb{P}^{d-1})^{\times (g(n)+1)}\rightarrow 
i(C_k) \cong \mathbb{P
}^{a_r} \label{rigidity2}
\end{align}
is constant. If $a_r=0$, this is clear; assume $a_r>0$ and let $l=g(n)+1$. 
If $l=1$ we get a regular map $\mathbb{P}^{d-1}\rightarrow \mathbb{P}^{a_0}$ which is thus constant, since $a_0<d-1$. 
For $l\geq 2$, it follows by induction that the map $\text{pr}_{i(C_k)} \circ f_{n}$ is constant on the fibers:
\[
(\mathbb{P}^{d-1})^{\times (l-1)}\times \{w\}\ \text{and}\  \{v\}\times \mathbb{P}^{d-1}
\]
for every $w\in \mathbb{P}^{d-1}$ and $v\in (\mathbb{P}^{d-1})^{\times (l-1)}$. By the rigidity theorem (see e.g.  \cite[Theorem~2.1]{MilneRigidity}), such a map must be constant. Thus $\text{pr}_{i(C_k)}\circ f_{n}$ is constant for each $r\leq k\leq n$. It follows that: 
\[
\dim \Im (f_n)\leq a_0+\cdots+a_{r-1}=O_n(1).
\]
Now take $n=g'(m)$ for some $m\gg 1$. Then $f'_{m}\circ f_{n}=\pi^{g(n)}_m$ must have image $(\mathbb{P}^{d-1})^{m+1}$ of dimension $(d-1)(m+1)$, while $\Im(f_n)$ has a bounded dimension, leading to a contradiction. Hence, $a_0=\cdots =a_r=d-1$, completing the proof.
\end{proof}



\section{Finitely presented monomial algebras}

In this section we are interested in the level of complexity of the spaces of point modules and of truncated point modules of monomial algebras,  and in particular, in their irreducible components. These have attracted considerable attention, partially thanks to their occurrence as degenerations of noncommutative surfaces (\cite{Smith_Skl,Walton_Skl,Walton_Skl_cor}; see also \cite{Smith}). 

\begin{prop} \label{prop:pro fin prsntd}
Let $A=F\left<x_1,\dots,x_m\right>/\left<w_1,\dots,w_r\right>$ be a finitely presented monomial algebra. Then $\sing(A)$ is finitely generated as a left ideal.
\end{prop}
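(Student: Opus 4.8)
The plan is to exploit that $\sing(A)$ is a monomial (two-sided) ideal by Proposition \ref{sing is mon}, hence is spanned by the nonzero monomials of $A$ it contains, and to show that for a finitely presented monomial algebra whether a nonzero monomial lies in $\sing(A)$ is detected by a suffix of bounded length. Recall that for a nonzero monomial $u$ of $A$ one has $u\in\sing(A)$ if and only if $uA_d=0$ for some $d\geq 1$, and that a monomial of the free algebra vanishes in $A$ if and only if it contains one of $w_1,\dots,w_r$ as a factor. Set $N=\max_i|w_i|$. (If there are no relations then $A$ is free, hence prolongable, so $\sing(A)=0$ and there is nothing to prove; thus assume $N\geq 1$.)

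First I would prove the following localization statement. Let $v$ be a nonzero monomial of $A$ with $|v|\geq N-1$, and write $v=v'v^\sharp$ where $v^\sharp$ is the suffix of $v$ of length $N-1$; note $v^\sharp$ is again nonzero, being a factor of $v$. Claim: $v\in\sing(A)$ if and only if $v^\sharp\in\sing(A)$. The implication $(\Leftarrow)$ is immediate, since $v^\sharp A_d=0$ forces $vz=v'(v^\sharp z)=0$ for every length-$d$ monomial $z$, so $vA_d=0$. For $(\Rightarrow)$, suppose $vA_d=0$ and let $z$ be a length-$d$ monomial with $v^\sharp z\neq 0$; I claim $vz=v'v^\sharp z\neq 0$, contradicting $vA_d=0$, so that necessarily $v^\sharp A_d=0$. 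Indeed, any factor of $v'v^\sharp z$ of length at most $N$ either lies inside $v$, hence is nonzero because $v$ is, or else it uses a letter of $z$, in which case its leftmost position is at least $|v|-N+2$, the first position of $v^\sharp$; it then lies inside $v^\sharp z$ and is nonzero because $v^\sharp z$ is. So no factor of $v'v^\sharp z$ contains any $w_i$, i.e.\ $vz\neq 0$. This is exactly where finite presentation enters, through the uniform bound $N$.

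With the localization in hand I would conclude as follows. Let $G$ be the set of all nonzero monomials of $A$ lying in $\sing(A)$ and of length at most $N-1$; since there are only finitely many monomials of bounded length, $G$ is finite. I claim $\sing(A)=AG$, the left ideal generated by $G$. The inclusion $AG\subseteq\sing(A)$ holds because $G\subseteq\sing(A)$ and $\sing(A)$ is a left ideal. For the reverse inclusion, $\sing(A)$ is spanned by the nonzero monomials of $A$ it contains, so it suffices to place each such monomial $v$ into $AG$: if $|v|\leq N-1$ then $v\in G$; and if $|v|\geq N-1$, writing $v=v'v^\sharp$ with $|v^\sharp|=N-1$, the localization statement gives $v^\sharp\in\sing(A)$, hence $v^\sharp\in G$, and since $v=v'v^\sharp\neq 0$ we get $v\in Av^\sharp\subseteq AG$. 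Thus $\sing(A)$ is finitely generated as a left ideal, generated by $G$.

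The main obstacle is the localization statement, that is, recognizing that the ability of a nonzero monomial to be prolonged indefinitely to the right depends only on its last $N-1$ letters. The only delicate point is the bookkeeping for factors straddling the boundary between $v$ and the appended word $z$, which is handled by the position count above; the remaining steps are formal once $\sing(A)$ is known to be a monomial ideal.
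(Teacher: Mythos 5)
Your proof is correct and takes essentially the same approach as the paper's: both arguments show that membership of a nonzero monomial in $\sing(A)$ is detected by a suffix of bounded length (you use length $N-1$ where $N=\max_i|w_i|$, while the paper uses a suffix of length $d$ for any bound $d\geq N$), and then conclude that $\sing(A)$ is generated as a left ideal by the finitely many monomials of bounded length it contains.
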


\begin{proof}
Recall that by Proposition \ref{sing is mon}, $\sing(A)$ is generated by monomials. Let $u\neq 0$ be a monomial of length $e\geq d$ (we retain the notation of $d$ as a bound on the lengths of all of the monomial relations $w_1,\dots,w_r$). Decompose $u=u_0u_1$ where $|u_0|=e-d$ (possibly, $u_0$ is the empty monomial) and $|u_1|=d$. Then $u\in \sing(A)$ if and only if $u_1\in \sing(A)$. Indeed, the `if' part is obvious since $\sing(A)$ is an ideal; for the `only if' part, assume that $u\cdot A_{\geq r}=0$ for some $r\geq 1$. Then for every monomial $p\in A_{\geq r}$ there is a relation $w_j$ which factors $up=u_0u_1p$. Since $w_j$ cannot factor $u_0u_1$ and $|u_1|=d\geq |w_j|$, it follows that $w_j$ factors $u_1p$, which is thus equal to zero in $A$; hence $u_1\in \sing(A)$. 
Therefore $\sing(A)\cap A_{\geq d} = A \cdot \left(\sing(A)\cap A_d\right)$. Thus $\sing(A)$ is generated as a left ideal by the set $\sing(A)\cap A^{\leq d}$, which is finite.
\end{proof}

Therefore, if $A$ is a finitely presented monomial algebra then so is its maximal prolongable quotient $A/\sing(A)$.

For a connected graded algebra $A$ let: 
\begin{eqnarray*}
a_n(A) & = & \#\{\text{Irreducible components of}\ \mathcal{P}_n(A)\}; \\
\bar{a}_n(A) & = & \#\{\text{Irreducible components of}\ \overline{\mathcal{P}}_n(A)\}.
\end{eqnarray*}

\begin{thm} \label{rational}
Let $A$ be a finitely presented monomial algebra.
Then the generating functions: $$ \sum_{n=0}^{\infty} a_n(A)t^n\ \text{and}\ \sum_{n=0}^{\infty} \bar{a}_n(A)t^n $$ are rational functions.
\end{thm}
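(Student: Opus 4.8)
The plan is to reduce the rationality of these generating functions to the regularity of certain languages, using the combinatorial description of $\mathcal{P}_n(A)$ and $\overline{\mathcal{P}}_n(A)$ from Theorem \ref{decomposition}. Recall that $\mathcal{P}_n(A)$ is a union of products $i(C_0)\times\cdots\times i(C_n)$ over trees $\mathcal{T}=C_0C_1\cdots$ over $A$, and similarly $\overline{\mathcal{P}}_n(A)$ is a union over all sequences $C_0,\dots,C_n$ of non-empty subsets such that every monomial in $C_0\cdots C_n$ is non-zero in $A$. Since each $i(C)\subseteq \mathbb{P}^{m-1}$ is a linear subspace (a coordinate subspace), hence irreducible, each product $i(C_0)\times\cdots\times i(C_n)$ is irreducible; so the irreducible components of $\mathcal{P}_n(A)$ are exactly the maximal members of the finite poset of such products appearing in the union, ordered by inclusion, and $i(C_0)\times\cdots\times i(C_n)\subseteq i(C_0')\times\cdots\times i(C_n')$ iff $C_i'\subseteq C_i$ for all $i$. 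Thus $a_n(A)$ counts the \emph{maximal} length-$(n+1)$ paths $C_0\cdots C_n$ in the tree-of-subsets, where maximality is coordinatewise inclusion among paths of the same length; and $\bar a_n(A)$ counts the maximal admissible sequences $C_0\cdots C_n$ in the analogous sense for $\overline{\mathcal{P}}_n(A)$.

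Next I would encode these path-counting problems via finite automata. Since $A$ is finitely presented, by Proposition \ref{prop:pro fin prsntd} its maximal prolongable quotient $A/\sing(A)$ is finitely presented as well, and $\mathcal{P}_n(A)=\mathcal{P}_n(A/\sing(A))$ by Lemma \ref{lem_prolongable} (the prolongable radical is in the annihilator of every point module), so we may assume $A$ is prolongable and finitely presented, with all relations of length $\le d$. The key point is that whether a sequence $C_0\cdots C_n$ of subsets of $\Sigma$ extends to a tree over $A$ is a \emph{local} condition once we know the last $d-1$ subsets, combined with a prolongability (non-deadlock) condition: $C_0\cdots C_n$ is a prefix of a tree iff (i) every monomial in $C_i\cdots C_{i+|w_j|-1}$ avoids every relation $w_j$, which depends only on a window of length $\le d$, and (ii) the state reached can be continued forever, which since $A$ is prolongable and finitely presented is again decidable by a finite-state condition (the set of "live" windows is recognizable). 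Hence the language $L\subseteq (2^\Sigma\setminus\{\emptyset\})^*$ of tree-prefixes over $A$ is regular; let $\mathcal{A}$ be a deterministic finite automaton recognizing it. Analogously, the language $\overline L$ of admissible sequences for $\overline{\mathcal{P}}_n(A)$ is regular (here condition (i) alone suffices—no prolongability is needed).

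To pass from "count words of length $n+1$ in $L$" to "count \emph{maximal} words of length $n+1$", I would run a standard product/subset construction: build an automaton over the alphabet $(2^\Sigma\setminus\{\emptyset\})\times(2^\Sigma\setminus\{\emptyset\})$ recognizing pairs $(C_0\cdots C_n,\ C_0'\cdots C_n')$ with both words in $L$ and $C_i\subsetneq_{\text{coordinatewise, strictly somewhere}} C_i'$—i.e., $C_i\subseteq C_i'$ for all $i$ with strict inclusion in at least one coordinate; this is a regular condition. A word $w\in L$ of length $n+1$ is maximal iff it is \emph{not} the first coordinate of any accepted pair, and by projection (which preserves regularity) the set $\mathrm{Dom}$ of non-maximal words is regular, so the set of maximal words is $L\setminus\mathrm{Dom}$, again regular. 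Then $a_n(A)$ is the number of length-$(n+1)$ words accepted by a fixed DFA, and the generating function $\sum_n a_n(A)t^n$ is rational by the transfer-matrix method: if $M$ is the (finite) transition matrix of that DFA and $v,u$ the start/accept vectors, then $a_n(A)=u^{T}M^{n+1}v$, whose generating function is $\frac{u^T M(I-tM)^{-1}v}{1}$, a rational function of $t$; and likewise for $\bar a_n(A)$ using $\overline L$. The main obstacle I anticipate is verifying carefully that the tree-prefix language $L$ is genuinely regular: the subtlety is the prolongability clause (ii)—that a window can be prolonged to an infinite tree path—and one must confirm this is finite-state, which follows because $A$ is finitely presented (so "deadlock" windows form a computable, suffix-closed, hence recognizable set) and because, after replacing $A$ by its prolongable quotient, every non-zero monomial does prolong; getting the window bookkeeping and the handling of short words ($n<d$) exactly right is where the real care goes, though it is routine automata theory once set up.
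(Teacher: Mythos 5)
Your proof is correct and shares the backbone of the paper's argument: both exploit the fact that, since $A$ is finitely presented, coherence is a sliding condition on windows of length $d$ and prolongability is detected by the length-$(d-1)$ suffix, so everything is governed by a finite-state machine and the count reduces to a matrix power. What differs is the treatment of maximality. The paper proves two explicit combinatorial facts (Lemma~\ref{lem:maxtopremax} and Lemma~\ref{lem:premaxtopremax}) showing that coherent- or prolongable-maximality of a length-$(n+1)$ sequence decomposes into a local \emph{premaximal} rule, checked on sliding windows of length $2d-2$, plus a \emph{postmaximal} boundary condition; this is then realized as a quiver $\overline{Q}$ (resp.\ $Q$) whose vertices are windows, and $\bar a_n$ (resp.\ $a_n$) is read off as the number of walks from premaximal sources to postmaximal sinks via powers of the adjacency matrix. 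You instead observe that the languages of coherent and of prolongable sequences over the alphabet $2^{\Sigma}\setminus\{\emptyset\}$ are regular, then recognize the non-maximal ones by a product automaton on pairs $(C_\bullet,C'_\bullet)$ with componentwise containment that is strict somewhere, followed by projection to the first coordinate and complementation inside the original language; rationality then follows by the transfer-matrix method applied to the DFA produced by these closure operations. Your route substitutes black-box closure properties of regular languages (product, projection, complement) for the paper's explicit combinatorial characterization of maximality, trading concreteness for brevity: the transfer matrix emerges implicitly rather than as the adjacency matrix of an explicitly described quiver. Both are valid proofs of the theorem.
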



By Ufnarovskii's work, it is known that the Hilbert series $$H_A(t)=\sum_{n=0}^{\infty} \left(\dim_F A_n\right)t^n$$ of a finitely presented monomial algebra $A$ is rational (\cite{Uf1,Uf2}; see also \cite{BBL}).

For the rest of the section, fix a finitely presented monomial algebra $A=F\left<x_1,\dots,x_m\right>/\left<w_1,\dots,w_r\right>$ and let $a_n=a_n(A),\bar{a}_n=\bar{a}_n(A)$. By Proposition \ref{prop:pro fin prsntd}, we may assume that $A$ is prolongable. Let $d\geq 2$ be a bound on the lengths of all defining relations, that is, $\deg(w_1),\dots,\deg(w_r)\leq d$.
Let $\Sigma=\{x_1,\dots,x_m\}$ and let $\mathcal{S}=P(\Sigma)\setminus \{\emptyset\}$. 
A sequence $(C_1,\dots,C_k)\in \mathcal{S}^k$ is \textit{coherent} if all words in $C_1\cdots C_k$ are non-zero in $A$. 
Notice that a monomial in $\Sigma$ is non-zero in $A$ if and only if all of its length-$d$ factors are non-zero, and therefore a sequence of subsets of $\Sigma$ is coherent if and only if all of its length-$d$ sub-sequences are coherent.

A sequence $(C_1,\dots,C_k)\in \mathcal{S}^k$ is \textit{prolongable} if there exist sets $C_{k+1},C_{k+2},\dots\in \mathcal{S}$ such that $C_1C_2\cdots$ is a tree over $A$. Every prolongable sequence is coherent. A coherent sequence is prolongable if and only if its length-$(d-1)$ suffix is a prolongable sequence, since if $(C_1,\dots,C_{k+d-1})$ is coherent and $C_{k+1}C_{k+2}\cdots$ is a tree then $C_1C_2\cdots$ is a tree, since all of its length-$d$ sub-sequences are coherent.

We say that $(C_1,\dots,C_k)\preceq (C'_1,\dots,C'_k)$ if $C_i\subseteq C'_i$ for all $1\leq i\leq k$. A coherent (resp. prolongable) sequence is \textit{coherent-maximal} (resp. \textit{prolongable-maximal}) if it is $\preceq$-maximal among all coherent (resp. prolongable) sequences of the same length.

By Theorem \ref{decomposition},
\begin{eqnarray*}
\bar{a}_n & = & \#\{\text{Coherent-maximal sequences}\ (C_0,\dots,C_n)\},\ \text{and} \\
a_n & = & \#\{\text{Prolongable-maximal sequences}\ (C_0,\dots,C_n)\}.
\end{eqnarray*}

A sequence $(C_1,\dots,C_{k+d-1})$ is \textit{coherent-premaximal} (respectively \textit{prolongable-premaximal}) if it is coherent (respectively prolongable) and $(C_1,\dots,C_k)$ is maximal among sequences $(X_1,\dots,X_k)$ such that $(X_1,\dots,X_k,C_{k+1},\dots,C_{k+d-1})$ is coherent (respectively prolongable).

A sequence $(C_1,\dots,C_{k+d-1})$ for $k\geq d-1$ is \textit{coherent-postmaximal} (respectively \textit{prolongable-postmaximal}) if it is coherent (respectively prolongable) and $(C_{k+1},\dots,C_{k+d-1})$ is maximal among all of the sequences $(X_1,\dots,X_k)$ such that $(C_{k-d+2},\dots,C_k,X_1,\dots,X_{d-1})$ is coherent (respectively prolongable).

\begin{lem} \label{lem:maxtopremax}
Let $n\geq d-1$. A 
sequence $\vec{C}=(C_0,\dots,C_n)$ is coherent-maximal (respectively prolongable-maximal) if and only if:
\begin{enumerate}
    \item $\vec{C}$ is coherent-premaximal (respectively prolongable-premaximal), and
    \item $\vec{C}$ is coherent-postmaximal (respectively prolongable-postmaximal).
\end{enumerate}
\end{lem}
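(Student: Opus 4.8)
The plan is to reduce all three maximality notions to one purely local statement — ``no single one of the sets $C_i$ can be enlarged by a single letter without destroying the property in question'' — and then to observe that these local statements, indexed by the position $i$, split along the partition $\{0,\dots,n\}=\{0,\dots,n-d+1\}\sqcup\{n-d+2,\dots,n\}$ into precisely the premaximal part and the postmaximal part. First I would record the elementary fact that if $P$ is any property of finite tuples of nonempty subsets of $\Sigma$ that is inherited by coordinatewise-smaller tuples, then such a tuple is $\preceq$-maximal with property $P$ iff for every index $i$ and every $x\in\Sigma\setminus C_i$ the tuple obtained by throwing $x$ into $C_i$ fails $P$ (indeed, if some strictly larger tuple had $P$, throw in any one of the extra letters and then shrink back down). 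Coherence is visibly inherited by smaller tuples, as is ``coherence with certain coordinates frozen at prescribed values''; and so is prolongability — here one uses that shrinking the labelling of a tree over $A$ again yields a tree over $A$, since any word read off the smaller labelling is read off the original tree at the same positions and is hence nonzero in $A$.

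With this in hand, spelling out the definitions (after translating the text's $1$-indexed $(C_1,\dots,C_{k+d-1})$, $k\ge d-1$, to our $0$-indexed $(C_0,\dots,C_n)$ via $k=n-d+2$) turns the three conditions into: $\vec C$ is coherent-maximal $\iff$ for all $i\in\{0,\dots,n\}$ enlarging $C_i$ by one letter destroys coherence of $\vec C$; $\vec C$ is coherent-premaximal $\iff$ the same, but only for $i\in\{0,\dots,n-d+1\}$ (the length-$(d-1)$ tail being frozen, which is just $\vec C$'s own tail); $\vec C$ is coherent-postmaximal $\iff$ for all $i\in\{n-d+2,\dots,n\}$ enlarging $C_i$ destroys coherence of the length-$(2d-2)$ segment $(C_{n-2d+3},\dots,C_n)$. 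The one point that needs an argument is that this last condition really says ``no late $C_i$ can be enlarged inside $\vec C$'': a length-$d$ window $(C_j,\dots,C_{j+d-1})$ involves position $i$ only for $i-d+1\le j\le i$, so enlarging $C_i$ with $i\ge n-d+2$ affects only windows lying inside $\{n-2d+3,\dots,n\}$, i.e.\ only the length-$d$ windows of that segment; since coherence of $\vec C$ is coherence of all its length-$d$ windows, destroying coherence of $\vec C$ by such an enlargement is the same as destroying coherence of the segment. The partition $\{0,\dots,n\}=\{0,\dots,n-d+1\}\sqcup\{n-d+2,\dots,n\}$ then gives coherent-maximal $\iff$ coherent-premaximal and coherent-postmaximal.

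For the prolongable version I would run the same argument using the criterion recorded just before the lemma, that a coherent sequence is prolongable iff its length-$(d-1)$ suffix is a prolongable (tree-extendable) sequence. Enlarging $C_i$ with $i\le n-d+1$ leaves the length-$(d-1)$ suffix untouched, so it preserves prolongability iff it preserves coherence, and the frozen tail in the premaximal definition still carries a prolongable suffix; hence the first block reduces to coherence exactly as before. Enlarging $C_i$ with $i\in\{n-d+2,\dots,n\}$ preserves prolongability iff it preserves coherence of the segment $(C_{n-2d+3},\dots,C_n)$ \emph{and} keeps the suffix $(C_{n-d+2},\dots,C_n)$ prolongable — which is precisely prolongable-postmaximality, via the maximality-is-local principle applied to that segment. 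I expect the only real friction to be the index bookkeeping between the $1$-indexed definitions and the $0$-indexed sequence, together with the boundary cases $d-1\le n<2d-3$ (vacuous when $d=2$), where the windows above should simply be truncated to start at index $0$; the argument is otherwise unchanged.
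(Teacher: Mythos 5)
Your proof is correct and matches the paper's proof in substance: both arguments reduce maximality (of $\vec C$, or of its prefix/suffix with part of the sequence frozen) to the impossibility of enlarging any single coordinate by one letter, and then split the index set into $\{0,\dots,n-d+1\}$ and $\{n-d+2,\dots,n\}$, using the observation that every length-$d$ window touching a late coordinate lies entirely inside the segment $(C_{n-2d+3},\dots,C_n)$. Your explicit ``local maximality'' lemma simply packages the case analysis that the paper carries out inline in the `if' and `only if' directions, so the route is essentially the same.
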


\begin{proof}
For the `if' direction, suppose that the sequence $\vec{C}=(C_0,\dots,C_n)$ is a coherent-premaximal and coherent-postmaximal (resp. prolongable-premaximal and prolongable-postmaximal). Then by definition, $\vec{C}$ is coherent (respectively prolongable); if it is not coherent-maximal (respectively prolongable-maximal) then there exists some $0\leq i\leq n$ and a strict inclusion $C_i\subset C'_i$ such that $\vec{C}':=(C_0,\dots,C'_i,\dots,C_n)$ is coherent (respectively prolongable). If $0\leq i\leq n-d+1$ then $\vec{C}\prec \vec{C}'$ is not coherent-premaximal (respectively prolongable-premaximal); and if $n-d+2\leq i\leq n$ then $\vec{C}\prec \vec{C}'$ is not coherent-postmaximal (respectively prolongable-postmaximal).

For the `only if' direction, assume that $\vec{C}=(C_0,\dots,C_n)$ is coherent-maximal (respectively prolongable-maximal). If $\vec{C}$ is not coherent-premaximal (respectively prolongable-premaximal), then there exists $0\leq i\leq n-d+1$ and a strict inclusion $C_i\subset C'_i$ such that $(C_0,\dots,C'_i,\dots,C_n)$ is coherent (respectively prolongable). But since $\vec{C}\prec (C_0,\dots,C'_i,\dots,C_n)$, we obtain a contradiction to coherent-maximality (respectively prolongable-maximality) of $\vec{C}$.
If $\vec{C}$ is not coherent-postmaximal (respectively prolongable-postmaximal), then there exists $n-d+2\leq i\leq n$ and a strict inclusion $C_i\subset C'_i$ such that $(C_{n-2d+3},\dots,C'_i,\dots,C_n)$ is coherent (respectively prolongable). But then every length-$d$ sub-sequence of $\vec{C}':=(C_0,\dots,C'_i,\dots,C_n)$ is coherent, so $\vec{C}'$ is coherent; furthermore, if $(C_{n-2d+3},\dots,C'_i,\dots,C_n)$ is prolongable then so is $\vec{C}'$. This contradicts coherent-maximality (respectively prolongable-maximality) of $\vec{C}$.
\end{proof}

\begin{lem} \label{lem:premaxtopremax}
Let $n\geq 2d-3$. A 
sequence $\vec{C}=(C_0,\dots,C_{n+1})$ is coherent-premaximal (respectively prolongable-premaximal) if and only if:
\begin{enumerate}
    \item $(C_0,\dots,C_n)$ is coherent-premaximal (resp. prolongable-premaximal), and
    \item $C_{n-d+2}$ is maximal among all sets $X\in \mathcal{S}$ such that:
    \[
    (C_{n-2d+3},\dots,C_{n-d+1},X,C_{n-d+3},\dots,C_{n+1})
    \]
    is coherent (respectively prolongable).
\end{enumerate}
\end{lem}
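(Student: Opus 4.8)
The statement is a "local-to-global" characterization of coherent-/prolongable-premaximality, and the natural strategy is to reduce the premaximality condition on a length-$(n+2)$ sequence to premaximality on its length-$(n+1)$ prefix plus a single local optimality condition on the newly relevant coordinate $C_{n-d+2}$. The key observation is that, by Lemma~\ref{lem:maxtopremax} and the definitions, premaximality of $(C_0,\dots,C_{n+1})$ is about maximizing the prefix $(C_0,\dots,C_{n-d+2})$ with the suffix $(C_{n-d+3},\dots,C_{n+1})$ held fixed; appending one more set $C_{n+1}$ only changes which coordinates count as "prefix" versus "suffix" by shifting the window by one, so the only coordinate whose maximality status is genuinely affected is $C_{n-d+2}$ (it moves from suffix to prefix). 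Everything to its left was already being maximized and, since coherence/prolongability of a sequence is determined by its length-$d$ windows (for coherence) or length-$d$ windows plus the length-$(d-1)$ suffix being prolongable (for prolongability), those leftward maximality conditions are unaffected by the value of $C_{n+1}$.

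\textbf{Key steps.} First I would treat the coherent case. For the forward direction, assume $\vec{C}=(C_0,\dots,C_{n+1})$ is coherent-premaximal. Condition (1): I must show $(C_0,\dots,C_n)$ is coherent-premaximal, i.e.\ $(C_0,\dots,C_{n-d+1})$ is maximal given $(C_{n-d+2},\dots,C_n)$. Any enlargement of some $C_i$ with $0\le i\le n-d+1$ that keeps $(C_0,\dots,C_n)$ coherent automatically keeps $(C_0,\dots,C_{n+1})$ coherent (the only new length-$d$ window introduced by appending $C_{n+1}$, namely $(C_{n-d+2},\dots,C_{n+1})$, does not involve index $i$ since $i\le n-d+1$), contradicting premaximality of $\vec C$; here I use $n-d+1 < n-d+2$, which is why the hypothesis $n\ge 2d-3$ (ensuring the window and prefix ranges are in the expected configuration) is invoked. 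Condition (2) is exactly the $i=n-d+2$ instance of the premaximality of $\vec C$: by definition of coherent-premaximality of a length-$(n+2)$ sequence, $(C_0,\dots,C_{n-d+2})$ is maximal given $(C_{n-d+3},\dots,C_{n+1})$, and in particular $C_{n-d+2}$ is maximal among $X$ making $(C_{n-2d+3},\dots,C_{n-d+1},X,C_{n-d+3},\dots,C_{n+1})$ coherent (restricting attention to the length-$d$ windows containing position $n-d+2$ suffices by the "length-$d$ windows" criterion). For the reverse direction, assume (1) and (2); to show $\vec C$ is coherent-premaximal I must show each $C_i$ with $0\le i\le n-d+2$ is maximal given the rest. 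For $i\le n-d+1$ this follows from (1) once I check coherence is not broken (again using the window criterion); for $i=n-d+2$ it is exactly (2).

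\textbf{The prolongable case and the main obstacle.} The prolongable case runs in parallel, but one must handle the extra clause in the definition of "tree over $A$": a coherent sequence is prolongable iff its length-$(d-1)$ suffix is prolongable (this was noted in the paragraph preceding Lemma~\ref{lem:maxtopremax}). So when I enlarge a coordinate $C_i$ in the reverse direction I must verify not only that coherence is preserved but that prolongability (i.e.\ extendability to a full tree) is preserved. The point is that enlarging $C_i$ for $i\le n-d+2$ does not touch the length-$(d-1)$ suffix $(C_{n-d+3},\dots,C_{n+1})$, so whatever infinite extension witnessed prolongability of $\vec C$ still witnesses prolongability of the enlarged sequence provided the enlarged sequence is still coherent---and coherence is handled by the window criterion exactly as before. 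Conversely, in the forward direction, premaximality of $\vec C$ already packages the condition that the relevant prefix is maximal \emph{among prolongable} sequences with the given suffix, which passes to the $i=n-d+2$ restriction. I expect the main obstacle (really just a bookkeeping hazard) to be correctly tracking index ranges: making sure that the hypothesis $n\ge 2d-3$ is exactly what is needed so that the window $(C_{n-2d+3},\dots,C_{n+1})$ in condition (2) has nonnegative starting index and so that the "prefix" indices $\{0,\dots,n-d+1\}$ in condition (1) and the single extra index $n-d+2$ in condition (2) partition the prefix $\{0,\dots,n-d+2\}$ relevant to premaximality of the length-$(n+2)$ sequence. Once the index arithmetic is laid out cleanly, both directions and both (coherent/prolongable) cases are short applications of the length-$d$-window criterion together with the suffix-prolongability criterion.
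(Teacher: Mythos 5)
Your proposal is correct and follows essentially the same route as the paper: both directions hinge on the observation that coherence is checked on length-$d$ windows and prolongability on the length-$(d-1)$ suffix, so premaximality of the length-$(n+2)$ sequence decomposes into premaximality of the length-$(n+1)$ prefix (condition (1)) plus a local maximality at position $n-d+2$ (condition (2)). One small wording caveat: in the forward direction, condition (2) is not literally the ``$i=n-d+2$ instance'' of premaximality---the set of admissible $X$ in condition (2) is a priori larger since the prefix constraint is shorter---but your parenthetical about the window criterion supplies exactly the bridge the paper uses, namely that (given $\vec{C}$ coherent) the two admissible sets coincide because the windows not involving position $n-d+2$ are unaffected by $X$.
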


\begin{proof}
For the `if' direction, let $\vec{C}=(C_0,\dots,C_{n+1})$ and assume that conditions (1),(2) hold, and let us prove that $\vec{C}$ is coherent-premaximal (resp. prolongable-premaximal). First, we claim that $\vec{C}$ is coherent. Every length-$d$ sub-sequence of $\vec{C}$ is either a sub-sequence of $(C_0,\dots,C_n)$, hence coherent (since $(C_0,\dots,C_n)$ is coherent by condition (1)), or else must be $(C_{n-d+2},\dots,C_{n+1})$ , which is coherent (respectively prolongable) by condition (2).
Now if $\vec{C}$ is not coherent-premaximal then there exists some $0\leq i\leq n-d+2$ and a strict inclusion such that $C_i\subset C'_i$ such that $(C_0,\dots,C'_i,\dots,C_{n+1})$ is coherent (respectively prolongable). Thus $(C_0,\dots,C'_i,\dots,C_n)$ is coherent (respectively prolongable). If $i\leq n-d+1$ then $(C_0,\dots,C_n)$ is not coherent-premaximal (respectively prolongable-premaximal), contradicting (1). If $i=n-d+2$ then we obtain a contradiction to (2).

For the `only if' direction, assume that $\vec{C}=(C_0,\dots,C_{n+1})$ has the property that it is coherent-premaximal (resp. prolongable-premaximal). In particular, $\vec{C}$ is coherent (resp. prolongable).

Let us prove condition (1). Note that $(C_0,\dots,C_n)$ is coherent (resp. prolongable). If $(C_0,\dots,C_n)$ is not coherent-premaximal (respectively prolongable-premaximal) then there exists some $0\leq i\leq n-d+1$ and a strict inclusion $C_i\subset C'_i$ such that $(C_0,\dots,C'_i,\dots,C_n)$ is coherent (resp. prolongable). We claim that $\vec{C}':=(C_0,\dots,C'_i,\dots,C_{n+1})$ is coherent (resp. prolongable). Every length-$d$ sub-sequence of $\vec{C}'$ is either a sub-sequence of $(C_0,\dots,C'_i,\dots,C_n)$, hence coherent, or else must be $(C_{n-d+2},\dots,C_{n+1})$, which is the length-$d$ suffix sub-sequence of $\vec{C}$ , hence coherent. Furthermore, if $\vec{C}$ is prolongable then so is $\vec{C}'$, since for coherent sequences, prolongability is determined by the length-$(d-1)$ suffix sub-sequence. Therefore $\vec{C}$ is not coherent-premaximal (resp. prolongable-premaximal).

Now let us prove condition (2). Note that $(C_{n-2d+3},\dots,C_{n+1})$ is coherent (resp. prolongable). 
If there exists a strict inclusion $C_{n-d+2}\subset X$ such that:
\[
(C_{n-2d+3},\dots,C_{n-d+1},X,C_{n-d+3},\dots,C_{n+1})
\]
is coherent (resp. prolongable) then every length-$d$ sub-sequence of:
\[
\vec{C}':=(C_0,\dots,C_{n-d+1},X,C_{n-d+3},\dots,C_{n+1})
\] 
is coherent (since it must be a sub-sequence of:
\[
(C_{n-2d+3},\dots,C_{n-d+1},X,C_{n-d+3},\dots,C_{n+1})
\]
or of $(C_0,\dots,C_{n-d+1})$). Furthermore, if: 
\[
(C_{n-2d+3},\dots,C_{n-d+1},X,C_{n-d+3},\dots,C_{n+1})
\]
is prolongable then so is $\vec{C}$, since it they share the same length-$(d-1)$ suffix sub-sequence, and a coherent sequence is prolongable if and only if its length-$(d-1)$ suffix sub-sequence is prolongable. It follows that $\vec{C}\prec \vec{C}'$ are both coherent (respectively prolongable) and share the same length-$(d-1)$ suffix sub-sequence, contradicting the coherent-premaximality (respectively prolongable-premaximality) of $\vec{C}$.
\end{proof}

Let:
\begin{eqnarray*}
\overline{V} & = & \{\vec{C}\in \mathcal{S}^{2d-2}|\vec{C}\ \text{is coherent}\} \\
V & = & \{\vec{C}\in \mathcal{S}^{2d-2}|\vec{C}\ \text{is prolongable}\}
\end{eqnarray*}
Define quivers $\overline{Q}=(\overline{V},\overline{E}),Q=(V,E)$ as follows.
Let us right an arrow in $\overline{Q}$ (resp. $Q$) between elements from $\overline{V}$ (respectively $V$) of the form:
\[
(C_1,\dots,C_{2d-2})\rightarrow (C_2,\dots,C_{2d_1})
\]
if $C_d$ is maximal among all sets $X\in \mathcal{S}$ such that:
\[
(C_{n-2d+3},\dots,C_{n-d+1},X,C_{n-d+3},\dots,C_{n+1})
\] 
is coherent (respectively prolongable).

\begin{proof}[{Proof of Theorem \ref{rational}}]
Let $\vec{C}=(C_0,\dots,C_n)$ for $n\geq 2d-3$ and define: $$\vec{C}^{(i)}:=(C_i,\dots,C_{i+2d-3})$$ for each $0\leq i\leq n-2d+3$. Then by Lemma \ref{lem:maxtopremax} and an inductive application of Lemma \ref{lem:premaxtopremax}, $\vec{C}$ is coherent-maximal (respectively prolongable-maximal) if and only if $\vec{C}^{(0)}$ is coherent-premaximal (respectively prolongable-premaximal) and:
\[
\vec{C}^{(0)}\rightarrow \vec{C}^{(1)}\rightarrow \cdots \rightarrow \vec{C}^{(n-2d+3)}
\]
are arrows in $\overline{Q}$ 
(respectively $Q$), and  $\vec{C}$ is 
coherent-postmaximal (respectively 
prolongable-postmaximal). Notice that 
for a coherent-premaximal 
(respectively prolongable-premaximal) 
sequence, being coherent-postmaximal 
(resp. prolongable-postmaximal) is 
equivalent to having the length-$(2d-
2)$ suffix sub-sequence (for $\vec{C}$ 
this is $\vec{C}^{(n-2d+3)}$) coherent-
postmaximal (resp. prolongable-
postmaximal).

Therefore, $\bar{a}_n$ (resp. $a_n$) is equal to the number of paths of length $n-2d+3$ in $\overline{Q}$ (resp. $Q$) with initial vertex being coherent-premaximal (resp. prolongable-premaximal) and terminal vertex coherent-postmaximal (respectively prolongable-postmaximal). Let:
\[
A_{\overline{Q}}\in M_{|\overline{V}|}(\mathbb{Z}),\ A_{Q}\in M_{|V|}(\mathbb{Z})
\]
be the adjacency matrices of $\overline{Q}$ and $Q$ respectively, namely, 
\[
\left(A_{\overline{Q}}\right)_{v_1,v_2}=\delta_{v_1\rightarrow v_2\in \overline{E}},\ 
\left(A_{Q}\right)_{v_1,v_2}=\delta_{v_1\rightarrow v_2\in E}.
\]
Let:
\[
\overline{w}_{\text{pre}}\in \mathbb{Z}^{1\times |\overline{V}|},\ w_{\text{pre}}\in \mathbb{Z}^{1\times |V|}
\]
be the indicator vectors of coherent-premaximality and prolongable-premaximality in $\overline{Q}$ and $Q$, respectively, and let:
\[
\overline{w}_{\text{post}}\in \mathbb{Z}^{1\times |\overline{V}|},\ w_{\text{post}}\in \mathbb{Z}^{1\times |V|}
\]
be the indicator vectors of coherent-postmaximality and prolongable-postmaximality in $\overline{Q}$ and $Q$, respectively. 
Therefore for every $n\geq 2d-3$:
\[
\bar{a}_n = \overline{w}_{\text{pre}} \cdot A_{\overline{Q}}^{n-2d+3} \cdot \overline{w}_{\text{post}},\ a_n = w_{\text{pre}} \cdot A_{\overline{Q}}^{n-2d+3} \cdot w_{\text{post}}.
\]
It follows that the sequences:
\[
\{\overline{a}_n\}_{n\geq 2d-3},\ \{a_n\}_{n\geq 2d-3}
\]
satisfy linear recurrences, and therefore the generating functions:
\[
\sum_{n=0}^{\infty} \bar{a}_n t^n,\ \sum_{n=0}^{\infty} a_n t^n
\]
are rational.
\end{proof}

\section{Finite-dimensional proalgebraic varieties}

Let $\mathcal{V}=\{(V_n,\varphi_n)\}_{n=1}^{\infty}$ be a proalgebraic variety. By definition, the sequence $\{\dim V_n\}_{n=1}^{\infty}$ is monotone non-decreasing. If this sequence stabilizes, we let $\dim \mathcal{V}=\lim_{n\rightarrow \infty} \dim V_n$, and say that $\mathcal{V}$ is finite-dimensional. Otherwise, we think of $\mathcal{V}$ as an infinite-dimensional object.

\begin{exmpl}
We have $\dim \mathcal{P}(F\left<x_1,\dots,x_d\right>)=\infty$ since $\mathcal{P}_n(F\left<x_1,\dots,x_d\right>)=\left(\mathbb{P}^{d-1}\right)^{\times (n+1)}$.
\end{exmpl}

\begin{exmpl} \label{dim1}
If $A$ is a projectively simple monomial algebra which is not of linear growth then $\dim \mathcal{P}(A)\geq 1$. Indeed, by Proposition \ref{rem}, $A$ admits at least one tree $\mathcal{T}=C_0C_1\cdots$ in which $C_0$ is not a singleton, hence each $\mathcal{P}_n(A)$ projects onto $\mathbb{P}^d$ for some $d\geq 1$, so $\dim \mathcal{P}(A)\geq 1$.
\end{exmpl}

\begin{exmpl}
In Section \ref{sec:Sturmian} we shall see that when $A$ is a projectively simple monomial algebra with Hilbert series $\frac{1}{(1-t)^2}$ then $\mathcal{P}(A)$ is geometrically a union of a projective line with a Cantor set, intersecting at two points; in particular, $\dim \mathcal{P}(A)=1$.
\end{exmpl}

Let us consider a class of monomial algebras (which contains the algebras from the previous example, as will be observed in the sequel) for which $\mathcal{P}(A)$ is finite-dimensional. Let us say that an infinite word $w\in \Sigma^{\mathbb{N}}$ is $k$-\emph{balanced} for some $k\geq 1$ if for every $x\in \Sigma$ and for every equal length factors of $w$, say, $u_1,u_2$, it holds that:
\[
\big||u_1|_x-|u_2|_x\big|\leq k
\] 
where $|u|_x$ is the number of occurrences of the letter $x$ in $u$. Words that are $1$-balanced are called simply \emph{balanced}. 

\begin{prop}
Let $w$ be a $k$-balanced word. Then $\dim \mathcal{P}(A_w)<\infty$.
\end{prop}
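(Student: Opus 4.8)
The plan is to bound the dimension of each $\mathcal{P}_n(A_w)$ by controlling, uniformly in $n$, the number of indices $i$ at which a tree $\mathcal{T}=C_0C_1\cdots$ over $A_w$ can branch, i.e., at which $|C_i|\geq 2$; at all other indices $i(C_i)$ is a point, so $\dim\big(i(C_0)\times\cdots\times i(C_n)\big)$ is at most a fixed constant times the number of branching indices. By Theorem \ref{decomposition}, $\mathcal{P}_n(A_w)$ is a finite union of such products, so it suffices to show there is a constant $B=B(w)$ such that every tree over $A_w$ has at most $B$ branching indices in total (or at least, at most $B$ branching indices beyond a bounded initial segment), whence $\dim\mathcal{P}_n(A_w)\le (m-1)B$ for all $n$ and the sequence $\{\dim\mathcal{P}_n(A_w)\}$ stabilizes.

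The key combinatorial input is $k$-balancedness. First I would recall that a tree $\mathcal{T}=C_0C_1\cdots$ over $A_w$ corresponds exactly to a choice, for each $i$ and each $x\in C_i$, making $C_0\cdots C_i$ a language all of whose words are factors of $w$; in particular every word $u$ with $u[j]\in C_j$ for $0\le j\le n$ is a factor of $w$. Suppose $i$ is a branching index for $\mathcal{T}$, say $x\neq y$ both lie in $C_i$. Pick any word $u$ of length $i+1$ with $u[j]\in C_j$; then $u'=u[0,i-1]x$ and $u''=u[0,i-1]y$ are both factors of $w$ (extending them to the right along $\mathcal{T}$ shows $u$ itself, hence $u',u''$, are prefixes of factors, and a hereditary language is closed under prefixes). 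Now $u',u''$ have the same length $i+1$ and identical prefix $u[0,i-1]$, so for the letter $x$ we have $|u'|_x=|u[0,i-1]|_x+1$ while $|u''|_x=|u[0,i-1]|_x$, i.e. $\big||u'|_x-|u''|_x\big|=1$. The real use of balancedness comes from iterating this: if there were many branching indices $i_1<i_2<\cdots<i_r$ (all at least some fixed depth), one can build two factors of $w$ of the same length whose counts of some fixed letter differ by a growing amount, contradicting the $k$-balanced bound once $r$ exceeds roughly $k\cdot m$.

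The main obstacle — and the step requiring genuine care — is making that iteration precise: the branchings happen at different indices, and a single branching only changes the count of \emph{one} letter by $1$, so one must argue that infinitely many branchings force an \emph{unbounded} discrepancy for at least one fixed letter. I would do this by a pigeonhole/path argument in the Rauzy-type graph of factors: balancedness of $w$ is equivalent (standard in combinatorics of words, cf.\ the references in the paper to \cite{Pir}) to the complexity function $p_w(n)$ being bounded — indeed $k$-balanced words have $p_w(n)=O(n)$ — and more is true: from a length-$n$ factor, the set of possible letter-frequency vectors has bounded diameter independent of $n$. Concretely, if $u_1,u_2$ range over all length-$n$ factors of $w$ and $x\in\Sigma$, then $\{|u|_x : u\in\mathcal{L}_n(w)\}$ is an interval of integers of length $\le k$. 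A branching index $i$ for $\mathcal{T}$ yields two continuations which, once prolonged to a common larger length $N$, give two length-$N$ factors $v_1,v_2$ of $w$ with $v_1[0,i-1]=v_2[0,i-1]$, $v_1[i]=x$, $v_2[i]=y\ne x$. Ordering the branching indices and following one branch that always "increments $x$" versus one that never does (possible because at each branching index at least two letters are available, so we can always avoid $x$ on one side unless $C_i=\{x\}$, which is not a branching index for the letter $x$), after $r$ branchings we obtain two equal-length factors differing in $|\cdot|_x$ by $r$; since this must be $\le k$, we get $r\le k$ branching indices that affect letter $x$, and summing over the $m$ letters of $\Sigma$ gives at most $mk$ branching indices in any tree over $A_w$ (possibly after discarding a bounded initial segment where the combinatorics is irregular). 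Therefore $\dim\mathcal{P}_n(A_w)\le (m-1)mk+O(1)$ for all $n$, so $\{\dim\mathcal{P}_n(A_w)\}$ is bounded, hence eventually constant, and $\dim\mathcal{P}(A_w)<\infty$.
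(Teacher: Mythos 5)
Your argument is correct and is essentially the paper's: both reduce (via Theorem \ref{decomposition}) to bounding the number of non-singleton $C_i$'s in any tree over $A_w$, fix a letter $x$, follow one branch that always chooses $x$ at a branching index where $x$ is available and one branch that always avoids $x$, and derive from $k$-balancedness that at most $k$ branching indices can contain $x$, then conclude by pigeonhole over the $m$ letters. The remarks about Rauzy graphs and discarding an initial segment are unnecessary but do not affect the argument.
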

\begin{proof}
By Theorem \ref{decomposition}, it suffices to prove that for each tree $\mathcal{T}=C_0C_1\cdots$ over $A_w$, the number of non-singleton $C_i$'s is bounded from above by a constant (which is independent of the tree).
Assume to the contrary that there exists such a tree with non-singletons $C_{i_1},\dots,C_{i_t}$ for some $i_1<\cdots<i_t$ where $t>(k+1)|\Sigma|$ ($\Sigma$ is the underlying alphabet). There exists $x\in \Sigma$ such that $x\in C_{i_j}$ for at least: \[ r:=\left\lfloor \frac{t}{|\Sigma|} \right\rfloor>k \] $j$'s, say, $i_{j_1},\dots,i_{j_r}$. This means that there exist $u_1,\dots,u_{t-1}$ such that $v:=xu_1x\cdots xu_{t-1}x\in C_{i_{j_1}}\cdots C_{i_{j_t}}$. Since each $C_i$ consist of at least two elements, there exist another factor of $\mathcal{T}$ of the form $v':=y_1u_1y_2\cdots y_{t-1}u_{t-1}y_t$ with $y_1,\dots,y_t\in \Sigma\setminus \{x\}$ and clearly $|v|=|v'|$. Now:
\[ \big| |v|_x - |v'|_x \big| = \Bigg| \left(\sum_{i=1}^{t-1} |u_i|_x + t\right) - \left(\sum_{i=1}^{t-1} |u_i|_x \right) \Bigg| = t > k, \]
contradicting that $w$ is $k$-balanced.
\end{proof}

Let $\sigma\colon \Sigma\rightarrow \Sigma^{*}$ be a map such that for some $x\in \Sigma$ the word $\sigma(x)$ starts with $x$, and for every $x\in \Sigma$ the lengths $|\sigma^n(x)|$ tend to infinity; such a map can be extended to a map $\sigma\colon \Sigma^{\mathbb{N}}\rightarrow \Sigma^{\mathbb{N}}$ and is called a \textit{substitution}; consider the $|\Sigma|\times |\Sigma|$ matrix $M_\sigma=(|\sigma(x)|_y)_{x,y}$. If in some power of $M_\sigma$ all entries are positive then $\sigma$ is said to be \textit{primitive}.

Adamczewski \cite[Theorem~13]{Adamcz} proved that if $w\in \Sigma^\mathbb{N}$ is a fixed point (that is, $\sigma(w)=w$) of a primitive substitution and all eigenvalues of $M_\sigma$ except for one have absolute value smaller than $1$, then $w$ is $k$-balanced for some $k$.

\begin{exmpl}
Consider the encoded Thue-Morse sequence via $0\mapsto x,1\mapsto y$: \[ w = xyyxyxxy\dots \]
which is a fixed point of the morphism $\sigma(x)=xy,\sigma(y)=yx$ (the encoding of $\phi$ defining ${\bf t}$ from Subsection \ref{subsec:iso mon alg}). Then: \[ M_\sigma=\left( \begin{matrix} 1 & 1 \\ 1 & 1
    \end{matrix} \right) \] so by the above $w$ is balanced (in fact, $2$-balanced; $w$ is not balanced, equivalently, non-Sturmian; see \cite{Adamcz}). In fact, $\dim \mathcal{P}(A_w)=1$. By Example \ref{dim1}, $\dim \mathcal{P}(A_w)\geq 1$. Assume to the contrary that there was a tree $\mathcal{T}=C_0C_1\cdots$ over $A_w$ with $|C_i|=2$ for some $i>0$; without loss of generality, assume that $C_1=\{x,y\}$ (this can be ensured by shifting each tree such that $C_i$ is placed in $C_1$) and that $C_0=\{y\}$ (if both $x,y\in C_0$ then each letter from $C_2$ would lead to a cube, and the Thue-Morse sequence is cube-free; recall that $A_w$ has an automorphism switching $x$ and $y$).
    Consequently, $C_2=\{x\}$, since the Thue-Morse sequence is cube-free and so $y^3$ is not a factor; next, $C_3=\{y\}$ (since $x^3$ is not a factor) and then $C_4=\{x\}$ (since $y^2 x y^2$ is not a factor of $w$; see \cite{Sgpfrm}, or use the $2$-balanced property). Then $C_5=\{y\}$ (since $ x^2 y x^2$ is not a factor of $w$). But this contradicts that $yxyxy$ does not factor $w$. Hence $\dim \mathcal{P}(A_w)=1$, by Theorem \ref{decomposition}.
\end{exmpl}



\section{Monomial $\mathbb{P}^1$'s} \label{sec:Sturmian}

Fix a binary alphabet $\Sigma = \{x_0,x_1\}$.
We retain the notions from the previous sections. In particular, $p_0=\infty=[1:0], p_1=0=[0:1] \in \mathbb{P}^1$.

\subsection{Monomial $\mathbb{P}^1$'s, Sturmian sequences and their trees}

The homogeneous coordinate ring of $\mathbb{P}^1$ is $F[x_0,x_1]$---a graded integral domain of Hilbert series $H(t)=\frac{1}{(1-t)^2}$.
Let $A$ be a prime monomial algebra with Hilbert series $H_A(t)=\frac{1}{(1-t)^2}=\sum_{n=0}^{\infty} (n+1)t^n$. We refer to such an algebra as a \emph{monomial} $\mathbb{P}^1$.
Every monomial $\mathbb{P}^1$ takes the form $A_w$ for an infinite word $w\in \Sigma^{\mathbb{N}}$ and furthermore, $|\Sigma|=2$ (as $\dim_F A_1=2$). The Hilbert series of $A_w$ translates to the complexity function of $w$ as follows: for every $n\geq 1$, we have $p_w(n)=n+1$. An infinite word $w$ whose complexity function satisfies $p_w(n)=n+1$ is called Sturmian. Sturmian words have been classified by Morse and Hedlund \cite{MH2_1940}. Sturmian sequences are uniformly recurrent, and therefore monomial $\mathbb{P}^1$'s are projectively simple.

For an irrational number $\alpha>0$ and a real number $0\leq r<1$ consider the graph of the linear function $y=\alpha x+r$. This graph intersects the first octant of the integral grid $\mathbb{N}^2$ in a discrete set of points, either on a vertical line or a horizontal line. Moving along this set, from the origin to the right---namely, increasing $x$---we put `$x_0$' for a vertical line intersecting the graph and `$x_1$' for a horizontal line intersecting it.
We let the \textit{cutting sequence} associated with $\alpha,r$ be the resulting sequence:
\[ w=x_{i_0}x_{i_1}\cdots \in \{x_0,x_1\}^{\mathbb{N}} \]
and let $A_{\alpha,r} = A_w$ be the resulting monomial algebra.

\begin{thm}[{Morse-Hedlund, \cite{MH2_1940}; see also \cite{Arnoux}}] 
The following are equivalent for an infinite word $w\in \Sigma^{\mathbb{N}}$:
\begin{itemize}
\item $w$ is Sturmian;
\item $w$ is a cutting sequence;
\item $w$ is balanced and not eventually periodic.
\end{itemize}
\end{thm}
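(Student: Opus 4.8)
Label the three conditions (i)~$w$ Sturmian, (ii)~$w$ a cutting sequence, (iii)~$w$ balanced and not eventually periodic. The plan is to prove (iii)$\Rightarrow$(i), (i)$\Rightarrow$(iii), (ii)$\Rightarrow$(i) and (i)$\Rightarrow$(ii), establishing the combinatorial equivalence (i)$\Leftrightarrow$(iii) first and using it freely in the geometric one. Two classical facts are used as black boxes. (a)~(Morse--Hedlund periodicity) If $p_w(n)\le n$ for some $n$ then $w$ is eventually periodic; equivalently, a non eventually periodic word $w$ satisfies $p_w(n)\ge n+1$ for all $n$. (b)~Over a binary alphabet a balanced word has at most one \emph{right-special} factor of each length, where $u$ is right-special if both $ux_0$ and $ux_1$ are factors of $w$: two distinct equal-length right-special factors $u\ne v$ would, on taking their longest common suffix $z$ (which is preceded by different letters in $u$ and $v$), force all of $x_0zx_0$, $x_0zx_1$, $x_1zx_0$, $x_1zx_1$ to occur in $w$, and then the equal-length words $x_0zx_0$ and $x_1zx_1$ have $x_0$-counts differing by $2$, contradicting balance. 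Since over a binary alphabet $p_w(n+1)-p_w(n)$ is exactly the number of right-special factors of length $n$, fact (b) gives $p_w(n+1)-p_w(n)\le 1$, hence $p_w(n)\le n+1$ as soon as both letters occur in $w$.

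For (iii)$\Rightarrow$(i): aperiodicity gives $p_w(n)\ge n+1$ by (a), and balance gives $p_w(n)\le n+1$ by (b) (both letters occur since $w$ is not constant), so $p_w(n)=n+1$. For (i)$\Rightarrow$(iii): a Sturmian word is not eventually periodic, as its complexity is unbounded; and if it were not balanced, a minimal-length descent on an equal-length pair of factors witnessing the failure of balance produces a shortest word $z$ such that both $x_0zx_0$ and $x_1zx_1$ are factors of $w$, and then, using that a Sturmian word has exactly one right-special and exactly one left-special factor of each length (immediate from $p_w(n+1)-p_w(n)=1$), a count of the factors of lengths $|z|+1$ and $|z|+2$ forces $p_w(m)\ge m+2$ for some $m$, a contradiction. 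This bookkeeping with bispecial factors is the fussiest step of the combinatorial half.

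For (ii)$\Rightarrow$(i): writing $w=A_{\alpha,r}$, the definition shows that the number of occurrences of $x_1$ in the factor $w[i,i+n-1]$ equals $\lfloor\alpha(i+n)+r\rfloor-\lfloor\alpha i+r\rfloor$, which always lies in $\{\lfloor\alpha n\rfloor,\lceil\alpha n\rceil\}$; hence any two length-$n$ factors of $w$ have $x_1$-counts, and so $x_0$-counts, differing by at most $1$, and $w$ is balanced. Moreover $w$ is not eventually periodic, since a period would make the frequency of $x_1$ in $w$ rational, whereas that frequency is an irrational quantity determined by the irrational $\alpha$. By the equivalence (i)$\Leftrightarrow$(iii) already proved, $w$ is Sturmian.

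For (i)$\Rightarrow$(ii): balance makes the counting sequence $a_n:=\#\{0\le i<n:w[i]=x_1\}$ nearly additive, $|a_{m+n}-a_m-a_n|\le 1$, so the frequency $\alpha:=\lim_n a_n/n$ exists, and $\alpha$ is irrational because a balanced word with rational letter-frequency is eventually periodic. It remains to realize $w$ as the cutting sequence of slope $\alpha$ for a suitable intercept $r\in[0,1)$. The cleanest route is dynamical: $w$ is uniformly recurrent, so the shift on its orbit closure is minimal with a unique invariant measure, and one identifies this subshift with the coding of the irrational rotation $t\mapsto t+\alpha$ on $\mathbb{R}/\mathbb{Z}$ by the partition into two arcs of lengths $\alpha$ and $1-\alpha$, the invariant measure pushing forward to Lebesgue measure; then $w$ is the itinerary of some point $t_0$, and $r$ is read off from $t_0$. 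An alternative is to reconstruct $r$ from the continued-fraction expansion of $\alpha$ together with the standard-word factorization of $w$. \emph{This reconstruction is the main obstacle}---it is where genuine number theory enters---and it requires a little care over the two boundary conventions for the coding (arcs closed on the left versus on the right), which is precisely why a single irrational slope can carry two distinct cutting sequences.
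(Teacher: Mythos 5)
The paper does not supply a proof of this theorem; it is quoted as a classical result of Morse and Hedlund, with Arnoux's survey as a secondary reference, so there is no in-text argument to compare yours against. Your outline is the standard modern proof of the Morse--Hedlund characterization and is correct in structure: the complexity gap $p_w(n)\le n\Rightarrow$ eventual periodicity, the count of right-special factors to bound $p_w(n+1)-p_w(n)$ by the number of balanced extensions, the minimal-counterexample reduction producing $x_0zx_0$ and $x_1zx_1$, the floor-function balance of rotation codings, and the irrational-frequency obstruction to periodicity are all the right ingredients.

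Two caveats. First, a bookkeeping mismatch: the paper's cutting sequence is the literal grid-crossing word of the line $y=\alpha x+r$ (one symbol per vertical or horizontal crossing), whereas your identity for the $x_1$-count in a window, $\lfloor\alpha(i+n)+r\rfloor-\lfloor\alpha i+r\rfloor$, describes the associated \emph{mechanical word} $s_k=\lfloor\alpha(k+1)+r\rfloor-\lfloor\alpha k+r\rfloor$. The two families coincide only after the slope change $\alpha\leftrightarrow\alpha/(1+\alpha)$ (equivalently, the cutting sequence intersperses a deterministic $x_0$ between the $x_1$-blocks that the mechanical word records); the balance conclusion and the irrationality-of-frequency argument survive, but the displayed formula does not literally compute the $x_1$-count of the paper's cutting sequence, and a careful proof should translate between the two encodings. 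Second, you correctly identify (i)$\Rightarrow$(ii) as the real obstacle, but the two routes you name are both left as pointers: the dynamical route needs the fact that the Sturmian subshift is a topological extension of an irrational circle rotation with countable (and hence measure-zero) non-injectivity locus, and the continued-fraction route needs the full induction on standard words and convergents. Likewise, the bispecial-factor count that yields $p_w(m)\ge m+2$ in (i)$\Rightarrow$(iii) is asserted rather than carried out. These are genuine gaps in the write-up, but not wrong turns; with the cited references they are fillable, and the overall plan is sound.
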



\begin{rem} \label{alpha>1}
The cutting sequence associated with $y=\alpha x+r$ for $\alpha>1$ is obtained from the cutting sequence associated with $y=\frac{1}{\alpha}x+\left(1-\frac{r}{\alpha}\right)$ by switching $x_0\leftrightarrow x_1$, and thus $A_{\alpha,r}\cong A_{\frac{1}{\alpha},1-\frac{r}{\alpha}}$.
\end{rem}

\begin{lem} \label{trees_Sturm}
Let $A$ be a monomial $\mathbb{P}^1$. Then every tree $\mathcal{T}=C_0C_1\cdots$ over $A$ has $|C_0|\leq 2$ and $|C_i|=1$ for all $i\geq 1$, and there is a unique tree $\mathcal{T}=C_0C_1\cdots$ over $A$ with $|C_0|=2$.
\end{lem}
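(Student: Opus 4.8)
The plan is to use that a monomial $\mathbb{P}^1$ is $A=A_w$ for a Sturmian word $w$ (so $p_w(n)=n+1$ for all $n\ge1$) and to combine the definition of a tree over $A$ with the elementary combinatorics of special factors of Sturmian words. Since $\Sigma=\{x_0,x_1\}$ has only two letters, the bound $|C_0|\le2$ is automatic, so the real content is that $|C_i|=1$ for every $i\ge1$ and that exactly one tree has $|C_0|=2$. I will use throughout that a tree $\mathcal T=C_0C_1\cdots$ over $A$ is the same data as a sequence of non-empty $C_i\subseteq\Sigma$ all of whose associated infinite words lie in $\Subshift(A)$ (equivalently, all aligned finite words $y_0\cdots y_{n-1}$ with $y_j\in C_j$ are factors of $w$).

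First I would record the standard facts about special factors. From $p_w(n+1)-p_w(n)=1$ it follows that for each $n\ge1$ the word $w$ has a unique right-special factor $R_n$ and a unique left-special factor; writing $L_n$ for the latter, the observation that $x_0u,x_1u$ being factors forces the same for the length-$(n-1)$ prefix $u'$ of $u$ shows $L_{n-1}$ is a prefix of $L_n$, so the $L_n$ assemble into an infinite word $L$ all of whose prefixes are left-special; in particular $L\in\Subshift(A)$. Dually, the last letter of any right-special factor of length $\ge1$ is right-special of length $1$; since the unique missing length-$2$ factor is $\bar s^2$, where $s:=L[0]$ denotes the left-special letter and $\bar s$ the other one, the unique right-special letter is also $s$, so every $R_i$ with $i\ge1$ ends in $s$.

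The argument then runs in three steps. (i) \emph{At most one $C_i$ equals $\Sigma$:} if $C_p=C_q=\Sigma$ with $p<q$, then for any word $z\in C_{p+1}\cdots C_{q-1}$ (possibly empty) all four of $x_0zx_0,x_0zx_1,x_1zx_0,x_1zx_1$ are factors of $w$, so $x_0z$ and $x_1z$ are two distinct right-special factors of length $q-p\ge1$, contradicting uniqueness of $R_{q-p}$. (ii) \emph{Trees with $|C_0|=2$:} since each $L_n$ is left-special, $x_0L,x_1L\in\Subshift(A)$, hence $C_0=\Sigma$, $C_i=\{L[i-1]\}$ for $i\ge1$ is a tree; conversely any tree with $|C_0|=2$ has $C_0=\Sigma$, hence all later $C_i$ are singletons by (i), so it equals $\{x_0v,x_1v\}$ for a single infinite word $v$, and $x_0v,x_1v\in\Subshift(A)$ forces every prefix of $v$ to be left-special, i.e. $v=L$; this gives existence and uniqueness. (iii) \emph{No $C_i=\Sigma$ with $i\ge1$:} if such an $i$ existed, (i) would give $\mathcal T=\{ux_0v,ux_1v\}$ with $u$ of length $i\ge1$; since $ux_0,ux_1$ are factors of $w$ we get $u=R_i$, and as in (ii) $v=L$. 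One of the two words of $\mathcal T$ is $R_i\,s\,L$, and its positions $i-1,i,i+1,\dots,i+k$ are all equal to $s$ — position $i-1$ being the final letter of $R_i$, position $i$ the displayed $s$, and positions $i+1,\dots,i+k$ the initial $s$-run of $L$, of some finite length $k\ge1$. Hence $R_isL$ contains the subword $s^{k+2}$. But $s^{k+2}$ is not a factor of $w$: the prefix $s^k\bar s$ of $L$ is left-special, so $\bar s s^k\bar s$ is a factor of $w$, and if $s^{k+2}$ were a factor too then the two length-$(k+2)$ factors $s^{k+2}$ and $\bar s s^k\bar s$ would contain $0$ and $2$ copies of $\bar s$ respectively, contradicting balancedness of $w$. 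Thus $R_isL\notin\Subshift(A)$, contradicting that $\mathcal T$ is a tree over $A$.

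The one genuinely nontrivial point is step (iii): a ``wide'' slot $C_i$ with $i\ge1$ is forced to sit between the right-special prefix $R_i$, which necessarily ends in $s$, and the left-special tail $L$, which necessarily begins with $s^k$, and this produces a run $s^{k+2}$ of the special letter that balance forbids. Steps (i), (ii) and the preliminary facts are routine bookkeeping with special factors, for which the Morse--Hedlund theory quoted above is the standard reference.
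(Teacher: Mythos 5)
Your proof is correct, but it takes a genuinely different route from the paper's. The paper realizes $w$ as a cutting sequence for a line $y=\alpha x+r$, reduces to $0<\alpha<1$ (so that $x_1^2=0$), establishes the forbidden patterns $x_0^{d+2}=0$ and $x_1x_0^{l}x_1=0$ for $0\le l\le d-1$ directly from the geometry of the line (where $\tfrac{1}{d+1}<\alpha<\tfrac1d$), and shows a hypothetical non-singleton $C_i$ with $i\ge1$ forces an $x_0^{d+2}$ into the tree; uniqueness of the fat tree at position $0$ is then a separate induction using balance. You instead work purely with the special-factor combinatorics of Sturmian words: the unique right-special factor $R_n$ and unique left-special factor $L_n$ at each length, the coherence of the $L_n$ into a single left-special infinite word $L$, and the fact that the special letter $s=L[0]$ is also the unique right-special letter. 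This identifies the unique tree with $|C_0|=2$ explicitly as $\Sigma\cdot L$, with no balance needed, and your step~(iii) is the attractive observation that a fat slot at position $i\ge1$ sits between $R_i$ (ending in $s$) and $L$ (beginning with $s^k$), inserting one more $s$ and producing a run $s^{k+2}$ that balance forbids. Your route avoids the cutting-sequence reduction and the $\alpha>1$ case entirely and yields a cleaner structural picture, at the cost of leaning implicitly on standard Sturmian bookkeeping (uniqueness of right/left-special factors from $p_w(n+1)-p_w(n)=1$, finiteness of $s$-runs, and the fact that the missing length-$2$ word is $\bar s^2$ rather than $s\bar s$ — the last requiring the one-line remark that otherwise $w$ would be eventually $s^\omega$, contradicting aperiodicity); a referee would likely want those facts recorded, but all of them are routine and the argument is sound.
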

\begin{proof}
Let $w$ be the Sturmian sequence associated with $A$, which can be realized as a cutting sequence for the line $\{y=\alpha x+r\}$ for some irrational $\alpha>0$ and for some $0\leq r<1$.
By Remark \ref{alpha>1} it suffices to prove the claim for $0<\alpha<1$. Notice that $\alpha<1$ implies that $x_1^2$ is not a factor of the associated Sturmian sequence $w$.
By Proposition \ref{rem}, there exists a tree $\mathcal{T}=C_0C_1\cdots$ over $A$ such with $C_0=\{x_0,x_1\}$. We first claim that there is a \textit{unique} such tree. Indeed, we claim that for every $n\geq 1$ there exists a unique length-$n$ factor $u$ of $w$ such that both $x_0u,x_1u$ are factors of $w$. Let us prove this claim by induction; for $n=1$, such $u$ must be equal to $x_0$, since $x_1^2=0$. For the induction step, assume that $u,v$ are length-$(n+1)$ factors of $w$ such that $x_0u,x_1u,x_0v,x_1v$ are all factors of $w$. Write $u=u'x_s,v=v'x_t$ for some $s,t\in \{0,1\}$. By induction, $u'=v'$, since $u',v'$ are length-$n$ factors of $w$ which can ne both prolonged to the left by both $x_0,x_1$. 
Assume to the contrary that $t\neq s$, say, $t=0,s=1$. Thus $x_0u'x_0,x_1u'x_1$ are both factors of $w$ and:
\[
\left||x_0u'x_0|_{x_1} - |x_1u'x_1|_{x_1}\right| = \left||u'|_{x_1} - (|u'|_{x_1}+2)\right| = 2
\]
contradicting that $w$ is balanced. We thus proved that there exists only one tree $\mathcal{T}=C_0C_1\cdots$ with $|C_0|=2$.

Next, we claim that for each tree $\mathcal{T}=C_0C_1\cdots$ over $A$, we have that $|C_i|=1$ for all $i\geq 1$. Fix an arbitrary tree $\mathcal{T}=C_0C_1\cdots$ over $A$. 
Assume to the contrary that $C_i=\{x_0,x_1\}$ for some $i\geq 1$. Then $C_{i-1}=\{x_0\}$, since $x_1^2=0$ in $A$. Since $0<\alpha<1$, there exists some positive integer $d$ such that $\frac{1}{d+1}<\alpha<\frac{1}{d}$. We claim that:
\begin{align}
x_0^{d+2}=0, \label{Sturm_1}
\end{align} for otherwise there were points $(\mu_1,e),(\mu_2,e+1)$ on the line $\{y=\alpha x+r\}$ for some $\mu_2>\mu_1+d+1$, so $\alpha = \frac{1}{\mu_2-\mu_1} < \frac{1}{d+1}$. Furthermore, we claim that:
\begin{align}
x_1x_0^l x_1=0\ \text{for every}\ 0\leq l\leq d-1,    \label{Sturm_2}
\end{align}
for otherwise there were points $(\mu_1,e),(\mu_2,e+1)$ on the line $\{y=\alpha x+r\}$ for some $\mu_2-\mu_1<\mu_1+l+1$, so: 
\[
\alpha = \frac{1}{\mu_2-\mu_1} > \frac{1}{l+1} \geq \frac{1}{d}.\]

It now follows that $C_{i+1}=\cdots=C_{i+d}=\{x_0\}$, since if there was $1\leq j\leq d$ minimal such that $x_1\in C_{i+j}$ then  $x_1x_0^{j-1}x_1\in C_i\cdots C_{i+j}$, contradicting (\ref{Sturm_2}). Recall that $C_{i-1}=\{x_0\}$ and that $x_0\in C_i$, so now $x_0^{d+2}\in C_{i-1}\cdots C_{i+d}$, contradicting (\ref{Sturm_1}). To conclude, we proved that every tree $\mathcal{T}=C_0C_1\cdots$ over $A$ has $|C_0|\leq 2$ and $|C_i|=1$ for all $i\geq 1$, and that there is a unique tree over $A$ with $|C_0|=2$.
\end{proof}

\subsection{The geometry of monomial $\mathbb{P}^1$'s}

\begin{thm} \label{Sturmian_iso}
Let $A$ be a monomial $\mathbb{P}^1$. Then $\mathcal{P}(A)$ is a union of a projective line with a Cantor set with exactly two intersection points. In particular, all monomial $\mathbb{P}^1$'s have isomorphic proalgebraic varieties of point modules.
\end{thm}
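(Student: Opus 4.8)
The plan is to feed the tree classification of Lemma~\ref{trees_Sturm} into the decomposition of Theorem~\ref{decomposition}. By the discussion preceding the statement, $A = A_w$ for a Sturmian word $w$ over $\Sigma = \{x_0,x_1\}$. By Lemma~\ref{trees_Sturm} the trees over $A$ split into two families: the unique \emph{fat} tree $\mathcal{T}^\ast = C_0^\ast C_1^\ast\cdots$ with $|C_0^\ast|=2$, which must have the form $C_0^\ast = \{x_0,x_1\}$ and $C_i^\ast = \{z[i-1]\}$ for $i\geq 1$, where $z$ is the distinguished left-special ray (the coherent limit of the unique length-$n$ factors admitting both left prolongations, extracted in the proof of that lemma); and the \emph{thin} trees, i.e.\ the single infinite words, which are exactly the elements of $\Subshift(A)$. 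Theorem~\ref{decomposition} then gives
\[
\varprojlim \mathcal{P}(A) \;=\; L \cup K,
\]
where $L$ is the image of $\mathcal{T}^\ast$, namely $\mathbb{P}^1 \times \{p_{z[0]}\} \times \{p_{z[1]}\} \times \cdots \subseteq (\mathbb{P}^1)^{\mathbb{N}}$, a copy of the projective line, and $K = \bigcup_{v\in \Subshift(A)} \{(p_{v[0]},p_{v[1]},\dots)\}$, homeomorphic to $\Subshift(A)$ via the coordinatewise substitution $x_i\mapsto p_i$.

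Since $A$ is prime with $\dim_F A_n = n+1$, it has quadratic, hence non-linear, growth, so Lemma~\ref{NoIsoPt} identifies $\Subshift(A)$, and therefore $K$, with the Cantor set. To compute $L\cap K$: a point of $L$ has $i$-th coordinate $p_{z[i-1]}$ for every $i\geq 1$ and arbitrary first coordinate in $\mathbb{P}^1$, whereas a point of $K$ has every coordinate in $\{p_0,p_1\}$; hence a common point corresponds to one of the infinite words $x_0z$ or $x_1z$, and both of these lie in $\Subshift(A)$ precisely because every prefix of $z$ admits both $x_0$ and $x_1$ as left prolongations in $w$. As $x_0z\neq x_1z$, the intersection $L\cap K$ has exactly two points, which is the asserted geometric description. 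The same bookkeeping at finite level shows $\mathcal{P}_n(A)$ is a $\mathbb{P}^1$ with $n$ further reduced points attached, with each transition map restricting to an isomorphism on the $\mathbb{P}^1$-part and to truncation of factors on the thin part, compatibly with the two gluing points; thus $\mathcal{P}(A)$ is genuinely obtained by gluing the constant pro-variety $\mathbb{P}^1$ to $\Subshift(A)$ (whose $\mathcal{P}_n$-parts are finite sets of reduced points) along a two-point subobject.

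For the last assertion, given two Sturmian words $w,w'$ with left-special rays $z,z'$, I would build a proregular isomorphism $\mathcal{P}(A_w)\xrightarrow{\sim}\mathcal{P}(A_{w'})$ piece by piece: the identity on the $\mathbb{P}^1$-parts, and for the Cantor parts a homeomorphism $\Subshift(A_w)\to \Subshift(A_{w'})$ sending $x_0z\mapsto x_0z'$ and $x_1z\mapsto x_1z'$ (such a homeomorphism exists because the Cantor set is topologically homogeneous, so any bijection between two finite subsets extends). Being a continuous map between spaces each of which is an inverse limit of finite sets, this homeomorphism is finitely determined and hence induces an isomorphism of the corresponding pro-objects, matching the two-point gluing subobjects on each side with the points $p_0,p_1\in\mathbb{P}^1$. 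The $\mathbb{P}^1$- and Cantor-components are then matched compatibly with the gluing data and assemble into the desired isomorphism, so $\mathcal{P}(A)$ is the same for every monomial $\mathbb{P}^1$.

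The main obstacle I anticipate is this final assembly: promoting the set-level and topological picture to an honest isomorphism \emph{of proalgebraic varieties}, which requires choosing the re-indexing function, checking that the induced maps $\mathcal{P}_{g(n)}(A_w)\to\mathcal{P}_n(A_{w'})$ are morphisms of algebraic varieties commuting with the transition maps, and verifying that the maps built from the homeomorphism and from its inverse are mutually inverse as proregular maps (a mild subtlety, since proalgebraic varieties are finer than their sets of points). All of the genuinely combinatorial input—that $\mathcal{T}^\ast$ is the only non-singleton tree and that $\Subshift(A)$ is a Cantor set—has already been isolated in Lemmas~\ref{trees_Sturm} and~\ref{NoIsoPt}, so what remains is precisely this gluing bookkeeping in the category of proalgebraic varieties.
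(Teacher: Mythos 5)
Your proposal is correct and follows essentially the same route as the paper: decompose via Lemma~\ref{trees_Sturm} and Theorem~\ref{decomposition} into a $\mathbb{P}^1$ and a copy of $\Subshift(A)$, identify the latter as a Cantor set by Lemma~\ref{NoIsoPt}, locate the two intersection points, and then transfer a homeomorphism between Cantor sets (fixing the two marked points, available by homogeneity) to a proregular isomorphism. The ``gluing bookkeeping'' you flag as the remaining obstacle is exactly what the paper carries out, namely choosing $g(n)$ by continuity and compactness so that $\pi_n\circ f$ factors through $\pi_{g(n)}$, and gluing the resulting finite-level maps with the identity on $\mathbb{P}^1$.
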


A consequence of Theorem \ref{Sturmian_iso} is:

\begin{cor}
There exists a continuum of pairwise non-isomorphic projectively simple monomial algebras of quadratic growth having equal Hilbert series and pairwise isomorphic proalgebraic varieties of point modules.
\end{cor}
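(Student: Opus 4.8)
The plan is to leverage Theorem \ref{Sturmian_iso} directly: it already tells us that any two monomial $\mathbb{P}^1$'s have isomorphic proalgebraic varieties of point modules, that each is projectively simple, and that each has Hilbert series $\frac{1}{(1-t)^2}$, hence quadratic growth (since $\gamma_A(n)=\sum_{i=0}^n(i+1)\sim n^2$). Thus the corollary reduces entirely to producing a continuum of \emph{pairwise non-isomorphic} monomial $\mathbb{P}^1$'s.

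For the construction I would use cutting sequences: for each irrational $\alpha\in(0,1)$ fix a monomial $\mathbb{P}^1$ of the form $A_{\alpha,r}$ coming from a line of slope $\alpha$ (its factor set, and hence the algebra, is independent of the intercept $r$); by the Morse--Hedlund theorem the cutting sequence is Sturmian, so $A_{\alpha,r}$ is indeed a monomial $\mathbb{P}^1$. There is a continuum of admissible slopes $\alpha$, so it suffices to prove that $A_{\alpha,r}\cong A_{\beta,s}$ forces $\alpha=\beta$ whenever $\alpha,\beta$ are irrational in $(0,1)$. Here I would invoke Theorem \ref{iso mon alg}: since both algebras are presented on the binary alphabet $\{x_0,x_1\}$ with no redundant generators, any abstract isomorphism between them arises from a permutation $\sigma\in S_2$ of the generators. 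If $\sigma=\mathrm{id}$ the two Sturmian words have equal factor sets; if $\sigma$ is the transposition, the factor set of one is the image of the other under the letter-swap $x_0\leftrightarrow x_1$, which by Remark \ref{alpha>1} is the factor set of a Sturmian word of slope $1/\alpha$. Since $1/\alpha>1$ is excluded by $\beta\in(0,1)$, we are left with $\alpha=\beta$.

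The only non-formal ingredient --- and the step I would flag as the crux --- is the classical fact from the theory of Sturmian sequences that the set of finite factors of a Sturmian word is a complete invariant of its slope (two Sturmian words share their factor sets exactly when their slopes coincide); this I would cite from the standard references (e.g.\ \cite{Arnoux,Pir}) rather than reprove. Everything else is bookkeeping: the growth and Hilbert series computations are immediate from $p_w(n)=n+1$, projective simplicity of monomial $\mathbb{P}^1$'s was already recorded above, and the set of irrationals in $(0,1)$ has the cardinality of the continuum.
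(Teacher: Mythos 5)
Your proof is correct and essentially mirrors the paper's own argument: both take the cutting-sequence algebras $A_{\alpha,\cdot}$ for irrational $\alpha\in(0,1)$, invoke Theorem \ref{iso mon alg} to reduce any isomorphism to a generator permutation, rule out the transposition by the slope constraint $\alpha<1$ (the paper does this via the direct observation $x_1^2=0\neq x_0^2$, which is equivalent to your appeal to Remark \ref{alpha>1}), and then conclude from the fact that the factor set of a Sturmian word determines its slope---the paper points to the $x_1x_0^dx_1$ patterns in Lemma \ref{trees_Sturm}, while you cite the classical slope-determination theorem directly, which is if anything the cleaner route since the $x_1x_0^dx_1$ patterns alone distinguish only the integer part of $1/\alpha$. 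One small correction: \cite{Pir} in this paper's bibliography is Pirillo's proof of Shirshov's theorem and has nothing to do with Sturmian slopes, so \cite{Arnoux} (or Morse--Hedlund \cite{MH2_1940}) is the reference you want for that classical fact.
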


For instance, all of the algebras $\{A_{\alpha,0}\}_{\alpha\in (0,1)\setminus \mathbb{Q}}$ are pairwise non-isomorphic: by Theorem \ref{iso mon alg}, if $A_{\alpha,0}\cong A_{\beta,0}$ for $\alpha,\beta$ as above then there is a monomial isomorphism between them. It can be seen that $x_0^2\neq 0,x_1^2=0$ in both algebras, and therefore a monomial isomorphism would induce the identity map; but the cutting sequences obtained from $\alpha,\beta$ are distinct, and the corresponding monomial algebras can be separated by a factor of the form $x_1x_0^dx_1$, as seen in the proof of Lemma \ref{trees_Sturm}.

\begin{proof}[{Proof of Theorem \ref{Sturmian_iso}}]
Let $A$ be a monomial $\mathbb{P}^1$. By Lemma \ref{trees_Sturm} there exists a unique tree over $A$ with $C_0=\{x_0,x_1\}$ and $|C_i|=1$ for all $i\geq 1$, and all of the other trees over $A$ are singletons, namely, $|C_i|=1$ for all $i\geq 0$.
Let $\mathcal{C}$ denote the set $\{ (p_{i_0},p_{i_1},\dots)|x_{i_0}x_{i_1}\cdots\in \Subshift(A) \}\subseteq \{p_0,p_1\}^{\mathbb{N}}$.
By Theorem \ref{decomposition}, 
\begin{align}
\varprojlim \mathcal{P}(A)=\left(\mathbb{P}^1\times \{p_{\mu_1}\} \times \{p_{\mu_2}\}\times \cdots \right) \cup \mathcal{C}, \label{10.6 (1)}
\end{align}
a union of a projective line, corresponding to the unique tree with $|C_0|=2, |C_{\geq 1}|=2$, with a continuum of points corresponding to $\Subshift(A)$. We denote by $\pi_n$ the projection $\varprojlim\mathcal{P}_n(A)\twoheadrightarrow \mathcal{P}_n(A) \subseteq \left(\mathbb{P}^1\right)^{\times (n+1)}$, and by abuse of notation we also use it to denote the projection $\pi_N(A)\rightarrow \mathcal{P}_n(A)$ for every $N\geq n$.
Notice that the pro-Zariski topology on $\mathcal{C}$ coincides with the product topology on $\Subshift(A)$ as a subspace of $\Sigma^{\mathbb{N}}$, and $\Subshift(A)$ is homeomorphic to the Cantor set by Lemma \ref{NoIsoPt}. There are two points in the first set in the union (\ref{10.6 (1)}) which belong to $\{p_0,p_1\}^\mathbb{N}$, namely:
\begin{eqnarray*}
P_0 & := & \{p_0\}\times \{p_{\mu_1}\} \times \{p_{\mu_2}\}\times\cdots \\
P_1 & := & \{p_1\}\times \{p_{\mu_1}\} \times \{p_{\mu_2}\}\times\cdots.
\end{eqnarray*}

Consider another monomial $\mathbb{P}^1$, say, $B$ and let
$\mathcal{D}=\{ (p_{i_0},p_{i_1},\dots)|x_{i_0}x_{i_1}\cdots\in \Subshift(B) \}\subseteq \{p_0,p_1\}^{\mathbb{N}}$ and:
\[ \varprojlim \mathcal{P}(B)=\left(\mathbb{P}^1\times \{p_{\nu_1}\} \times \{p_{\nu_2}\}\times \cdots \right) \cup \mathcal{D} \]
and define:
\begin{eqnarray*}
Q_0 & := & \{p_0\}\times \{p_{\nu_1}\} \times \{p_{\nu_2}\}\times\cdots \\
Q_1 & := & \{p_1\}\times \{p_{\nu_1}\} \times \{p_{\nu_2}\}\times\cdots
\end{eqnarray*}
and by abuse of notation, we denote by $\pi_n$ the projection $\varprojlim\mathcal{P}_n(B)\twoheadrightarrow \mathcal{P}_n(B)\subseteq \left(\mathbb{P}^1\right)^{\times (n+1)}$.

We now claim that $\mathcal{P}(A)\cong \mathcal{P}(B)$.
Notice that both $\mathcal{C}$ and $\mathcal{D}$ are homeomorphic to the Cantor set, hence there exists a homeomorphism $f\colon \mathcal{C}\rightarrow \mathcal{D}$. Moreover, we may take such homeomorphism for which $f(P_0)=Q_0,f(P_1)=Q_1$. Indeed, we can split $\mathcal{C}$ into a disjoint union of clopen sets $\mathcal{C}=\mathcal{C}_0\cup \mathcal{C}_1$ with $P_i\in \mathcal{C}_i$ ($i=0,1$) and similarly $\mathcal{D}=\mathcal{D}_0\cup \mathcal{D}_1$ with $Q_i\in D_i$ ($i=0,1$), and each $\mathcal{C}_i,\mathcal{D}_i$ is homeomorphic to the Cantor set itself. Since the Cantor set is homogeneous (that is, it is transitive under the action of its auto-homeomorphism group), being homeomorphic to a topological group $\left(\mathbb{Z}/2\mathbb{Z}\right)^{\mathbb{N}}$, we can find homeomorphisms $\mathcal{C}_i\rightarrow \mathcal{D}_i$ carrying $P_i\mapsto Q_i$ and glue them together to $f\colon \mathcal{C}\rightarrow \mathcal{D}$.


By the definition of the projective limit topology, it follows that for every $\xi \in \pi_n(\mathcal{D})$ there exists some $r=r(\xi)$ and $u_0,\dots,u_r\in \{0,1\}$ such that: 
\[ (\pi_n\circ f)^{-1}(\xi) \supseteq \{(p_{u_0},\dots,p_{u_r})\}\times \{p_0,p_1\}^{\times \infty}. \] 
Let: \[ g(n) > \max \{r(\xi)|\xi \in \pi_n(\mathcal{D})\}. \] 
Then $\pi_n \circ f$ is constant on the fibers of $\pi_{g(n)}\colon \mathcal{C}\rightarrow \{p_0,p_1\}^{g(n)+1}$ and thus restricts to a map $f_n\colon \pi_{g(n)}(\mathcal{C})\rightarrow \pi_n(\mathcal{D})$ obtained by $f_n=\pi_n \circ f \circ \theta_{g(n)}$ where $\theta_i\colon \pi_i(\mathcal{C})\rightarrow \mathcal{C}$ is an arbitrary section of the projection $\pi_i\colon \mathcal{C}\rightarrow \{p_0,p_1\}^{i+1}$:
\[
\begin{tikzcd}
\mathcal{C} \arrow[r, "f"]                         & \mathcal{D} \arrow[d, "\pi_n"] \\
\pi_{g(n)}(\mathcal{C}) \arrow[u, "\theta_{g(n)}"] \arrow[r, "f_n"]  & \pi_n(\mathcal{D}).                             \end{tikzcd}
\]
We can now define a family of morphisms of projective varieties:
\[ \{ \psi_n\colon \mathcal{P}_{g(n)}(A)\rightarrow \mathcal{P}_n(B)\ |\ n=1,2,\dots\} \]
as follows. Observe that:
\begin{eqnarray*}
\mathcal{P}_n(A) & = & \left( \mathbb{P}^1\times \{(p_{\mu_1},\dots,p_{\mu_n})\} \right) \cup \pi_n(\mathcal{C}) \\ 
\mathcal{P}_n(B) & = & \left( \mathbb{P}^1\times \{(p_{\nu_1},\dots,p_{\nu_n})\} \right) \cup \pi_n(\mathcal{D})
\end{eqnarray*}
and let $\psi_n\colon \mathcal{P}_{g(n)}(A)\rightarrow \mathcal{P}_n(B)$ be:
\begin{equation*}
\psi_n(\rho)=
    \begin{cases}
        f_n(\rho) & \text{if } \rho \in \pi_{g(n)}(\mathcal{C}) \\
        (\alpha,p_{\nu_1},\dots,p_{\nu_n}) & \text{if } \rho = (\alpha,p_{\mu_1},\dots,p_{\mu{g(n)}})\   \text{for some } \alpha\in \mathbb{P}^1,
    \end{cases}
\end{equation*}
which is a well-defined morphism since: 
\begin{eqnarray*}
f_n(p_0,p_{\mu_1},\dots,p_{\mu_n}) =  (\pi_n\circ f)(P_0)=\pi_n(Q_0)=(p_0,p_{\nu_1},\dots,p_{\nu_n}), \\
f_n(p_1,p_{\mu_1},\dots,p_{\mu_n}) = (\pi_n\circ f)(P_1)=\pi_n(Q_1)=(p_1,p_{\nu_1},\dots,p_{\nu_n}).
\end{eqnarray*}
In a similar vein, we can define $\psi'_n\colon \mathcal{P}_{h(n)}(B)\rightarrow \mathcal{P}_n(A)$ utilizing the inverse isomorphism $f^{-1}\colon \mathcal{D}\rightarrow \mathcal{C}$.
Thus: \[ \Psi = \{ \psi_n\colon \mathcal{P}_{g(n)}(A)\rightarrow \mathcal{P}_n(B)\ |\ n=0,1,2,\dots\} \] provides a morphism of proalgebraic varieties $\mathcal{P}(A)\rightarrow \mathcal{P}(B)$, which is  easily seen to be an isomorphism, whose inverse is the morphism:
\[ \Psi^{-1} = \{ \psi'_n\colon \mathcal{P}_{h(n)}(B)\rightarrow \mathcal{P}_n(A)\ |\ n=0,1,2,\dots\}, \]
completing the proof.
\end{proof}

\subsection{Zhang rigidity}

By Theorem \ref{Sturmian_iso} all monomial $\mathbb{P}^1$'s have isomorphic proalgebraic varieties of point modules, despite being pairwise non-isomorphic to each other. Since graded algebras which are isomorphic to twists of each other have isomorphic proalgebraic varieties of point modules (Proposition \ref{ZhangTwistProvarieties}), it is natural to ask what the twists of monomial $\mathbb{P}^1$'s can be. It turns out that monomial $\mathbb{P}^1$'s are very rigid in this context.

\begin{prop} \label{ZhangRigidity}
Let $A$ be a monomial $\mathbb{P}^1$. If $B$ is a prime monomial algebra which is isomorphic to a twist of $A$ then $A\cong B$.
\end{prop}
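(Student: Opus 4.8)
The plan is to pit the coordinatewise description of twisting on spaces of point modules (Proposition~\ref{ZhangTwistProvarieties}) against the rigid geometry of monomial $\mathbb{P}^1$'s (Theorem~\ref{Sturmian_iso}), and to deduce from the shift-invariance of the underlying subshifts that a twist of a monomial $\mathbb{P}^1$ can only relabel the two generators. First I would record the reduction: twisting preserves the dimensions of homogeneous components, and by \cite{BellZhang} an abstract isomorphism of connected graded algebras generated in degree $1$ may be taken graded, so $H_B(t)=H_{A^\tau}(t)=H_A(t)=\tfrac{1}{(1-t)^2}$; since $B$ is prime and monomial it is itself a monomial $\mathbb{P}^1$, say $B=A_{w'}$ for a Sturmian word $w'$. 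Then I fix a graded isomorphism $\psi\colon A^\tau\xrightarrow{\sim}B$ and compose the isomorphism $\mathcal P(A)\cong\mathcal P(A^\tau)$ of Proposition~\ref{ZhangTwistProvarieties} — which at level $n$ is the restriction of a product $g_0\times\cdots\times g_n$ of projective-linear automorphisms of $\mathbb P^1$ — with the isomorphism $\mathcal P(A^\tau)\cong\mathcal P(B)$ induced by restriction of scalars along $\psi$, which on every $\mathbb P^1$-factor is one fixed matrix. This yields an isomorphism of proalgebraic varieties $\Phi\colon\varprojlim\mathcal P(B)\xrightarrow{\sim}\varprojlim\mathcal P(A)$ that is \emph{coordinatewise}: $\Phi((v_j)_j)=(m_j(v_j))_j$ for fixed $m_j\in\mathrm{PGL}_2(F)$.

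Next I would bring in the explicit geometry from the proof of Theorem~\ref{Sturmian_iso}: $\varprojlim\mathcal P(B)=L_B\cup\mathcal D$ and $\varprojlim\mathcal P(A)=L_A\cup\mathcal C$, where $L_\bullet\cong\mathbb P^1$ is a single projective line and $\mathcal C,\mathcal D\subseteq\{p_0,p_1\}^{\mathbb N}$ are the Cantor sets corresponding coordinatewise to $\Subshift(A)$ and $\Subshift(B)$. Since each $\mathcal P_n$ is a projective line together with finitely many points, an isomorphism matches the unique positive-dimensional component with the positive-dimensional one and isolated points with isolated points; passing to the inverse limit and using that $\Subshift(B)$ is perfect (Lemma~\ref{NoIsoPt}), one gets $\Phi(L_B)=L_A$ and $\Phi(\mathcal D)=\mathcal C$. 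Because $w'$ is uniformly recurrent and uses both letters, both $p_0$ and $p_1$ occur in every coordinate slot of $\mathcal D$; as $\Phi(\mathcal D)=\mathcal C\subseteq\{p_0,p_1\}^{\mathbb N}$, each $m_j$ must send $\{p_0,p_1\}$ bijectively onto itself, say $m_j(p_i)=p_{q_j(i)}$ with $q_j\in S_2$. Reading off the correspondence gives $\Subshift(A)=\{\,(x_{q_j(u_j)})_j : (x_{u_j})_j\in\Subshift(B)\,\}$.

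The step I expect to be the main obstacle is showing $q_{j+1}=q_j$ for all $j$. Here I would use that both subshifts are invariant under the (surjective) left shift $T$: the bijection $\beta\colon\Subshift(B)\to\Subshift(A)$, $(u_j)_j\mapsto(q_j(u_j))_j$, conjugates $T$ on $\Subshift(A)$ to a self-map $R:=\beta^{-1}\circ T\circ\beta$ of $\Subshift(B)$, and a direct computation gives $R\bigl((u_j)_j\bigr)=(r_j(u_{j+1}))_j$ with $r_j:=q_jq_{j+1}\in S_2$; since $T$ is surjective on $\Subshift(B)$, this forces $R(\Subshift(B))\subseteq\Subshift(B)$. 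Now suppose $r_{j_0}$ is the transposition. After renaming the generators if needed we may assume $x_1^2=0$ in $B$, so $\mathcal L_2(B)=\{x_0x_0,x_0x_1,x_1x_0\}$; as every factor of $w'$ occurs at every position, there are $u,u'\in\Subshift(B)$ with $u_{j_0+1}u_{j_0+2}=x_0x_0$ and $u'_{j_0+1}u'_{j_0+2}=x_0x_1$. Then $(R(u))_{j_0}(R(u))_{j_0+1}=x_1\,r_{j_0+1}(x_0)$ and $(R(u'))_{j_0}(R(u'))_{j_0+1}=x_1\,r_{j_0+1}(x_1)$ lie in $\mathcal L_2(B)$, hence avoid $x_1x_1$, forcing $r_{j_0+1}(x_0)=x_0$ and $r_{j_0+1}(x_1)=x_0$, contradicting injectivity. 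Therefore all $r_j$ are trivial, so $q_j\equiv q$ for a single $q\in S_2$.

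Finally I would conclude: with $q_j\equiv q$ constant, $\Subshift(A)$ equals $\Subshift(B)$ when $q=\mathrm{id}$ and equals its image under swapping $x_0\leftrightarrow x_1$ when $q$ is the transposition; in either case the languages $\mathcal L(A)$ and $\mathcal L(B)$ coincide up to renaming the two generators, so $A\cong B$ (indeed $A$ and $B$ are equal up to a permutation of generators, cf.\ Theorem~\ref{iso mon alg}). The delicate points are the coordinatewise-ness of $\Phi$ (handled by Proposition~\ref{ZhangTwistProvarieties} together with \cite{BellZhang}) and, principally, the combinatorial rigidity argument in the previous paragraph showing that no nontrivial position-dependent letter-permutation stabilizes a Sturmian subshift.
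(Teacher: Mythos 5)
Your proof is correct, and it takes a genuinely different route from the paper's at the final step. Both you and the paper start identically: upgrade to a graded isomorphism via \cite{BellZhang}, deduce that $B$ is again a monomial $\mathbb{P}^1$, and use Proposition~\ref{ZhangTwistProvarieties} to produce a \emph{coordinatewise} isomorphism $\Phi\colon\varprojlim\mathcal{P}(B)\to\varprojlim\mathcal{P}(A)$ with $\Phi(L_B)=L_A$ and $\Phi(\mathcal{D})=\mathcal{C}$ (using perfectness from Lemma~\ref{NoIsoPt}). From there, the paper extracts from the projection-compatibility of the $f_n$'s an isomorphism of the rooted trees $\Mon(B)\xrightarrow{\sim}\Mon(A)$ and invokes the separately-proved Proposition~\ref{Sturmian_graph}, a substantial balance-based lemma. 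You instead exploit the coordinatewise structure more directly: each $m_j\in\mathrm{PGL}_2$ restricts to a permutation $q_j\in S_2$ of $\{p_0,p_1\}$, and you then show the $q_j$ are constant by conjugating the shift through $\beta$ and running a short contradiction argument using $x_1^2=0$. This avoids Proposition~\ref{Sturmian_graph} entirely and is arguably shorter; the paper's route buys a standalone structural lemma about $\Mon$-trees of Sturmian algebras. One small wording issue: you write ``since $T$ is surjective on $\Subshift(B)$, this forces $R(\Subshift(B))\subseteq\Subshift(B)$,'' but the containment follows just from $T$-invariance of $\Subshift(A)$ and bijectivity of $\beta$, not from surjectivity; surjectivity (equivalently minimality) is what you are implicitly using in the \emph{next} step, to guarantee that each length-two factor occurs at every coordinate position. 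This is a presentational slip and not a gap.
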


Let us first develop a useful combinatorial machinery. Let $\Sigma$ be a finite alphabet,  $A=F\left<\Sigma\right>/I$ a monomial algebra and let $\Mon(A)$ be the directed (rooted, layered) tree whose vertices are non-zero monomials of $A$ (including $1$), with a vertex $u\rightarrow v$ if $v=ux_i$ for some letter (that is, a monomial generator) $x_i\in \Sigma$. Then $A$ is prolongable if and only if $\Mon(A)$ has no sinks. For each $i\geq 0$ let $\Mon_i(A)$ denote the $i$-th layer of the graph, namely, the set of vertices labeled by monomials of length $i$.

\begin{exmpl}
Let $w=yxyyxyxyyx\cdots$ be the Fibonacci word, which is the Sturmian word whose first letter is $y$ and is fixed by the morphism $\phi(x)=y$, $\phi(y)=yx$. As a cutting sequence, $w$ is obtained from the line $\{y=\frac{1}{1.618\dots}x\}$. 
The following figure shows the subgraph of $\Mon(A_w)$ induced by the vertices $\Mon_{\leq 4}(A_w)$:
\[
\begin{tikzcd}
              &                          &               & 1 \arrow[lld] \arrow[rrd] &                           &                         &               \\
              & x \arrow[d]              &               &                           &                           & y \arrow[ld] \arrow[rd] &               \\
              & xy \arrow[ld] \arrow[rd] &               &                           & yx \arrow[d]              &                         & yy \arrow[d]  \\
xyx \arrow[d] &                          & xyy \arrow[d] &                           & yxy \arrow[ld] \arrow[rd] &                         & yyx \arrow[d] \\
xyxy          &                          & xyyx          & yxyx                      &                           & yxyy                    & yyxy         
\end{tikzcd}
\]

\end{exmpl}

\bigskip

\begin{prop} \label{Sturmian_graph}
Let $A,B$ be monomial $\mathbb{P}^1$'s such that $\Mon(A)\cong \Mon(B)$. Then $A\cong B$.
\end{prop}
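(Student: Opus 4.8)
The plan is to show that an isomorphism of rooted layered trees $\Phi\colon \Mon(A)\xrightarrow{\sim}\Mon(B)$ forces, after possibly swapping the two generators of $B$, a letter-for-letter identification of the Sturmian words defining $A$ and $B$; then Theorem \ref{iso mon alg} (or direct inspection) gives $A\cong B$. Write $A=A_w$, $B=A_{w'}$ for Sturmian words $w,w'$ over $\Sigma=\{x_0,x_1\}$. The key structural fact I would extract from being a monomial $\mathbb{P}^1$ is Lemma \ref{trees_Sturm}: for each $n$ there is a \emph{unique} length-$n$ non-zero monomial $u_n$ admitting two prolongations to the right (equivalently, a unique vertex of $\Mon(A)$ at level $n$ with out-degree $2$), and every other vertex has out-degree $1$. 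Dually, since $p_w(n)=n+1$, exactly one vertex at each level has in-degree $2$ and the rest have in-degree $1$. So $\Mon(A)$ is, combinatorially, an almost-path: each level $\Mon_n(A)$ has $n+1$ vertices, and passing from level $n$ to level $n+1$ one distinguished vertex branches into two while the $n$ others each get a unique child, and simultaneously one distinguished pair of level-$n$ vertices have a common child. A tree isomorphism $\Phi$ must preserve out-degrees and in-degrees and hence carries the branching vertex of $\Mon_n(A)$ to that of $\Mon_n(B)$, and the merging pair to the merging pair, for every $n$.

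The heart of the argument is to promote this level-wise combinatorial rigidity to an identification of the \emph{edge labels}. First I would normalize: by Lemma \ref{trees_Sturm} the two level-$1$ monomials $x_0,x_1$ are non-zero in $A$, and likewise in $B$; pre-composing $\Phi$ with the generator swap on $B$ if necessary, assume $\Phi(x_0)=x_0$, $\Phi(x_1)=x_1$ at level $1$. Now I claim $\Phi$ is the identity on monomials, proved by induction on length. Suppose $\Phi$ is the identity on $\Mon_{\le n}(A)=\Mon_{\le n}(B)$ (as sets of words — the induction hypothesis is that these sets coincide and $\Phi$ fixes each). Consider level $n{+}1$. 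Exactly one vertex $v\in\Mon_n$ branches: in $\Mon(A)$ its children are $vx_0,vx_1$, in $\Mon(B)$ they are $vx_0,vx_1$ as well (these being precisely the two length-$(n{+}1)$ non-zero monomials extending $v$ on the right, a property intrinsic to the word, but we only need that they are $\{vx_0,vx_1\}$ in both). Since $\Phi$ respects the layered structure, $\Phi(vx_i)$ is a child of $\Phi(v)=v$ in $\Mon(B)$, so $\Phi$ permutes $\{vx_0,vx_1\}$. For every other vertex $u\in\Mon_n$, $u$ has a unique child $ux_{j(u)}$ in $\Mon(A)$ and a unique child $ux_{j'(u)}$ in $\Mon(B)$; $\Phi(ux_{j(u)})$ must be that unique child, so it equals $ux_{j'(u)}$ — I must show $j(u)=j'(u)$ and, for the branching vertex, that $\Phi$ fixes rather than swaps.

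To pin down these last letters I would use the balanced property (Morse–Hedlund), exactly in the spirit of the proof of Lemma \ref{trees_Sturm}: the set of length-$(n{+}1)$ factors of a Sturmian word is determined by the set of length-$n$ factors \emph{together with} the single branching factor $u_n$, via the rule that every length-$n$ factor $\ne u_n$ has a forced continuation and $u_n$ has both, and balancedness forces which letter each $\ne u_n$ factor must append (if $u$ ended such that both $ux_0,ux_1$ were factors, $u$ would be $u_n$; so the forced letter is read off from the unique occurrence pattern). Since $\Phi$ already matches $\Mon_n(A)$ with $\Mon_n(B)$ as \emph{identical} sets of words and matches the branching vertices, the recursive rule yields identical sets of length-$(n{+}1)$ words \emph{and} identical edge labels, i.e. $j(u)=j'(u)$ for all $u$, and the branching at $u_n$ produces $\{u_nx_0,u_nx_1\}$ on both sides with $\Phi$ necessarily the identity on this pair because each of $u_nx_0,u_nx_1$ has, at the next level, a prescribed continuation distinguishing it (or, more simply: $\Phi$ restricted to $\Mon_{\le n+1}$ is a tree isomorphism fixing everything except possibly swapping $u_nx_0\leftrightarrow u_nx_1$, but such a swap would have to extend compatibly through all higher levels, and the subtrees rooted at $u_nx_0$ and $u_nx_1$ are non-isomorphic since they describe the two distinct shift-tails of $w$, which are distinct uniformly recurrent — hence non-periodic — words with different factor sets beyond some length). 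This closes the induction: $\Mon(A)=\Mon(B)$ as labelled trees, so $w$ and $w'$ have the same finite factors, $A_w=A_{w'}$, and $A\cong B$.

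The main obstacle I anticipate is precisely the last point — ruling out the ``local swap'' at each branching vertex and showing it cannot be carried consistently up the tree. The clean way to handle it is to observe that an isomorphism of the \emph{full infinite} trees $\Mon(A)\cong\Mon(B)$ restricts to an isomorphism of the subtree hanging below each branching vertex, and these two subtrees (below $u_nx_0$ versus below $u_nx_1$) are the monomial trees of the two one-sided words obtained by the two continuations of $u_n$; these are shift-tails of $w$, which for a Sturmian (non-eventually-periodic) word are distinct, so their monomial trees are non-isomorphic, forbidding the swap. Making ``the subtree below $ux_i$ is $\Mon$ of the corresponding tail'' precise — and verifying that distinct tails of a Sturmian word have non-isomorphic monomial trees (again via balancedness / unique branching structure) — is the one genuinely non-formal step, but it is a short argument once set up correctly.
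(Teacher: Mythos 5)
Your induction scheme---normalize $\Phi$ to fix the generators, then push the identification $\Mon_{\le n}(A)=\Mon_{\le n}(B)$ up one level using balancedness---captures the central mechanism correctly. In particular, the vague balancedness step you invoke to show that a non-branching vertex has the same forced child in $A$ and $B$ is exactly the \emph{Claim} in the paper's proof, proved there by comparing $|eve|_e$ with $|z_1vz_1|_e$ where $v$ is the common splitting vertex one level down. So that part of the proposal is sound and matches the paper in spirit, though you would need to make it rigorous along the paper's lines.

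The genuine gap is in your treatment of the ``local swap'' at the branching vertex, and the fix you sketch does not work. First, the subtree of $\Mon(A)$ hanging below $vx_i$ is the cone of right-extensions of the single factor $vx_i$, not the monomial tree of any infinite word; identifying it with ``$\Mon$ of the corresponding tail'' is not correct. Second, and more decisively, the statement that ``distinct tails of a Sturmian word have non-isomorphic monomial trees... with different factor sets beyond some length'' is false: a Sturmian word is uniformly recurrent, so \emph{every} shift-tail generates the same minimal subshift, has the same factor set, and hence has literally the same monomial tree. Thus the non-isomorphism you would need cannot be obtained this way. (The cones below $vx_0$ and $vx_1$ may well be non-isomorphic as abstract rooted trees for a Sturmian word, but proving it requires tracking where the right-special factors sit relative to these two prefixes, which is a delicate computation you have not carried out and which your suggested route does not give.)

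The paper sidesteps the swap problem entirely. It normalizes so that $x^2$ vanishes in both $A$ and $B$, observes that $x$ is then the unique level-$1$ vertex of out-degree $1$, so any directed-tree isomorphism $f$ must carry the subtree $\Mon_x(A)$ below $x$ onto $\Mon_x(B)$. Assuming $A\neq B$, it takes the minimal level $m$ where the trees differ, proves via the balancedness claim that the unique splitting vertices at level $m-1$ lie in \emph{different} halves ($\Mon_x$ of one, $\Mon_y$ of the other), and then simply counts vertices: $\Mon_x(A)$ and $\Mon_x(B)$ agree layer-by-layer up to $m-1$ but gain a vertex at layer $m$ in exactly one of the two, so they cannot be isomorphic---contradicting that $f$ restricts to an isomorphism between them. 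No analysis of the cones below $vx_0$ and $vx_1$ is needed. I would recommend replacing your last paragraph with this restriction-and-counting argument.
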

\begin{proof}
Write $A=F\left<x,y\right>/I,\ B=F\left<x,y\right>/J$ and suppose that $f\colon \Mon(A)\rightarrow \Mon(B)$ is a directed graph isomorphism. Assume to the contrary that $A\not \cong B$. 
Since $A,B$ are monomial $\mathbb{P}^1$'s, each layer $\Mon_d(A),\Mon_d(B)$ consists of exactly $d+1$ vertices, and there are no sinks (that is, vertices with no outgoing arrow).
We may assume without loss of generality that $x^2\in I,J$ (in a Sturmian sequence the square of one of the letters is not a factor), so $x$ has only one child in both $\Mon(A),\Mon(B)$. 
Since $\dim_F A_2=\dim_F B_2=3$, it follows that $y^2,yx,xy\notin I,J$, so $y$ has two children in both $\Mon(A),\Mon(B)$, and therefore the subgraph $\Mon_x(A)$ of $\Mon(A)$ consisting of $x$ and its descendants must be mapped under $f$ to the corresponding one in $\Mon(B)$, call it $\Mon_x(B)$.

Since for every $d\geq 0$ we have $\# \Mon_d(A)=\# \Mon_d(B)=d+1$ and both $\Mon(A),\Mon(B)$ have no sinks, in each layer there is exactly one vertex splitting (namely, having two children) and the rest are non-splitting (have only one child each). Since $A\neq B$, there is some minimum index $m$ for which $\Mon_m(A)\neq \Mon(B)_m$ (as sets); such $m$ exists, for otherwise, $\Mon(A)=\Mon(B)$ and so $A=B$. We may assume that $m\geq 3$ by the previous arguments.
In particular, $\Mon_i(A)=\Mon_i(B)$ for all $i\leq m-1$ and consequently, the subgraphs induced by the vertices $\Mon_{\leq m-1}(A)$ and $\Mon_{\leq m-1}(B)$ are equal; in particular, the same vertex splits in $\Mon_{m-2}(A)$ and in $\Mon_{m-2}(B)$.

\bigskip

\textit{Claim}: The splitting vertices in $\Mon_{m-1}(A)$ and $\Mon_{m-1}(B)$ have distinct ancestors from $\Mon_1(A)=\Mon_1(B)=\{x,y\}$; in other words, they lie in distinct halves of $\Mon_{\geq 1}(A),\Mon_{\geq 1}(B)$, respectively. 

\textit{Proof of Claim}. There is a unique vertex $v\in \Mon_{m-2}(A)=\Mon_{m-2}(B)$ with two children in both $A,B$. 
Therefore, the only vertices in $\Mon_{m-1}(A)=\Mon_{m-1}(B)$ that could possibly split in either $\Mon(A)$ or $\Mon(B)$ are $xv$ and $yv$ (indeed, a suffix of a splitting monomial is splitting). Assume to the contrary that $A,B$ have the same splitting vertex in their $(m-1)$-th layer, say, $ev$ for some $e\in \{x,y\}$. Consider an arbitrary vertex $z_1\cdots z_{m-1}\in \Mon_{m-1}(A)=\Mon_{m-1}(B)$ which is non-splitting in both $A,B$ (that is, distinct from $ev$). We claim that it has the same child in both $A,B$; otherwise, if $z_1\cdots z_{m-1}z'\in \Mon_m(A)$ and $z_1\cdots z_{m-1}z''\in \Mon_m(B)$ for $\{z',z''\}=\{x,y\}$ then $z_2\cdots z_{m-1}$ is splitting in both $A,B$ and thus $z_2\cdots z_{m-1}=v$. It follows that $z_1\neq e$, since $z_1v=z_1\cdots z_{m-1}$ is non-splitting. Since $z_1v$ has different children in $A,B$, in one of these algebras (say, $C\in \{A,B\}$) we have that $z_1vz_1\in \Mon_m(C)$; but also, $eve\in \Mon_m(C)$, since $ev$ is assumed to split in both $A,B$. Thus:
\[
\left||eve|_e - |z_1vz_1|_e \right| = 2,
\]
contradicting that $C$ is a monomial $\mathbb{P}^1$ and hence its underlying infinite word is balanced.
We thus proved that $\Mon_m(A)=\Mon_m(B)$: they have a common splitting vertex in $\Mon_{m-1}(A)=\Mon_{m-1}(B)$, and every non-splitting vertex in $\Mon_{m-1}(A)=\Mon_{m-1}(B)$ has the same child in both $\Mon_m(A)$ and $\Mon_m(B)$. This contradicts the definition of $m$. It follows that the assumption that $A,B$ have the same splitting vertex in their $(m-1)$-th layer is wrong, and the claim is proved.

\bigskip

Hence in one of $A,B$ the vertex labeled $xv$ is splitting whereas in the other one $yv$ is splitting. Recall that the isomorphism $f$ carries $\Mon_x(A)$ isomorphically onto $\Mon_x(B)$. But in one of $\Mon_x(A),\Mon_x(B)$ there is a splitting vertex in layer $m-2$ (that is, layer $m-1$ in $\Mon(A)$ or $\Mon(A)$ respectively), whereas in the other one there is no splitting vertex in the same layer. We obtain a contradiction to the initial assumption that $A\not \cong B$, so $A\cong B$.
\end{proof}

\begin{exmpl}
Consider the (right) prolongable monomial algebras: \[ 
 A=F\left<x,y,z\right>/\left<x^2,y^2,z^2\right>,\ B=F\left<x,y,z\right>/\left<xz,yz,z^2\right>. \] Then $\Mon(A)\cong \Mon(B)$, both having a root of degree $3$ and all other layers having two descendants for each vertex, but $A$ is not isomorphic to a twist of $B$, since $A$ is prime and in particular also left prolongable, while $B$ is not left prolongable (see Lemma \ref{Zhang_prolongable}).
\end{exmpl}

\begin{proof}[{Proof of Proposition \ref{ZhangRigidity}}]
By assumption, $B\cong A^\tau$ for some twisting system $\tau$ of $A$, through a degree-preserving isomorphism. This isomorphism induces an isomorphism $\mathcal{P}(B)\cong \mathcal{P}(A)$ which is shift-equivariant, namely, a system of isomorphism of projective varieties $f_n\colon \mathcal{P}_n(B)\xrightarrow{\sim} \mathcal{P}_n(A)$ such that $f_{n-1}\circ \pi_n=\pi_n\circ f_n$ (Proposition \ref{ZhangTwistProvarieties}). Since twists preserve Hilbert series, it follows that $B\cong A^\tau$ is a monomial $\mathbb{P}^1$ too.


Now, by Theorem \ref{Sturmian_iso} we have that:
\begin{eqnarray*}
\mathcal{P}_n(A) & = & \left( \mathbb{P}^1\times \{(p_{\mu_1},\dots,p_{\mu_n})\} \right) \cup \pi_n(\mathcal{C}) \\ 
\mathcal{P}_n(B) & = & \left( \mathbb{P}^1\times \{(p_{\nu_1},\dots,p_{\nu_n})\} \right) \cup \pi_n(\mathcal{D}),
\end{eqnarray*}
where:
\begin{eqnarray*}
\mathcal{C} & = & \{ (p_{i_0},p_{i_1},\dots)|x_{i_0}x_{i_1}\cdots\in \Subshift(A) \}\subseteq \{p_0,p_1\}^{\mathbb{N}} \\
\mathcal{D} & = & \{ (p_{i_0},p_{i_1},\dots)|x_{i_0}x_{i_1}\cdots\in \Subshift(B) \}\subseteq \{p_0,p_1\}^{\mathbb{N}}
\end{eqnarray*}
and the copies of $\mathbb{P}^1$ intersect $\pi_n(\mathcal{C})$ and $\pi_n(\mathcal{D})$ at exactly two points (each).

For each $n$, the isomorphism $f_n$ carries the copy of $\mathbb{P}^1$ in $\mathcal{P}_n(B)$ to the copy of $\mathbb{P}^1$ in $\mathcal{P}_n(A)$, and consequently, $f_n(\pi_n(\mathcal{D} \setminus \mathbb{P}^1))=\pi_n(\mathcal{C} \setminus \mathbb{P}^1)$. 

Every non-zero monomial in $A,B$ has at least two (possibly long) prolongations (since the the underlying infinite word is recurrent and no eventually periodic), so $\pi_n(\mathcal{C})=\pi_n(\mathcal{C}\setminus \mathbb{P}^1)$, and therefore $f_n$ carries $\pi_n(\mathcal{D})$ to $\pi_n(\mathcal{C})$; denote the induced isomorphism (of finite sets!) by $\bar{f}_n$.
Since the system $\{f_n\}_{n=1}^{\infty}$ (and hence $\{\bar{f}_n\}_{n=1}^{\infty}$) is shift-equivariant, we obtain an isomorphism of directed graphs $\Mon(B)\xrightarrow{\sim} \Mon(A)$. By Proposition \ref{Sturmian_graph}, it follows that $A\cong B$.
\end{proof}

\end{document}

Fix an alphabet $\{x_0,\dots,x_{m-1}\}$. For $1\leq i\leq m$ let: \[ p_0=[1:0:\cdots :0],\dots, p_{m-1}=[0:\cdots:0:1] \in \mathbb{P}^{m-1} \]
For a monomial $u = x_{i_0}\cdots x_{i_{d-1}}$ let us define: \[ \widehat{u}=(p_{i_0},\dots,p_{i_{d-1}})\in \left(\mathbb{P}^{m-1}\right)^{\times {d}} \]
similarly, for an infinite word $ u=x_{i_0}x_{i_1}\cdots \in \{x_1,\dots,x_m\}^{\mathbb{N}} $ let: \[ \widehat{u}=(p_{i_0},p_{i_1},\dots)\in \mathbb{P}^{m-1}\times \mathbb{P}^{m-1}\times \cdots \]
For a prolongable monomial algebra $A=F\left<x_1,\dots,x_m\right>/I$ let: $$ \Subshift(A)\subseteq \{x_1,\dots,x_m\}^{\mathbb{N}} $$ 
Notice that $\Subshift(A)$ is a closed shift-invariant subset of $\{x_1,\dots,x_m\}^{\mathbb{N}}$. Let $\mathcal{L}(A)$ be the hereditary language associated with $A$, namely, the set of all non-zero monomials in $A$; let $\mathcal{L}_n(A)$ be the set of non-zero monomials in $A$ of length $n$.

Let $A$ be a projectively simple monomial algebra. Then $A=A_w$ for some uniformly recurrent word $w\in \{x_1,\dots,x_m\}^{\mathbb{N}}$. Notice that (prefix) projections induce an inverse system of topological spaces:
\[ \cdots \xrightarrow{\pi_{n+1}} 
 \mathcal{L}_{n+1}(A)\xrightarrow{\pi_n} \mathcal{L}_n(A)\xrightarrow{\pi_{n-1}}\cdots \xrightarrow{\pi_1} \mathcal{L}_1(A)=\{x_1,\dots,x_m\} \]
and:
\[ \varprojlim_{n} \mathcal{L}_n(A) = \Subshift(A) \]
Using $\widehat{\cdot}$ we can think of each $\mathcal{L}_n(A)$ as a (finite) subset of $\mathbb{P}^{m-1}$. By abuse of notation, let us denote by $\pi_{n}$ the projection: \[ \pi_n\colon \left(\mathbb{P}^{m-1}\right)^{\times (n+1)}\longrightarrow \left(\mathbb{P}^{m-1}\right)^{\times n} \] on the first $n$ copies of $\mathbb{P}^{m-1}$. We now have a (profinite) proalgebraic variety:
\[ \varprojlim_{n} \widehat{\mathcal{L}_n(A)} = \{ \widehat{u}\ |\ u\in \Subshift(A) \}\]

\begin{prop}
Let $A=F\left<x_1,\dots,x_m\right>/I$ be a projectively simple monomial algebra. Then: \[ \mathcal{P}(A) = \varprojlim_{n} \widehat{\mathcal{L}_n(A)} \]
\end{prop}
\begin{proof}
Let $A$ be a projectively simple monomial algebra. For every point module $P$, the annihilator $\Ann_A(P)$ is a homogeneous ideal of $A$ which cannot have finite codimension as $P$ is cyclic and $\dim_F P=\infty$, so by projective simplicity $\Ann_A(P)=0$ and $P$ is faithful. Now by Theorem \ref{main_thm} there exists an infinite word $w=x_{j_0}x_{j_1}\cdots \in \{x_1,\dots,x_m\}^{\mathbb{N}}$ such that $A=A_w$, which means that $w\in \Subshift(A)$ and $e_i\cdot x_j=\lambda_{i,j}\delta_{j_i,j}e_{i+1}$ for some $\lambda_{i,j}\in F^\times$. Now $P$ is parametrized by the point:
\[ p = (p_{i_0},p_{i_1},\dots) \in \mathbb{P}^{m-1}\times \mathbb{P}^{m-1}\times \cdots \]
and $p=\widehat{w}$. Conversely, for every $w\in \Subshift(A)$ the point module $P=Fe_0+Fe_1+\cdots$ defined by $e_i\cdot x_j = \delta_{w[i],j}e_{i+1}$ is parametrized by $\widehat{w}$ in $\mathcal{P}(A)$.
\end{proof}

As opposed to the case of algebras of linear growth, projectively simple of super-linear growth have isomorphic proalgebraic varieties parametrizing their point modules.

\begin{prop}
Let $A,B$ be projectively simple monomial algebras, not of linear growth. Then: \[ \mathcal{P}(A) \cong \mathcal{P}_B \]
isomorphism of proalgebraic varieties.
\end{prop}

\begin{proof}
Let us construct non-decreasing functions $g,g'\colon \mathbb{Z}_{>0}\rightarrow \mathbb{Z}_{>0}$ along with morphisms... 

?????

\end{proof}

\subsection{Unique point modules}
\begin{prop}
Let $A$ be a monomial algebra. Then $A$ admits a unique point module up to isomorphism, namely, $\mathcal{P}(A)=\{*\}$ if and only if:
$$ 0\rightarrow \sing(A)\rightarrow A\rightarrow F[t]\rightarrow 0 $$
\end{prop}
\begin{proof}
The `if' part follows since $\mathcal{P}_{F[t]}=\{*\}$. Conversely, let us assume that $A$ admits a unique point module up to isomorphism and prove that $A/\sing(A)\cong F[t]$. Pick a generator $x_k\notin \sing(A)$ and assume to the contrary that it is not unique, namely, there exists another generator $x_l\notin \sing(A)$ for some $k\neq l$.
Now since $x_k,x_l\notin \sing(A)$ it follows that each one of them can be infinitely prolonged to the right, say,
\begin{eqnarray*}
W_1 & = & x_k x_{a_1} x_{a_2}\cdots \\
W_2 & = & x_l x_{b_1} x_{b_2}\cdots
\end{eqnarray*}
both have all finite subwords non-zero. Then the point modules constructed in Proposition \ref{AW} corresponding to $W_1,W_2$:
\begin{eqnarray*}
P_1 = Fe_0+Fe_1+\cdots\ \ \ \ \ e_i\cdot x_j = \delta_{W_1[i],x_j}e_{i+1} \\
P_2 = Fe_0+Fe_1+\cdots\ \ \ \ \ e_i\cdot x_j = \delta_{W_2[i],x_j}e_{i+1}
\end{eqnarray*}
inflate to non-isomorphic point modules of $A$. It follows that there exists only one generator $x_k$ which does not vanish modulo $\sing(A)$, therefore $A/\sing(A)\cong F[t]$.
\end{proof}

\subsection{Veronese subrings}
Given a graded algebra $A$, its $d$-Veronese subring $A$ is $A(d)=\bigoplus_{i=0}^{\infty} A_{di}$.

Consider the monomial algebra $A=A_w$ associated with the infinite word:
$$ w=xyxyxy\cdots $$
(namely, $A_w=F\left<x,y\right>/\left<x^2,y^2\right>$.) Then the $2$-Veronese $A(2)$ is generated by $xy,yx$ and is isomorphic to the commutative algebra $F[t_1]\oplus F[t_2]$ (with $t_1,t_2$ both having degree $1$).
Observe that both $\mathcal{P}_{A}$ and $\mathcal{P}_{A(2)}$ are $2$-point sets though $A\not\cong A(2)$.

Notice that $A$ is prime whereas $A(2)$ is not.

Given a ring $R$ let $B(R)$ denote its prime radical, namely, the sum of all nilpotent ideal.

\begin{prop}
Let $A$ be a graded algebra and let $d\geq 1$ be arbitrary. 
\begin{enumerate}
\item If $A$ is a monomial algebra then $A(d)$ is a monomial algebra;
\item $\sing(A(d))=\sing(A)\cap A(d)$ so if $A$ is prolongable then so is $A(d)$;
\item $B(A(d))=B(A)\cap A(d)$ so if $A$ is semiprime then so is $A(d)$.
\end{enumerate}
\end{prop}
\begin{proof}
It is not hard to show that $A(d)$ is itself a monomial algebra and that $\sing(A(d))=\sing(A)\cap A(d)$.

Clearly $B(A)\cap A(d)\subseteq B(A(d))$. Take $f\in B(A(d))$ and we may assume it is homogeneous, say, $f\in A_{dm}$ and $\left(A(d)fA(d)\right)^r=0$. Assume to the contrary that $f\notin B(A)$, then there exist $k_1,\dots,k_{(d+1)r}$ such that: $$ fA_{k_1}fA_{k_2}\cdots A_{k_{(d+1)r}}f\neq 0. $$
By the pigeonhole principle, there exist $1\leq i<j\leq d+1$ such that: \[ {k_1+\cdots+k_{ri} \equiv k_1+\cdots+k_{rj} \pmod{d}} \]
and it follows that: \[ \alpha:= {k_{ri+1}+\cdots+k_{rj}+(rj-ri-1)dm \equiv 0 \pmod{d}} \]
since $A_{k_{i+1}}f\cdots A_{k_j}\subseteq A_\alpha\subseteq A(d)$
hence $fA_{k_{i+1}}f\cdots A_{k_j}f\neq 0$ hence $A(d)$ is semiprime.
\end{proof}





\section{Growth of modules}

Let $A$ be a graded algebra and let $M$ be a finitely generated module. Fix a finite-dimensional generating subspace $M_0\leq M$ so $M=M_0\cdot A$. We let $\gr M=\bigoplus_{n=0}^{\infty} (\gr M)_n$ where $(\gr M)_n=M_0\cdot A_{\leq n}/M_0\cdot A_{\leq n-1}$ for $n\geq 1$ and $(\gr M)_0=M_0$.

\begin{lem}
Let $A$ be a graded algebra and let $M$ be a finitely generated $A$-module. Then $\gr M$ is a finitely generated graded $A$-module generated in degree $0$ whose growth is equal to the growth of $M$.
\end{lem}

\begin{thm}
Let $F$ be a field of characteristic zero. Then there exists a projectively simple $F$-algebra $A$ which has no finitely generated modules of GK-dimension $1$.
\end{thm}

\begin{proof}
Let $\mathcal{A}_1=F\left<X,Y\right>/\left<XY-YX-1\right>$ be the first Weyl algebra and consider the polynomial ring $\mathcal{A}_1[t]$ with a central indeterminate $t$. Observe that $\mathcal{A}_1[t]$ is $\mathbb{N}$-graded with infinite-dimensional homogeneous components by $\deg(t)=1,\deg(X)=\deg(Y)=0$.
Let $B=F\left<Xt,Yt,t\right>\subseteq \mathcal{A}_1[t]$. Then each homogeneous component $B_n$ has an $F$-linear basis consisting of $\{X^iY^jt^n\}_{i+j\leq n}$.

Let us prove that
?????
\end{proof}

\begin{proof}
Let $\mathcal{A}_1=F\left<X,Y\right>/\left<XY-YX-1\right>$ be the first Weyl algebra and consider the polynomial ring $\mathcal{A}_1[t]$ with a central indeterminate $t$. Observe that $\mathcal{A}_1[t]$ is $\mathbb{N}$-graded with infinite-dimensional homogeneous components by $\deg(t)=1,\deg(X)=\deg(Y)=0$.
Let $B=F\left<Xt,Yt,t\right>\subseteq \mathcal{A}_1[t]$. Then $B$ has an $F$-linear basis consisting of $\{X^iY^jt^k\}_{i+j\leq k}$. Let us define a total order on these monomials by setting $X^iY^jt^k \prec X^{i'}Y^{j'}t^{k'}$ if:
\begin{itemize}
    \item $k<k'$ or
    \item $k=k'$ and $i+j<i'+j'$ or
    \item $k=k',\ i+j=i'+j'$ and $i<i'$.
\end{itemize}
Then a nonzero homogeneous element $f\in B_n$ has a leading monomial ${\rm in}(f)$ of the form $x^i y^j t^n$, which is just the largest element with respect to $\prec$ in its support. 
Notice that $B=\bigoplus_{n=0}^{\infty} B_n$ is a connected graded algebra with finite-dimensional homogeneous components which is generated in degree $1$. Let us prove that $B$ admits no finitely generated infinite-dimensional graded module of GK-dimension smaller than $2$.

Let $M=\bigoplus_{n=0}^{\infty} M_n$ be an infinite-dimensional graded $B$-module which is generated in degree $0$. Assume to the contrary that $\GK(M)<2$. We may assume that $M$ is cyclic and fix a presentation $M\cong B/R$ for some right ideal $R\leq B$.
Let: \[{\rm in}(R)=\{{\rm in}(f)|f\in R\},\]
a subset of $B$.

\textit{Claim 1: There exist $a,b\geq 0$ such that $X^at^b\in {\rm in}(R)$.} Otherwise, for each $n$ the monomials $\{X^it^n\}_{i=0}^{n}$ are linearly independent modulo $R$ and their images belong to $M_n$, hence $\dim_F M_n \geq n+1$ and $\GK(M)\geq 2$. Let $a$ be minimal such that $X^at^n\in {\rm in}(R)$ for some $n$.

?????
\end{proof}

Then notice that if ${\rm in}(L)$ does not contain a monomial of the form $x^a t^b$ for some $a,b$ then for each $n$ the monomials $x^i t^n$ with $i\le n$ have linearly independent images in $A/L$ and so $M$ has GK dimension at most two.  It follows that there is a monomial of the form $x^a t^b$ in ${\rm in}(L)$ and we may pick $a$ minimal.  Then $M_n$ is spanned by images of elements of the form $x^i y^j t^n$ with $i<a, j\le n$.  Now if ${\rm in}(L)$ does not contain an element of the form $x^{a-1} y^i t^j$ for some $i,j$ then similarly the images of $x^{a-1} y^i t^n$ with $i\le n-a+1$ are linearly independent in $M_n$ and so $M$ again has GK dimension two.  Thus there exist $i_1, j_1$ such that $x^{a-1} y^{i_1} t^{j_1}\in {\rm in}(L)$. An induction argument then shows that for each $s\le a$ there are elements of the form $x^{a-s} y^{i_s} t^{j_s} \in {\rm in}(L)$ for some $i_s,j_s$.  But this now shows that for $n$ large we must have $M_n=0$ and so $M$ has GK dimension zero.  
Note that a similar argument can be done with larger Weyl algebras to avoid graded modules of positive GK dimension $\le d$ for a given natural number $d$.

Consider an auxiliary alphabet $\Sigma=\{X_1,\dots,X_m\}$.
For each $i\geq 0$, let $C_i=\{X_j|\lambda_{i,j}\neq 0\}\subseteq \Sigma$ and let $\mathcal{T}=C_1C_2\cdots \subseteq \Sigma^{\mathbb{N}}$.
Let $A_\mathcal{T}$ be the monomial algebra spanned by finite factors of words from $\mathcal{T}$, namely:
$$A_\mathcal{T}:=F\left<X_1,\dots,X_m\right>/\left<u\ |\ u\ \text{is not a factor of an infinite word in}\ \mathcal{T}\right>$$
We claim that $A\cong A_\mathcal{T}$ via $x_j\leftrightarrow X_j$. First, we claim that the natural map $\pi\colon X_j\mapsto x_j$ induces a surjective homomorphism $\pi\colon A_\mathcal{T}\twoheadrightarrow A$. Indeed, suppose that a monomial $X_{r_1}\cdots X_{r_k}$ vanishes in $A_\mathcal{T}$. This means that for \textit{no} $i\geq 0$ it holds that $X_{r_1}\in C_i,\dots,X_{r_k}\in C_{i+k-1}$; in other words for each $i\geq 0$,
$$ 
\lambda_{i,r_1}\cdots \lambda_{i+k-1,r_k} = 0 $$
hence:
$$ e_i\cdot x_{r_1}\cdots x_{r_k} = \lambda_{i,r_1}\lambda_{i+1,r_2}\cdots \lambda_{i+k-1,r_k} = 0 $$
which means that the monomial $x_{r_1}\cdots x_{r_k}$ annihilates $P$, and since by assumption $P$ is a faithful $A$-module it follows that $x_{r_1}\cdots x_{r_k}=0$. Therefore $\pi$ is a well-defined surjective homomorphism.
Now let $X_{r_1}\cdots X_{r_k}$ be a non-zero monomial in $A_\mathcal{T}$. Then there exists $i\geq 0$ such that $X_{r_1}\in C_i,\dots,X_{r_k}\in C_{i+k-1}$. Therefore, $ 
\lambda_{i,r_1}\cdots \lambda_{i+k-1,r_k} \neq 0 $ and as before: $$ e_i\cdot x_{r_1}\cdots x_{r_k} = \lambda_{i,r_1}\lambda_{i+1,r_2}\cdots \lambda_{i+k-1,r_k} \neq 0 $$
so $X_{r_1}\cdots X_{r_k}\notin \Ker (\pi)$. Since both $A,A_\mathcal{T}$ are monomial algebras and $\pi$ maps monomial generators to monomial generators, $\Ker(\pi)$ is generated by monomials and by the previous argument it must be the zero ideal. Hence $\pi$ is an isomorphism. Notice that we proved that $\mathcal{T}$ is a tree over $A$.

\section{Projectively simple algebras -- PRETTY FALSE}

\subsection{Algebras with isomorphic proalgebraic varieties}
Let $A$ be a projectively simple monomial algebra with generators $x_1,\dots,x_m$. For a graded $A$-module $M=M_0+M_1+\cdots$ denote $M_{\leq n}:=M/M_{\geq n}$, a finite-dimensional $A$-module.

Define a topology on the set of (isomorphism classes of) point $A$-modules by taking a basis of open sets to be $\mathcal{O}_{P,n}:=\{Q\in A\text{-mod}:Q_{\leq n}\cong P_{\leq n}\}$. 
The space of isomorphism classes of point $A$-modules, endowed with the shift operator \[ s(Fe_0+Fe_1+\cdots)=Fe_1+Fe_2+\cdots \] becomes a symbolic dynamical system. Indeed, by Theorem \ref{main_thm} ?????, a point $A$-module $P=Fe_0+Fe_1+\cdots$ is monomial, namely, induced by an infinite word $w$ on the monomial generators of $A$ by $e_i\cdot x_j=\delta_{w[i],x_j} e_{i+1}$ (up to isomorphism). The identification $P\mapsto w$ provides a conjugacy, namely, a shift-equivariant homeomorphism from the space of isomorphism classes of point $A$-modules to $\Subshift(A)$, the underlying subshift of $A$. Subsequently, 
\[ \varprojlim \mathcal{P}(A) = \{(p_{i_0},p_{i_1},\dots)|x_{i_0}x_{i_1}\cdots\in \Subshift(A)\} \subseteq \mathbb{P}^{m-1}\times \mathbb{P}^{m-1}\times \cdots \]
where \[ p_1=[1:0:\cdots :0],\dots, p_{m}=[0:\cdots:0:1] \in \mathbb{P}^{m-1} \]
We summarize this as follows:
\begin{cor} \label{Subshifts}
FALSE. 
Let $A$ be a projectively simple monomial algebra. Then $\varprojlim \mathcal{P}(A)$ is conjugate to $\Subshift(A)$.
\end{cor}

The following consequence shows that all projectively simple (non-PI) monomial algebras admit the same `point geometry'. Recall that a prime monomial algebra is PI if and only if it is of linear growth.

\begin{thm} \label{ProjSimpleIso}
Let $A,B$ be projectively simple monomial algebras, not of linear growth. Then: \[ \mathcal{P}(A) \cong \mathcal{P}(B) \]
an isomorphism of proalgebraic varieties.
\end{thm}


\begin{proof}[{Proof of Theorem \ref{ProjSimpleIso}
}]
Notice that since $A,B$ are projectively simple then by Theorem \ref{main_thm} each $\mathcal{P}_n(A),\mathcal{P}_n(B)$ is a finite subset of $\left(\mathbb{P}^{m-1}\right)^n$ (in fact, can be identified with the set of length-$n$ factors of $\Subshift(A)$ and $\Subshift(B)$ respectively), so a regular map between them is simply a set-theoretic map.

Since $A,B$ are prime monomial algebras which are not PI, it follows from Lemma \ref{NoIsoPt} that $\Subshift(A)$ and $\Subshift(B)$ have no isolated points.
It then follows that they are both homeomorphic to the Cantor set and thus to each other. By Corollary \ref{Subshifts}, it follows that $\varprojlim \mathcal{P}(A)$ and $\varprojlim \mathcal{P}(B)$ are homeomorphic so fix such a homeomorphism $f\colon \varprojlim \mathcal{P}(A)\rightarrow \varprojlim \mathcal{P}(B)$. By definition of the product topology, it follows that for each $\xi \in \mathcal{P}_n(B)$: \[ (\pi_n\circ f)^{-1}(\xi) \supseteq \{(q_0,\dots,q_r)\}\times \{p_1,\dots,p_m\}^{\times \infty} \]  for some $r=r(\xi)$.  Let: \[ g(n) > \max \{r(\xi)|\xi \in \mathcal{P}_n(B)\} \] 
It now follows that $\pi_n \circ f$ is constant on fibers of $\pi_{g(n)}\colon \varprojlim \mathcal{P}(A)\rightarrow \mathcal{P}_{g(n)}(A)$ and thus restricts to a map $f_n\colon \mathcal{P}_{g(n)}(A)\rightarrow \mathcal{P}_n(B)$ obtained by $f_n=\pi_n \circ f \circ \theta_{g(n)}$ where $\theta_i\colon \mathcal{P}_i(A)\rightarrow \varprojlim \mathcal{P}(A)$ is an arbitrary section of the projection $\pi_i\colon \varprojlim \mathcal{P}(A)\rightarrow \mathcal{P}_i(A)$:
\[
\begin{tikzcd}
\varprojlim \mathcal{P}(A) \arrow[rr, "f"]                         &  & \varprojlim \mathcal{P}(B) \arrow[d, "\pi_n"] \\
\mathcal{P}_{g(n)}(A) \arrow[u, "\theta_{g(n)}"] \arrow[rr, "f_n"] &  & \mathcal{P}_n(B)                             \end{tikzcd}
\]

Thus the family of morphisms: \[ \{ f_n\colon \mathcal{P}_{g(n)}(A)\rightarrow \mathcal{P}_n(B)\ |\ n=1,2,\dots\} \] provides a morphism of proalgebraic varieties $\mathcal{P}(A)\rightarrow \mathcal{P}(B)$, which is  easily seen to be an isomorphism considering the morphism $\mathcal{P}(B)\rightarrow \mathcal{P}(A)$ obtained from the homeomorphism $f^{-1}$.
\end{proof}



\subsection{Algebras with isomorphic symbolic proalgebraic varieties}
The isomorphism constructed in the proof of Theorem \ref{ProjSimpleIso} is in general not shift-equivariant. 

Indeed, if there is an isomorphism $\mathcal{P}(A)\cong \mathcal{P}(B)$ (for $A,B$ projectively simple monomial algebras) which induces a shift-equivariant map on $\varprojlim \mathcal{P}(A) \cong \varprojlim \mathcal{P}(B)$ then by Corollary \ref{Subshifts} it follows that $\Subshift(A)$ is conjugate to $\Subshift(B)$ so in particular they have the same entropy. However, it is not hard to show that for every real number $h\in [0,\log m]$ there exists a minimal subshift with entropy $h$.

The following question is therefore natural: given two monomial algebras $A,B$ and an isomorphism $\mathcal{P}(A)\cong \mathcal{P}(B)$ which induces a shift-equivariant bijection on the spaces of points (let us call such an isomorphism an isomorphism of \textbf{symbolic proalgebraic varieties}), to what extent are $A$ and $B$ `close' to each other? For projectively simple monomial algebras, Corollary \ref{Subshifts} translates this question to: given projectively simple monomial algebras $A,B$ such that $\Subshift(A)$ and $\Subshift(B)$ are conjugate, how close are $A$ and $B$ to each other?

We begin with the following construction.
Let: \[ {\bf t}=01101001\cdots \] be the Thue-Morse binary word; i.e., ${\bf t}=\phi^{\omega}(0)$ where $\phi \colon \{0,1\}^*\to \{0,1\}^*$ is the monoid endomorphism defined by $\phi(0)=01$ and $\phi(1)=10$.  We create infinite words ${\bf w}$ and ${\bf w}'$ over the alphabet $\{x,w,z\}$ by applying respectively the codings $\mu,\mu'\colon \{0,1\}\to \{x,w,z\}^*$ to ${\bf t}$, where: 
\[ \mu(0)=xw^2, \mu(1)=xz^2\ \ \text{and}\ \  \mu'(0)=xwz, \mu'(1)=xz^2.\] Then ${\bf w}$ and ${\bf w}'$ are uniformly recurrent as ${\bf t}$ is. Let $A_{\bf w}$ and $A_{{\bf w}'}$ denote the monomial algebras on generators $x,w,z$ in which a monomial is zero in $A_{\bf w}$ (respectively $A_{{\bf w}'}$) if it is not a subword of ${\bf w}$ (respectively ${\bf w}'$).


\begin{lem}
The algebras $A_{\bf w}$ and $A_{{\bf w}'}$ are non-isomorphic projectively simple monomial algebras. 
\end{lem}
\begin{proof}
Since ${\bf w}$ and ${\bf w}'$ are uniformly recurrent, it follows that $A_{\bf w}$ and $A_{{\bf w}'}$ are projectively simple monomial algebras.

Now we claim that $A_{\bf w} \not\cong A_{{\bf w}'}$.
Since $\mu,\mu'$ encode $0$ and $1$ into length-$3$ words over $\{x,w,\}$, it follows that length-$\leq 4$ monomials $A_{\bf w}$ (resp.~$A_{{\bf w}'}$) arise as subword of: 
\[ \mu(0)\mu(0)=xw^2xw^2,\ \mu(0)\mu(1)=xw^2xz^2,\ \mu(1)\mu(0)=xz^2xw^2,\ \mu(1)\mu(1)=xz^2xz^2 \]
resp.:
\[ \mu'(0)\mu'(0)=xwzxwz,\ \mu'(0)\mu'(1)=xwzxz^2,\ \mu'(1)\mu'(0)=xz^2xwz,\ \mu'(1)\mu'(1)=xz^2xz^2 \]
and we compute that: 
\begin{eqnarray*}
\left(A_{\bf w}\right)_2 & = & Fxw+Fxz+Fwx+Fww+Fzx+Fzz \\
\left(A_{{\bf w}'}\right)_2 & = & Fxw+Fxz+Fwz+Fzx+Fzz
\end{eqnarray*}
If there was an isomorphism $\psi\colon A_{\bf w}\rightarrow A_{{\bf w}'}$ then by \cite[Theorem~1]{BellZhang} there would be a degree preserving graded isomorphism $\psi'\colon A_{\bf w}\rightarrow A_{{\bf w}'}$, but $\dim_F \left(A_{\bf w}\right)_2 = 6$ whereas $\dim_F \left(A_{{\bf w}'}\right)_2 = 5$, a contradiction. Hence $A_{\bf w} \not\cong A_{{\bf w}'}$.
\end{proof}

\begin{lem}
The subshifts generated by ${\bf w}$ and by ${\bf w}'$ are conjugate to each other.
\end{lem}
\begin{proof}
To see this, we let 
$f\colon \{x,w,z\}^*\to \{x,w,z\}^*$ denote the involution that takes a word $u$ and changes all occurrences of $xw^2$ to $xwz$ and all occurrences of $xwz$ to $xw^2$.  Note that since there can be no overlaps between subwords of the form $xw^2$ and $xwz$, the map $f$ is defined unambiguously and extends to infinite words. In symbolic dynamical terms, $f$ induces a sliding block code with memory $2$ and zero anticipation, induced by the block map $\Phi\colon \{x,w,z\}^3\rightarrow \{x,w,z\}$ given by: \[ \Phi(xww)=z,\Phi(xwz)=w\ \text{and}\ \Phi(v_1v_2v_3)=v_3\ \text{for}\ v_1v_2v_3\neq xww,xwz. \] Since $f({\bf w})={\bf w}'$ and $f^2=\text{id}$ (hence bijective), it is a conjugacy between the subshifts generated by ${\bf w}$ and ${\bf w}'$.
\end{proof}

\begin{cor}
There exist non-isomorphic projectively simple monomial algebras $A,B$ with $\mathcal{P}(A)\cong \mathcal{P}(B)$ isomorphic as symbolic proalgebraic varieties.
\end{cor}

However, an isomorphism of symbolic proalgebraic varieties $\mathcal{P}(A)\cong \mathcal{P}(B)$ does ensure the following `birational equivalence' relation between $A$ and $B$.

Given a monomial algebra $A=F\left<x_1,\dots,x_m\right>/I$, let $T=x_1+\cdots+x_m$. It is not hard to see that if $A$ is both left and right prolongable then $T$ is a regular homogeneous element of degree $1$.

\begin{prop} \label{birational}
Let $A,B$ be projectively simple monomial algebras with generators $\{x_1,\dots,x_m\}$. If $\mathcal{P}(A)\cong \mathcal{P}(B)$ are isomorphic as symbolic proalgebraic varieties then there are isomorphic localizations $A[T^{-1}]\cong B[T^{-1}]$.
\end{prop}

\begin{proof}
Fix a common alphabet $\Sigma=\{x_1,\dots,x_m\}$. Let $X$ (resp.~$Y$) be the $\mathbb{Z}$-subshift consisting of all \textit{bi-infinite} words in $\Sigma^{\mathbb{Z}}$ whose finite subwords are non-zero monomials in $A$ (resp.~$B$). Thus $X,Y\subseteq \Sigma^{\mathbb{Z}}$ are minimal subshifts. Denote the shift operation on $\Sigma^{\mathbb{Z}}$ by $T$.
The groupoid $\mathfrak{G}_{X}$ (resp.~$\mathfrak{G}_{Y}$) of the action $\mathbb{Z} \curvearrowright X$ (resp.~$\mathbb{Z} \curvearrowright Y$) is the groupoid whose elements are $\mathbb{Z}\times X$ (resp.~$\mathbb{Z}\times Y$) and (partial) multiplication is given by:
$$(m_2, T^{m_1}(x))\cdot(m_1,x) = (m_1+m_2,x).$$
These are \'etale Hausdorff groupoid whose spaces of units are totally disconnected.
One can associate with $\mathfrak{G}_X$ an associative algebra, which we denote $F[\mathfrak{G}_X]$, called the \textit{convolution algebra} of $\mathfrak{G}_X$, consisting of all continuous, compactly supported functions $f\colon\mathfrak{G}_X\rightarrow F$ (here the base field $F$ is endowed with the discrete topology). The multiplicative structure of $F[\mathfrak{G}_X]$ is given by convolution:
$$(f_1\cdot f_2)(g)=\sum_{\substack{h\in \mathfrak{G}_X\ \text{s.t.}\ h^{-1}g \\ \text{is well-defined}}} f_1(h)f_2(h^{-1}g).$$
For each $1\leq i\leq d$, consider the characteristic function $1_{x_i}$ of the cylindrical set:
$$ \{u\in X\ |\ u[0]=x_i\}. $$
Then the convolution algebra is generated by these characteristic functions and the shift operator (and its inverse), namely:
$$ F[\mathfrak{G}_X] = F\left< 1_{x_1},\dots,1_{x_d},T^{\pm 1} \right>.$$
Identifying $T$ with the corresponding characteristic function. Notice that $1=\sum_{i=1}^{d} 1_{x_i}$. By \cite[Theorem~1.2]{Nekrashevych}, $F[\mathfrak{G}_X]$ is simple since $X$ is minimal. 
Moreover, $F[\mathfrak{G}_X]$ is a localization of the monomial algebra $A$. Namely, there is an injective 
ring homomorphism:
$$ i\colon A \hookrightarrow F[\mathfrak{G}_X] $$
given by:
$$ x_i \mapsto 1_{x_i} T.$$
See \cite[Example~4.4.1]{Nekrashevych}, where $A$ is denoted $\mathcal{M}_\mathcal{X}$ and $F[\mathfrak{G}_X]$ is denoted $F[\mathfrak{G}_w]$ (where $X$ is the closure of the shift-orbit of $w$).
Similarly one defines $F[\mathfrak{G}_Y]$ which is a localization of $B$ by the shift operator $T$.

Since there exists a shift-equivariant homeomorphism $f\colon X\rightarrow Y$, we have an induced topological isomorphism of the associated \'etale groupoids $f'\colon \mathfrak{G}_X\rightarrow \mathfrak{G}_Y$ which extends to an isomorphism $F[\mathfrak{G}_X]\cong F[\mathfrak{G}_Y]$. Since $F[\mathfrak{G}_X]\cong A[T^{-1}]$ and $F[\mathfrak{G}_Y]\cong B[T^{-1}]$, the claim follows.
\end{proof}

\section{Veronese subalgebras}

Given a graded algebra $A$, its $d$-Veronese subring $A$ is $A^{(d)}=\bigoplus_{i=0}^{\infty} A_{di}$.
As noted in \cite[Example~1.11]{RRZ}, a Veronese subalgebra of a projectively simple algebra need not be projectively simple. Consider the monomial algebra $A=A_w$ associated with the infinite word:
$$ w=xyxyxy\cdots $$
(namely, $A_w=F\left<x,y\right>/\left<x^2,y^2\right>$.) Then the $2$-Veronese $A^{(2)}$ is generated by $xy,yx$ and is isomorphic to the commutative algebra $F[t_1,t_2]/\left<t_1t_2\right>$ (with $t_1,t_2$ both having degree $1$).
Observe that both $\mathcal{P}(A)$ and $\mathcal{P}(A^{(2)})$ have two points and are thus isomorphic as (pro)algebraic varieties, but not as dynamical systems. However, $A\not\cong A^{(2)}$ and moreover $A$ is projectively simple whereas $A^{(2)}$ is not even prime (but it is semiprime). This phenomenon is more thoroughly explained by the following.



Given a ring $R$ let $B(R)$ denote its prime radical\footnote{Also known in the literature as the Baer radical.}, namely, the intersection of all prime ideals or equivalently, the sum of all nilpotent ideal.

\begin{prop}
Let $A$ be a graded algebra and let $d\geq 1$ be arbitrary. 
\begin{enumerate}
\item $B(A^{(d)})=B(A)\cap A^{(d)}$ and in particular if $A$ is semiprime then so is $A^{(d)}$;
\item If $A$ is a monomial algebra then $A^{(d)}$ is a monomial algebra;
\item $\sing(A^{(d)})=\sing(A)\cap A^{(d)}$, and in particular if $A$ is a prolongable monomial algebra then so is $A^{(d)}$.
\end{enumerate}
\end{prop}
\begin{proof}
(1)\ Clearly $B(A)\cap A^{(d)}\subseteq B(A^{(d)})$, since if the ideal in $A$ generated by an element from $A^{(d)}$ is nilpotent, then the ideal it generates in $A^{(d)}$ is nilpotent. Now take $f\in B(A^{(d)})$ and we may assume it is homogeneous, say, $f\in A_{dm}$ and generated a nilpotent ideal in $A^{(d)}$, so $\left(A^{(d)}fA^{(d)}\right)^r=0$ for some $r>1$. Assume to the contrary that $f\notin B(A)$, then there exist $k_1,\dots,k_{d(r-1)+1}$ such that: $$ fA_{k_1}fA_{k_2}\cdots A_{k_{d(r-1)+1}}f\neq 0. $$
By the pigeonhole principle, there exist $1\leq i_1<\cdots<i_r\leq d+1$ such that: \[ { \sum_{t=1}^{i_1} k_t \equiv \cdots \equiv \sum_{t=1}^{i_d} k_t \pmod{d} } \]
since $f\in A_{dm}$, it follows that each:
\[ V_1 := A_{k_1}f\cdots fA_{k_{i_1}},\ \dots\ , V_r :=  A_{k_{i_{r-1}+1}}f\cdots fA_{k_{i_r}} & \subseteq A^{(d)} \]

so: \[ fV_1f\cdots fV_rf\subseteq fA_{k_1}fA_{k_2}\cdots A_{k_{d(r-1)+1}}f\neq 0 \]
but $fV_1f\cdots fV_rf\subseteq \left(A^{(d)}fA^{(d)}\right)^r=0$, a contradiction. Hence $B(A^{(d)})=B(A)\cap A^{(d)}$.

(2,3)\ It is not hard to show that $A^{(d)}$ is itself a monomial algebra and that $\sing(A^{(d)})=\sing(A)\cap A^{(d)}$.
\end{proof}

As noted above, Veronese subalgebras of projectively simple monomial algebras need not be projectively simple. However:

\begin{prop}
Let $A$ be a projectively simple monomial algebra. Then any Veronese subalgebra $A^{(d)}$ is a subdirect product of at most $d$ projectively simple monomial algebras.
\end{prop}
[This can be probably proven using Jason's lemma from the paper on amenability.]

\begin{cor} \label{cor_Veronese}
If $A$ is a projectively simple monomial algebra and $A^{(d)}$ is prime then $A^{(d)}$ is projetively simple.
\end{cor}

This is analogous to \cite[Lemma~1.10(c)]{RRZ}, which asserts that a prime Veronese subalgebra of a projectively simple Noetherian algebra is projectively simple. 

\bigskip
\bigskip
\bigskip

Question: What is the general relation between $\mathcal{P}(A)$ and $\mathcal{P}(A^{(d)})$? At least for monomial algebras? At least for projectively simple algebras? The question refers to both the proalgebraic variety structure and the symbolic variety structure (i.e. with shift-equivariant isomorphisms).

Question: For a projectively simple (monomial) algebra of not of linear growth, is any Veronese actually projectively simple? Maybe not.

\section{Finite varieties}

Let $A$ be a prolongable monomial algebra. When does $\mathcal{P}(A)$ have finitely many points, in other words, $A$ admits only finitely many point modules up to isomorphism?

Suppose that $A$ has finitely many point modules and is prolongable.  Then since $A$ is prolongable, for each word $u$ with nonzero image in $A$, we have a right-infinite extension ${\bf w}$, which gives us a point module $M_{\bf w}$.  Then since each infinite suffix of ${\bf w}$ gives us a point module, we have that $M_{\bf w}\cong M_{{\bf w}'} $ for some suffix of ${\bf w}'$ and so ${\bf w}={\bf w'}$, and so ${\bf w}$ is periodic and has $u$ as a subword.  Since $A$ has only finitely many point modules, it follows that there is a finite set of right-infinite periodic words with the property that each monomial with nonzero image in $A$ is a subword of one of these periodic words.  Since periodic words have ${\rm O}(1)$ subwords of length $n$ for each $n$, we see that $A$ has GK dimension one.  

Now unfortunately, the converse is not quite true.  Notice that if $A$ is the monomial algebra with basis given by subwords of $xy^{\omega}$ union with subwords of $xz^{\omega}$ then notice that for each $\lambda$ in our base field $k$ we have $M_{\lambda}:=A/((x-\lambda z)A+yA)$ is a point module of $A$.  To see this, notice that $(x-\lambda z)$ is actually in the annihilator $M_{\lambda}$ whereas $x-\gamma z$ is not for $\gamma\neq \lambda$.  Well, I think this is right.  Over finite base fields none of this should be an issue and I think the iff is OK in this case. 

So how can we correct this?  But it is known that a prolongable monomial algebra of GK dimension one has a basis consisting of distinct subwords of a finite set of eventually periodic right-infinite words ${\bf w}_1,\ldots {\bf w}_d$, where we may assume that for each $i,j$ with $i\neq j$, ${\bf w}_i$ has a subword that is not a subword of ${\bf w}_j$.  Now I think we run into the same problem whenever we have distinct words $u_1,u_2$ of the same length with unique right extensions of each length that are the same. This should be expressible in terms of some ring theoretic property, but I'm not sure, but we want our point modules to be of the form $A/L$ with $L$ generated by monomials, since otherwise we can apply automorphisms of our algebra to produce infinitely many non-isomorphic point modules.





\section{Questions (for us)}

\begin{itemize}
    \item Consider the notion of \textit{horizontal} morphisms between provarieties. Does then the provariety recovers the underlying monomial algebra up to twists? Yes for Sturmian.

\[
\begin{tikzcd}
\text{Isomorphism} \arrow[rr] &  & \txt{(Right) Zhang \\ equivalence} \arrow[rr] \arrow[ll, "\not" description, bend right] &  & \txt{Horizontally isomorphic \\  provarieties} \arrow[rr] \arrow[ll, "?" description, bend right] &  & \txt{Isomorphic \\ provarieties} \arrow[ll, "\not" description, bend right]
\end{tikzcd}
\]

Furthermore, is there a connection with conjugacy of the underlying subshift (which is a weaker equivalence relation than isomorphism)? Conjugacy does not imply Zhang equivalence (we have an example above where conjugacy does not preserve the Hilbert series) and conversely the twist of $F\left<x,y\right>/\left<x^2,y^2\right>$? with respect to the trnsposition automorphism is not prime, so the subshifts cannot be conjugate.

    \item Is projective simplicity invariant under twists?
    \item Given a monomial algebra $A$ and a twisting system or even an automorphism and the corresponding twisted algebra $A^\tau$, which algebraic properties are shared (e.g. projective simplicity, semiprimeness etc.; primeness is not, as mentioned in Zhang's paper)? Is there a way to view one of them as a Veronese of the other or something like that?
    \item What is the connection between the provarieties of an algebra (or a monomial algebra) and its Veronese subalgebra?
\end{itemize}

PREVIOUS VERSION OF THE SECTION ON RATIONAL FUNCTIONS:

WWW

\begin{proof}
Let $\Sigma=\{x_1,\dots,x_m\}$ and let $\mathcal{S}=P(\Sigma)\setminus \{\emptyset\}$ and let us say that a sequence $(C_1,\dots,C_k)\in \mathcal{S}^k$ is \textit{coherent} if all words in $C_1\cdots C_k$ are non-zero in $A$. Let $d\geq 2$ be an upper bound on the lengths of all monomial relations defining $A$.

Notice that a monomial in $\Sigma$ is non-zero in $A$ if and only if all of its length-$d$ factors are non-zero, and therefore a sequence of subsets of $\Sigma$ is coherent if and only if all of its length-$d$ sub-sequences are coherent.

For two sequences in $\mathcal{S}^k$ we say that $(C_1,\dots,C_k)\preceq (C'_1,\dots,C'_k)$ if for each $1\leq i\leq k$ we have $C_i\subseteq C'_i$.
By Theorem \ref{decomposition}, the irreducible components of $\mathcal{P}_n(A)$ are parametrized by coherent sequences $(C_0,\dots,C_n)$ which are $\preceq$-maximal. WHY?!

A coherent sequence $(C_0,\dots,C_n)$ of non-empty subsets of $\Sigma$ is $(d-1)$-\textit{pre-maximal} if $(C_0,\dots,C_{n-d+1})$ is $\preceq$-maximal among all sequences $(X_0,\dots,X_{n-d+1})$ for which $(X_0,\dots,X_{n-d+1},C_{n-d+2},\dots,C_n)$ is coherent.

Consider the following quiver $Q=(V,E)$. The vertex set is: $$ V=\{\vec{C}\in \mathcal{S}^{2d-2}|\vec{C}\ \text{is coherent}\} $$ and we draw a directed edge:
$$ (X_1,\dots,X_{2d-2})\rightarrow (X_2,\dots,X_{2d-1}) $$
if $X_d$ is maximal among the sets such that $(X_1,\dots,X_{d-1},-,X_{d+1},\dots,X_{2d-2})$ is coherent.

\smallskip

\textit{Claim.} A coherent sequence $(C_0,\dots,C_{n+1})$ (for $n\gg 1$) is $(d-1)$-pre-maximal if and only if $(C_0,\dots,C_n)$ is $(d-1)$-pre-maximal and there is an arrow $(C_{n-2d+3},\dots,C_n)\rightarrow (C_{n-2d+4},\dots,C_{n+1})$.

\textit{Proof of Claim.} For the `only if' direction, coherence of $(C_0,\dots,C_n)$ is clear. If it is not $(d-1)$-pre-maximal then we can find $(X_0,\dots,X_{n-d+1})\succeq (C_0,\dots,C_{n-d+1})$ such that $(X_0,\dots,X_{n-d+1},C_{n-d+2},\dots,C_n)$ is coherent, but then: $$ (X_0,\dots,X_{n-d+1},C_{n-d+2},\dots,C_n,C_{n+1}) $$ is coherent too (since all of its $d$-sub-sequences are coherent) and $(X_0,\dots,X_{n-d+1},C_{n-d+2})\succeq (C_0,\dots,C_{n-d+2})$. 
Furthermore, if there is no arrow $(C_{n-2d+3},\dots,C_n)\rightarrow (C_{n-2d+4},\dots,C_{n+1})$ then there exists some $X\supset C_{n-d+2}$ such that: $$ (C_{n-2d+3},\dots,C_{n-d+1},X,C_{n-d+3},\dots,C_{n+1}) $$ is coherent, and then: 
$$ (C_0,\dots,C_{n-d+1},X,C_{n-d+3},\dots,C_{n+1}) $$ is coherent too (since all of its $d$-sub-sequences are coherent), contradicting the $(d-1)$-pre-maximality of $(C_0,\dots,C_{n+1})$.

For the `if' part, coherence of $(C_0,\dots,C_{n+1})$ follows from coherence of $(C_0,\dots,C_n)$ and of $(C_{n-2d+4},\dots,C_{n+1})$ since any $d$-sub-sequence of $(C_0,\dots,C_{n+1})$ is contained in one of them. Now let us prove $(d-1)$-pre-maximality. If $(C_0,\dots,C_{n-d+1})\prec (X_0,\dots,X_{n-d+1})$ and $(X_0,\dots,X_{n-d+1},C_{n-d+2},\dots,C_{n+1})$ is coherent then we may assume that all $X_i$'s are equal to the corresponding $C_i$'s except for one index $0\leq j\leq n-d+2$. If $j<n-d+2$ then $X_n=C_n$ and we obtain a contradiction to the $(d-1)$-pre-maximality of $(C_0,\dots,C_n)$; if $j=n-d+2$ then:
\begin{eqnarray*}
& & (X_{n-2d+3},\dots,X_{n-d+1},X_{n-d+2},C_{n-d+3},\dots,C_{n+1}) \\ & = & (C_{n-2d+3},\dots,C_{n-d+1},X_{n-d+2},C_{n-d+3},\dots,C_{n+1})
\end{eqnarray*}
which is thus coherent, so there is no arrow $(C_{n-2d+3},\dots,C_n)\rightarrow (C_{n-2d+4},\dots,C_{n+1})$, a contradiction. The claim is proven.

\smallskip

Next, observe that a sequence $\vec{C}=(C_0,\dots,C_{n+1})$ is maximal if and only if it is $(d-1)$-pre-maximal and $(C_{n-d+3},\dots,C_{n+1})$ is maximal among all sequences for which $(C_{n-2d+4},\dots,C_{n-d+2},-,\dots,-)$ is a coherent sequence. Indeed, if $\vec{C}$ is not maximal then we may assume that there is a $\preceq$-larger coherent sequence differing from it by exactly one coordinate; the different coordinate cannot be among the last $d-1$  ones, since then $(C_{n-d+3},\dots,C_{n+1})$ fails to satisfy its promised maximality property; and the different coordinate cannot be any of the first $n-d+3$ coordinates by $(d-1)$-pre-maximality of $\vec{C}$. 
The converse implication holds too and its proof is similar to previous arguments, utilizing that a sequence is coherent if and only if all of its $d$-sub-sequences are. 

Let $v,w\in \mathbb{Z}^{|V|}$ be $\{0,1\}$-vectors indexed by elements of $V$ such that:
\begin{itemize}
    \item $v(X_1,\dots,X_{2d-2})=1$ if and only if $(X_1,\dots,X_{2d-2})$ is $(d-1)$-pre-maximal; 
    \item $w(X_1,\dots,X_{2d-2})=1$ if and only if $(X_d,\dots,X_{2d-2})$ is maximal among the sequences for which $(X_1,\dots,X_{d-1},-,\dots,-)$ is coherent.
\end{itemize}

Collecting pieces, we obtain that the number of maximal coherent sequences $(C_0,\dots,C_{n+1})$ is equal to:
$$ a_{n+1}(A) = w^T \cdot A_Q^n \cdot v $$
where $A_Q$ is the adjacency matrix of $Q$. The result follows.
\end{proof}